\topskip \setlength{\parindent}{0pt} \setlength{\parskip}{5pt plus
\newtheorem{theorem}{Theorem}
\newtheorem{lemma}[theorem]{Lemma}
\newtheorem{proposition}[theorem]{Proposition}
\newtheorem{corollary}[theorem]{Corollary}
\def\p{\pi}
\def\gf{generating function\xspace}
\def\al{\alpha}
\begin{document}
\title{Wilf classification of triples of 4-letter patterns}

\author[D. Callan]{David Callan}
\address{Department of Statistics, University of Wisconsin, Madison, WI 53706}
\email{callan@stat.wisc.edu}
\author[T.~Mansour]{Toufik Mansour}
\address{Department of Mathematics, University of Haifa, 31905 Haifa, Israel}
\email{tmansour@univ.haifa.ac.il}
\author[M.~Shattuck]{Mark Shattuck}
\address{Department of Mathematics, University of Tennessee, Knoxville, TN 37996}
\email{shattuck@math.utk.edu}
\maketitle

\begin{abstract}
We determine all 242 Wilf classes of triples of 4-letter patterns by showing that there are 32 non-singleton Wilf classes. There are 317 symmetry classes of triples of 4-letter patterns and after computer calculation of 
initial terms,  the problem reduces to showing that counting sequences that appear to be the same (agree in the first 16 terms) are in fact identical. The insertion encoding algorithm (INSENC) accounts for many of these and some others have been previously counted; in this paper, we find the generating function for each of the remaining 36 triples and it turns out to be algebraic in every case.
Our methods are both combinatorial and analytic, including decompositions by left-right maxima and by initial letters. Sometimes this leads to an algebraic equation for the generating function, sometimes to a functional equation or a multi-index recurrence that succumbs to the kernel method. A particularly nice so-called cell decomposition is used in one case and a bijection is used for another.
\bigskip

\noindent{\bf Keywords}: pattern avoidance, Wilf-equivalence, kernel method, insertion encoding algorithm
\end{abstract}
\maketitle

\section{Introduction}

In recent decades pattern avoidance has received a lot of attention. It has a prehistory in the work of MacMahon \cite{macmahon1915} and Knuth \cite{K} but the paper that really sparked the current interest is by Simion and Schmidt \cite{SiS}. They thoroughly analyzed 3-letter patterns, including a bijection between 123- and 132-avoiding permutations, thereby explaining the first 
(nontrivial) instance of what is, in modern terminology, a Wilf class. Since then the problem has been
addressed on several other discrete structures, such as compositions, $k$-ary words, and set partitions; see, e.g.,
the texts \cite{SHM,TM} and references contained therein.

Permutations avoiding a single 4-letter pattern have been well studied (see, e.g., \cite{St0,St,West}). There are 56 symmetry classes of pairs of 4-letter patterns, for all but 8 of which the avoiders have been enumerated \cite{wikipermpatt}. Le \cite{L} established that these $56$ symmetry classes form $38$ distinct Wilf classes. Vatter \cite{V} showed that of these $38$, $12$ can be enumerated with regular insertion encodings (the INSENC algorithm). Some of these generating functions were computed by hand by Kremer and Shiu \cite{KS}.

Much less is known about larger sets/longer patterns. Here, we consider the 317 symmetry classes of triples of 4-letter patterns and determine their Wilf classes.
First, we used the software of Kuszmaul \cite{Kus} to compute the initial terms $\{|S_n(T)|\}_{n=1}^{16}$ for each symmetry class of $3$ patterns in $S_4$, see Table \ref{long3s}.
There are 242 distinct 16-term sequences among the 317, giving a lower bound of 242 on the number of Wilf classes. 
We will show that whenever such sequences agree in the first 16 terms, they are identical, and so 242 is also an upper bound.  To do so, we find the \gf for every triple whose 16-term counting sequence is repeated in Table \ref{long3s}. Thirty-eight of them can be found by INSENC and some others have already been counted; these are referenced in Table \ref{long3} at the end of this introduction. Here we count the rest, amounting to 36 triples with 15 distinct counting sequences. Table \ref{long3} is a compendium of the results. Summarizing, there are 242 Wilf classes of triples of 4-letter patterns, of which 210 are singleton (trivial) and 32 contain more than one symmetry class (nontrivial). Enumeration of the trivial Wilf classes will be treated in a forthcoming paper.

\begin{theorem}[\bf Main Theorem]
There are exactly $242$ Wilf classes $(32$ nontrivial Wilf classes$)$ of triples of $4$-letter patterns in permutations,
see Tables \ref{long3} and \ref{long3s}.
\end{theorem}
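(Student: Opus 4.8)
The argument splits into a lower bound and an upper bound on the number of Wilf classes, and the lower bound is immediate. Running the software of Kuszmaul \cite{Kus} produces the initial segments $\{|S_n(T)|\}_{n=1}^{16}$ for all $317$ symmetry classes of triples of $4$-letter patterns (Table \ref{long3s}), and exactly $242$ of these segments are pairwise distinct. Since Wilf-equivalent sets have identical counting sequences, two symmetry classes with different $16$-term prefixes cannot lie in a common Wilf class, so there are at least $242$ Wilf classes.

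For the matching upper bound one must show that every apparent coincidence in Table \ref{long3s} is genuine: whenever a block of symmetry classes shares the same $16$-term prefix, all members of that block are Wilf-equivalent. The plan is to work block by block. Within a block it is enough to connect the members by a chain of established Wilf-equivalences, so for each block I would select a small spanning set of triples and, for each chosen triple $T$, determine the \gf $F_T(x)=\sum_{n\ge0}|S_n(T)|\,x^n$ in closed form. Three sources furnish these generating functions: (i)~$38$ of the triples admit a regular insertion encoding, so the INSENC algorithm of Vatter \cite{V} outputs their (rational) generating functions directly; (ii)~several triples coincide with sets already enumerated in the literature, as referenced in Table \ref{long3}; and (iii)~the remaining $36$ triples, comprising $15$ distinct counting sequences, are enumerated from scratch in the sections that follow. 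Once every member of a block has a known \gf, one verifies that these generating functions agree as formal power series; this upgrades the $16$-term coincidence to a full Wilf-equivalence, and tallying the blocks then gives $32$ nontrivial Wilf classes and $210$ singleton classes, hence $242$ in all.

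The bulk of the work is step (iii), the computation of the $36$ outstanding generating functions, and here no uniform recipe is available. The staple technique is to decompose an arbitrary avoider according to the positions and values of its left-right maxima, or according to its first one or two letters; such a decomposition expresses $F_T$ --- or a refinement of it carrying one or more catalytic variables --- in terms of itself and simpler, previously computed series. In favorable cases this collapses to a polynomial equation for $F_T$ that can be solved outright; more often it produces a functional equation or a multi-index recurrence in the catalytic variables, to which the kernel method applies. Two triples call for special treatment: one is handled by a so-called cell decomposition of its avoiders, and another by an explicit bijection onto a class counted earlier. In every instance $F_T$ turns out to be algebraic, and matching it against the algebraic generating functions of its block-mates --- clearing denominators and comparing minimal polynomials if need be --- is then a finite check.

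The principal obstacle is precisely this last stage repeated $36$ times: each triple is effectively its own small problem, and for several of them the catalytic-variable recurrence is intricate, so the kernel-method bookkeeping (locating the relevant root of the kernel, substituting it, and reducing the resulting algebraic \gf to a form comparable with those of the other symmetry classes in its block) is delicate. A secondary, organizational difficulty is choosing the spanning set of triples in each block so as to reuse earlier derivations and keep the number of genuinely new computations at $36$ rather than larger.
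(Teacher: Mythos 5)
Your plan coincides with the paper's own argument: the 242 distinct 16-term prefixes from Kuszmaul's software give the lower bound, and the upper bound is obtained by producing the generating function for every triple in a repeated-prefix block via INSENC (38 triples), prior enumerations, or the 36 new case-by-case derivations (15 sequences) using left-right-maxima/initial-letter decompositions, the kernel method, one cell decomposition, and one bijection, then checking the series agree. Since this matches the paper's structure and techniques essentially point for point, no further comment is needed.
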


\vspace*{5mm}

{\footnotesize\begin{longtable}[c]{|l|l|l|l|}
\caption{Nontrivial Wilf classes of three 4-letter patterns with numbering taken from Table \ref{long3s}.\label{long3}}\\ \hline
\multicolumn{4}{| c |}{Start of Table}\\ \hline
No. &$T$ & $\sum_{n\geq0}|S_n(T)|x^n$ & Reference\\ \hline
\endfirsthead  \hline
\multicolumn{4}{|c|}{Continuation of Table \ref{long3}}\\ \hline
No. &$T$ & $\sum_{n\geq0}|S_n(T)|x^n$ & Reference \\ \hline
\endhead \hline
\endfoot \hline
\multicolumn{4}{| c |}{End of Table}\\ \hline\hline
\endlastfoot
6&\{1432,2134,3412\}, \{1234,1432,3412\} &$\frac{1-6x+16x^2-22x^3+21x^4-8x^5+2x^6}{(1-x)^7}$&INSENC\\\hline
50&\{2143,3412,2341\}, \{2143,2341,4231\}&&    \\
&\{3412,1432,1243\}&  \raisebox{1.5ex}[0pt]{$\frac{1-6x+13x^2-11x^3+5x^4}{(1-x)^2(1-2x)(1-3x+x^2)}$}    & \raisebox{1.5ex}[0pt]{Thm. \ref{th50A1}, \ref{th50A2}, \ref{th50A3}} \\\hline
55&\{1342,3124,4213\}, \{1234,2143,4123\} & & INSENC\\
 &\{1324,2143,2341\} &  \raisebox{1.5ex}[0pt]{$\frac{1-6x+12x^2-8x^3+3x^4-x^5}{(1-x)(1-3x+x^2)^2}$} & Thm. \ref{th55A3}\\\hline
56&\{1342,2143,4123\}, \{1432,3412,4123\} &$\frac{1-5x+8x^2-5x^3+3x^4}{(1-x)^2(1-4x+3x^2-x^3)}$&INSENC\\\hline
78&\{1324,1342,4312\}, \{1234,2413,3412\} &$\frac{1-8x+26x^2-42x^3+36x^4-14x^5}{(1-x)^3(1-2x)^3}$&INSENC\\\hline
94&\{1432,2134,4132\}, \{1234,1432,4132\}&$\frac{1-8x+25x^2-37x^3+27x^4-9x^5}{(1-3x+x^2)(1-x)^2(1-2x)^2}$&INSENC\\\hline
108&\{1432,3124,4123\}, \{1324,1432,4123\} &$\frac{1-7x+17x^2-15x^3+4x^4-2x^5}{(1-2x)(1-3x+x^2)^2}$&INSENC\\\hline
112&\{1243,1432,2134\}, \{1432,3142,4123\} & ${\frac{1-4x+4x^2-3x^3+x^4}{(1-x+x^2)(1-4x+2x^2)}}$&INSENC\\\hline
126&\{2431,4213,1324\}, \{3142,4123,1234\}&$\frac{1-7x+18x^2-20x^3+11x^4-2x^5}{(1-x)^2(1-3x+x^2)^2}$&INSENC\\\hline
127& \{1342,2413,4312\}, \{1243,2341,2413\}&$\frac{1-8x+24x^2-33x^3+22x^4-7x^5}{(1-x)^3(1-3x)(1-3x+x^2)}$&INSENC\\\hline
129&\{1432,2413,3124\}, \{1432,2143,3124\}&$\frac{(1-x)^2(1-3x+x^2)}{1-6x+12x^2-11x^3+3x^4-x^5}$&INSENC\\\hline
157&\{1324,1342,4213\}, \{1432,3124,4132\} & $\frac{(1-x)(1-6x+11x^2-4x^3)}{(1-2x)(1-3x+x^2)^2}$&INSENC\\\hline
166& \{3412,3142,1243\}, \{3412,3142,1324\}& $\frac{1-9x+30x^2-44x^3+27x^4-7x^5}{(1-3x)(1-x)(1-3x+x^2)^2}$  & Thm. \ref{th166A1}, \ref{th166A2}\\\hline
170&\{3142,4231,4321\}, \{1234,1243,2413\} &$\frac{(1-3x+x^2)^2}{(1-x)(1-2x)(1-4x+2x^2)}$& INSENC\\\hline
171&\{3124,1342,4123\}, \{1342,1324,4123\}&$\frac{1-4x+5x^2-x^3+(1-4x+3x^2-x^3)\sqrt{1-4x}}{(1-x)(1-3x+x^2)(1-2x+\sqrt{1-4x})}$  & Thm. \ref{th171A1}, \ref{th171A2}\\\hline
173&\{1342,1423,4213\}, \{1423,2341,2431\} & $\frac{1-5x+8x^2-7x^3+2x^4}{1-6x+12x^2-13x^3+6x^4-x^5}$&INSENC\\\hline
174&\{1432,3412,3421\}, \{1342,3412,4312\} && INSENC\\
&\{2134,2341,2413\}, \{1342,3142,4312\} &$\frac{1-6x+10x^2-3x^3+x^4}{(1-3x+x^2)(1-4x+2x^2)}$& Thm. \ref{th174A3}, INSENC\\
&\{2143,2314,2341\}, \{2143,2314,2431\} & & Thm. \ref{th174A5}, \ref{th174A6}\\\hline
177&\{2143,2341,2413\}, \{2143,2341,3241\}& $\frac{1-4x+3x^2-x^3}{1-5x+6x^2-3x^3}$   & Thm. \ref{th177A1}, \ref{th177A2}\\\hline
191&\{1342,2134,2413\}, \{1324,1432,3124\} & & Thm. \ref{th191A1}, INSENC\\
&\{1423,2134,2413\}, \{3142,4132,4321\} &  \raisebox{1.5ex}[0pt]{$\frac{(1-x)(1-2x)(1-3x)}{1-7x+16x^2-14x^3+3x^4}$} &INSENC\\\hline
196&\{1342,3142,4213\}, \{1324,1432,2134\} &&INSENC\\
&\{2143,2431,3241\}, \{1342,3142,4123\} &   \raisebox{1.5ex}[0pt]{$\frac{1-5x+7x^2-4x^3}{1-6x+11x^2-9x^3+2x^4}$}& Thm. \ref{th196A3}, \ref{th196A4}\\\hline
201&\{3142,1324,1243\}, \{1342,1423,2314\}& $\frac{1-3x+x^2}{1-x}C(x)^3$   & Thm. \ref{th201A1}, \ref{th201A2}\\\hline
203&\{3142,1432,1324\}, \{3124,1423,1234\}& $\frac{1-x}{2-2x-(1-x-x^2)C(x)}$   & Thm. \ref{th203A1}, \ref{th203A2}\\\hline
215&\{1243,2134,2143\}, \{1234,1243,2143\} &&\\
&\{1423,2314,2413\}, \{1423,1432,4123\} & \raisebox{1.5ex}[0pt]{$\frac{1-4x+2x^2}{(1-x)(1-4x+x^2)}$} & \raisebox{1.5ex}[0pt]{INSENC} \\\hline
218&\{1342,2314,2413\}, \{3142,1324,1423\}&   & \\
&\{3124,1423,1243\}& \raisebox{1.5ex}[0pt]{$\frac{(1-2x)(1+\sqrt{1-4x})}{x^2+(2-4x+x^2)\sqrt{1-4x}}$}   & \raisebox{1.5ex}[0pt]{Thm. \ref{th218A1}, \ref{th218A2}, \ref{th218A3}} \\\hline
221&\{2413,3142,1324\}, \{2143,3142,1324\}& &\\
&\{2143,1324,1423\}, \{3142,4132,1243\}&    &\\
&\{3142,4123,1423\}, \{4132,1432,1243\}&  \raisebox{1.5ex}[0pt]{$1+ \frac{1 - 2 x}{2 (1 - x)} \left(\frac{1}{\sqrt{1 - 4 x}} -1\right)$}    & \raisebox{1.5ex}[0pt]{\cite{CM2}} \\
&\{4132,1342,1324\}&    &\\\hline
229&\{2413,3142, 2341\}, \{2143,1342,1423\}&   &\\
&\{2134,1342,1423\}& \raisebox{1.5ex}[0pt]{$\frac{1-2x+2x^2-\sqrt{1-8x+20x^2-24x^3+16x^4-4x^5}}{2x(1-x+x^2)}$}    & \raisebox{1.5ex}[0pt]{Thm. \ref{th229A1}, \ref{th229A2}, \ref{th229A3}} \\\hline
233&\{2143,1324,1243\}, \{2134,1324,1243\}&&\\
&\{2134,1243,1234\}, \{3142,4132,1432\}&&\\
&\{3142,4132,1342\}, \{3142,4132,1423\}&&\\
&\{3142,1342,1324\}, \{3124,1342,1324\}& \raisebox{1.5ex}[0pt]{$\frac{2(1-4x)}{2-9x+4x^2-x\sqrt{1-4x}}$}  & \raisebox{1.5ex}[0pt]{\cite{CMS}} \\
&\{3124,1324,1423\}, \{4132,1432,1324\}&&\\
&\{4132,4123,1423\}, \{1342,4123,1423\}&&\\\hline
234&\{2143,2413,2314\}, \{3142,1342,1243\}& $\frac{(1-x)^2-\sqrt{(1-x)^4-4x(1-2x)(1-x)}}{2x(1-x)}$& Thm. \ref{th234A1}, \ref{th234A2}\\\hline
235&\{1423,1432,2143\}, \{3142,1432,1423\}& $F_T(x)=1-x+xF_T(x)$  &\\
&\{1234,1243,2314\}& $\qquad+x(1-2x)F_T^2(x)+x^2F_T^3(x)$  & \raisebox{1.5ex}[0pt]{Thm. \ref{th235A1}, \ref{th235A2}, \ref{th235A3}}\\\hline
236 &\{1423,3124,4123\}, \{1342,1432,4132\} &&\\
&\{1324,1423,1432\}, \{1243,1324,1423\}&$\frac{1-5 x+(1+x)\sqrt{1-4x}}{1-5x+(1-x)\sqrt{1-4x}}$&\cite{CM}\\
&\{1234,1243,1423\} &&\\\hline
238&\{1423,2413,3142\}, \{2134,2143,2413\}&&   \\
&\{1342,1423,1234\}, \{1342,1423,1324\}&$\frac{3-2x-\sqrt{1-4x}-\sqrt{2-16x+4x^2+(2+4x)\sqrt{1-4x}}}{2(1-\sqrt{1-4x})}$& Thm. \ref{th238A1}, \ref{th238A2}, \ref{th238A3}, \ref{th238A4}, \ref{th238A5}  \\
&\{1342,1423,1243\}& &\\\hline
239&\{2413,3412,3142\}, \{4312,3412,4132\}&&    \\
&\{3412,3142,1342\}, \{3142,1432,1342\}&&   \\
&\{3142,1342,1423\}, \{3124,1324,1243\}&  \raisebox{1.5ex}[0pt]{$\frac{2}{1 + x + \sqrt{1 - 6x + 5x^2}}$ }  & \raisebox{1.5ex}[0pt]{\cite{MS}}    \\
&\{1432,1423,1243\}, \{1324,1423,1234\}&&   \\
&\{4123,1423,1243\}& &\\\hline
\end{longtable}}

\section{Preliminaries and Notation}
We say a permutation is \emph{standard} if its support set is an initial segment of the positive integers, and for a permutation $\p$ whose support is any set of positive integers, St($\p$) denotes the standard permutation obtained by replacing the smallest entry of $\p$ by 1, next smallest by 2, and so on. As usual, a standard permutation $\p$ \emph{avoids} a standard permutation $\tau$ if there is no subsequence $\rho$ of $\p$ for which St($\rho) =\tau$. In this context, $\tau$ is a pattern, and for a list $T$ of patterns, $S_n(T)$ denotes the set of permutations of $[n]=\{1,2,\dots,n\}$ that avoid all the patterns in $T$.

A permutation has an obvious representation as a matrix diagram,
\begin{center}
\begin{pspicture}(-1,-.5)(3,1.5)
\psset{unit =.5cm, linewidth=.5\pslinewidth}
\psline(0,0)(3,0)(3,1)(0,1)(0,2)(3,2)(3,3)(0,3)(0,0)
\psline(1,0)(1,3)(2,3)(2,0)(3,0)(3,3)
\rput(.5,2.5){\small $\bullet$}
\rput(1.5,0.5){\small $\bullet$}
\rput(2.5,1.5){\small $\bullet$}
\rput(1.5,-0.8){\small matrix diagram of the permutation 312}
\end{pspicture}
\end{center}
and it will often be convenient to use such diagrams where shaded areas always indicate regions that contain no entries (blank regions may generally contain entries but in a few cases, as noted and clear from the context, they don't).

The eight symmetries of a square, generated by rotation and reflection, partition patterns and sets of patterns into symmetry classes on each of which the counting sequence for avoiders is obviously constant. Thus if $\pi$ avoids $\tau$ then, for example, $\pi^{-1}$ avoids $\tau^{-1}$ since inversion corresponds to flipping the matrix diagram across a diagonal. It sometimes happens (and remarkably often) that different symmetry classes have the same counting sequence, and all symmetry classes with a given counting sequence form a \emph{Wilf class}. Thus Wilf classes correspond to counting sequences.

Throughout, $C(x)=\frac{1-\sqrt{1-4x}}{2x}$ denotes the \gf for the Catalan numbers $C_n:=\frac{1}{n+1}\binom{2n}{n}=\binom{2n}{n}-\binom{2n}{n-1}$. As is well known \cite{wikispecific}, $C(x)$ is the \gf for $(|S_n(\pi)|)_{n\ge 0}$ where $\pi$ is any one of the six 3-letter patterns. Occasionally, we need the \gf for avoiders of a 3-letter and a 4-letter pattern; see \cite{wikispecific} for a comprehensive list.  

A permutation $\pi$ expressed as $\pi=i_1\pi^{(1)}i_2\pi^{(2)} \cdots i_m\pi^{(m)}$ where $i_1<i_2<\cdots<i_m$ and $i_j>\max(\pi^{(j)})$ for $1 \leq j \leq m$ is said to have $m$ left-right maxima (at $i_1,i_2,\ldots,i_m$).  Given nonempty sets of numbers $S$ and $T$, we will write $S<T$ to mean $\max(S)<\min(T)$ (with the inequality vacuously holding if $S$ or $T$ is empty).  In this context, we will often denote singleton sets simply by the element in question. Also, for a number $k$, $S-k$ means the set $\{s-k:s\in S\}$. An ascent in $\pi$ is a pair of adjacent increasing entries $\pi_j<\pi_{j+1}$, thus 413625 has 3 ascents, 13, 36, and 25.

Our approach is ultimately recursive. In each case, we examine the structure of an avoider, 
usually by splitting the class of avoiders under consideration into subclasses according to a judicious choice of parameters which may involve, for example, left-right maxima, initial letters, ascents, and whether resulting subpermutations are empty or not. The choice is made so that each member of a subclass can be decomposed into independent parts. The \gf for the subclass (a summand of the full \gf) is then the product of the generating functions for the parts, and we speak of the ``contribution'' of the various parts to the \gf for that subclass.
For Case 238, we use a cell decomposition, described in that subsection. From the structure, we are able to find an equation for the \gf $F_T(x):= \sum_{n\ge 0}|S_n(T)|x^n$, where $T$ is the triple under consideration. This equation is often algebraic and, if linear or quadratic, as it is in all but one case, easy to solve explicitly (the exception is the cubic equation for the second triple in Case 235). It is also often a functional equation requiring the kernel method (see, e.g., \cite{HM} for an exposition). Exceptionally, for one of the triples in Case 50, we use a bijection. In every case, the \gf turns out to be algebraic.

Furthermore, in several cases, especially those where recurrences are made use of, we have in fact counted members of the avoidance class in question according to the distribution of one or more statistics, specific to the class, and have assumed particular values of the parameters to obtain the avoidance result.  In some of these cases, to aid in solving the recurrence, certain auxiliary arrays related to the statistic are introduced.  This leads to systems of linear functional equations to which we apply the kernel method, adapted for a system.  See, for example, the proofs below of the second symmetry class in Case 171 and of the first class in Case 235.  Also, in cases where the kernel method is used, it is usually possible (if desired) to solve the functional equation in its full generality yielding a polynomial generalization of the avoidance result. In other instances, one may extend the result by counting members of the class in question having a fixed number of left-right maxima.  We refer the reader to the discussion following the proof of the first class in Case 50 below.

We now proceed to find all the required generating functions in the 15 cases of repeated counting sequence.

\section{Proofs}
\subsection{Case 50}
The three representative triples $T$ are:

\{2143, 2341, 3412\} (Theorem \ref{th50A1})

\{2143, 2341, 4231\} (Theorem \ref{th50A2})

\{2143, 2341, 3421\} (Theorem \ref{th50A3})

In order to deal with this case, we define the following two generating functions for each triple $T$: $H_T(x)$ is the generating
function for $T$-avoiders with first letter $n-1$ and $J_T(x)$ is the generating function for $T$-avoiders with second letter $n$.

\begin{lemma}\label{lem50}
For each triple $T$ in Case 50, $H_T=J_T.$
\end{lemma}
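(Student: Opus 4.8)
\emph{Proof plan.} The idea is to peel one large entry off each side and reduce to a statement about shorter permutations. A $T$-avoider $\pi$ with first letter $n-1$ has exactly two left-right maxima, $n-1$ (at position $1$) and $n$, so it can be written $\pi=(n-1)\,\alpha\, n\,\beta$; a $T$-avoider $\sigma$ with second letter $n$ likewise has exactly two left-right maxima, at positions $1$ and $2$, so $\sigma=a\, n\,\gamma$. The crucial elementary point, valid for all three triples since each contains $2143$ and $2341$, is that the distinguished large entry cannot occur in any copy of $2143$ or $2341$. Indeed the leading $n-1$ of $\pi$, being first, must play the leftmost letter of a pattern, which in $2143$ and $2341$ is the second-smallest value and so needs two larger entries, while only $n$ exceeds $n-1$; and the entry $n$ in position $2$ of $\sigma$ can only be the first or second letter of a copy, whereas in $2143$ (resp.\ $2341$) the largest letter sits in the third (resp.\ fourth) position. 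Consequently, deleting the distinguished entry and standardizing gives bijections of the two classes onto $S_{n-1}$, namely $(n-1)\alpha n\beta\mapsto\mathrm{St}(\alpha n\beta)=:\rho$ and $a n\gamma\mapsto a\gamma=:\sigma'$, under which avoidance of $\{2143,2341\}$ is preserved in both directions; in particular this already proves $H=J$ for the pair $\{2143,2341\}$.

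It remains to match up avoidance of the third pattern, which in each case adds exactly one extra condition on the reduced permutation; I would read it off by asking when the distinguished large entry can be completed to a copy of the third pattern. For $T=\{2143,2341,3412\}$ the leading $n-1$ of $\pi$ can only be the ``$3$'' of a $3412$ (with $n$ its ``$4$''), which forces an ascent inside $\beta$; translated through the standardization, $\pi$ avoids $3412$ iff $\rho$ avoids $3412$ \emph{and} the part of $\rho$ after its maximum is decreasing. Symmetrically, the entry $n$ in position $2$ of $\sigma$ can only be the ``$4$'' of a $3412$ with $a$ its ``$3$'', forcing an ascent among the entries of $\gamma$ below $a$, so $\sigma$ avoids $3412$ iff $\sigma'$ avoids $3412$ \emph{and} the entries of $\sigma'$ below $\sigma'_1$ form a decreasing subsequence. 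The triple $\{2143,2341,3421\}$ is identical with ``decreasing'' replaced by ``increasing'', and $\{2143,2341,4231\}$ gives instead the conditions ``$\rho$ with its maximum deleted avoids $231$'' and ``$\sigma'$ with its first entry deleted avoids $231$''. Thus in every case $H_T=J_T$ reduces to the equinumeracy, inside $S_{n-1}(T)$, of the permutations meeting the $\rho$-condition and those meeting the $\sigma'$-condition.

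I would finish this last step either by a direct bijection between the two subfamilies of $S_{n-1}(T)$ or, in keeping with the rest of the paper, by decomposing each subfamily by left-right maxima or initial letters, extracting a (functional or algebraic) equation for its \gf, and observing that the two equations coincide. I expect this to be the main obstacle: the two conditions are genuinely different — they do not agree permutation by permutation, as one already sees at $n=5$ — so the equinumeracy must be established honestly and case by case, the real work lying in the interaction between the internal $231$- or monotonicity-condition and the remaining patterns $2143$ and $2341$.
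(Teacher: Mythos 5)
Your reductions are sound as far as they go: deleting the leading $n-1$ (resp.\ the $n$ in position $2$) does preserve $\{2143,2341\}$-avoidance in both directions, and your translations of the third pattern into a condition on the reduced permutation are correct in all three cases. But the proof is not complete. After the reduction, $H_T=J_T$ has become the assertion that, inside $S_{n-1}(T)$, the permutations whose segment after the maximum is decreasing (resp.\ increasing, resp.\ whose max-deleted part avoids $231$) are equinumerous with those whose entries below the first letter are decreasing (resp.\ increasing, resp.\ whose first-letter-deleted part avoids $231$). As you yourself note, these two conditions do not match permutation by permutation, so this equinumeracy is the entire content of the lemma — and it is left as a plan (``a direct bijection or coinciding functional equations'') rather than carried out. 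As written, you have replaced the lemma by an unproved claim of essentially the same difficulty, and the generating-function route you sketch would have to be fought through separately for each triple, entangled with the patterns $2143$ and $2341$.

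The missing idea is a symmetry, and it is exactly the paper's one-line proof: every pattern occurring in Case 50 ($2143$, $2341$, $3412$, $4231$, $3421$) has a matrix diagram invariant under reflection in the anti-diagonal line $y=-x$, so this reflection maps $S_n(T)$ onto itself for each of the three triples; and since it sends the cell $(1,n-1)$ to $(2,n)$, it interchanges avoiders whose first letter is $n-1$ with avoiders whose second letter is $n$, giving $H_T=J_T$ with no deletion, standardization, or case analysis at all. The same reflection, applied inside $S_{n-1}(T)$, carries ``segment after the maximum is decreasing'' to ``entries below the first letter are decreasing'' (and ``max-deleted part avoids $231$'' to ``first-letter-deleted part avoids $231$''), so it would also finish your reduced problem in one stroke; without noticing it, the honest case-by-case work you anticipate really would be the bulk of the effort. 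One small slip, harmless here: in $2341$ the largest letter sits in the third position, not the fourth (either way it lies beyond position $2$, which is all your argument needs).
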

\begin{proof}
For each pattern in case 50, its matrix diagram is invariant under the involution ``flip in the diagonal line $y=-x$''. Consequently, the set of $T$-avoiders is invariant under this flip. But the flip interchanges the permutations whose first letter is $n-1$ and the permutations whose second letter is $n$.
\end{proof}

\begin{theorem}\label{th50A1}
Let $T=\{2143,2341,3412\}$. Then
$$F_T(x)=\frac{1-6x+13x^2-11x^3+5x^4}{(1-x)^2(1-2x)(1-3x+x^2)}.$$
\end{theorem}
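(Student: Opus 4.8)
The plan is to decompose $T$-avoiders $\pi \in S_n(T)$ with $T = \{2143, 2341, 3412\}$ according to their left-right maxima, and to use Lemma~\ref{lem50} to get mileage out of the symmetry of the triple. First I would separate off the trivial cases: the empty permutation and permutations with exactly one left-right maximum (i.e., $\pi = n\pi'$ with $\pi' \in S_{n-1}(T)$), which contribute $1$ and $xF_T(x)$ respectively. The substantive work is with permutations having $m \geq 2$ left-right maxima, written $\pi = i_1\pi^{(1)}i_2\pi^{(2)}\cdots i_m\pi^{(m)}$ with $i_1 < \cdots < i_m = n$.

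The key structural observations I would chase down are the constraints imposed by the three patterns on the blocks $\pi^{(j)}$ and on where small versus large entries can sit. Avoiding $2341$ forces each $\pi^{(j)}$ for $j < m$ to avoid having a $3$-term increasing-up-to-a-later-large-letter configuration; concretely, entries of $\pi^{(j)}$ that lie below $i_j$ but are "active" with respect to a later left-right maximum are tightly restricted. Avoiding $3412$ controls the relative order of a block before a left-right maximum and a block after it. Avoiding $2143$ (which is the one symmetric under flipping in $y=-x$) restricts descents spread across the block structure. The upshot that I expect to emerge is that the blocks $\pi^{(1)}, \ldots, \pi^{(m-1)}$ can contain only very limited content (essentially they are forced to be decreasing runs of a specific form, or empty), while $\pi^{(m)}$ is an essentially free $T$-avoider on the remaining small letters — except that the interaction of $\pi^{(m)}$ with the letters $i_1, \ldots, i_{m-1}$ imposes an extra pattern-avoidance or positional condition. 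This is where I would invoke the auxiliary generating functions $H_T$ (first letter $n-1$) and $J_T$ (second letter $n$): the "first letter is $n-1$" structure is exactly what shows up when $m \geq 2$ and $i_1 = n-1$, and Lemma~\ref{lem50} lets me re-express one count in terms of the other, closing the system.

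Concretely, I would set up a small system of equations: one for $F_T$ in terms of itself and $H_T$ (from the left-right maxima decomposition, peeling off $i_1$ and classifying by whether $i_1 = n-1$ or $i_1 < n-1$, and by emptiness of the trailing blocks), and one or two more for $H_T$ (equivalently $J_T$) obtained by a parallel decomposition of avoiders beginning with $n-1$, again splitting on left-right maxima and on whether certain sub-blocks are empty. Using $H_T = J_T$ to eliminate variables, I expect to reduce to two linear equations in $F_T$ and $H_T$, solve for $F_T$, and obtain the claimed rational function. As a sanity check I would verify that the Taylor expansion matches the initial terms in Table~\ref{long3s} and note that the denominator $(1-x)^2(1-2x)(1-3x+x^2)$ has the expected factors — $(1-x)$ powers from runs/near-layered pieces, $(1-2x)$ from a binary-type choice in the block structure, and $(1-3x+x^2)$ (whose reciprocal generates odd-indexed Fibonacci numbers, often the signature of a single nearly-free $C(x)$-like factor truncated by a pattern) from the main recursive block.

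The main obstacle I anticipate is the bookkeeping in the left-right-maxima decomposition: correctly identifying, for each interior block $\pi^{(j)}$, precisely which entries are forbidden by which of the three patterns once one fixes the relative positions of the $i_j$'s, and making sure the "independent parts" really are independent (no hidden cross-block pattern like a $2143$ straddling three blocks is being double-counted or missed). Getting the base cases of the $H_T$ recursion right — especially the smallest permutations where "first letter $n-1$" and "second letter $n$" coincide or degenerate — is the delicate point, and the symmetry lemma is the tool that makes it tractable rather than a separate hard computation.
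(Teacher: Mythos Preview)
Your plan is essentially the paper's approach: decompose by left-right maxima, feed the auxiliary counts $H_T$ and $J_T$ into the recursion, invoke Lemma~\ref{lem50}, and solve a linear system. The one place your sketch drifts is the case split: the paper does \emph{not} branch on ``$i_1=n-1$ vs.\ $i_1<n-1$'' but rather on (i) $\pi^{(1)}\neq\emptyset$, (ii) $\pi^{(1)}=\emptyset$ with $\pi^{(2)}$ containing a letter below $i_1$, (iii) $\pi^{(1)}=\emptyset$ with $\pi^{(2)}>i_1$; these three cases yield contributions $x^{m-2}\bigl(H_T-\tfrac{x^2}{1-x}\bigr)$, $x^{m-2}\bigl(J_T-x^2F_T\bigr)$, and $xG_{m-1}$ respectively, and a separate short case analysis on permutations $in\pi'$ gives the closed form $J_T=x^2F_T+\tfrac{x^3(1-x+x^2)}{(1-x)^2(1-2x)}$ that closes the system.
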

\begin{proof}
Let $G_m(x)$ be the generating function for $T$-avoiders with $m$
left-right maxima. Clearly, $G_0(x)=1$ and $G_1(x)=xF_T(x)$.
For $G_m(x)$ with $m\geq2$, we first need
an equation for $J_T(x)$. Consider a permutation $\pi=in\pi'\in S_n(T)$ counted by $J_T$.
Clearly, the contribution for the case $i=n-1$ is given by
$\frac{x^2}{1-x}$.
If $i=1\ne n-1$, the contribution is $x^2(F_T(x)-1)$. Otherwise, $n \ge 4$
and $1<i<n-1$ and $\pi$ can be written as $in\beta'(n-1)\beta''$ with at least one of
$\beta', \beta''$
nonempty. Consider three cases: (1) $\beta'$ is empty, (2) $\beta''$ is empty,
(3) neither of $\beta', \beta''$ is empty. In each of cases 1 and 2,
the map ``delete $n-1$ and standardize'' is a bijection to the
one-size-smaller permutations counted by $J_T$ that do not start with a
1. Hence, in each of these cases, the contribution is $x\big(J_T(x)-x^2
F_T(x)\big)$. In case 3, $\pi \backslash\{n-1\}$ must have the form $n-2\ n\
n-3\ n-4\ \dots 2\ 1$ with $n-4$ positions available for $n-1$, namely, immediately before $1,2,\dots,n-4$. Hence
the contribution in this case is $x^5/(1-x)^2$.

Adding all the contributions, and solving for $J_T(x)$, yields
\begin{equation}\label{Jeqn50A1}
J_T(x)=x^2F_T(x)+\frac{x^3(1-x+x^2)}{(1-x)^2(1-2x)}\, .
\end{equation}

Now let $m\geq2$ and let us write an equation for $G_m(x)$.
Let $\pi=i_1\pi^{(1)}i_2\pi^{(2)}\cdots i_m\pi^{(m)}\in S_n(T)$ with exactly $m$ left-right maxima. By considering the cases where $\pi^{(1)}$ is not empty or where $\pi^{(1)}$ is empty and $\pi^{(2)}$ either has a letter smaller than $i_1$ or it doesn't (see the next figure), we obtain
\begin{center}
\begin{pspicture}(-0,-1)(12,3.5)
\psset{unit =.8cm, linewidth=.5\pslinewidth}
\psline(0,0)(4,0)(4,1)(0,1)(0,0)\psline(1,0)(4,0)(4,2)(1,2)(1,0)
\psline(1,0)(1,1)\psline(2,0)(2,1)\psline(3,0)(3,1)
\psline[fillstyle=hlines,hatchcolor=lightgray,hatchsep=0.8pt](1,1)(4,1)(4,2)(1,2)(1,1)
\psline[fillstyle=hlines,hatchcolor=lightgray,hatchsep=0.8pt](2,2)(4,2)(4,3)(2,3)(2,2)
\psline[fillstyle=hlines,hatchcolor=lightgray,hatchsep=0.8pt](3,3)(4,3)(4,4)(3,4)(3,3)
\psline[fillstyle=hlines,hatchcolor=lightgray,hatchsep=0.8pt](2,0)(4,0)(4,1)(2,1)(2,0)
\pscircle*(0,1){0.08}\pscircle*(1,2){0.08}\pscircle*(2,3){0.08}\pscircle*(3,4){0.08}
\put(.2,.3){\small$\neq\emptyset$}
\put(0,-.8){\small$x^{m-2}\left(H_T(x)-\frac{x^2}{1-x}\right)$}
\put(5,0){\psset{unit =.8cm, linewidth=.5\pslinewidth}
\psline(0,0)(4,0)(4,1)(0,1)(0,0)\psline(1,0)(4,0)(4,2)(1,2)(1,0)\psline(2,0)(4,0)(4,3)(2,3)(2,0)
\psline(3,0)(4,0)(4,4)(3,4)(3,0)\pscircle*(0,1){0.08}\pscircle*(1,2){0.08}
\pscircle*(2,3){0.08}\pscircle*(3,4){0.08}
\psline[fillstyle=hlines,hatchcolor=lightgray,hatchsep=0.8pt](2,1)(4,1)(4,2)(2,2)(2,1)
\psline[fillstyle=hlines,hatchcolor=lightgray,hatchsep=0.8pt](2,2)(4,2)(4,3)(2,3)(2,2)
\psline[fillstyle=hlines,hatchcolor=lightgray,hatchsep=0.8pt](3,3)(4,3)(4,4)(3,4)(3,3)
\psline[fillstyle=hlines,hatchcolor=lightgray,hatchsep=0.8pt](2,0)(4,0)(4,1)(2,1)(2,0)
\psline[fillstyle=hlines,hatchcolor=lightgray,hatchsep=0.8pt](0,0)(1,0)(1,1)(0,1)(0,0)
\put(1.2,.3){\small$\neq\emptyset$}
\put(0,-.8){\small$x^{m-2}\big(J_T(x)-x^2F(x)\big)$}}
\put(10,0){\psset{unit =.8cm, linewidth=.5\pslinewidth}
\psline(0,0)(4,0)(4,1)(0,1)(0,0)\psline(1,0)(4,0)(4,2)(1,2)(1,0)\psline(2,0)(4,0)(4,3)(2,3)(2,0)
\psline(3,0)(4,0)(4,4)(3,4)(3,0)\pscircle*(0,1){0.08}\pscircle*(1,2){0.08}
\pscircle*(2,3){0.08}\pscircle*(3,4){0.08}
\psline[fillstyle=hlines,hatchcolor=lightgray,hatchsep=0.8pt](0,0)(4,0)(4,1)(0,1)(0,0)
\psline[fillstyle=hlines,hatchcolor=lightgray,hatchsep=0.8pt](3,0)(4,0)(4,2)(3,2)(3,0)
\put(.8,-.8){\small$xG_{m-1}(x)$}}
\end{pspicture}
\end{center}
\begin{equation}\label{GforThm2}
G_m(x)=x^{m-2}\left(H_T(x)-\frac{x^2}{1-x}\right)+x^{m-2}\big(J_T(x)-x^2F_T(x)\big)+xG_{m-1}(x)
\end{equation}
for $m\ge 2$.
By summing  (\ref{GforThm2}) over $m\geq2$ and using the expressions for $G_0(x)$ and $G_1(x)$, we obtain
\begin{equation}\label{Feqn50A1}
F_T(x)=1+\frac{1}{(1-x)^2}\left(H_T(x)-x^2/(1-x)+J_T(x)+x(1-2x)F_T(x)\right)\, .
\end{equation}
Eliminating $J_T=H_T$ from (\ref{Jeqn50A1}) and (\ref{Feqn50A1}) gives the desired expression for $F_T$.
\end{proof}

It is possible to generalize the preceding result as follows.  Define the generating function $G(x,q)=G_T(x,q)=\sum_{m\geq0}G_m(x)q^m$, where $T=\{2143,2341,3412\}$.  Multiplying both sides of \eqref{GforThm2} by $q^m$, and summing over $m\geq2$, implies
$$G(x,q)-1-xqF_T(x)=\frac{q^2}{1-xq}\left(2J_T(x)-x^2F_T(x)-\frac{x^2}{1-x}\right)+xq(G(x,q)-1).$$
Solving for $G(x,q)$ then yields
\begin{align*}
G(x,q)&=1+\frac{q}{(1-xq)^2}\left(xF_T(x)+\frac{2x^3q(1-x+x^2)}{(1-x)^2(1-2x)}-\frac{x^2q}{1-x}\right)\\
&=1+\frac{xq(1-6x+13x^2-11x^3+5x^4-xq(1-8x+20x^2-19x^3+10x^4-2x^5))}{(1-x)^2(1-xq)^2(1-2x)(1-3x+x^2)}.
\end{align*}
Taking $q=1$ in the last formula, and simplifying, recovers Theorem \ref{th50A1}:
$$G_T(x,1)=F_T(x)=\frac{1-6x+13x^2-11x^3+5x^4}{(1-x)^2(1-2x)(1-3x+x^2)}.$$
Extracting the coefficient of $q^m$ from $G(x,q)$ implies the generating function $G_m(x)$ for the number of $T$-avoiders having exactly $m$ left-right maxima is given by
$$G_m(x)=\frac{x^m(1-8x+20x^2-19x^3+10x^4-2x^5)+mx^{m+1}(2-7x+8x^2-5x^3+2x^4)}{(1-x)^2(1-2x)(1-3x+x^2)}, \quad m \geq1.$$
Note that taking $m=1$ in the last equation gives back the obvious formula $G_1(x)=xF_T(x)$.

\emph{Remark:} A comparable formula for $G_T(x,q)$ may be obtained for other $T$ in this paper where left-right maxima are made use of in counting the avoiders in question.

We now turn our attention to the case when $T=\{2143,2341,4231\}$.

\begin{theorem}\label{th50A2}
Let $T=\{2143,2341,4231\}$. Then
$$F_T(x)=\frac{1-6x+13x^2-11x^3+5x^4}{(1-x)^2(1-2x)(1-3x+x^2)}.$$
\end{theorem}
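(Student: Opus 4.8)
The plan is to re-run the proof of Theorem~\ref{th50A1} almost verbatim. First I would check that the triple $T=\{2143,2341,4231\}$ also satisfies the hypothesis of Lemma~\ref{lem50}: each of the three matrix diagrams is invariant under the flip in the line $y=-x$ (for $2143$ and $4231$ this is clear since they are involutions with anti-diagonally symmetric point sets, and the same direct check works for $2341$). Hence $H_T=J_T$, where $H_T$ counts $T$-avoiders with first letter $n-1$ and $J_T$ counts $T$-avoiders with second letter $n$. As before, let $G_m(x)$ be the generating function for $T$-avoiders with exactly $m$ left-right maxima, so $G_0(x)=1$ and $G_1(x)=xF_T(x)$.

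Next I would derive an equation for $J_T(x)$ by the same case split. Take $\pi=in\pi'\in S_n(T)$ with $n$ in the second position. The case $i=n-1$ contributes $x^2/(1-x)$, and the case $i=1\ne n-1$ contributes $x^2(F_T(x)-1)$. For $1<i<n-1$ write $\pi=in\beta'(n-1)\beta''$ and split according to which of $\beta',\beta''$ is empty. In the two cases where exactly one of them is empty, ``delete $n-1$ and standardize'' should again be a bijection onto the one-smaller $T$-avoiders counted by $J_T$ that do not begin with $1$, giving contribution $x\bigl(J_T(x)-x^2F_T(x)\bigr)$ each time; in the case where both are nonempty, the presence of $2143$, $2341$, and $4231$ should force $\pi\setminus\{n-1\}$ into a rigid configuration with a bounded (linear in $n$) number of admissible slots for $n-1$, contributing a rational term. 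Collecting contributions and solving should reproduce \eqref{Jeqn50A1}, namely $J_T(x)=x^2F_T(x)+\dfrac{x^3(1-x+x^2)}{(1-x)^2(1-2x)}$. Then, for $m\ge2$, I would set up the recursion for $G_m(x)$ using the same trichotomy depicted before \eqref{GforThm2} ($\pi^{(1)}$ nonempty; $\pi^{(1)}$ empty and $\pi^{(2)}$ with or without an entry below $i_1$), check that the forbidden (shaded) regions are unchanged for this $T$, so that \eqref{GforThm2} holds verbatim, sum over $m\ge2$, and eliminate $J_T=H_T$ to solve for $F_T(x)$, which should simplify to the stated rational function.

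The main obstacle is the subcase $1<i<n-1$ in the computation of $J_T$: with $4231$ replacing $3412$, the forced shape of the entries lying below the value $n-1$, and hence the exact count of insertion positions for $n-1$, must be rederived from scratch, and one must verify that none of the subpermutations $\beta',\beta''$ can create a new occurrence of $2143$ or $2341$. A secondary point to confirm is that the structural pictures underlying \eqref{GforThm2} are genuinely the same for this triple. Once $J_T$ is shown to coincide with \eqref{Jeqn50A1} and the $G_m$-recursion is seen to be identical, the equality of $F_T$ with the expression of Theorem~\ref{th50A1} is forced. As in the remark following Theorem~\ref{th50A1}, one could optionally refine the argument by tracking left-right maxima through $G_T(x,q)=\sum_{m\ge0}G_m(x)q^m$.
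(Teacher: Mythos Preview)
Your plan has a structural gap: the pattern $4231$ begins with its maximum, and this breaks every place where Theorem~\ref{th50A1} implicitly uses ``delete $n$ (or $(n-1)n$) and what remains is a $T$-avoider.'' For $T=\{2143,2341,4231\}$, a permutation $n\pi'$ avoids $T$ if and only if $\pi'$ avoids $\{231,2143\}$, not $T$; hence $G_1(x)=xK$ with $K=\dfrac{1-2x}{1-3x+x^2}$, not $xF_T(x)$. The same phenomenon changes the subtracted terms in the $G_m$ recursion: the permutations with $\pi^{(1)}=\emptyset$ and no small letter in $\pi^{(2)}$ reduce (after stripping $i_1$) to avoiders starting with their maximum, which are counted by $K$, not by $F_T$; and the $(n-1)n\cdots$ permutations excluded from the $H_T$-term are likewise counted by $x^2K$, not $x^2/(1-x)$. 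So \eqref{GforThm2} does \emph{not} hold verbatim; the correct recursion is
\[
G_m(x)=x^{m-2}\bigl(H_T(x)-x^2K\bigr)+x^{m-2}\bigl(J_T(x)-x^2K\bigr)+xG_{m-1}(x).
\]

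Consequently $J_T$ cannot satisfy \eqref{Jeqn50A1} either (you can check the two candidate rational functions disagree already at the $x^4$ coefficient). The paper does not try to express $J_T$ in terms of $F_T$; instead it computes $H_T$ directly by casing on the position of $n-2$ in $(n-1)\pi'n\pi''$, obtaining the closed form $H_T(x)=\dfrac{x^2(1-3x+3x^2+x^3)}{(1-2x)(1-3x+x^2)}$. With $H_T=J_T$, $G_1=xK$, and the corrected recursion, summing over $m\ge2$ gives $F_T$ as an explicit rational expression in $x$ (no self-reference through $F_T$ remains), which simplifies to the stated formula. If you want to salvage your outline, the minimal fix is to replace every ``$F_T$-avoider after stripping the top letter(s)'' by a $\{231,2143\}$-avoider and recompute $J_T$ from scratch.
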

\begin{proof}
The proof follows along the lines of Theorem \ref{th50A1}.
Let $G_m(x)$ be the generating function for $T$-avoiders with $m$ left-right maxima.
Clearly, $G_0(x)=1$ and $G_1(x)=xK$, where $$K=\sum_{n\geq0}|S_n(231,2143)|x^n=\frac{1-2x}{1-3x+x^2}$$ (see \cite[Seq. A001519]{Sl}).

To write an equation for $H_T(x)$, we consider
$\pi=(n-1)\pi'n\pi''\in S_n(T)$. If $\pi'\pi''$ is empty, then the contribution is $x^2$.
Otherwise, we consider the following two cases:
\begin{itemize}
\item the letter $n-2$ belongs to $\pi'$, which implies that $\pi=(n-1)\beta'(n-2)\beta''n
\pi''$.
If $\beta''\pi''$ is empty, we get a contribution of $x^3K$
by deleting the letters $n-1,\,n-2,\,n$ from $\pi$.
If $\beta''\pi''$ is nonempty, then $\beta'<\beta''\pi''$ and we get a contribution of
$\frac{x}{1-x}\big(H_T(x)-x^2\big)$.

\item the letter $n-2$ belongs to $\pi''$, which implies $\pi=(n-1)\pi'n\beta'(n-2)\beta''$. If $\beta''$ is not  empty, then $\pi'n\beta'=12\cdots jn(j+1)(j+2)\cdots i'$ where $\beta''$ is a permutation of $i'+1,\ldots,n-3$, so we have a contribution of $\frac{x^3}{(1-x)^2}(K-1)$. If $\beta''$ is empty, then $\pi'$ is an increasing subsequence, say $\pi'=j_1j_2\cdots j_d$ with $j_1<j_2<\cdots<j_d$. If $d=0$, then the contribution is $x^3K$, otherwise $\pi$ can be written as
    $\pi=(n-1)j_1j_2\cdots j_dn\beta'(n-2)$ with $d\geq1$. Since $\pi$ avoids $2341$ and
    $4231$, we have
    $\pi=(n-1)12\cdots(d-1)j_dn\beta^{(1)}\beta^{(2)}(n-2)$, where $j_d\beta^{(1)}
    \beta^{(2)}$ is a permutation of $d,d+1,\ldots,n-3$ and $\beta^{(1)}<j_d<\beta^{(2)}$.
    By considering whether $\beta^{(1)}$ is empty or not, we get
    $x^{d+3}K+\frac{x^{d+3}}{1-x}(K-1)$. By summing over $d\geq1$, it follows that the contribution in this case is given by $\frac{x^4}{1-x}K+\frac{x^4}{(1-x)^2}(K-1)$.
\end{itemize}
Thus, $H_T(x)=x^2+x^3K+\frac{x}{1-x}(H_T(x)-x^2)+\frac{x^3}{(1-x)^2}(K-1)+x^3K+\frac{x^4}{1-x}K+\frac{x^4}{(1-x)^2}(K-1)$, which implies
$$H_T(x)=\frac{(1-3x+3x^2+x^3)x^2}{(1-2x)(1-3x+x^2)}.$$

Now, we are ready to write an equation for $G_m(x)$, where $m\geq2$. Using the same
decomposition as in the proof of Theorem \ref{th50A1}, we obtain
\begin{eqnarray*}
G_m(x) & = & x^{m-2}(H_T(x)-x^2K)+x^{m-2}(J_T(x)-x^2K)+xG_{m-1}(x) \\
& = & 2x^{m-2}H_T(x) - 2 x^m K +xG_{m-1}(x).
\end{eqnarray*}
Summing over $m\geq2$, we find $\sum_{m\ge2}G_m(x)=(2H_T(x)-x^2(1+x)K)/(1-x)^2$.  Using the
expressions for $G_0(x)$ and $G_1(x)$, it is seen that  $F_T(x)=\sum_{m\ge 0}G_m(x)$
simplifies to the desired expression.
\end{proof}

\begin{theorem}\label{th50A3}
There is a bijection between the set of $\{2143,2341,3412\}$-avoiders and the set of $\{2143,2341,3421\}$-avoiders.
\end{theorem}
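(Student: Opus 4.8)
The plan is to construct an explicit, invertible map $\Phi$ from $S_n(\{2143,2341,3412\})$ onto $S_n(\{2143,2341,3421\})$. Both classes sit inside $\mathcal A_n:=S_n(2143,2341)$, and the patterns $3412$ and $3421$ differ only in the relative order of their two smallest letters: each is an ascent (``$34$'') two of whose later entries lie below the bottom of that ascent, those two entries being increasing for $3412$ and decreasing for $3421$. So the idea is to single out, in a $\{2143,2341,3412\}$-avoider $\pi$, the entries that could ever play the role of that dominated pair, show that $3412$-avoidance forces them into decreasing order, and let $\Phi(\pi)$ be the permutation obtained by writing those same entries in increasing order in the same set of positions and leaving every other entry of $\pi$ untouched.

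To set this up, call a position $p$ \emph{low} in $\pi$ if there are indices $q_1<q_2<p$ with $\pi_{q_1}<\pi_{q_2}$ and $\pi_p<\pi_{q_1}$, and let $P(\pi)$ be the set of low positions. The key structural lemma to be proved is: if $\pi$ avoids $2143$, $2341$ and $3412$, then the restriction of $\pi$ to $P(\pi)$ is strictly decreasing. All three avoidances enter here --- $3412$-avoidance forbids two low entries in increasing order whenever their two witnessing ``$34$''-ascents can be amalgamated into one, while $2143$- and $2341$-avoidance rule out precisely the nested configurations in which they cannot, so that in the end no two low entries ever occur in increasing order. Dually, for $\sigma$ avoiding $2143$, $2341$ and $3421$, the restriction of $\sigma$ to $P(\sigma)$ is strictly increasing. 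Then $\Phi$ reverses the subsequence on $P(\pi)$, and its candidate inverse reverses the subsequence on $P(\sigma)$.

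It then remains to verify three things: (i) $P(\pi)$ depends only on the positions together with the values and relative order of the \emph{non-low} entries, whence $P(\Phi(\pi))=P(\pi)$, and so $\Phi$ and its candidate inverse are genuinely mutually inverse; (ii) $\Phi(\pi)$ avoids $2341$ and $2143$, which one proves by showing that any occurrence of $2341$ or $2143$ in $\Phi(\pi)$ produces one in $\pi$ after replacing its entries lying in $P(\pi)$ by suitable entries of $\pi$ at the same (or later) low positions, using that $\pi$ is monotone on $P(\pi)$ and that $P(\pi)$ is ``closed downward and to the right'' in the appropriate sense; and (iii) $\Phi(\pi)$ avoids $3421$, which is immediate once one knows that the dominated decreasing pair of any would-be occurrence must lie inside $P(\Phi(\pi))=P(\pi)$, where $\Phi(\pi)$ is increasing. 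The main obstacle is step (ii) --- controlling $2143$- and $2341$-occurrences across the reversal --- for which the sharp description of $P(\pi)$ in step (i) and the structural lemma are the crucial ingredients. As an alternative to this ``reverse a block'' description, one can build $\Phi$ recursively along the left-right maxima decomposition used in the proof of Theorem~\ref{th50A1}: the analysis of $\{2143,2341,3412\}$-avoiders and that of $\{2143,2341,3421\}$-avoiders run identically except at the forced monotone tail (the decreasing block $(n-3)(n-4)\cdots 21$ of ``Case 3'' there, which for $3421$ becomes the increasing block $1\,2\cdots(n-3)$), and reversing that block at each stage produces the same bijection. In either form the construction is easily checked by hand for small $n$ against the counting sequence of Theorem~\ref{th50A1}.
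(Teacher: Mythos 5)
Your central structural lemma is true and attractive: in a $\{2143,2341,3412\}$-avoider the entries that serve as the ``1'' of some $231$ (your low entries) do form a decreasing subsequence (in fact only $2143$- and $3412$-avoidance are needed), and dually they form an increasing subsequence in a $\{2143,2341,3421\}$-avoider; granting this, $3421$-avoidance of the image is indeed immediate. But what you have submitted is a plan, and the places where the work actually lies are left open or are justified by claims that are not correct as stated. First, the assertion in (i) that $P(\pi)$ ``depends only on the positions together with the values and relative order of the non-low entries'' is false as written: whether a position is low depends on its own value. What you actually need -- and do not prove -- are two facts: that a witnessing ascent can always be chosen among non-low entries, and that every non-low entry lying to the right of some low entry exceeds \emph{all} low values; from these one can indeed deduce $P(\Phi(\pi))=P(\pi)$, but that deduction is absent. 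Second, step (ii), which you yourself identify as the main obstacle, is not carried out, and the transport mechanism you propose (replace the entries of a would-be occurrence that lie in $P(\pi)$ by entries of $\pi$ ``at the same or later low positions'') breaks down in several of the configurations that actually arise: for instance, when a $2143$ in $\Phi(\pi)$ has its ``1'' and ``3'' at low positions whose \emph{new} values straddle the non-low ``2'' while the \emph{old} values did not, the forbidden pattern in $\pi$ has to be manufactured from an \emph{earlier} low position (the original position of the displaced value) or from a non-low witnessing ascent, and it may be a $2341$ or a $3412$ rather than a $2143$. A genuine multi-case analysis is required here, and none of it appears in the proposal.

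There is also a bijectivity gap: your items (i)--(iii) at best show that $\Phi$ maps the $3412$-class injectively into the $3421$-class. To conclude you must also show that the candidate inverse (sorting the low entries decreasingly) carries every $\{2143,2341,3421\}$-avoider back into the $3412$-class, i.e.\ creates no $2143$, $2341$ or $3412$; this is \emph{not} a mirror image of (ii), because the monotone direction on the low positions is reversed, so the admissible low/non-low distributions inside a potential occurrence are different and the case analysis must be redone. Finally, the alternative recursive construction you mention -- that the analysis of Theorem~\ref{th50A1} ``runs identically except at the forced monotone tail'' -- is unsubstantiated; the paper's actual bijection in Theorem~\ref{th50A3} is a recursion on the first letter and on the position of $n$, preserving left-right maxima, whose delicate point is the case where $n$ is in second position, split according to condition~(\ref{condbij}), and nothing in your sketch reproduces that analysis. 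In short, the sorting-the-low-entries idea is genuinely different from the paper's construction and, after supplying the missing facts and the full avoidance case analyses in both directions, could probably be turned into a complete proof; as written, however, the decisive verifications are missing and the guiding claims for them are partly incorrect.
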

\begin{proof}
Let $A_n$ and $B_n$ denote the subsets of $S_n$ whose members
avoid the patterns in the sets $\{2143,2341,3412\}$ and
$\{2143,2341,3421\}$, respectively.  We will define a bijection
$f$ from $A_n$ to $B_n$ as follows.  If $n\leq 3$, we may
clearly take $f$ to be the identity, so assume $n\geq 4$.  Given
$\pi= \pi_1 \pi_2 \cdots \pi_n \in A_n$, let $a=\pi_1$ denote the first letter of $\pi$.
If $a=1$ so that $\pi$ has the form $1\pi'$, define $f$ recursively by
$f(\pi)=1\oplus f(\pi'-1)$. (Here, $\oplus$ is the direct sum, thus $132\oplus 123=132456$.) 
Henceforth, assume $a>1$. We consider cases according to descending values of the position $j$ of $n$ in $\pi$.

If $j=n$ so that $\pi$ has the form $\pi'n$, define $f$, again recursively, by
$f(\pi)=f(\pi')n$.

If $3 \le j \le n-1$, then $\pi$ has the form $a \beta' n \beta''$ with  $\beta'$
and $\beta''$ nonempty. We claim  (i) $a=n-1$, (ii) $\beta'$ avoids 231,
and (iii) $\beta''$ is decreasing.
To establish the claims, we need a preliminary result.

\begin{lemma}
If $\pi \in A_n$ has $\ge 3$ left-right maxima and $1$ is not its first letter, then $n$ is its last letter.
\end{lemma}
\begin{proof}
Suppose $i_1=a,i_2,\dots,i_m=n$ are the left-right maxima of $\pi$ with $m\ge 3$, so that all other entries of $\pi$ lie in the rectangles $R_2,R_3,\dots,R_m$ and $S_1,S_2$ shown in the figure.
\begin{center}
\begin{pspicture}(0,0)(6,3)
\psset{xunit=.5cm,yunit=.3cm}
\psline(0,0)(0,2)(4,2)(4,0)(0,0)
\psline(2,2)(2,4)(4,4)(4,2)
\psline(4,4)(4,6)(6,6)(6,2)
\psline(8,2)(8,8)(10,8)(10,2)
\psline(4,2)(10,2)(10,0)(4,0)
\rput(-0.45,2.2){\footnotesize{$i_1$}}
\rput(1.5,4.2){\footnotesize{$i_2$}}
\rput(3.5,6.2){\footnotesize{$i_3$}}
\rput(7.4,8.3){\footnotesize{$i_m$}}
\pscircle*(0,2){0.05}
\pscircle*(2,4){0.05}
\pscircle*(4,6){0.05}
\pscircle*(8,8){0.05}
\rput(2,1){\footnotesize{$S_1$}}
\rput(7,1){\footnotesize{$S_2$}}
\rput(3,3){\footnotesize{$R_2$}}
\rput(5,4){\footnotesize{$R_3$}}
\rput(9,5){\footnotesize{$R_m$}}
\rput(7,4){\footnotesize{$\dots$}}
\end{pspicture}
\end{center}
An entry $x\in S_2$ implies $i_1 i_2 i_3 x$ is a forbidden 2341. Hence $S_2=\emptyset$ and so $1 \in S_1$. Now $x \in R_m$ implies $i_1 1 i_m x$ is a forbidden 2143. Hence, $R_m=\emptyset$ and $i_m=n$ is the last letter in $\pi$.
\end{proof}

\begin{corollary}\label{corbij}
If $\pi \in A_n$ and $1$ is not its first letter and $n$ is not its last letter, then $\pi$ has at most two left-right maxima. \qed
\end{corollary}

Now, if claim (i) fails, $a\le n-2$. Since $n$ is not the last letter of $\pi$, Corollary \ref{corbij} implies that $a$ and $n$ are the only two left-right maxima of $\pi$. Consequently, $n-1$ occurs after $n$ and, since $j\ge 3$, there is a letter $x<a$ before $n$. But then $axn(n-1)$ is a forbidden 2143.

If (ii) fails and $cdb$ is a 231 in $\beta'$, take
any $x\in \beta''$. If $x>c$, then $cbnx$ is a forbidden 2143, while if $x<c$,
then $cdnx$ is a forbidden 2341.

If (iii) fails, there are letters $x<y$ in $\beta''$. But then $(n-1)nxy$ is a forbidden 3412.

It follows from (ii) that the initial segment $a \beta' n$ of $\pi$ avoids 3421, since 3421 contains the pattern 231.  Define $f(\pi) =  a \beta' n r(\beta'')$,
where $r(\beta)$ denotes the reverse of a permutation $\beta$. It is clear that
$f(\pi)$ also avoids 3421 and so is in $B_n$.

If $j=2$ so that $\pi$ begins $a n\cdots$, we introduce the condition
\begin{equation}\label{condbij}
\textrm{\emph{ $a+1 \ne \pi_n$  and the successor and predecessor of $a+1$ in $\pi$ are both $<a$,}}
\end{equation}
and consider two cases.
\begin{itemize}
\item Condition (\ref{condbij}) holds. Here, $2\le a \le n-2$ and $\pi$ must have the form
$a\alpha'\beta' (a+1) \beta'' \alpha''$ with $\beta'=(a-1)(a-2)\cdots(b+1)$ and $\beta''=b(b-1)\cdots 1$
for some $b\in[a-2]$, where $\alpha'$ starts with $n$ and is decreasing,
and $\alpha''$ is increasing, possibly empty. Conversely, one may verify that a permutation with a decomposition of this form belongs to $A_n$.  Define $f(\pi) = a \alpha' r(\beta'') (a+1) r(\beta') \alpha''$.

\item Condition (\ref{condbij}) fails. Here, $\pi$ must have the form
$a\alpha'\beta \alpha''$ with $\beta=(a-1)(a-2)\cdots 1$ (hence, nonempty), where $\alpha'$ starts with $n$ and is decreasing, and $\alpha''$ is increasing, possibly empty. Define $f(\pi) = a \alpha' r(\beta) \alpha''$.
\end{itemize}
Finally, if $j=1$ so that $\pi=n\pi'$, define $f$ recursively by $f(\pi)=nf(\pi')$.

The mapping $f$ preserves left-right maxima and their positions. The reader may check that $f$ is reversible and is a bijection from $A_n$ to $B_n$.
\end{proof}

\subsection{Case 55}
\begin{theorem}\label{th55A3}
Let $T=\{1324, 2143, 2341\}$. Then
$$F_T(x)=\frac{1-6x+12x^2-8x^3+3x^4-x^5}{(1-x)(1-3x+x^2)^2}.$$
\end{theorem}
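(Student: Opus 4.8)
The plan is to enumerate $T$-avoiders, $T=\{1324,2143,2341\}$, by a decomposition on the first letter $a=\pi_1$, and to check that the three resulting pieces contribute, respectively, $xF_T(x)$, $xK(x)$, and $\frac{2x^3(1-x)}{(1-3x+x^2)^2}$ to $F_T(x)$, where $K(x):=\frac{1-2x}{1-3x+x^2}$ is the generating function for $\{213,2341\}$-avoiders (the odd-indexed Fibonacci numbers, OEIS A001519; the same sequence appearing in Theorem~\ref{th50A2}). If $a=n$, then deleting the leading $n$ is a bijection onto $S_{n-1}(T)$ --- in each pattern of $T$ the largest letter is not first, so a leading $n$ lies in no occurrence --- giving the contribution $xF_T(x)$.

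If $a=1$, write $\pi=1\oplus\sigma$ with $\sigma\in S_{n-1}$. The leading $1$ is the global minimum in the first slot, so it can only occur in a pattern of $T$ as the initial ``$1$'' of a $1324$, which requires a $213$ (a ``$324$'') further along; since each of $1324$ and $2143$ contains a $213$, the condition that survives is that $\sigma$ avoid $213$, and so $\pi$ avoids $T$ iff $\sigma$ avoids $\{213,2341\}$. To obtain $K$, use the standard decomposition of a $213$-avoider as $\sigma=L\,1\,R$ (splitting at its least element) with $R<L$ and $L,R$ each $213$-avoiding; then $\sigma$ avoids $2341$ iff $L$ avoids $123$ (otherwise $L$ followed by the middle $1$ is a $2341$) and $R$ avoids $\{213,2341\}$, so, using that $\{123,213\}$-avoiders are counted by $\frac{1-x}{1-2x}$, one gets $K=1+x\cdot\frac{1-x}{1-2x}\cdot K$, hence $K=\frac{1-2x}{1-3x+x^2}$. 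This case contributes $xK(x)$.

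The crux is the case $2\le a\le n-1$, where $\pi$ has letters both below and above $a$ to the right of the initial $a$ and all three patterns interact: $a$ as the ``$2$'' of a $2341$ forbids an increasing pair of above-$a$ letters followed later by a below-$a$ letter; $a$ as the ``$2$'' of a $2143$ forbids a below-$a$ letter followed by a descent among above-$a$ letters; and the letters after $a$ must themselves avoid $T$. I expect these constraints to force a rigid skeleton --- the above-$a$ letters preceding the first below-$a$ letter form a decreasing run and those following it an increasing run, the below-$a$ letters split accordingly, and the portion on each side of the first below-$a$ letter reduces to a $\{213,2341\}$-avoider --- so that, after tallying the (few) sub-cases, this case contributes $\frac{2x^3(1-x)}{(1-3x+x^2)^2}$, the factor $2$ reflecting a left--right symmetry of the sub-cases and the squared denominator the two $K$-type pieces. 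A convenient way to carry out this case is a secondary decomposition by the position of $n$ (equivalently by left-right maxima, of which $\pi$ here has at least two since $\pi_1=a\neq n$).

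Summing the three pieces and correcting for the single length-$1$ permutation (counted in both the $a=1$ and $a=n$ pieces) yields $F_T=\bigl((1-x)+xK(x)+\tfrac{2x^3(1-x)}{(1-3x+x^2)^2}\bigr)/(1-x)$; clearing denominators over $(1-x)(1-3x+x^2)^2$ collapses the numerator to $1-6x+12x^2-8x^3+3x^4-x^5$, which is the claimed formula. The main obstacle is the third case: disentangling the joint effect of $\{1324,2143,2341\}$ on the two-sided structure around the first letter, and verifying that each side is precisely a $\{213,2341\}$-avoider. As a check, the claimed generating function begins $1,1,2,6,21,72,\dots$, agreeing with direct enumeration of small cases.
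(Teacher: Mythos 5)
Your first two cases are fine and agree with the paper's initial conditions: a leading $n$ is inert (none of the three patterns begins with its largest letter), giving $xF_T(x)$, and a leading $1$ reduces avoidance of $T$ to avoidance of $\{213,2341\}$ in the remainder, giving $xK(x)$ with $K=\frac{1-2x}{1-3x+x^2}$. The final algebra is also consistent: granting your claim for the middle case, the three pieces do collapse to the stated formula. The problem is that the middle case $2\le a\le n-1$ -- which you yourself identify as the crux -- is only asserted (``I expect these constraints to force a rigid skeleton\dots''), and the asserted structure cannot be right as a description of the whole case. The set of avoiders with first letter in $[2,n-1]$ contains recursive subfamilies that do not reduce to a pair of $\{213,2341\}$-type pieces: if $a=n-1$ the first letter is inert and the rest is an arbitrary $T$-avoider (e.g.\ $45213$), and if $2\le a\le n-2$ with second letter $n$ one again recurses to an arbitrary avoider with the same first letter (e.g.\ $42135$, whose letters below $4$ form the pattern $213$). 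Consequently the generating function of this case is not obtainable as a fixed product of two $K$-factors by a direct rigid decomposition; the identity ``middle case $=\frac{2x^3(1-x)}{(1-3x+x^2)^2}$'' is equivalent to the theorem itself (it equals $(1-x)F_T-1+x-xK$), so writing it down is circular unless you derive an equation and solve.

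What is needed, and what the paper does, is a refinement of the middle case by the second letter $j$: for $2\le i\le n-2$ one shows $a_T(n;i,j)=0$ for $i+2\le j\le n-1$, $a_T(n;i,n)=a_T(n-1;i)$ (a genuinely recursive term), and rigid structures when $j=i+1$ or $j<i$, where the free part is a $\{132,2341\}$-avoider, counted by odd-indexed Fibonacci numbers; together with the easy values at $i=1,n-1,n$ this yields $a_T(n;i)-a_T(n-1;i)=F_{2i-1}$ and then the difference equation $a_T(n)=3a_T(n-1)-a_T(n-2)+2F_{2n-4}-1$, which is solved using $\sum_{n\ge0}F_{2n-1}x^n=\frac{1-2x}{1-3x+x^2}$. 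So your plan can be repaired, but only by keeping the recursive subcases as recursive terms (leading to a recurrence or an equation in $F_T$) rather than expecting the whole middle case to be two independent $K$-pieces; as it stands the key step is missing.
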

\begin{proof}
Let $a_T(n;i_1,i_2,\ldots,i_s)$ be the number of permutations $i_1i_2\cdots i_s\pi\in S_n(T)$. The initial conditions $a_T(n;n)=a_T(n;n-1)=a_T(n-1)$ and $a_T(n;1)=|S_{n-1}(213,2341)|$ easily follow from the definitions. It is well known that $|S_{n-1}(213,2341)|=F_{2n-3}$ ($F_n$ is the $n$th Fibonacci number defined by $F_n=F_{n-1}+F_{n-2}$, with $F_0=0$ and $F_1=1$).

Now let $2\le i \le n-2$ and let us focus on the second letter of $\pi$. Clearly, $a_T(n;i,n)=a_T(n-1;i)$, and also $a_T(n;i,j)=0$ for all $j=i+2,i+3,\ldots,n-1$.
Note that any permutation $\pi=i(i+1)\pi'\in S_n(T)$ can be written as
$\pi=i(i+1)\pi''(i+2)(i+3)\cdots n$, so $a_T(n;i,i+1)=|S_{i-1}(132,2341)|$
and it is also known that $|S_{i-1}(132,2341)| =F_{2i-3}$.
A permutation $\pi=ij\pi'\in S_n(T)$ with $n-2\geq i>j\geq1$ satisfies
$\pi=ij\pi'(j+2)\cdots(i-1)(i+1)(i+2)\cdots n$, where $\pi'$ is a permutation of $1,2,\ldots,j-1,j+1$ that avoids $132$ and $2341$, and so $a_T(n;i,j)=|S_j(132,2341)|=F_{2j-1}$. Hence
$$a_T(n;i)=F_1+F_3+F_5+\cdots+F_{2i-3}+F_{2i-3}+a_T(n-1;i),$$
which, by the fact that $F_1+F_3+F_5+\cdots+F_{2i-3}=F_{2i-2}$, implies for $2\le i \le n-2$ that
$$a_T(n;i)-a_T(n-1;i)=F_{2i-1}.$$
By summing both sides of the last equation over $i=2,3,\ldots,n-2$ and using the initial conditions, we obtain for $n \geq3$,
\begin{align*}
a_T(n)-a_T(n-1) &=  2a_T(n-1)-a_T(n-2)+F_{2n-3}-F_{2n-5}+\sum_{i=2}^{n-2}F_{2i-1} \\
&=  2a_T(n-1)-a_T(n-2)+F_{2n-3}-F_{2n-5}+F_{2n-4}-1 \\
&=  2a_T(n-1)-a_T(n-2)+2F_{2n-4}-1.
\end{align*}
It is routine to solve this difference equation for the generating function $F_T(x)$ using
$$\sum_{n\geq0}F_{2n-1}x^n=\frac{1-2x}{1-3x+x^2}.$$
\end{proof}

\subsection{Case 166}
The two representative triples $T$ are:

\{1243,3142,3412\} (Theorem \ref{th166A1})

\{1324,3142,3412\} (Theorem \ref{th166A2})

\subsubsection{$\mathbf{T=\{1243,3142,3412\}}$}
Let $b_n(i)=|S_T(n;i,n)|$ and $b_n(i,j)=|S_T(n;i,n,j)|$ so that $b_n(i)$ and $b_n(i,j)$ count $T$-avoiders for which the second letter is $n$ by first letter, $i$, and third letter, $j$. We first obtain a recurrence for $b_n(i,j)$.

Note that when the second letter is $n$, the conditions $i=1$ and $j \in[2,n-2]$ force the last entry to be $n-1$ or 2 for else a forbidden pattern will be present.
Deleting this entry gives contributions of $b_{n-1}(1,j),\ b_{n-1}(1,j-1)$ to $b_n(1,j)$  according as
the last entry is $n-1$ or 2. Clearly, $b_n(1,n-1)=b_{n-1}(1)$.
Hence, for $i=1$, we have the following table of recursive values for $b_n(1,j)$:
\[
\begin{array}{c|cc}
j & \in[2,n-2] & n-1 \\[1mm] \hline
{\rule[-2mm]{0mm}{6mm}b_n(1,j)} & b_{n-1}(1,j)+b_{n-1}(1,j-1) & b_{n-1}(1)
\end{array}
\]

Similarly, the conditions $2\le i \le n-2$ and $j \in[i+1,n-2]$ force the last entry to be $n-1$ or 1 and deleting it gives contributions of $b_{n-1}(i,j),\ b_{n-1}(i-1,j-1)$ according as the last entry is $n-1$ or 1.
Hence, for $2\le i \le n-2$, we have the table (the straightforward verification of entries other than $j \in[i+1,n-2]$ is left to the reader):
\[
\begin{array}{c||c|c|c|c}
j & \in[1,i-2] & i-1 & \in [i+1,n-2] & n-1\\[1mm] \hline
{\rule[-2mm]{0mm}{6mm}b_n(i,j)} & 0 & b_{n-1}(i-1) & b_{n-1}(i,j)+b_{n-1}(i-1,j-1) & b_{n-1}(i)
\end{array}
\]

Lastly, for $i=n-1$, $b_n(n-1)=1$ because $\pi=(n-1)n\pi'\in S_n(T)$ implies $\pi=(n-1)n(n-2)(n-3)\cdots 1$.

The table for $i=1$ yields $b_n(1)=\sum_{j=2}^{n-1}b_n(1,j) = \sum_{j=2}^{n-2}\big(b_{n-1}(1,j)+b_{n-1}(1,j-1)\big)+ b_{n}(1,n-1) = 3b_{n-1}(1)-b_{n-2}(1)$ for $n\ge 4$. Together with the initial conditions $b_2=b_3=1$, this recurrence implies that $b_n(1)=F_{2n-5}$ for $n\ge 2$, where $F_n$ is the $n$th Fibonacci number (defined by $F_n=F_{n-1}+F_{n-2}$ for all $n$, with $F_0=0$ and $F_1=1$).

\begin{lemma}\label{lem166A1x2}
Let $b_n=\sum_{i=1}^{n-1}|S_T(n;i,n)|$. Then the generating function
for the sequence $b_n$ is given by
$$B(x)=\sum_{n\geq2}b_nx^n=\frac{x^2(1-5x+7x^2-x^3)}{(1-3x+x^2)(1-3x)(1-x)}.$$
\end{lemma}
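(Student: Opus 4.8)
The plan is to convert the recursions already established for $b_n(i)$ and $b_n(i,j)$ into generating-function identities and then assemble $B(x)$. Introduce the bivariate generating functions $\beta_1(x,v)=\sum_{n\ge 2}\sum_{j\ge 2} b_n(1,j)\,x^n v^j$ for the case $i=1$ and, for the generic case, $\beta(x,u,v)=\sum_{n}\sum_{2\le i\le n-2}\sum_{j} b_n(i,j)\,x^n u^i v^j$. The recurrence table for $i=1$, namely $b_n(1,j)=b_{n-1}(1,j)+b_{n-1}(1,j-1)$ for $j\in[2,n-2]$ together with $b_n(1,n-1)=b_{n-1}(1)$, translates into a linear functional equation for $\beta_1$; but in fact we already have the clean closed form $b_n(1)=F_{2n-5}$ for $n\ge 2$ derived in the excerpt, so the $i=1$ contribution to $B(x)$ is simply $\sum_{n\ge 2}F_{2n-5}x^n$, which sums to a known rational function using $\sum_{n\ge 0}F_{2n-1}x^n=\tfrac{1-2x}{1-3x+x^2}$. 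This disposes of the first term of the answer without any functional-equation work.

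Next I would handle the generic block $2\le i\le n-2$. Summing the column entries of the second table gives $b_n(i)=\sum_{j=1}^{n-1}b_n(i,j)= b_{n-1}(i-1) + \sum_{j=i+1}^{n-2}\bigl(b_{n-1}(i,j)+b_{n-1}(i-1,j-1)\bigr) + b_{n-1}(i)$, and one checks that the middle sum telescopes against $b_{n-1}(i)$ and $b_{n-1}(i-1)$ in the same way the $i=1$ case did. So I expect a two-term recurrence of the shape $b_n(i) = b_{n-1}(i) + b_{n-1}(i-1) + (\text{correction from the }j=i-1\text{ and }j=n-1\text{ entries})$, valid for $n$ large relative to $i$; setting $\widetilde B_i(x)=\sum_n b_n(i)x^n$ this becomes a recurrence in $i$ among the $\widetilde B_i$, solvable by iterating from the base case $b_n(n-1)=1$ (whose generating function is $\frac{x^2}{1-x}$) and the $i=1$ case above. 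Then $B(x)=\sum_{i\ge 1}\widetilde B_i(x)$, and summing the resulting geometric-type series in $i$ should collapse to the claimed $\frac{x^2(1-5x+7x^2-x^3)}{(1-3x+x^2)(1-3x)(1-x)}$. The three factors in the denominator are a good consistency check: $1-3x+x^2$ is the Fibonacci-bisection denominator coming from the $b_n(1)=F_{2n-5}$ and $b_n(i,i+1)$ terms, $1-x$ comes from the all-ones strip $i=n-1$, and $1-3x$ should emerge from summing the doubly-indexed $b_n(i,j)$ over the full triangular range $2\le i$, $i+1\le j\le n-2$.

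Alternatively, and perhaps more cleanly, I would keep the full double sum: set $B(x)=\sum_{n\ge 2}\sum_{i=1}^{n-1}b_n(i)x^n$ and write a single functional equation for $\sum_{n,i}b_n(i)x^n u^i$ directly from the two tables, isolating the four column-types ($j\in[1,i-2]$ contributing $0$, $j=i-1$, $j\in[i+1,n-2]$, $j=n-1$); the boundary terms $j=i-1$ and $j=n-1$ feed back $b_{n-1}(i-1)$ and $b_{n-1}(i)$, i.e.\ shifts in $n$ and $i$, and the bulk term is a diagonal convolution that the substitution $v\mapsto$ (something) handles. Setting $u=1$ at the end and using the already-known value of the $i=1$ summand to fix the one free function gives $B(x)$. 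The main obstacle I anticipate is bookkeeping the ranges correctly: the recurrences for $b_n(i,j)$ hold only in the interior range $i+1\le j\le n-2$, so the boundary rows and columns ($j=i-1$, $j=n-1$, and the degenerate small-$n$ cases) must be tracked separately, and it is easy to be off by one $x$ or to double-count the corner $b_n(1)$ when both $i=1$ and the generic formula could apply. Once the ranges are pinned down, the remaining algebra is routine partial-fraction manipulation and a check against the initial terms $b_2=b_3=1$, $b_4,\dots$ computed directly.
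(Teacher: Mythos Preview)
Your plan would eventually work, but it is substantially more elaborate than what the paper does, and your sketch of the key step is not quite right.

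First, the inaccuracy: for $2\le i\le n-2$ you write that summing the columns should give ``a two-term recurrence of the shape $b_n(i)=b_{n-1}(i)+b_{n-1}(i-1)+(\text{correction})$''. In fact the middle block $\sum_{j=i+1}^{n-2}\bigl(b_{n-1}(i,j)+b_{n-1}(i-1,j-1)\bigr)$ does not simply equal $b_{n-1}(i)+b_{n-1}(i-1)$; each of those sums is that quantity \emph{minus} the missing boundary entries $b_{n-1}(i,i-1)=b_{n-2}(i-1)$, $b_{n-1}(i-1,i-2)=b_{n-2}(i-2)$, and $b_{n-1}(i-1,n-2)=b_{n-2}(i-1)$. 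So the recurrence for $b_n(i)$ genuinely involves $b_{n-2}(\cdot)$ terms as well as $b_{n-1}(\cdot)$ terms, and the ``corrections'' are not lower-order perturbations but of the same size as the main terms. Your functional-equation machinery could absorb this, but the system is messier than you anticipate.

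The paper bypasses all of this by summing over $i$ and $j$ simultaneously from the start, never introducing $u$, $v$, or the per-index series $\widetilde B_i(x)$. Writing $b_n=\sum_{i=1}^{n-1}b_n(i)$, separating off the boundary contributions $b_n(i,i-1)=b_{n-1}(i-1)$ and $b_n(i,n-1)=b_{n-1}(i)$ (whose $i$-sums are each essentially $b_{n-1}$), and recognizing the remaining double sums $\sum_i\sum_{j}b_{n-1}(i,j)$ over shifted triangular ranges as $b_{n-1}$ minus boundary strips, one obtains directly the scalar recurrence
\[
b_n=4b_{n-1}-3b_{n-2}+F_{2n-8},\qquad b_2=1,\ b_3=2.
\]
This three-term linear recurrence with Fibonacci forcing immediately gives
\[
B(x)=\frac{x^2(1-5x+7x^2-x^3)}{(1-3x+x^2)(1-3x)(1-x)}
\]
via $\sum_{n\ge4}F_{2n-8}x^n=\frac{x^5}{1-3x+x^2}$. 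The factors $(1-3x)(1-x)$ come from the homogeneous part $b_n=4b_{n-1}-3b_{n-2}$ and $1-3x+x^2$ from the forcing term, matching your heuristic attribution of the denominator factors but arriving there in one step rather than through a bivariate kernel argument.
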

\begin{proof}
Clearly, $b_n=\sum_{i=1}^{n-1}b_n(i)$. Using the preceding results, we have
\begin{align*}
b_n&= 2b_{n-1}+\sum_{i=2}^{n-3}\sum_{j=i+1}^{n-2}b_n(i,j)+\sum_{j=2}^{n-2}b_n(1,j) \\
&=2b_{n-1}+\sum_{i=1}^{n-4}\sum_{j=i+1}^{n-3}b_{n-1}(i,j)
+\sum_{i=2}^{n-3}\sum_{j=i+1}^{n-2}b_{n-1}(i,j)+\sum_{j=2}^{n-2}b_n(1,j)\\
&=2b_{n-1}+b_{n-1}-2b_{n-2}+b_{n-1}-b_{n-2}+F_{2n-7}-F_{2n-7}\\
&=4b_{n-1}-3b_{n-2}+F_{2n-8},
\end{align*}
with $b_1=0$, $b_2=1$ and $b_3=2$. Hence, by using the fact that $\sum_{n\geq4}F_{2n-8}x^n=\frac{x^5}{1-3x+x^2}$, we obtain $B(x)=\frac{x^2(1-5x+7x^2-x^3)}{(1-3x+x^2)(1-3x)(1-x)}$.
\end{proof}

\begin{theorem}\label{th166A1} We have
$$F_T(x)=\frac{1-9x+30x^2-44x^3+27x^4-7x^5}{(1-3x)(1-x)(1-3x+x^2)^2}.$$
\end{theorem}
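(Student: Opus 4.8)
The plan is to decompose an arbitrary $T$-avoider $\pi=\pi_1\pi_2\cdots\pi_n$ according to the position of $n$, which gives a recursion coupling $F_T(x)$ with the generating function $B(x)$ from Lemma \ref{lem166A1x2} (the "$n$ in position $2$" case) and with simpler known series. First I would separate off the case $\pi_1=1$: writing $\pi=1\pi'$ with $\pi'$ on $\{2,\dots,n\}$, the standardization of $\pi'$ is again a $T$-avoider, contributing $x F_T(x)$; and the case $\pi=\pi'n$ (so $n$ is the last letter), which by deleting $n$ contributes $xF_T(x)$ as well — though one must be careful not to double-count the overlap where $\pi$ both begins with $1$ and ends with $n$, which contributes $x^2 F_T(x)$. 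So those two "easy" positions together contribute $2xF_T(x)-x^2F_T(x)$.

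Next I would handle the case where $n$ sits in position $2$; since $b_n=\sum_{i=1}^{n-1}|S_T(n;i,n)|$ counts exactly the avoiders whose second letter is $n$ (and whose first letter is anything from $1$ to $n-1$), this case contributes precisely $B(x)=\frac{x^2(1-5x+7x^2-x^3)}{(1-3x+x^2)(1-3x)(1-x)}$. The remaining case is $n$ in some position $j$ with $3\le j\le n-1$ and $\pi_1>1$; here I would argue from the forbidden patterns $\{1243,3142,3412\}$ that the prefix before $n$ and the suffix after $n$ are highly constrained (analogous to the rigidity already exploited for $b_n(n-1)$ and in the $b_n(i,j)$ tables), so that this case contributes a rational function one can write down explicitly — I expect something with denominator a power of $(1-x)$ coming from a block of forced decreasing/increasing runs, possibly with one Fibonacci-type factor $(1-3x+x^2)$ if a $132$-and-$2341$ avoiding sub-block appears. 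Summing all contributions yields a linear equation
\[
F_T(x)=1+\bigl(2x-x^2\bigr)F_T(x)+B(x)+R(x),
\]
where $R(x)$ is the rational contribution of the last case, and solving for $F_T(x)$ and simplifying should give the claimed formula $\frac{1-9x+30x^2-44x^3+27x^4-7x^5}{(1-3x)(1-x)(1-3x+x^2)^2}$; note the denominator already contains $(1-3x)(1-x)(1-3x+x^2)$ from $B(x)$, so $R(x)$ must supply the extra factor $(1-3x+x^2)$, which is consistent with a single Fibonacci block in the structure.

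The main obstacle will be the precise structural analysis of the $3\le j\le n-1$, $\pi_1>1$ case: one needs to pin down exactly how the entries below $\pi_1$, the entries between $\pi_1$ and $n$ (excluding $n$), and the entries after $n$ interleave, using all three forbidden patterns simultaneously, and then verify that the resulting sub-permutations are independent so the contribution factors as a product of generating functions. A secondary bookkeeping hazard is the inclusion–exclusion among the special sub-cases (first letter $1$, last letter $n$, second letter $n$, and $n$ in an interior position) to make sure every avoider is counted exactly once; I would double-check the final rational function against the initial terms of sequence No.~166 in Table \ref{long3s} as a sanity check before declaring the computation complete.
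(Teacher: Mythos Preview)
Your proposal has a genuine error at the very first step. You claim that the case $\pi_1=1$ contributes $xF_T(x)$, i.e.\ that deleting an initial $1$ gives a bijection to $T$-avoiders. This is false: if $\sigma$ is a $T$-avoider containing a $132$ pattern, then $1(\sigma+1)$ contains $1243$. Concretely, for $n=4$ there are only five $T$-avoiders starting with $1$ (namely $1234,1324,1342,1423,1432$; the sixth candidate $1243$ is forbidden), whereas $|S_3(T)|=6$. The correct statement is that $\pi=1\pi'$ avoids $T$ iff $\pi'$ avoids $\{132,3412\}$, so this case contributes $xK(x)$ with $K(x)=\frac{1-2x}{1-3x+x^2}$, not $xF_T(x)$. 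This error propagates into your displayed equation and into the bookkeeping for overlaps with the $B(x)$ term (which, incidentally, already includes permutations with $\pi_1=1$, so your inclusion--exclusion needs further correction there too).

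Beyond this, the heart of your argument --- the structural analysis of the case $3\le j\le n-1$, $\pi_1>1$ that is supposed to produce $R(x)$ --- is left entirely speculative. You guess the denominator shape rather than deriving it, and ``the prefix and suffix are highly constrained'' is not yet an argument. The paper takes a different route altogether: it decomposes by the number $m$ of left-right maxima rather than by the position of $n$. For $m=2$ the avoider $i\pi'n\pi''$ is split according to whether $\pi'$ is decreasing (contributing $B(x)/(1-x)$) or has an ascent (forcing $\pi''$ to be the decreasing tail, contributing $\frac{x^2}{1-x}(F_T(x)-\frac{1}{1-x})$). For $m\ge 3$ a careful case analysis on the smallest index $s$ such that $\pi^{(s)}$ dips below $i_1$ yields explicit contributions involving $B(x)$ and $K(x)$, and summing $\sum_{m\ge0}G_m(x)$ gives a linear equation for $F_T(x)$. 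If you want to salvage a position-of-$n$ approach, you will need to redo the $\pi_1=1$ case with $K(x)$, fix the overlap accounting with $B(x)$, and actually carry out the structural analysis for interior positions of $n$ rather than guessing its form.
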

\begin{proof}
Let $G_m(x)$ be the generating function for $T$-avoiders with $m$ left-right maxima.
Clearly, $G_0(x)=1$ and $G_1(x)=xF_T(x)$. Now let us write an equation for $G_m(x)$ with $m=2$.
Let $\pi=i\pi'n\pi''\in S_n(T)$ with two left-right maxima. If $\pi=i(i-1)\cdots i'n\pi''$, then
the contribution is $B(x)/(1-x)$ (see Lemma \ref{lem166A1x2}). Otherwise, $\pi=i\pi'n\pi''$ where
$\pi'$ is a permutation on $\{i',i'+1,\ldots,i-1\}$ that avoids $T$ and has at least one ascent. Since $\pi$ avoids $1243$, $\pi''=(i'-1)\cdots21$. Thus, the contribution is given by $\frac{x^2}{1-x}\big(F_T(x)-1/(1-x)\big)$. Hence,
$$G_2(x)=\frac{1}{1-x}B(x)+\frac{x^2}{1-x}\left(F_T(x)-\frac{1}{1-x}\right).$$

Now let us write an equation for $G_m(x)$ with $m\geq3$. Let $\pi=i_1\pi^{(1)}\cdots i_m\pi^{(m)}\in
S_n(T)$ with exactly $m$ left-right maxima, $i_1,i_2,\dots,i_m$. Since $\pi$ avoids $1243$, we have
that $\pi^{(2)},\ldots,\pi^{(m)}$ are all $<i_2$.
If $\pi^{(1)}$ has an ascent, then $\pi^{(1)}>\pi^{(2)}>\cdots>\pi^{(m)}$ and $\pi^{(1)}$
avoids $T$ and $\pi^{(2)}\cdots\pi^{(m)}=(i'-1)\cdots1$ with $i'$ the minimal letter of $
\pi^{(1)}$. Thus, the contribution is given by $\frac{x^{m}}{(1-x)^{m-1}}\big(F_T(x)-1/(1-x)\big)$.

From now, we can assume that $\pi^{(1)}=(i-1)\cdots(i'+1)i'$. Let $s\in\{2,3,\ldots,m\}$ be the minimal
number such that $\pi^{(s)}$ contains a letter from the set $[i'-1]$. We have the following cases:
\begin{itemize}
\item $s=2$: Here $\pi'^{(2)}\pi^{(3)}\cdots\pi^{(m)}=(i'-1)\cdots21$, where $\pi'^{(2)}$ is the
subsequence of all letter of $\pi^{(2)}$ that are smaller than $i'$. Hence, by the definition of $B(x)$
(see Lemma \ref{lem166A1x2}), we get a contribution of $\frac{x^{m-2}}{(1-x)^{m-1}}\big(B(x)-x^2K(x)\big)$,
where $K(x)=\sum_{n\geq0}|S_n(132,3412)|x^n=\frac{1-2x}{1-3x+x^2}$ (see \cite[Seq. A001519]{Sl}).

\item $s=3,4,\ldots,m-1$: Here $\pi^{(2)}>\pi^{(3)}>\cdots>\pi^{(s-1)}>i_1>\pi^{(s)}>\pi^{(s+1)}>\cdots>\pi^{(m)}$ and  $\pi^{(s)}\cdots\pi^{(m)}=(i'-1)\cdots21$ and $\pi^{(3)}\cdots\pi^{(s-1)}=(i'_1-1)\cdots(i_1+2)(i_1+1)$, where $i'_1$ is the minimal letter of $\pi^{(2)}$. Moreover, $\pi^{(2)}$ avoids $132$ and $3412$. Thus, the contribution is given by $\frac{x^{m+1}}{(1-x)^m}K(x)$.

\item $s=m$. Here $i'=1$ and $\pi^{(2)}>\pi^{(3)}>\cdots>\pi^{(m)}>i_1$ and $\pi^{(3)}\cdots\pi^{(s-1)}=(i'_1-1)\cdots(i_1+2)(i_1+1)$ and $\pi^{(2)}$ avoids $132$ and $3412$, where $i'_1$ is the minimal letter of $\pi^{(2)}$. Thus, the contribution is given by $\frac{x^{m}}{(1-x)^m}K(x)$.
\end{itemize}
By the preceding cases, we obtain for $m\ge 3$,
\begin{align*}
G_m(x)&=\frac{x^m}{(1-x)^{m-1}}\left(F_T(x)-\frac{1}{1-x}\right)+\frac{x^{m-2}}{(1-x)^{m-1}}\left(B(x)-x^2K(x)\right)\\
&\quad+(m-3)\frac{x^{m+1}}{(1-x)^m}K(x)+\frac{x^{m}}{(1-x)^m}K(x).
\end{align*}
Since $F_T(x)=\sum_{m\geq0}G_m(x)$, we have
\begin{align*}
F_T(x)&=1+xF_T(x)+\frac{1}{1-x}B(x)+\sum_{m\geq2}\frac{x^{m}}{(1-x)^{m-1}}\left(F_T(x)-\frac{1}{1-x}\right)\\
&\quad+\sum_{m\geq3}\frac{x^{m-2}}{(1-x)^{m-1}}(B(x)-x^2K(x))+\sum_{m\geq4}(m-3)\frac{x^{m+1}}{(1-x)^m}K(x)\\
&\quad+\sum_{m\geq3}\frac{x^{m}}{(1-x)^m}K(x).
\end{align*}
After several algebraic operations, and solving for $F_T(x)$, we complete the proof.
\end{proof}

\subsubsection{$\mathbf{T=\{1324,3142,3412\}}$}
Let $b_n(i,j)=|S_T(n;i,n,j)|$. Then, in analogy with the previous subsection,
$$b_n(i,j)=F_{2n-2j-3},\quad 2\leq i+1<j\leq n-2,$$
with $b_n(i+1,i)=b_n(i,i+1)=b_n(i,n-1)=b_{n-1}(i)$ and $b_n(n-2,n-1)=1$.

\begin{lemma}\label{lem166A2x2}
Let $b_n(i)=|S_T(n;i,n)|$, $b_n=\sum_{i=1}^{n-1}b_n(i)$, and $B(x)=\sum_{n\geq2}b_nx^n$. Then
$$B(x)= \frac{x^2(1-5x+7x^2-x^3)}{(1-3x+x^2)(1-3x)(1-x)}.$$
\end{lemma}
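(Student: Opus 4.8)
The target is Lemma \ref{lem166A2x2}, which asserts that the generating function $B(x)$ for the sequence $b_n=\sum_{i=1}^{n-1}b_n(i)$ in the case $T=\{1324,3142,3412\}$ equals $\frac{x^2(1-5x+7x^2-x^3)}{(1-3x+x^2)(1-3x)(1-x)}$, i.e.\ precisely the same rational function as in Lemma \ref{lem166A1x2}. The natural approach is to mirror the computation done for $T=\{1243,3142,3412\}$ in Lemma \ref{lem166A1x2}: derive a linear recurrence for $b_n$ with a Fibonacci-type inhomogeneous term, then convert to generating functions. The combinatorial input has already been recorded just before the lemma: $b_n(i,j)=F_{2n-2j-3}$ for $2\le i+1<j\le n-2$, together with the boundary values $b_n(i+1,i)=b_n(i,i+1)=b_n(i,n-1)=b_{n-1}(i)$ and $b_n(n-2,n-1)=1$.

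**Key steps.** First I would obtain a recurrence for $b_n(i):=|S_T(n;i,n)|$ by summing the two-letter counts $b_n(i,j)$ over the admissible second-letter values $j$. Using $b_n(i,j)=F_{2n-2j-3}$ on the generic range and the special values at $j=i-1,\,i+1,\,n-1$, the sum $b_n(i)=\sum_j b_n(i,j)$ telescopes via the identity $F_1+F_3+\cdots+F_{2k-1}=F_{2k}$, exactly as in Case 166A1; this should express $b_n(i)$ in terms of $b_{n-1}(i)$ plus a Fibonacci term. Summing over $i$ from $1$ to $n-1$ then yields a recurrence of the shape $b_n=4b_{n-1}-3b_{n-2}+F_{2n-8}$ (or a minor variant), with small initial values $b_1=0,\ b_2=1,\ b_3=2$, which one checks directly. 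Finally, translating this recurrence into generating functions and using $\sum_{n\ge 4}F_{2n-8}x^n=\frac{x^5}{1-3x+x^2}$ (equivalently the known series $\sum_{n\ge 0}F_{2n-1}x^n=\frac{1-2x}{1-3x+x^2}$) gives, after clearing denominators and simplifying, $B(x)=\frac{x^2(1-5x+7x^2-x^3)}{(1-3x+x^2)(1-3x)(1-x)}$.

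**Main obstacle.** The genuinely delicate point is not the algebra but the bookkeeping in the double sum $\sum_{i=1}^{n-1}\sum_j b_n(i,j)$: one must correctly partition the index range into the generic region $2\le i+1<j\le n-2$ (where $b_n(i,j)=F_{2n-2j-3}$), the diagonal/near-diagonal entries, the column $j=n-1$, and the degenerate rows $i=1$ and $i=n-2$, making sure every boundary case is counted once and the telescoping of Fibonacci numbers is applied with the right index offsets. A convenient shortcut is to argue that the two-letter data $b_n(i,j)$ here differ from those of Case 166A1 only by the harmless relabeling reflecting the symmetry between the patterns $1243$ and $1324$ (both give $T$-avoiders with second letter $n$ governed by the same Fibonacci statistics), so that the very recurrence $b_n=4b_{n-1}-3b_{n-2}+F_{2n-8}$ and initial conditions of Lemma \ref{lem166A1x2} carry over verbatim; then $B(x)$ is identical by the uniqueness of the solution, and no new computation is needed.
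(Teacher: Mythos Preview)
Your proposal is essentially the paper's own argument. The paper sums the two-letter data directly to obtain the closed form $b_n=3b_{n-1}-1+\sum_{j=1}^{n-4}F_{2j-1}(n-3-j)$, then takes a first difference to collapse the Fibonacci sum via $F_1+F_3+\cdots+F_{2n-9}=F_{2n-8}$, arriving at the identical recurrence $b_n=4b_{n-1}-3b_{n-2}+F_{2n-8}$ with $b_2=1,\ b_3=2$; since this is exactly the recurrence of Lemma~\ref{lem166A1x2}, $B(x)$ coincides. Your one dubious remark is the ``shortcut'' claiming a relabeling symmetry between the $1243$ and $1324$ cases: no such symmetry is invoked in the paper (the two-letter arrays $b_n(i,j)$ for the two triples are not related by an obvious bijection), so that shortcut should be dropped and the recurrence derived honestly from the stated boundary and generic values.
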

\begin{proof}
From the preceding results, we have
$$b_n=3b_{n-1}-1+\sum_{j=1}^{n-4}F_{2j-1}(n-3-j),$$
with $b_2=1$. Thus,
$$b_n-b_{n-1}=3b_{n-1}-3b_{n-2}+F_1+F_3+\cdots+F_{2n-9},$$
which, by the fact that $F_1+F_3+\cdots+F_{2n-11}=F_{2n-8}$, implies
$$b_n=4b_{n-1}-3b_{n-2}+F_{2n-8},$$
with $b_2=1$ and $b_3=2$. This is the same recurrence as in Lemma \ref{lem166A1x2}.
\end{proof}

\begin{theorem}\label{th166A2} We have
$$F_T(x)=\frac{1-9x+30x^2-44x^3+27x^4-7x^5}{(1-3x)(1-x)(1-3x+x^2)^2}.$$
\end{theorem}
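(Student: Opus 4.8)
The plan is to follow the template of the proof of Theorem~\ref{th166A1}: decompose $T$-avoiders by their number $m$ of left-right maxima and write $F_T(x)=\sum_{m\ge0}G_m(x)$, where $G_m(x)$ is the generating function for $T$-avoiders with exactly $m$ left-right maxima. The ingredients needed to start are already in place: the values $b_n(i,j)=|S_T(n;i,n,j)|$ recorded above, and $B(x)$ from Lemma~\ref{lem166A2x2}, the generating function for $T$-avoiders whose second letter is $n$. The cases $m=0,1$ are immediate, $G_0(x)=1$ and $G_1(x)=xF_T(x)$. For $m=2$, write $\pi=i\pi'n\pi''$ and split according to whether $\pi'$ is a decreasing run or contains an ascent: if $\pi'$ has an ascent, then avoidance of $3142$ and $3412$ forces $\pi''$ to be a decreasing run on an initial segment, while if $\pi'$ is itself a decreasing run, deleting it leaves exactly a permutation counted by $B(x)$. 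This gives $G_2(x)$ in closed form (in fact the same expression as in Theorem~\ref{th166A1}).

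For $m\ge3$ I would carry out the finer case analysis, adapted from the proof of Theorem~\ref{th166A1} with $1324$ now playing the role that $1243$ played there. Given $\pi=i_1\pi^{(1)}\cdots i_m\pi^{(m)}$ with $i'=\min(\pi^{(1)})$: first treat the case that $\pi^{(1)}$ contains an ascent, where the avoided patterns force $\pi^{(2)}\cdots\pi^{(m)}$ to be the single decreasing run $(i'-1)\cdots21$ spread over the last $m-1$ blocks, contributing $\frac{x^m}{(1-x)^{m-1}}\left(F_T(x)-\frac1{1-x}\right)$; otherwise $\pi^{(1)}$ is a decreasing run, and one lets $s\ge2$ be least with $\pi^{(s)}\cap[i'-1]\ne\emptyset$ and distinguishes $s=2$, $3\le s\le m-1$, and $s=m$. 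In each of these subcases the avoided patterns should force all blocks except one to be monotone runs, while the remaining ``free'' block avoids $3412$ together with a single $3$-letter pattern, hence is counted by the generating function $K(x)=\frac{1-2x}{1-3x+x^2}$ already used in the preceding subsection. Assembling these contributions gives $G_m(x)$ for $m\ge3$; summing over $m\ge0$, substituting $B(x)$ from Lemma~\ref{lem166A2x2}, and solving the resulting linear equation for $F_T(x)$ gives the stated rational function, which coincides with the generating function of Theorem~\ref{th166A1}.

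The main obstacle is the structural verification that $1324$-avoidance (in place of $1243$-avoidance) forces, in each of the cases above, exactly the same pattern of monotone blocks plus one free block, so that $B(x)$ and $K(x)$ reappear with the same powers of $x$ and $1/(1-x)$ as in the preceding subsection. These are finite, local checks about where a letter below $i'$, or a value lying between two consecutive left-right maxima, can first occur among the blocks; each of $1324$, $3142$, $3412$ rules out a specific configuration. Once these are settled, the remainder --- summing the geometric-type series in $m$ and simplifying --- is routine and reproduces the generating function of Theorem~\ref{th166A1}.
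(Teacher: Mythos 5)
Your plan assumes the case analysis for $\{1243,3142,3412\}$ transfers essentially verbatim with $1324$ in place of $1243$, and in particular that $G_2(x)$ and the block structure for $m\ge3$ come out the same; this is exactly where the proposal breaks. Already at $m=2$ the structural claim is false: in the $1243$ case an ascent $a<b$ in $\pi'$ can use $n$ as the largest letter of the pattern (the ``4'' of $1243$ sits in the third slot), which is what collapses $\pi''$; but in $1324$ the largest letter must come \emph{after} the middle letter, so $n$, lying between $\pi'$ and $\pi''$, cannot play that role. Concretely, $\pi=312645$ avoids $\{1324,3142,3412\}$, has exactly the two left-right maxima $3$ and $6$, its $\pi'=12$ contains an ascent, and yet $\pi''=45$ is increasing, not a decreasing run on an initial segment. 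What an ascent in $\pi'$ actually forces for this triple is a condition on $\pi'$ itself (it must avoid $132$, since a $132$ in $\pi'$ followed by $n$ is a $1324$), not a collapse of $\pi''$; the same discrepancy propagates to your $m\ge3$ cases, where the $1243$-style claim ``ascent in $\pi^{(1)}$ forces $\pi^{(2)}\cdots\pi^{(m)}=(i'-1)\cdots21$'' also fails.

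For the same reason the refined generating functions do not coincide between the two triples; only the totals do. The paper's proof of Theorem \ref{th166A2} gets $G_2(x)=K(x)B(x)$, whereas Theorem \ref{th166A1} has $G_2(x)=\frac{1}{1-x}B(x)+\frac{x^2}{1-x}\left(F_T(x)-\frac{1}{1-x}\right)$, and with the stated $B$ and $F_T$ these differ by $\frac{x^6}{(1-x)^2(1-3x+x^2)^2}$; so your assertion that $G_2$ is ``the same expression as in Theorem \ref{th166A1}'' cannot hold, and feeding the $1243$-case contributions into $\sum_m G_m(x)$ would not return the stated $F_T(x)$. The correct analysis for $\{1324,3142,3412\}$ starts instead from the chain $i_1>\pi^{(1)}>\cdots>\pi^{(m-1)}$ forced by $1324$, conditions on the first nonempty block among $\pi^{(2)},\ldots,\pi^{(m-1)}$ (not on where letters below $\min\pi^{(1)}$ first appear), and leads to a recursion of the form $G_m(x)=x^{m-2}G_2(x)+\frac{x^m}{1-x}\bigl((1-x)^{-(m-2)}-1\bigr)K(x)^2+\sum_{s=2}^{m-1}\frac{x^s}{1-x}G_{m+1-s}(x)$, which is genuinely different from the $1243$ computation; the two only agree after summing over $m$. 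So the ``finite, local checks'' you defer are not routine verifications of identical structure—they are the crux, and as stated they fail.
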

\begin{proof}
Let $G_m(x)$ be the generating function for $T$-avoiders with $m$ left-right maxima.
Clearly, $G_0(x)=1$ and $G_1(x)=xF_T(x)$. By using similar arguments as in the proof of Theorem \ref{th166A1}, we obtain that (see Lemma \ref{lem166A2x2}) $G_2(x)=K(x)B(x)$.

Now let us write an equation for $G_m(x)$ with $m\geq3$. Let $\pi=i_1\pi^{(1)}\cdots i_m\pi^{(m)}\in S_n(T)$ with exactly $m$ left-right maxima. Since $\pi$ avoids $1324$, we have that $i_1>\pi^{(1)}>\cdots>\pi^{(m-1)}$.
Let $s\in\{2,3,\ldots,m-1\}$ (maybe $s$ does not exist) be the minimal number such that $\pi^{(s)}\neq\emptyset$. We have the following cases:
\begin{itemize}
\item $s=2,3,\ldots,m-1$: Here $\pi^{(2)}=\pi^{(3)}=\cdots=\pi^{(s-1)}=\emptyset$ and there is no letter  in $\pi^{(m)}$ between $i_{s-1}$ and $i_s$. Thus, the contribution is given by $\frac{x^s}{1-x}G_{m+1-s}(x)$.

\item $s$ does not exist. Here $\pi^{(2)}=\pi^{(3)}=\cdots=\pi^{(m-1)}=\emptyset$. So there two cases, either $\pi^{(m)}$ contains a letter from the set $\{i_1+1,\ldots,i_{m-1}-1\}$ or not. If yes, then $\pi^{(m)}$ can be written as $\pi^{(m)}=\pi'(i_{m-1}-1)\cdots(i_{m-2}+2)(i_{m-2}+1)\cdots(i_1-1)\cdots21$ with $\pi'$ and $\pi^{(1)}$ avoiding $132$ and $3412$. Thus, the contribution is given by $\frac{x^m}{1-x}(1/(1-x)^{m-2}-1)K(x)^2$. Otherwise, $\pi^{(m)}$ does not contain any letter from the set $\{i_1+1,\ldots,i_{m-1}-1\}$, which yields a contribution of $x^{m-2}G_2(x)$.
\end{itemize}
Combining the previous cases, we obtain
\begin{align*}
G_m(x)&=x^{m-2}G_2(x)+\frac{x^m}{1-x}(1/(1-x)^{m-2}-1)K(x)^2+\sum_{s=2}^{m-1}\frac{x^s}{1-x}G_{m+1-s}(x).
\end{align*}
Since $F_T(x)=\sum_{m\geq0}G_m(x)$, we have
\begin{align*}
F_T(x)&=1+xF_T(x)+\frac{1}{1-x}K(x)B(x)+\sum_{m\geq3}\frac{x^m}{1-x}(1/(1-x)^{m-2}-1)K(x)^2\\
&\quad+\frac{x^2}{(1-x)^2}(F_T(x)-1-xF_T(x)).
\end{align*}
Solving for $F_T(x)$ and using Lemma \ref{lem166A2x2}, we complete the proof.
\end{proof}

\subsection{Case 171}
The two representative triples $T$ are:

\{1423,2314,2341\} (Theorem \ref{th171A1})

\{1324,1342,4123\} (Theorem \ref{th171A2})
\subsubsection{$\mathbf{T=\{1423,2314,2341\}}$}
\begin{theorem}\label{th171A1}
Let $T=\{1423,2314,2341\}$. Then
$$F_T(x)=\frac{1-4x+5x^2-x^3+(1-4x+3x^2-x^3)\sqrt{1-4x}}{(1-x)(1-3x+x^2)(1-2x+\sqrt{1-4x})}.$$
\end{theorem}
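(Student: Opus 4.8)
\noindent\textit{Proof plan.}
Following the general strategy of the paper, I would decompose a $T$-avoider by its left-right maxima. Let $G_m(x)$ be the generating function for $T$-avoiders with exactly $m$ left-right maxima, so that $F_T(x)=\sum_{m\ge 0}G_m(x)$ with $G_0(x)=1$ and $G_1(x)=xF_T(x)$. For $m\ge 2$ write $\pi=i_1\pi^{(1)}i_2\pi^{(2)}\cdots i_m\pi^{(m)}$ with $i_1<i_2<\cdots<i_m$. Since the $i_j$ are increasing, a letter below $i_1$ sitting in an early block $\pi^{(j)}$ with $j\le m-1$, together with $i_{j+1}$, $i_{j+2}$ and a later small letter, threatens a forbidden $2341$ (or, with an intermediate later letter, a $2314$), while $1423$ forbids a descent of two larger letters once such a small letter has appeared. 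I expect these three constraints to force each non-initial block $\pi^{(2)},\dots,\pi^{(m)}$ into a rigid shape — prescribed decreasing runs interleaved with the $i_j$ — leaving essentially one or two distinguished blocks free to be arbitrary avoiders of a single $3$-letter pattern. A $3$-letter avoidance class is counted by $C(x)$, the source of the $\sqrt{1-4x}$ in the answer: indeed $\frac{1}{1-2x+\sqrt{1-4x}}=\frac{1}{2}C(x)^2$, and in fact one checks that the claimed formula is equivalent to
\[
F_T(x)=C(x)^2-\frac{x(1-4x+3x^2-x^3)}{(1-x)(1-3x+x^2)}\,C(x)^3 ,
\]
so the proof should naturally split into a $C(x)^2$ contribution and a $C(x)^3$ contribution, the rational coefficients coming from geometric sums over block lengths and from an auxiliary subclass whose generating function is a ratio like $\frac{1-2x}{1-3x+x^2}$ (such ratios recur throughout the paper).

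In detail I would: (1) dispose of $m\le 1$; (2) for $m=2$, split $\pi=i_1\pi^{(1)}i_2\pi^{(2)}$ according to whether $\pi^{(1)}$ is decreasing or has an ascent — the latter forcing, via $1423$, that $\pi^{(2)}$ is decreasing and lies below $i_1$ — and according to whether $\pi^{(2)}$ has letters below $i_1$, thereby writing $G_2(x)$ as a combination of $xF_T(x)$, a $C(x)$-type term, and elementary rational pieces; (3) for $m\ge 3$, show that the interior blocks collapse to explicit decreasing segments, so that $G_m(x)$ obeys a recursion of the shape $G_m(x)=\bigl(\text{a power of }\tfrac{x}{1-x}\bigr)\cdot(\cdots)+(\text{a shift of }G_{m-1}(x))$, possibly even a closed form; (4) sum over $m\ge 2$, add $G_0+G_1$, solve the resulting linear equation for $F_T(x)$, and rewrite $\sqrt{1-4x}=1-2xC(x)$ to land on the stated expression.

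A possible alternative, closer to the treatment of the second triple of this case, is to decompose by the first letter $a=\pi_1$: the subcase $a=n$ contributes $xF_T(x)$; the subcase $a=1$ reduces, after deleting the leading $1$, to avoiders of $\{312,2314,2341\}$ (since $423$ standardizes to $312$, and $312$-avoidance already rules out $1423$), a short class one computes separately; and for $2\le a\le n-1$ one tracks the positions of $n$ and of $a+1$, producing a multi-index recurrence to which the kernel method applies. Either way, the step I expect to be the main obstacle is the bookkeeping in the intermediate range — describing precisely how $1423$, $2314$, and $2341$ jointly determine which single sub-block is genuinely unconstrained (the Catalan factor) versus which are forced monotone, and verifying that the chosen subcases partition the avoiders without overlap. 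Once the structure is correctly pinned down, extracting and solving the generating-function equation should be routine.
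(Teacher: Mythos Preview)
Your overall strategy --- left-right maxima, $G_0=1$, $G_1=xF_T$, then treat $m\ge 3$ and $m=2$ separately --- is exactly what the paper does, and your expectation for $m\ge 3$ is close: the paper shows that the blocks stack as $\pi^{(1)}<\pi^{(2)}<\cdots<\pi^{(m-2)}<\pi^{(m-1)}i_m\pi^{(m)}$, that $\pi^{(2)},\dots,\pi^{(m-2)}$ and the tail are forced decreasing runs, and that $\pi^{(1)}$ is a $\{231,1423\}$-avoider, giving the clean closed form $G_m(x)=\dfrac{x^m}{(1-x)^m}\,K$ with $K=\dfrac{1-2x}{1-3x+x^2}$. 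So your step~(3) is fine in spirit.

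The real gap is step~(2). Your proposed $m=2$ split (``$\pi^{(1)}$ has an ascent $\Rightarrow$ via $1423$ the block $\pi^{(2)}$ is decreasing and below $i_1$'') is not correct as stated --- for instance, $\pi^{(1)}=12$ and $\pi^{(2)}$ containing a single letter above $i_1$ produces no $1423$, $2314$, or $2341$ --- and even once the cases are fixed, a finite case split of this kind does not close. In the paper the $m=2$ contribution is obtained only after introducing an \emph{auxiliary parameter}: one refines by the value of the first letter, setting $g_k(x)$ (resp.\ $g'_k(x)$) to be the generating function for $T$-avoiders with two left-right maxima and first letter $n-1-k$ (resp.\ first two letters $(n-1-k)\,n$). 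These satisfy coupled recurrences in $k$ involving tails $\sum_{j\ge 0} g_{k+j}(x)$; passing to $G(x,u)=\sum_k g_k(x)u^k$ and $G'(x,u)$ gives two linear functional equations, and the kernel method with $u=C(x)$ is what produces the Catalan factor. In other words, the $\sqrt{1-4x}$ here does \emph{not} come from a single block avoiding a $3$-letter pattern, but from the kernel substitution. Your ``alternative'' at the end (track the first letter and run a kernel argument) is therefore not a fallback but the essential missing ingredient; promote it to the main line for $m=2$ and the rest of your outline goes through.
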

\begin{proof}
Let $G_m(x)$ be the generating function for $T$-avoiders with $m$ left-right maxima.
Clearly, $G_0(x)=1$ and $G_1(x)=xF_T(x)$. Next, we consider $m\geq3$.
If $\pi=i_1\pi^{(1)}\cdots i_m\pi^{(m)}$ avoids $T$ then $$\pi^{(1)}<\pi^{(2)}<\cdots<\pi^{(m-2)}<\pi^{(m-1)}i_m\pi^{(m)}$$
and $\pi^{(1)}$ avoids $231$ and $1423$ (due to the presence of $i_m$), $\pi^{(j)}=(i_{j+1}-1)\cdots(i_j+2)(i_j+1)$ for $j=2,3,\ldots,m-2$,
and $$i_{m-1}\pi^{(m-1)}i_m\pi^{(m)}=i_{m-1}(i_{m-1}-1)\cdots \ell\: i_m(i_m-1)\cdots (i_{m-1}+1)\,\ell\,(\ell-1)\cdots(i_{m-2}+1)\, .$$
These results are explained in the next figure, where entries are decreasing as indicated by arrows to avoid 1423,
and other shaded regions are empty to avoid the indicated pattern.
\begin{center}
\begin{pspicture}(0,-.1)(4,3.5)
\psset{unit =.8cm, linewidth=.5\pslinewidth}
\rput(.6,2.3){$\iddots$}
\pspolygon[fillstyle=hlines,hatchcolor=lightgray,hatchsep=0.8pt](1,0)(4,0)(4,2)(2,2)(2,1)(1,1)(1,0)
\psline[fillstyle=hlines,hatchcolor=lightgray,hatchsep=0.8pt](2,2)(3.5,2)(3.5,2.5)(4,2.5)(4,4)(3.5,4)(3.5,3)(3,3)(3,2.5)(2,2.5)(2,2)
\psline(1,0)(0,0)(0,1)(1,1)(1,2)(2,2)(2,3)(3,3)(3,4)(4,4)
\psline(4,2)(4,3)
\psline(3,0)(3,2.5)(3.5,2.5)(3.5,3)
\pscircle*(0,1){0.1}\pscircle*(1,2){0.1}\pscircle*(2,3){0.1}\pscircle*(3,4){0.1}
\psline[arrows=->,arrowsize=3pt 3](1.1,1.9)(1.9,1.1)
\psline[arrows=->,arrowsize=3pt 3](2.1,2.9)(2.9,2.6)
\psline[arrows=->,arrowsize=3pt 3](3.6,2.45)(3.9,2.05)
\psline[arrows=->,arrowsize=3pt 3](3.1,3.9)(3.4,3.1)
\rput(-0.25,1.2){\textrm{{\footnotesize $i_1$}}}
\rput(1.4,3.2){\textrm{{\footnotesize $i_{m-1}$}}}
\rput(2.6,4.2){\textrm{{\footnotesize $i_m$}}}
\rput(0.5,.5){\textrm{{\small $\pi^{(1)}$}}}
\rput(2.3,.8){\textrm{{\footnotesize $23\overset{{\gray \bullet}}{1}4$}}}
\rput(3.5,1){\textrm{{\footnotesize $234\overset{{\gray \bullet}}{1}$}}}

\end{pspicture}
\end{center}
There are $m$ left-right maxima and $m$ regions containing arrows (to be filled with an arbitrary number of ``balls''), and the \gf $K:=\sum_{n\geq0}|S_n(231,1423)|x^n$ for $\{231,1423\}$-avoiders is $\frac{1-2x}{1-3x+x^2}$ \cite[Seq. A001519]{Sl}.
Hence, for $m\ge 3$,
$$G_m(x)=\frac{x^m}{(1-x)^m}K.$$

Now we find an explicit formula for $G_2(x)$. In order to do that we define the following notation. Let $g_k(x)$ denote the generating function for $T$-avoiders in $S_n$ with two left-right maxima and leftmost letter $n-1-k,\ 0\le k \le n-2$. Let $g'_k(x)$ to be the generating function for $T$-avoiders with two left-right maxima $(n-1-k)n$ in the two leftmost positions.

First, we find an equation for $g_k(x)$. Let $\pi=(n-1-k)\pi'n\pi''\in S_n(T)$ with two left-right maxima and leftmost letter $n-1-k$. If $\pi'$ is empty, then the contribution is $g'_k(x)$. Otherwise, since $\pi'$ avoids $2314$, $\pi'$ has the form
$\beta' d\, \beta''$ where $\beta'<\beta''<d$. 
If $\beta'$ is empty, then the contribution is  $x\sum_{j\geq0}g_{k+j}(x)$, upon considering deletion of $n-1-k$.
If $\beta'$ is not empty, then $\pi$ has the form
$$(n-1-k)\beta' d(d-1)\cdots d'n(n-1)\cdots(n-k)(n-2-k)\cdots(d+1)(d'-1)\cdots(d''+1),$$
for some $d'$, where $d''$ is the largest letter of $\beta'$. The contributions are $x^{k+2}$ for $\{n-k-1,n-k,\dots,n\}$, $x$ for $d$, $1/(1-x)^3$ for the 3 decreasing sequences, and $K-1$ for $\beta'$, hence
$\frac{x^{k+3}(K-1)}{(1-x)^3}$ altogether. By combining all the contributions, we obtain
$$g_k(x)=g'_k(x)+x\sum_{j\geq0}g_{k+j}(x)+\frac{x^{k+3}(K-1)}{(1-x)^3}$$
for $k\ge 1$,
with initial condition $g_0(x)=x(F_T(x)-1)$ (delete $n-1$, which can play no role in a forbidden pattern).

Define the \gf $G(x,u)=\sum_{k\geq0}g_k(x)u^k$. Note that $G(x,1)=G_2(x)$ since $G_2(x) =\sum_{k\ge 0}g_k(x)$.
The preceding recurrence for $g_k(x)$ can now be written as
\begin{align}\label{eq171A1x1}
G(x,u)=G'(x,u)+x(F_T(x)-1)-x^2F_T(x)+\frac{xu}{1-u}\big(G(x,1)-G(x,u)\big)+\frac{x^4u(K-1)}{(1-x)^3(1-xu)},
\end{align}
where $G'(x,u)=\sum_{k\geq0}g'_k(x)u^k$.

Next, let us write an equation for $g'_k(x)$. So suppose $\pi=(n-1-k)n\pi'\in S_n(T)$ has two left-right maxima. Clearly, $g'_0(x)=x^2F_T(x)$ (delete the first two letters, $n-1$ and $n$). For $k\geq1$, $\pi$ can be written as $\pi=(n-1-k)n\beta'(n-1)\beta''$. If $\beta'$ is empty, then the contribution is given by $xg'_{k-1}(x)$.
Otherwise, similar to the $g_k$ case, $\beta'$
has the form $\gamma' d \gamma''$ where $\gamma'<\gamma''<d$, and by considering whether $\gamma'$ is empty or not,
we obtain the contribution $x^2\sum_{j\geq0}g_{k-1+j}(x)+\frac{x^{k+3}(K-1)}{(1-x)^3}$.
Combining all the previous cases yields
$$g'_k(x)=xg'_{k-1}(x)+x^2\sum_{j\geq0}g_{k-1+j}(x)+\frac{x^{k+3}(K-1)}{(1-x)^3}$$
for $k\ge 1$, with $g'_0(x)=x^2F_T(x)$. Multiply by $u^k$ and sum over $k\ge 0$ to obtain
\begin{align}\label{eq171A1x2}
G'(x,u)=xuG'(x,u)+x^2F_T(x)+\frac{x^2u}{1-u}\big(G(x,1)-uG(x,u)\big)+\frac{x^4u}{(1-x)^3(1-xu)}(K-1).
\end{align}
Solving \eqref{eq171A1x2} for $G'(x,u)$, and substituting into \eqref{eq171A1x1}, yields
\begin{align*}
\frac{1-u+xu^2}{1-u}G(x,u)&=\frac{x((1-x)^3+xu(2-xu)(x^2-(1-x)^3))}{(1-x)^3(1-xu)}\\
&\quad-x(1-xu+x^2u)F_T(x)-\frac{x^4u(2-xu)K}{(1-x)^3(1-xu)}-\frac{xu(1+x-xu)}{1-u}G(x,1)\, .
\end{align*}
To solve the preceding functional equation, we apply the kernel method and take $u=C(x)$,
which cancels out the $G(x,u)$ term.
A calculation, using the identity $xC(x)^2=C(x)-1$ to simplify the result (best done by computer), now yields
\[
G(x,1) = \frac{x (-1+(2-x) C(x)) }{1+x C(x)}F_T(x)-\frac{x \left(1-5 x+9 x^2-4 x^3+x^4-x^2
   C(x)\right)}{(1-x)^2 \left(1-3 x+x^2\right) \big(1+x (1-C(x))\big)}\, .
\]

Hence, since $F_T(x)=\sum_{m\geq0}G_m(x)$ and $G_2(x)=G(x,1)$, we obtain
$$F_T(x)=1+xF_T(x)+G(x,1)+\frac{x^3K}{(1-x)^2(1-2x)},$$
which leads to
$$F_T(x)=\frac{1-4x+5x^2-x^3+(1-4x+3x^2-x^3)\sqrt{1-4x}}{(1-x)(1-3x+x^2)(1-2x+\sqrt{1-4x})},$$
as required.
\end{proof}

\subsubsection{$\mathbf{T=\{1324,1342,4123\}}$}
For this case, we define $a(n;i_1,i_2,\ldots,i_k)$ for $n\geq k$ to be the number of $T$-avoiding permutations of length $n$ whose first $k$ letters are $i_1,i_2,\ldots,i_k$.  Let $a(n)=\sum_{i=1}^n a(n;i)$ for $n\geq 1$ and $\mathcal{T}_{i,j}$ be the subset of permutations enumerated by $a(n;i,j)$.  It is convenient to consider separately the cases when either the second or third letter equals $n$.  To this end, let $e(n;i)=a(n;i,n)$ for $1 \leq i \leq n-2$ (with $e(n;n-1)$ defined to be zero) and $f(n;i,j)=a(n;i,j,n)$ for $4 \leq i \leq n-1$ and $1 \leq j \leq i-3$.  The arrays $a(n;i,j)$, $e(n;i)$ and $f(n;i,j)$ are determined recursively as follows.

\begin{lemma}\label{171Bl1}
We have
\begin{equation}\label{171Bl11e1}
a(n;i,i-2)=a(n-2;i-2)+e(n-1;i-2)+\sum_{j=1}^{i-3}a(n-1;i-1,j), \qquad 3 \leq i \leq n,
\end{equation}
\begin{equation}\label{171Bl11e2}
a(n;i,j)=f(n;i,j)+\sum_{\ell=1}^{j-1}a(n-1;i-1,\ell), \qquad 4 \leq i \leq n-1 \quad and \quad 1 \leq j \leq i-3,
\end{equation}
\begin{equation}\label{171Bl11e3}
a(n;n,j)=\sum_{\ell=1}^{j}a(n-1;n-1,\ell), \qquad 1 \leq j \leq n-3,
\end{equation}
\begin{equation}\label{171Bl11e4}
e(n;i)=e(n-1;i)+\sum_{j=1}^ia(n-1;n-1,j), \qquad 1 \leq i \leq n-3,
\end{equation}
and
\begin{equation}\label{171B1l1e5}
f(n;i,j)=f(n-1;i,j)+\sum_{\ell=1}^{j-1}a(n-2;n-2,\ell), \qquad 4 \leq i \leq n-2 \quad and \quad 1 \leq j \leq i-3,
\end{equation}
with $e(n;n-2)=C_{n-2}$ for $n\geq 3$ and $f(n;n-1,j)=a(n-1;n-1,j)$ for $1 \leq j \leq n-4$.  Furthermore, we have $a(n;i,j)=0$ if $n \geq4$ and $1 \leq i<j-1<n-1$, $a(n;i,i+1)=a(n-1;i)$ if $1\leq i \leq n-1$, and $a(n;i,i-1)=a(n-1;i-1)$ if $2 \leq i\leq n$.
\end{lemma}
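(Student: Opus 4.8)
The plan is to prove the entire system uniformly: for each prescribed prefix I would induct on $n$ and split the set of $T$-avoiders carrying that prefix according to the position of one judiciously chosen extra letter --- typically the maximum $n$, the second largest $n-1$, or the value $i-1$ just below the leading letter $i$. In each resulting subcase the aim is to set up a bijection onto a (disjoint union of) smaller array(s) by deleting one or two letters, and the only real content is to check that the deletion respects $T$-avoidance. Two mechanisms recur throughout. First, a letter sitting in a fixed extreme position is often \emph{inert}: for instance, the leading letter of $\pi$ can play the role of the ``$1$'' in a $1324$ or a $1342$ only if it is the least of the four chosen entries, which then forces the second chosen entry to be the third smallest --- impossible when the actual second letter of $\pi$ is $i\pm1$. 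Second, when a letter is not inert but its sole possible role is the ``$4$'' of a $4123$ (this is the situation for $n$ whenever it occupies position $2$ or $3$), deleting it destroys exactly those $4123$'s, but each of them reappears with the new maximum playing the ``$4$''; hence $T$-avoidance is preserved in both directions, and deletion of $n$ is still a bijection.

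With these principles the auxiliary assertions are short. The vanishing $a(n;i,j)=0$ for $i+2\le j\le n-1$ holds because, under that constraint on the first two letters, the entry $i+1$ occurs to the right of position $2$ and $n$ occurs to the right of position $2$ as well, so $i$, $j$, and the pair $\{i+1,n\}$ in whichever order they appear yield a forbidden $1324$ or $1342$. The identities $a(n;i,i+1)=a(n-1;i)$ and $a(n;i,i-1)=a(n-1;i-1)$ follow by deleting the (inert) leading letter and standardizing. For the base cases, deleting $n$ from an avoider counted by $f(n;n-1,j)$ is governed by the second mechanism and gives $f(n;n-1,j)=a(n-1;n-1,j)$ at once; the avoiders counted by $e(n;n-2)$ are seen, by forcing out the regions made empty by the $4123$ constraint, to be in bijection with the $123$-avoiding permutations of an $(n-2)$-element set, whence $e(n;n-2)=C_{n-2}$.

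For the five main recurrences I would choose the splitting letter and the branches as follows. In \eqref{171Bl11e1} (prefix $i,i-2$), split on the third letter: if it lies in $\{n-1,n\}$, delete the leading $i$ to reach $e(n-1;i-2)$; if it equals $i-1$, delete both $i$ and $i-1$ to reach $a(n-2;i-2)$; in the remaining case delete the letter $i-2$ to reach a member of $a(n-1;i-1,j)$ with $j\le i-3$. In \eqref{171Bl11e2} (prefix $i,j$, $j\le i-3$), split on whether the third letter equals $n$ (the $f(n;i,j)$ term) or not (delete the leading $i$ to land in $a(n-1;i-1,\ell)$, the constraint $1\le\ell\le j-1$ coming from the fact that the entries lying between $j$ and $i$ in value must occur in a very restricted order to avoid $1324$ and $1342$). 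Relation \eqref{171Bl11e3} (prefix $n,j$) is, after observing that $\pi=n\sigma$ avoids $T$ iff $\sigma$ avoids $123$, exactly the ballot-type recursion for $123$-avoiders by first letter. Relation \eqref{171Bl11e4} ($e(n;i)=a(n;i,n)$) splits on whether $n-1$ occupies position $3$ (delete $n$, giving $e(n-1;i)$) or lies further right (delete the leading letter, giving $\sum_{\ell\le i}a(n-1;n-1,\ell)$). Finally \eqref{171B1l1e5} (prefix $i,j,n$, $j\le i-3$) splits on the position of $n-1$, one branch handled by deleting $n$ (yielding $f(n-1;i,j)$) and the other by a deletion producing a member of $a(n-2;n-2,\ell)$ with $\ell\le j-1$. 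I expect the bottleneck to be precisely the per-subcase pattern-chasing: proving that the ``forced'' part of each avoider is genuinely forced --- that prescribed suffixes must be decreasing, that prescribed rectangles in the matrix diagram are empty, and that the summation ranges are exactly as stated --- and, conversely, that every configuration permitted by those constraints really does avoid all of $1324$, $1342$, $4123$, so that the maps are bijections rather than mere injections.
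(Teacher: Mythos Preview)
Your overall strategy matches the paper's: both proofs split on the third (or fourth) letter and delete one or two letters to set up bijections with smaller arrays. The two mechanisms you isolate --- inert extreme letters and the ``replaceable $4$'' phenomenon for $4123$ --- are precisely what the paper uses. However, one of your case descriptions contains a genuine error.

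In \eqref{171Bl11e2} you propose, in the branch where the third letter $x$ is not $n$, to \emph{delete the leading $i$} and land in $a(n-1;i-1,\ell)$. But deleting $i$ from a permutation beginning $i,j,x,\ldots$ (with $x<j\le i-3$) and standardizing produces a permutation beginning $j,x$, not $i-1,x$. Worse, this deletion is not a bijection onto $T$-avoiders with that prefix: re-inserting $i$ in front can create a $4123$ of the form $i,j,b,c$ with $j<b<c<i$ that has no witness in the shorter permutation, since neither $j$ nor $x$ (both smaller than $c$) can play the ``$4$''. The correct move --- the one the paper makes --- is to delete the \emph{second} letter $j$: any $4123$ with $j$ as ``$4$'' is already witnessed with $i$ in that role, and any $1324$ or $1342$ with $j$ as ``$1$'' is witnessed with $x<j$ in that role; after standardizing, the prefix becomes $i-1,x$, yielding exactly $\sum_{\ell<j}a(n-1;i-1,\ell)$. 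A smaller slip occurs in \eqref{171Bl11e1}: your first branch ``third letter in $\{n-1,n\}$'' is off, since for $i\le n-1$ the value $n-1$ is never a legal third letter (it forces a $1324$ or $1342$ together with $i{-}2$ and the letters $i{-}1,\,n$), while for $i=n$ the value $n-1$ coincides with $i-1$ and hence overlaps your second branch. The paper's split is simply $x=n$, $x=i-1$, or $x<i-2$, noting that the identity survives at $i=n$ because $e(n-1;n-2)=0$ by convention. With these two fixes your plan is the paper's proof.
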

\begin{proof}
The formulas for $a(n;i,i+1)$ and $a(n;i,i-1)$, and for $a(n;i,j)$ when $i<j-1< n-1$, follow from the definitions. In the remaining cases, let $x$ denote the third letter of a $T$-avoiding permutation. For \eqref{171Bl11e1}, note that members of $\mathcal{T}_{i,i-2}$ when $i<n$ must have $x=i-1$, $x=n$ or $x<i-2$, lest there be an occurrence of $1324$ or $1342$.  This is seen to give $a(n-2;i-2)$, $e(n-1;i-2)$ and $\sum_{j=1}^{i-3}a(n-1;i-1,j)$ possibilities, respectively, which implies \eqref{171Bl11e1}.  Observe that \eqref{171Bl11e1} also holds when $i=n$ since $e(n-1;n-2)=0$, by definition.  For \eqref{171Bl11e2}, note that members of $\mathcal{T}_{i,j}$ where $i<n$ and $j \leq i-3$ must have $x=n$ or $x<j$ (as $x=j+1$ is not permitted due to $4123$ and $j+2\leq x \leq n-1$ is not due to $1324$, $1342$).  In the second case, the letter $j$ becomes extraneous and thus may be deleted since $i,x$ imposes a stricter requirement on later letters than does $j,x$ (with $x<j$ making $j$ redundant with respect to $1324$, $1342$). Relation \eqref{171Bl11e2} then follows from the definitions.  For \eqref{171Bl11e3}, note that members of $\mathcal{T}_{n,j}$ where $j\leq n-3$ must have $x=n-1$ or $x<j$ in order to avoid $4123$, which accounts for the $\ell=j$ term and the remaining terms, respectively, in the sum on the right-hand side.

For \eqref{171Bl11e4}, note that members of $\mathcal{T}_{i,n}$ where $i \leq n-3$ must have $x=n-1$, $x<i$ or $x=i+1$.  The letter $n$ may be deleted in the first case, while the $i$ may be deleted in the latter two (as $n,x$ imposes a stricter requirement on subsequent letters than $i,x$).  Thus, there are $e(n-1;i)$, $\sum_{j=1}^{i-1}a(n-1;n-1,j)$ and $a(n-1;n-1,i)$ possibilities, respectively, which implies \eqref{171Bl11e4}.  That $e(n;n-2)=C_{n-2}$ follows from the fact that members of $\mathcal{T}_{n-2,n}$ are synonymous with $123$-avoiding permutations of length $n-2$ which are well known to be enumerated by $C_{n-2}$ (note that $n-2$ is redundant due to $n$).  Finally, to show  \eqref{171B1l1e5}, note that permutations counted by $f(n;i,j)$ must have fourth letter $y$ equal $n-1$ or less than $j$.  If $y=n-1$, then the letter $n$ may be deleted and thus there are $f(n-1;i,j)$ possibilities, by definition.  If $y<j$, then $n,y$ imposes a stricter requirement on the remaining letters with respect to $4123$ than does $i,y$ or $i,j$, with the $i$ and $j$ also redundant with respect to $1324$ or $1342$ due to $y$.  Thus, both $i$ and $j$ may be deleted in this case, yielding $\sum_{\ell=1}^{j-1}a(n-2;n-2,\ell)$ possibilities, which implies \eqref{171B1l1e5}.  That $f(n;n-1,j)=a(n-1;n-1,j)$ holds for $1 \leq j \leq n-4$ since the letter $n$ may be deleted in this case, which completes the proof.
\end{proof}

In order to solve the recurrences of the prior lemma, we introduce the following functions:  $a_n(u)=\sum_{i=1}^na(n;i)u^i$ for $n\geq1$, $b_{n,i}(v)=\sum_{j=1}^{i-2}a(n;i,j)v^j$ for $3 \leq i \leq n$, $b_n(u,v)=\sum_{i=1}^{n-1}b_{n,i}(v)u^i$ for $n\geq3$,
$$c_{n,i}(v)=a(n-1;i)+a(n-1;i-1)+b_{n,i}(v), \qquad 1 \leq i \leq n-1,$$
$c_n(u,v)=\sum_{i=1}^{n-1}c_{n,i}(v)u^i$ for $n\geq2$, $e_n(u)=\sum_{i=1}^{n-2}e(n;i)u^i$ for $n\geq 3$, $f_{n,i}(v)=\sum_{j=1}^{i-3}f(n;i,j)v^j$ for $4 \leq i \leq n-1$, and $f_n(u,v)=\sum_{i=4}^{n-1}f_{n,i}(v)u^i$ for $n\geq 5$.

By the definitions, we have
\begin{equation}\label{171Be1}
a_n(u)=c_n(u,1)+e_n(u)+C_{n-1}u^n, \qquad n \geq 1.
\end{equation}
Assume $b_{n,1}(v)=b_{n,2}(v)=0$.  By \eqref{171Bl11e1} and \eqref{171Bl11e2}, we have for $3 \leq i \leq n-1$,
\begin{align*}
b_{n,i}(v)&=(a(n-2;i-2)+b_{n-1,i-1}(1)+e(n-1;i-2))v^{i-2}+f_{n,i}(v)\\
&\quad+\sum_{j=1}^{i-3}v^j\sum_{\ell=1}^{j-1}a(n-1;i-1,\ell)\\
&=(a(n-2;i-2)+e(n-1;i-2))v^{i-2}+f_{n,i}(v)+\frac{v}{1-v}(b_{n-1,i-1}(v)-v^{i-2}b_{n-1,i-1}(1)).
\end{align*}
Multiplying both sides of the last equation by $u^i$, and summing over $3 \leq i \leq n-1$, yields
\begin{align}
b_n(u,v)&=u^2(a_{n-2}(uv)+e_{n-1}(uv)-C_{n-3}(uv)^{n-2})+f_n(u,v)\notag\\
&\quad+\frac{u}{1-v}(vb_{n-1}(u,v)-b_{n-1}(uv,1)), \qquad n \geq 3. \label{171Be2}
\end{align}
By \eqref{171Bl11e1} and \eqref{171Bl11e3}, we get
\begin{align}
b_{n,n}(v)&=(a(n-2;n-2)+b_{n-1,n-1}(1))v^{n-2}+\frac{1}{1-v}(b_{n-1,n-1}(v)-v^{n-2}b_{n-1,n-1}(1))\notag\\
&=C_{n-3}v^{n-2}+\frac{1}{1-v}(b_{n-1,n-1}(v)-v^{n-1}b_{n-1,n-1}(1)), \qquad n \geq 3. \label{171Be3}
\end{align}
By the definitions, we have
\begin{align}
c_n(u,v)&=a_{n-1}(u)+u(a_{n-1}(u)-a(n-1;n-1)u^{n-1})+b_n(u,v)\notag\\
&=(u+1)a_{n-1}(u)-C_{n-2}u^n+b_n(u,v), \qquad n \geq 2. \label{171Be4}
\end{align}

Multiplying both sides of \eqref{171Bl11e4} by $u^i$, and summing over $1 \leq i \leq n-3$, gives
\begin{align}
e_n(u)&=e(n;n-2)u^{n-2}+e_{n-1}(u)+\sum_{j=1}^{n-3}a(n-1;n-1,j)\left(\frac{u^j-u^{n-2}}{1-u}\right)\notag\\
&=C_{n-2}u^{n-2}+e_{n-1}(u)+\frac{1}{1-u}(b_{n-1,n-1}(u)-u^{n-2}b_{n-1,n-1}(1)), \qquad n \geq 3. \label{171Be5}
\end{align}
By \eqref{171B1l1e5}, we have
$$f_{n,i}(v)=f_{n-1,i}(v)+\frac{1}{1-v}\sum_{\ell=1}^{i-3}a(n-2;n-2,\ell)(v^{\ell+1}-v^{i-2}), \qquad 4 \leq i \leq n-2,$$
with
$$f_{n,n-1}(v)=\sum_{j=1}^{n-4}f(n;n-1,j)v^j=\sum_{j=1}^{n-4}a(n-1;n-1,j)v^j=b_{n-1,n-1}(v)-C_{n-3}v^{n-3}, \qquad n \geq 5.$$
We then get
\begin{align}
f_n(u,v)-f_{n,n-1}(v)u^{n-1}&=f_{n-1}(u,v)+\frac{1}{1-v}\sum_{\ell=1}^{n-4}a(n-2;n-2,\ell)v^{\ell+1}\sum_{i=\ell+3}^{n-2}u^i\notag\\
&\quad-\frac{1}{1-v}\sum_{\ell=1}^{n-4}a(n-2;n-2,\ell)\sum_{i=\ell+3}^{n-2}u^iv^{i-2}\notag\\
&=f_{n-1}(u,v)+\frac{uv}{(1-u)(1-v)}(u^2b_{n-2,n-2}(uv)-u^{n-2}b_{n-2,n-2}(v))\notag\\
&\quad-\frac{u}{v(1-uv)(1-v)}(u^2v^2b_{n-2,n-2}(uv)-(uv)^{n-2}b_{n-2,n-2}(1)),\quad n\geq5. \label{171Be6}
\end{align}

Let $C(x)=\sum_{n\geq0}C_nx^n$.  Define the generating functions $a(x;u)=\sum_{n\geq1}a_n(u)x^n$, $b(x;u,v)=\sum_{n\geq3}b_n(u,v)x^n$, $c(x;u,v)=\sum_{n\geq2}c_n(u,v)x^n$, $d(x;v)=\sum_{n\geq3}b_{n,n}(v)x^n$, $e(x;u)=\sum_{n\geq3}e_n(u)x^n$, and $f(x;u,v)=\sum_{n\geq5}f_n(u,v)x^n$.
By \eqref{171Be4} at $v=1$, we have
\begin{equation}\label{171Be7}
c(x;u,1)=x(u+1)a(x;u)+b(x;u,1)-x^2u^2C(xu).
\end{equation}
Rewriting recurrences \eqref{171Be1}--\eqref{171Be3}, \eqref{171Be5}, and \eqref{171Be6} in terms of generating functions, and applying \eqref{171Be7} to the relation obtained from \eqref{171Be1}, yields the following system of functional equations.

\begin{lemma}\label{171Bl2}
We have
\begin{equation}\label{171feqa}
(1-x(u+1))a(x;u)=xu(1-xu)C(xu)+b(x;u,1)+e(x;u),
\end{equation}
\begin{align}
\left(1-\frac{xuv}{1-v}\right)b(x;u,v)&=f(x;u,v)+xu^2e(x;uv)-x^3u^3vC(xuv)-\frac{xu}{1-v}b(x;uv,1)\notag\\
&\quad+x^2u^2a(x;uv),\label{171feqb}
\end{align}
\begin{equation}\label{171feqd}
\left(1-\frac{x}{1-v}\right)d(x;v)=x^3vC(xv)-\frac{x}{1-v}d(xv;1),
\end{equation}
\begin{equation}\label{171feqe}
(1-x)e(x;u)=x^2(C(xu)-1)+\frac{x}{u(1-u)}\left(ud(x;u)-d(xu;1)\right),
\end{equation}
and
\begin{align}
(1-x)f(x;u,v)&=xd(xu;v)-x^3u^2(C(xuv)-1)+\frac{x^2uv}{(1-u)(1-v)}(u^2d(x;uv)-d(xu;v))\notag\\
&\quad-\frac{x^2u}{v(1-uv)(1-v)}(u^2v^2d(x;uv)-d(xuv;1)).\label{171feqf}
\end{align}
\end{lemma}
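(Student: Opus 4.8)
\textbf{Proof proposal for Lemma \ref{171Bl2}.}

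The plan is to convert each scalar recurrence of Lemma \ref{171Bl1}, in the polynomial‑coefficient forms \eqref{171Be1}--\eqref{171Be6}, into a relation among the bivariate generating functions $a(x;u),b(x;u,v),c(x;u,v),d(x;v),e(x;u),f(x;u,v)$ by multiplying through by $x^n$ and summing over the stated range of $n$. Three elementary bookkeeping rules do all the work: an index shift $n\mapsto n-k$ inside a generating function produces a factor $x^k$; replacing a polynomial variable by a product such as $uv$ corresponds to evaluating the generating function at that argument, e.g.\ $\sum_n a_{n-2}(uv)x^n=x^2a(x;uv)$; and a Catalan sum telescopes via $\sum_{n\ge k}C_{n-k}t^{\,n-k+1}x^n=x^k t\,C(xt)$, with the obvious subtraction of one or two leading terms when the summation misses the $C_0$ (or $C_0,C_1$) contribution. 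The geometric sums over the $j$‑ and $\ell$‑indices have already been absorbed into the $\frac1{1-u}$ and $\frac1{1-v}$ factors at the level of Lemma \ref{171Bl1}, so no further summing of those occurs.

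Concretely I would proceed equation by equation. Summing \eqref{171Be1} over $n\ge1$ gives $a(x;u)=c(x;u,1)+e(x;u)+xu\,C(xu)$; substituting \eqref{171Be7} (itself obtained by summing \eqref{171Be4} at $v=1$) to eliminate $c(x;u,1)$ and collecting the Catalan terms yields \eqref{171feqa}. Summing \eqref{171Be2} over $n\ge3$ and recognizing $x^2a(x;uv),\,x\,e(x;uv),\,x^3uv\,C(xuv),\,x\,b(x;u,v)$ and $x\,b(x;uv,1)$ yields \eqref{171feqb}. Summing \eqref{171Be3} over $n\ge3$, with the convention $b_{2,2}(v)=0$, yields \eqref{171feqd}, and summing \eqref{171Be5} over $n\ge3$ yields \eqref{171feqe}, where the shifted Catalan sum $\sum_{n\ge3}C_{n-2}u^{n-2}x^n=x^2(C(xu)-1)$ carries the $-1$ because the $C_0$ term is absent. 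Finally, for \eqref{171Be6} I would first rewrite the boundary term via $f_{n,n-1}(v)u^{n-1}=u^{n-1}b_{n-1,n-1}(v)-C_{n-3}v^{n-3}u^{n-1}$, then extract the sums $\sum_{n\ge5}u^{n-1}b_{n-1,n-1}(v)x^n=x\,d(xu;v)-v u^3x^4$ and $\sum_{n\ge5}C_{n-3}v^{n-3}u^{n-1}x^n=u^2x^3\bigl(C(xuv)-1-uvx\bigr)$; the $x^4$ corrections cancel and the surviving Catalan contribution collapses to exactly $x^3u^2(C(xuv)-1)$, so that after moving $x\,f(x;u,v)$ to the left one is left with \eqref{171feqf}.

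Throughout, the one point requiring genuine care is matching index ranges: one must use $f_3=f_4=0$, $e_2=0$, $b_2(\cdot,\cdot)=0$, and $b_{3,3}(v)=v$ (the latter because $a(3;3,1)=1$) so that the ranges $n\ge3$ and $n\ge5$ defining $b,d,e$ and $f$ line up with the ranges in which the recurrences are valid, and so that the low‑order corrections in the Catalan sums and in the $f_{n,n-1}$ boundary term of \eqref{171Be6} are accounted for correctly. No new combinatorial input is needed beyond Lemma \ref{171Bl1}; the main obstacle is purely organizational — an off‑by‑one in those corrections would corrupt \eqref{171feqf} — and it is reassuring that the spurious $x^4$ terms cancel cleanly, which I would use as an internal check. \qed
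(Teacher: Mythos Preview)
Your proposal is correct and follows exactly the approach the paper takes: the paper itself merely states (just before the lemma) that one rewrites recurrences \eqref{171Be1}--\eqref{171Be3}, \eqref{171Be5}, \eqref{171Be6} as generating-function identities and applies \eqref{171Be7} to the relation coming from \eqref{171Be1}. Your outline supplies the details the paper omits, and your bookkeeping checks (in particular the cancellation of the $x^4$ terms in the $f_{n,n-1}$ boundary contribution and the use of $b_{3,3}(v)=v$, $b_{2,2}(v)=0$, $e_2=f_3=f_4=0$) are exactly the ones needed.
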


Note that the last three equations in the prior lemma are independent of the first two.

\begin{lemma}\label{171Bl3}
We have
\begin{equation}\label{171de2}
d(x;v)=\frac{x^2v(1-(1-x)C(xv))}{1-x-v},
\end{equation}
\begin{equation}\label{171ee2}
e(x;u)=\frac{x^2u(1-(1-x)C(xu))}{(1-x)(1-x-u)},
\end{equation}
and
\begin{equation}\label{171fe2}
f(x;1/(1-x),1-x)=\frac{1-6x+9x^2-2x^3-(1-3x)(1-x)\sqrt{1-4x}}{2x(1-x)^2}.
\end{equation}
\end{lemma}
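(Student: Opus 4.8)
The plan is to read off \eqref{171de2}, \eqref{171ee2}, and \eqref{171fe2} from the ``decoupled'' subsystem consisting of the last three equations of Lemma~\ref{171Bl2}, namely \eqref{171feqd}, \eqref{171feqe}, \eqref{171feqf}, which involve neither $a$ nor $b$. The only genuinely new step is a kernel-method application to \eqref{171feqd}; after that everything is substitution, repeatedly invoking the Catalan identity $yC(y)^2=C(y)-1$ and the simplification
$$1-4x(1-x)=(1-2x)^2,\qquad\text{so that}\qquad C\big(x(1-x)\big)=\frac{1}{1-x}.$$

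First I would solve \eqref{171feqd}. Since $d(x;v)=\sum_{n\ge3}b_{n,n}(v)x^n$ is a power series in $x$ with polynomial coefficients in $v$, I may legitimately substitute $v=1-x$, which annihilates the kernel factor $1-\tfrac{x}{1-v}$ and forces $d\big(x(1-x);1\big)=x^3(1-x)C\big(x(1-x)\big)=x^3$. Because $w\mapsto x(1-x)$ has compositional inverse $x=wC(w)$ (as $wC(w)\big(1-wC(w)\big)=w$ follows from $C=1+wC^2$), this yields $d(x;1)=x^3C(x)^3=x\big((1-x)C(x)-1\big)$. Substituting $d(xv;1)=(xv)^3C(xv)^3$ back into \eqref{171feqd} and clearing the $C(xv)^2$ and $C(xv)^3$ terms via the Catalan identity gives \eqref{171de2}.

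With $d(x;v)$ known, \eqref{171feqe} and \eqref{171feqf} contain no further unknowns: \eqref{171feqe} expresses $e(x;u)$ directly in terms of $d(x;u)$ and $d(xu;1)$, and substituting \eqref{171de2} together with $d(xu;1)=xu\big((1-xu)C(xu)-1\big)$ and simplifying produces \eqref{171ee2}; likewise \eqref{171feqf} expresses $f(x;u,v)$ directly in terms of $d(xu;v)$, $d(x;uv)$, and $d(xuv;1)$. It then remains to specialize $f$ at $u=1/(1-x)$, $v=1-x$, where $uv=1$, $xu=x/(1-x)$, $xv=x(1-x)$, $xuv=x$. The only delicate point is that the term in \eqref{171feqf} carrying the factor $\tfrac{1}{1-uv}$ is then of the form $0/0$, because $u^2v^2d(x;uv)-d(xuv;1)$ vanishes at $uv=1$; I would fix $v=1-x$, put $w=uv$, and pass to the limit $w\to1$ (differentiating the numerator once), using $d(x;w)$ from \eqref{171de2}, the value $d(x;1)=x^3C(x)^3$, and $C'(w)=C(w)^2/\sqrt{1-4w}$. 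Assembling the pieces gives \eqref{171fe2}.

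The main obstacle is purely computational: the Catalan identity must be applied at four different arguments ($x$, $xu$, $xv$, $x(1-x)$), the compositional inversion $x\leftrightarrow x(1-x)$ has to be carried out correctly, and the removable singularity at $uv=1$ in \eqref{171feqf} makes the last simplification error-prone. As the authors note elsewhere, this is best executed (or at least verified) with a computer algebra system; conceptually, however, every step is routine once \eqref{171de2} is in hand.
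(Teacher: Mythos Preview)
Your proposal is correct and follows essentially the same route as the paper: solve \eqref{171feqd} for $d(x;1)$ by the kernel method, substitute back for $d(x;v)$, read $e(x;u)$ off \eqref{171feqe}, and evaluate \eqref{171feqf} at $u=1/(1-x)$, $v=1-x$ by resolving the removable singularity at $uv=1$ via differentiation. The one minor difference is in the kernel step: the paper first replaces $x$ by $x/v$ in \eqref{171feqd} and then sets $v=1/C(x)$, whereas you substitute $v=1-x$ directly and invert $x\mapsto x(1-x)$ via $x=wC(w)$; your version is a touch slicker (the identity $C(x(1-x))=1/(1-x)$ makes $d(x(1-x);1)=x^3$ immediate), but the two computations are equivalent.
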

\begin{proof}
Replacing $x$ with $x/v$ in \eqref{171feqd} gives
\begin{equation}\label{171Bl3e1}
\left(1-\frac{x}{v(1-v)}\right)d(x/v;v)=\frac{x^3}{v^2}C(x)-\frac{x}{v(1-v)}d(x;1).
\end{equation}
Applying the kernel method to \eqref{171Bl3e1}, and taking $v=\frac{1+\sqrt{1-4x}}{2}=\frac{1}{C(x)}$, yields
$$d(x;1)=x^2\left(1-\frac{1}{C(x)}\right)C^2(x)=x(C(x)-1)-x^2C(x)=(x-x^2)C(x)-x$$
and thus
\begin{align*}
d(x;v)&=\frac{x^3v(1-v)C(xv)}{1-x-v}-\frac{x}{1-x-v}(xv(1-xv)C(xv)-xv)\\
&=\frac{x^2v(1-(1-x)C(xv))}{1-x-v}.
\end{align*}
Formula \eqref{171ee2} now follows from \eqref{171de2} and \eqref{171feqe}.
By taking $u\rightarrow1/v$  in \eqref{171feqf}, we obtain
\begin{align*}
&(1-x)f(x;1/v,v)\\
&=xd(x/v;v)-x^3/v^2(C(x)-1)+\frac{x^2}{(1-1/v)(1-v)}(d(x;1)/v^2-d(x/v;v))\\
&\quad-\frac{x^2}{v^2(1-v)}\lim_{u\rightarrow1/v}\frac{u^2v^2d(x;uv)-d(xuv;1)}{1-uv}\\
&=xd(x/v;v)-x^3/v^2(C(x)-1)+\frac{x^2}{(1-1/v)(1-v)}(d(x;1)/v^2-d(x/v;v))\\
&\quad+\frac{x^2(2d(x;1)+\frac{d}{dw}d(x;w)\mid_{w=1}-x\frac{d}{dx}d(x;1))}{v^2(1-v)}.
\end{align*}
Substituting $v=1-x$ in the last expression, and using \eqref{171de2}, yields \eqref{171fe2}.
\end{proof}

We can now determine the generating function $F_T(x)$.

\begin{theorem}\label{th171A2}
Let $T=\{1324,1342,4123\}$.  Then
$$F_T(x)=\frac{1-3x+x^2-x^3-(1-x)^3\sqrt{1-4x}}{2x(1-4x+4x^2-x^3)}.$$
\end{theorem}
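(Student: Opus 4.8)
The plan is to read off $F_T(x)=1+a(x;1)$ from the functional-equation system of Lemma~\ref{171Bl2}, using the explicit evaluations already obtained in Lemma~\ref{171Bl3}. Since $a(n)=\sum_{i=1}^n a(n;i)=|S_n(T)|$ for $n\geq1$ and $|S_0(T)|=1$, it suffices to determine the single series $a(x;1)$. The last three equations of Lemma~\ref{171Bl2} were solved without reference to $a$ and $b$, so what remains is to exploit \eqref{171feqa} and \eqref{171feqb}, which couple $a$ and $b$.

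First I would specialize \eqref{171feqa} at $u=1$, obtaining
$$(1-2x)\,a(x;1)=x(1-x)C(x)+b(x;1,1)+e(x;1),$$
which ties $a(x;1)$ to $b(x;1,1)$ once $e(x;1)$ is substituted from \eqref{171ee2}. For a second relation I would apply the kernel method to \eqref{171feqb}: the coefficient of $b(x;u,v)$ there is $1-\frac{xuv}{1-v}$, which vanishes when $v=\frac{1}{1+xu}$. The decisive choice is $u=\frac{1}{1-x}$; then the kernel root is exactly $v=1-x$, the product $uv$ equals $1$, and the remaining coefficient $\frac{xu}{1-v}$ stays finite (it equals $\frac{1}{1-x}$). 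With this substitution \eqref{171feqb} collapses to a relation among $f\!\left(x;\frac{1}{1-x},1-x\right)$, $e(x;1)$, $b(x;1,1)$, $a(x;1)$ and $C(x)$, and $f\!\left(x;\frac{1}{1-x},1-x\right)$ is precisely the quantity evaluated in \eqref{171fe2}. The substitution is legitimate as a formal operation because each $b_n(u,v)$ is a polynomial in $u,v$ of bounded degree, so plugging in the power series $\frac{1}{1-x}$, which has constant term $1$, is well defined.

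Finally I would eliminate $b(x;1,1)$ between the two relations. The coefficient of $a(x;1)$ in the resulting equation (collected on one side) works out to $\frac{1-3x+x^2}{(1-x)^2}$, which is the source of the factor $1-3x+x^2$, hence of the denominator $1-4x+4x^2-x^3=(1-x)(1-3x+x^2)$ in the claimed formula. Substituting the closed forms of $e(x;1)$ and $f\!\left(x;\frac{1}{1-x},1-x\right)$ and writing $C(x)=\frac{1-\sqrt{1-4x}}{2x}$, the rational parts and the $\sqrt{1-4x}$-parts each consolidate, yielding
$$(1-3x+x^2)\,a(x;1)=\frac{1-5x+9x^2-9x^3+2x^4-(1-x)^3\sqrt{1-4x}}{2x(1-x)};$$
adding $1$ and clearing denominators gives the stated $F_T(x)$. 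The only real obstacle is the bookkeeping in this last simplification — in particular, checking that the $\sqrt{1-4x}$ coefficient collapses exactly to $-(1-x)^3$ — which is routine but best confirmed with a computer algebra system.
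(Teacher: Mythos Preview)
Your proposal is correct and follows essentially the same approach as the paper: specialize \eqref{171feqa} at $u=1$, substitute $u=\tfrac{1}{1-x}$, $v=1-x$ into \eqref{171feqb} (which you motivate as a kernel choice; the paper simply makes the substitution), plug in the closed forms of $e(x;1)$ and $f\bigl(x;\tfrac{1}{1-x},1-x\bigr)$ from Lemma~\ref{171Bl3}, and solve the resulting two linear equations for $a(x;1)$ and $b(x;1,1)$. The paper records both $a(x;1)$ and $b(x;1,1)$ explicitly before adding $1$, but the computation is the same as yours.
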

\begin{proof}
In the notation above, we seek to determine $1+a(x;1)$.
By \eqref{171feqb} with $u=1/(1-x)$ and $v=1-x$ and by \eqref{171feqa} with
$u=1$, we have
\begin{align*}
f(x;1/(1-x),1-x)&=-\frac{x}{(1-x)^2}e(x;1)+\frac{x^3}{(1-x)^2}C(x)
+\frac{1}{1-x}b(x;1,1)-\frac{x^2}{(1-x)^2}a(x;1),\\
a(x;1)&=\frac{x(1-x)}{1-2x}C(x)+\frac{1}{1-2x}b(x;1,1)+\frac{1}{1-2x}e(x;1).
\end{align*}
Substituting the expressions for $e(x;1)$ and
$f(x;1/(1-x),x)$ from the prior lemma, and then solving the system that results for $a(x;1)$ and
$b(x;1,1)$, we obtain
\begin{align*}
a(x;1)&=\frac{1-5x+9x^2-9x^3+2x^4-(1-x)^3\sqrt{1-4x}}{2x(1-4x+4x^2-x^3)},\\
b(x;1,1)&=\frac{1-8x+22x^2-23x^3+5x^4-(1-6x+12x^2-7x^3+x^4)\sqrt{1-4x}}{2x(1-3x+x^2)}.\\
\end{align*}
Hence,
\begin{align*}
1+a(x;1)&=\frac{1-3x+x^2-x^3-(1-x)^3\sqrt{1-4x}}{2x(1-4x+4x^2-x^3)},
\end{align*}
as desired.
\end{proof}

\subsection{Case 174}
The three representative triples $T$ are:

\{2134,2341,2413\} (Theorem \ref{th174A3})

\{2143,2314,2341\} (Theorem \ref{th174A5})

\{2143,2314,2431\} (Theorem \ref{th174A6})
\begin{theorem}\label{th174A3}
Let $T=\{2134,2341,2413\}$. Then
 $$F_T(x)=\frac{1-6x+10x^2-3x^3+x^4}{(1-3x+x^2)(1-4x+2x^2)}.$$
\end{theorem}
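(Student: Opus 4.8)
The plan is to decompose $T$-avoiders according to their number of left-right maxima, as in the proofs of Theorems~\ref{th50A1} and~\ref{th166A1}. Let $G_m(x)$ denote the generating function for $T$-avoiders with exactly $m$ left-right maxima, so $G_0(x)=1$, $G_1(x)=xF_T(x)$, and $F_T(x)=\sum_{m\ge0}G_m(x)$.

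First I would record the structural consequences of avoiding $\{2134,2341,2413\}$ for a permutation $\pi=i_1\pi^{(1)}i_2\pi^{(2)}\cdots i_m\pi^{(m)}$ with $m\ge3$ left-right maxima. Every entry of $\pi^{(1)}$ is smaller than $i_1<i_2<i_3$, so a nonempty $\pi^{(1)}$ gives, with $i_1,i_2,i_3$, an occurrence of $2134$; hence $\pi^{(1)}=\emptyset$. An entry below $i_1$ occurring in $\pi^{(3)}\cup\cdots\cup\pi^{(m)}$ gives, with $i_1,i_2,i_3$, an occurrence of $2341$; hence all entries below $i_1$ lie in $\pi^{(2)}$. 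If $m\ge4$, such an entry would also form, with $i_1,i_3,i_4$, an occurrence of $2134$; hence $i_1=1$, and deleting the leading $1$ yields a bijection showing $G_m(x)=xG_{m-1}(x)$ for $m\ge4$. Consequently $\sum_{m\ge3}G_m(x)=G_3(x)/(1-x)$, and
$$F_T(x)=1+xF_T(x)+G_2(x)+\frac{G_3(x)}{1-x}.$$

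It remains to find $G_2(x)$ and $G_3(x)$. For $G_3$, write $\pi=i_1i_2\pi^{(2)}i_3\pi^{(3)}$. The subcase $i_1=1$ contributes $xG_2(x)$. When $i_1\ge2$, the forbidden patterns force $i_3=n$, $\pi^{(3)}=(i_3-1)(i_3-2)\cdots(i_2+1)$, and $\pi^{(2)}=(i_2-1)(i_2-2)\cdots(i_1+1)\,L$ where $L$ is a permutation of $\{1,\dots,i_1-1\}$; moreover the residual condition on $L$ turns out to be exactly that $L$ avoid $213$ and $2341$. Since $|S_k(213,2341)|=F_{2k-1}$ (as used in the proof of Theorem~\ref{th55A3}), summing over $i_1$, $i_2$, $n$ gives a rational contribution, and one obtains $G_3(x)=xG_2(x)+\frac{x^4(2-x)}{(1-x)^2(1-3x+x^2)}$. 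For $G_2$, write $\pi=i_1\pi^{(1)}i_2\pi^{(2)}$. The subcase $i_1=1$ contributes $x^2F_T(x)$; the subcase $i_1\ge2$ with $\pi^{(1)}=\emptyset$ forces (exactly as for $G_3$) $i_2=n$ and $\pi^{(2)}=(i_2-1)\cdots(i_1+1)\,L$ with $L$ now an arbitrary $T$-avoider of $\{1,\dots,i_1-1\}$, contributing $\frac{x^2}{1-x}(F_T(x)-1)$; and the subcase $i_1\ge2$ with $\pi^{(1)}\ne\emptyset$ (which forces $i_2=n$ and $\pi^{(1)}\subseteq\{1,\dots,i_1-1\}$) requires a secondary decomposition producing a further rational function $D(x)$.

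The final step is to substitute the resulting rational formulas for $G_2(x)$ and $G_3(x)$ into the displayed master equation and solve the resulting linear equation for $F_T(x)$, then check that it simplifies to $\frac{1-6x+10x^2-3x^3+x^4}{(1-3x+x^2)(1-4x+2x^2)}$. The main obstacle is the last subcase of $G_2$: when $\pi^{(1)}\ne\emptyset$, one finds that $\pi^{(2)}$ consists of a decreasing run of the ``medium'' values $\{i_1+1,\dots,n-1\}$ followed by a permutation of the unused ``low'' values, while $\pi^{(1)}$ must avoid $213$ and $\max(\pi^{(1)})$ must be suitably placed relative to the ascents and the low values appearing in $\pi^{(2)}$. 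The work is to organize this bookkeeping so that the subclass decomposes into independent blocks, and in particular to show that the Catalan-type $213$-avoidance condition collapses to a rational count once the extra placement conditions are imposed. Everything else reduces to routine summation of geometric series.
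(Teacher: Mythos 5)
Your overall frame (splitting by the number $m$ of left-right maxima, noting $\pi^{(1)}=\emptyset$ for $m\ge3$ and $G_m(x)=xG_{m-1}(x)$ for $m\ge4$, and your subcases (a), (b) of $G_2$ with contributions $x^2F_T(x)$ and $\frac{x^2}{1-x}(F_T(x)-1)$) is sound and consistent with the paper's analysis. However, there are two genuine problems. First, your formula for $G_3$ is wrong: from the structure you yourself describe ($\pi=i_1i_2(i_2-1)\cdots(i_1+1)\,L\,n(n-1)\cdots(i_2+1)$ with $L$ a nonempty $\{213,2341\}$-avoider), the extra term is
$$\frac{x^3}{(1-x)^2}\,(K(x)-1)=\frac{x^4}{(1-x)(1-3x+x^2)},\qquad K(x)=\frac{1-2x}{1-3x+x^2},$$
not $\frac{x^4(2-x)}{(1-x)^2(1-3x+x^2)}$. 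A quick check at $n=4$ exposes the error: the only $T$-avoider of length $4$ with three left-right maxima and first letter at least $2$ is $2314$, so the coefficient of $x^4$ in this term must be $1$, whereas your expression gives $2$; with your $G_3$ the master equation cannot yield the stated $F_T$.

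Second, and more seriously, the heart of the proof is missing: subcase (c) of $G_2$ (namely $i_1\ge2$ and $\pi^{(1)}\ne\emptyset$), which already accounts for half of the length-$4$ avoiders with two left-right maxima, is deferred to an unspecified ``rational function $D(x)$.'' Its contribution is in fact $x\big(F_T(x)-\frac{1}{1-x}\big)$, so it is not a free-standing rational function of $x$ that you can compute in isolation and substitute; it involves $F_T$ itself and must be derived, and your sketch of the structure (``$\pi^{(1)}$ avoids $213$ and $\max(\pi^{(1)})$ suitably placed'') is not a complete characterization: the low letters split between $\pi^{(1)}$ and $\pi^{(2)}$ interact (e.g.\ a $21$ in $\pi^{(1)}$ followed by an ascent among the low letters of $\pi^{(2)}$ creates a $2134$, and a low letter of $\pi^{(1)}$ together with $n$ and two lows of $\pi^{(2)}$ can create a $2413$), so independence of the blocks is exactly what needs proving. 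The paper sidesteps this by organizing $G_2$ differently: either $i=n-1$ (delete $n$), or $2\le i\le n-2$ and $\pi''$ begins with the full run $(n-1)(n-2)\cdots(i+1)$ (delete the run, giving a bijection to avoiders of length $i$), and Lemma~\ref{lem174A3} shows the only remaining case is $\pi=1n\pi''$. Unless you supply an argument of comparable strength for your subcase (c), the proposal does not prove Theorem~\ref{th174A3}.
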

\begin{proof}
Let $G_m(x)$ be the generating function for $T$-avoiders with $m$ left-right maxima.
Clearly, $G_0(x)=1$ and $G_1(x)=xF_T(x)$. Now let us write an equation for $G_m(x)$ with $m\geq2$.

For $m=2$, let $\pi=i\pi'n\pi''\in S_n(T)$ with two left-right maxima, $i$ and $n$, and consider cases on $i$. If $i=n-1$, then $\pi \rightarrow \pi' (n-1)\pi''$ is a bijection
to nonempty $T$-avoiders of length
$n-1$, giving a contribution of  $x(F_T(x)-1)$. If $2\le i \le n-2$ and $\pi''$ has the form $(n-1)
(n-2)\cdots (i+1)\pi'''$, then $\pi \rightarrow \pi' i \pi'''$ is a bijection to $T$-avoiders of length $i$, giving a contribution of  $x^2/(1-x)(F_T(x)-1-x)$.
\begin{lemma}\label{lem174A3}
If $\pi=i\pi'n\pi''\in S_n(T)$ has two left-right maxima, $i\le n-2$ and
$\pi''$ does not have the form
$(n-1)(n-2)\cdots (i+1)\pi'''$, then $\pi'=\emptyset$ and $i=1$.
\end{lemma}
\begin{proof}
Note first that all letters of $[i+1,n-1]$ must occur prior to any letters of $[i-1]$ within $\pi''$ in order to avoid 2413.  By hypothesis, there exist $a,b \in \pi''$ such that $i<a<b$ with $a$ occurring before $b$. If $x \in \pi'$, then $ixab$ is a
2134. Hence, $\pi'=\emptyset$. If $i>1$, then 1 occurs (i) before $a$ or
(ii) after $b$. If (i), $i1ab$ is a 2134; if (ii), $iab1$ is a 2341, both
forbidden. Hence, $i=1$.
\end{proof}
By the lemma, the only remaining case is $\pi=1n\pi''$ with $n\ge 3$ (since $n=2$ falls under
the case  $i=n-1$). Here, $\pi \rightarrow \pi''$ is a bijection to $T$-avoiders of length
$n-2$, giving a contribution of $x^2 (F_T(x)-1)$. Summing all contributions, we find
$$G_2(x)=\frac{x}{1-x}\left(F_T(x)-1\right)+x^2\left(F_T(x)-\frac{1}{1-x}\right).$$

For $m\geq3$, let $\pi=i_1\pi^{(1)}\cdots i_m\pi^{(m)}\in S_n(T)$ with $m$ left-right maxima.
Then $\pi^{(1)}=\pi^{(2)}=\dots = \pi^{(m-2)}=\emptyset$ for else a 2134 is present;
also $\pi^{(m-1)}>i_{m-3}$ (with $i_0=0$) and $\pi^{(m)}>i_{m-2}$ or a 2341 is present.
Consequently, if there is no letter between $i_{m-3}$ and $i_{m-2}$, then $\pi$ has the form
$12\cdots(m-2)\pi'$ where $\pi'$ is a permutation of $m-1,m-2,\dots,n$ with two left-right maxima that
avoids $T$, giving a contribution of $x^{m-2}G_2(x)$.
On the other hand, if there is a letter between $i_{m-3}$ and $i_{m-2}$, the reader may
verify that $\pi$ must have the form
$$\pi=12\cdots(m-3)i_{m-2}i_{m-1}(i_{m-1}-1)\cdots(i_{m-2}+1)\pi'i_m(i_m-1)\cdots(i_{m-1}+1)\, ,$$
where $\pi'$ is a nonempty permutation of $i_{m-3}+1,\ldots,i_{m-2}-1$ that avoids $213$ and $2341$.
Here, the contribution is $\frac{x^m}{(1-x)^2}(K-1)$,
where $K=\sum_{n\geq0}|S_n(213,2341)|x^n=\frac{1-2x}{1-3x+x^2}$ (see \cite[Seq. A001519]{Sl}).
Hence, for $m\ge 3$,
$$G_m(x)=x^{m-2}G_2(x)+\frac{x^m}{(1-x)^2}(K-1)\, .$$
Since $F_T(x)=\sum_{m\geq0}G_m(x)$, we have
$$F_T(x)=1+xF_T(x)-\frac{x}{(1-x)^2}\left((x^2-x-1)F_T(x)+x+1\right)+\frac{(K-1)x^3}{(1-x)^3},$$
with solution the stated $F_T(x)$.
\end{proof}

\begin{theorem}\label{th174A5}
Let $T=\{2143,2314,2341\}$. Then
 $$F_T(x)=\frac{1-6x+10x^2-3x^3+x^4}{(1-3x+x^2)(1-4x+2x^2)}.$$
\end{theorem}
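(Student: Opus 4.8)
The plan is to mimic the left-right maxima decomposition used in the proof of Theorem \ref{th174A3}. Let $G_m(x)$ be the generating function for $T$-avoiders with exactly $m$ left-right maxima, so that $G_0(x)=1$, $G_1(x)=xF_T(x)$, and $F_T(x)=\sum_{m\ge 0}G_m(x)$; it then suffices to find $G_2(x)$ and a recurrence for $G_m(x)$, $m\ge 3$. Writing a general avoider as $\pi=i_1\pi^{(1)}i_2\pi^{(2)}\cdots i_m\pi^{(m)}$, the governing feature is that each of $2143$, $2314$, $2341$ has its \emph{second}-smallest letter in the initial position; hence every avoidance condition is really a statement about how the letters of $\pi$ exceeding $i_1$ are interleaved with the letters below $i_1$, and this is what keeps the admissible shapes rigid enough to produce an algebraic (indeed Fibonacci-type) generating function, as is the theme of Case 174.

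For $m\ge 3$ I would first use $2314$ and $2341$ to show that $\pi^{(1)},\dots,\pi^{(m-2)}$, together with the portion of $\pi^{(m-1)}$ and $\pi^{(m)}$ lying below $i_{m-2}$, are severely constrained: either $\pi=1\,2\cdots(m-2)\,\rho$ with $\rho$ a $T$-avoider of ``two left-right maxima'' type (contributing $x^{m-2}G_2(x)$), or $\pi$ has a single exceptional shape in which the one interesting factor --- a block avoiding a suitable $3$-letter/$4$-letter pattern pair with generating function $K(x)=\frac{1-2x}{1-3x+x^2}$ (compare Theorem \ref{th174A3}) --- sits just before $i_m$ between decreasing runs. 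I expect this to give an identity of the same shape as in Theorem \ref{th174A3},
$$G_m(x)=x^{m-2}G_2(x)+\frac{x^m}{(1-x)^2}\bigl(K(x)-1\bigr),\qquad m\ge 3,$$
at worst up to a shifted power of $x$; summing over $m$, adding $G_0$, $G_1$, $G_2$, and solving the resulting linear equation for $F_T(x)$ will then recover the stated formula, which necessarily agrees with that of Theorem \ref{th174A3}.

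The computation of $G_2(x)$ is the crux, and I expect the structural lemma behind it --- the analogue of Lemma \ref{lem174A3}, describing exactly when the first letter of a two-left-right-maxima avoider is forced to be $1$ and the first block forced to be empty --- to be the main obstacle; this is also where $2143$ genuinely enters, through a ``medium-sized'' letter appearing after a large left-right maximum. Everything after that is routine generating-function bookkeeping. As a possible shortcut I would also try to bypass the computation by exhibiting a direct, length- and first-letter-preserving bijection onto the $\{2134,2341,2413\}$-avoiders of Theorem \ref{th174A3}: since in both triples every pattern begins with its second-smallest letter, an avoider of either kind is essentially determined by its first letter $k$ and the interleaving of the letters above $k$ with the letters below $k$, and the two families of constraints on that interleaving are close enough that one can hope to transport one to the other.
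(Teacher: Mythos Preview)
Your overall plan---left-right maxima decomposition, reduce to $G_2(x)$ plus a recurrence for $G_m(x)$, $m\ge 3$---is exactly what the paper does, but the recurrence you write down is wrong, and this is not a ``shifted power of $x$'' issue.

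For $m\ge 3$, avoidance of $2314$ and $2341$ forces the \emph{ladder} structure
\[
\pi^{(1)}<i_1<\pi^{(2)}<i_2<\cdots<\pi^{(m-2)}<i_{m-2}<i_{m-1}\pi^{(m-1)}i_m\pi^{(m)},
\]
so none of $\pi^{(1)},\dots,\pi^{(m-2)}$ is forced empty (in contrast to Theorem~\ref{th174A3}, where $2134$ kills them all). Avoidance of $2143$ then says that if $j\in[m-2]$ is the least index with $\pi^{(j)}\neq\emptyset$, every later block $\pi^{(j+1)},\dots,\pi^{(m)}$ is empty; the nonempty $\pi^{(j)}$ is a $\{231,2143\}$-avoider (not $\{213,2341\}$ as in \ref{th174A3}, though the generating function $K$ happens to coincide). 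Hence there are $m-2$ ``exceptional shapes'', one for each $j$, each contributing $x^m(K-1)$, and the correct recurrence is
\[
G_m(x)=(m-2)\,x^m\bigl(K-1\bigr)+x^{m-2}G_2(x),\qquad m\ge 3.
\]
Summed over $m\ge 3$ this gives $\dfrac{x^3(K-1)}{(1-x)^2}$, whereas your formula would give $\dfrac{x^3(K-1)}{(1-x)^3}$, which does not lead to the stated $F_T(x)$.

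Conversely, $G_2(x)$ is not the hard part. If $\pi=i\pi'n\pi''$ has two left-right maxima and $\pi'\neq\emptyset$, then any $x\in\pi'$ together with $i<n-1$ would give the $2143$ pattern $i\,x\,n\,(n-1)$; so $i=n-1$. Thus $G_2(x)=x\bigl(F_T(x)-1\bigr)+x\bigl(F_T(x)-1-xF_T(x)\bigr)$, and plugging everything into $F_T=\sum_m G_m$ yields the stated rational function directly.
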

\begin{proof}
Let $G_m(x)$ be the generating function for $T$-avoiders with $m$ left-right maxima.
Clearly, $G_0(x)=1$ and $G_1(x)=xF_T(x)$. Now let us write an equation for $G_m(x)$ with $m\geq3$.
Suppose $\pi=i_1\pi^{(1)}\cdots i_m\pi^{(m)}\in S_n(T)$ with $m\ge 3$ left-right maxima $i_1,i_2,\dots,i_m$. Then, because $\pi$ avoids 2314 and 2341,
$$\pi^{(1)}<i_1<\pi^{(2)}<i_2<\cdots<\pi^{(m-2)}<i_{m-2}<i_{m-1}\pi^{(m-1)}i_m\pi^{(m)}.$$
If $\pi^{(1)}=\cdots=\pi^{(j-1)}=\emptyset$ and $\pi^{(j)}\neq\emptyset$ with $j=1,2,\ldots,m-2$, then we have that $\pi^{(j+1)}=\cdots=\pi^{(m)}=\emptyset$ ($\pi$ avoids $2143$). So the contribution for each $j=1,2,\ldots,m-2$ is $x^m(K-1)$, where $K=\sum_{n\geq0}|S_n(231,2143)|x^n=\frac{1-2x}{1-3x+x^2}$ (see \cite[Seq. A001519]{Sl}). If $\pi^{(1)}=\cdots=\pi^{(m-2)}=\emptyset$, then the contribution is given by $x^{m-2}G_2(x)$. Thus,
$$G_m(x)= (m-2)x^m(K-1) + x^{m-2}G_2(x), \qquad m \geq3.$$

It remains to find a formula for $G_2(x)$. So suppose $\pi=i\pi'n\pi''$ and consider whether $
\pi'$ is empty or not.
If $\pi'=\emptyset$, then $\pi \rightarrow i \pi''$ is a bijection to nonempty $T$ avoiders, giving
a contribution of $x \big(F_T(x)-1\big)$. If $\pi'\ne \emptyset$, say $x\in \pi'$, then $i=n-1$
because $i<n-1$ implies $i xn(n-1)$ is a 2143. So $\pi$ has first letter $n-1$ (contributes $x
(F_T(x)-1)$\,) and second letter $\ne n$ (hence, subtract $x^2 F_T(x)$\,)
for a net contribution of $x\big(F_T(x)-1-xF_T(x)\big)$. Thus,
\[
G_2(x)=x\big(F_T(x)-1\big)+x\big(F_T(x)-1-xF_T(x)\big)\, .
\]
Since $F_T(x)=\sum_{m\geq0}G_m(x)$, we have
$$F_T(x)=1+xF_T(x)+\frac{x}{1-x}\big(2F_T(x)-2-xF_T(x) \big)+\frac{x^3(K-1)}{(1-x)^2},$$
with solution the stated $F_T(x)$.
\end{proof}

\begin{theorem}\label{th174A6}
Let $T=\{2143,2314,2431\}$. Then
 $$F_T(x)=\frac{1-6x+10x^2-3x^3+x^4}{(1-3x+x^2)(1-4x+2x^2)}.$$
\end{theorem}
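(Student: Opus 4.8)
The plan is to mirror the left--right maxima decomposition used in Theorems \ref{th174A3} and \ref{th174A5}. Let $G_m(x)$ be the generating function for $T$-avoiders with exactly $m$ left--right maxima, so that $G_0(x)=1$, $G_1(x)=xF_T(x)$, and $F_T(x)=\sum_{m\ge 0}G_m(x)$; it then suffices to find $G_2(x)$, to express $G_m(x)$ for $m\ge 3$ in terms of $G_2(x)$ and known series, and to solve the resulting linear equation for $F_T(x)$.

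For $m\ge 3$, write $\pi=i_1\pi^{(1)}i_2\pi^{(2)}\cdots i_m\pi^{(m)}$ with left--right maxima $i_1<\cdots<i_m$. Avoidance of $2314$ forces, for the relevant range of $j$, that every letter of $\pi^{(j)}$ exceeds $i_{j-1}$ (otherwise $i_{j-1}\,i_j\,a\,i_{j+1}$ is a $2314$ for any $a<i_{j-1}$ in $\pi^{(j)}$), so that $i_{j-1}<\pi^{(j)}<i_j$. Avoidance of $2143$ then shows that as soon as one early block $\pi^{(j)}$ with $j\le m-2$ is nonempty, the later blocks $\pi^{(j+1)},\dots,\pi^{(m)}$ collapse to a forced decreasing-type tail while the single flexible block is governed by $K=\sum_{n\ge0}|S_n(231,2143)|x^n=\frac{1-2x}{1-3x+x^2}$ (sequence A001519); avoidance of $2431$ supplies the remaining constraints on the last block $\pi^{(m)}$. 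I expect this to reproduce the relation $G_m(x)=(m-2)x^m(K-1)+x^{m-2}G_2(x)$ for $m\ge 3$, as in Theorem \ref{th174A5}.

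To compute $G_2(x)$, write $\pi=i\pi'n\pi''$ with two left--right maxima $i<n$ and split on whether $\pi'$ is empty. If $\pi'=\emptyset$, then $\pi\mapsto i\pi''$ (standardized) is a bijection onto the nonempty $T$-avoiders, contributing $x(F_T(x)-1)$. If $\pi'\ne\emptyset$, then a letter of $\pi'$ together with $i$, $n$, and a suitable later letter would create a forbidden $2143$ (or $2314$/$2431$) unless $i=n-1$; with $i=n-1$, deleting the requirement that the second letter be $n$ gives the contribution $x\bigl(F_T(x)-1-xF_T(x)\bigr)$. Thus $G_2(x)=x(F_T(x)-1)+x\bigl(F_T(x)-1-xF_T(x)\bigr)$, exactly as in Theorem \ref{th174A5}. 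Summing all contributions yields $F_T(x)=1+xF_T(x)+\frac{x}{1-x}\bigl(2F_T(x)-2-xF_T(x)\bigr)+\frac{x^3(K-1)}{(1-x)^2}$, and solving this linear equation gives $F_T(x)=\frac{1-6x+10x^2-3x^3+x^4}{(1-3x+x^2)(1-4x+2x^2)}$.

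The main obstacle is verifying that, despite $2431$ replacing $2341$, the forbidden configurations exclude precisely the same permutations at each step, so that the $m\ge 3$ contributions and $G_2(x)$ agree with those of Theorem \ref{th174A5}. Since the target generating function is identical, the strategy is to run the argument of Theorem \ref{th174A5} with $2431$ in place of $2341$, checking each use of $2341$ --- essentially only the analysis of the tail block $\pi^{(m)}$ and the ``$\pi'\ne\emptyset\Rightarrow i=n-1$'' step --- and localizing any genuine discrepancy; alternatively, one could seek a direct bijection between $\{2143,2314,2341\}$-avoiders and $\{2143,2314,2431\}$-avoiders, exploiting the fact that the two triples share the patterns $2143$ and $2314$.
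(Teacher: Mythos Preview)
Your approach does not go through: the structural claims you import verbatim from Theorem~\ref{th174A5} fail once $2341$ is replaced by $2431$, and the final equation you write down is the one for $\{2143,2314,2341\}$, not the one that actually arises here.

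The key failure is the $\pi'=\emptyset$ case of $G_2$. You claim $\pi=in\pi''\mapsto i\pi''$ is a bijection to nonempty $T$-avoiders. This works in Theorem~\ref{th174A5} because in each of $2143,2314,2341$ the ``4'' sits in position $\ge 3$, so $n$ at position~2 can never participate in a forbidden pattern. But in $2431$ the ``4'' is in position~2; hence $in a b$ with $b<i<a<n$ is a $2431$. Concretely, $i\pi''=231\in S_3(T)$, yet $in\pi''=2431\notin S_4(T)$. So the map is not a bijection and your $G_2(x)$ is wrong.

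The same issue breaks your $m\ge 3$ recursion. If $j\le m-1$ is minimal with $\pi^{(j)}\ne\emptyset$, then $2143$ forces $\pi^{(j+1)}=\cdots=\pi^{(m-1)}=\emptyset$, but it does \emph{not} force $\pi^{(m)}=\emptyset$ (that step in Theorem~\ref{th174A5} used $2341$). Instead, $2431$ gives $\pi^{(m)}>i_{j-1}$ and $2143$ gives $\pi^{(m)}<i_j$, so $\pi^{(m)}$ lives in the same interval as $\pi^{(j)}$ and is unconstrained there; the contribution is $x^{m-1}\big(F_T(x)-1-xF_T(x)\big)$, not $x^m(K-1)$. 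Moreover, the case $j=m$ (all early blocks empty) now requires a further decomposition of $\pi^{(m)}$ as $\beta^{(1)}\cdots\beta^{(m)}$ with $\beta^{(k)}$ between $i_{k-1}$ and $i_k$ (forced by $2431$), after which $2314$ kills all but two consecutive nonempty $\beta$'s and $2143$ forces the higher one to be increasing; only here does $K$ enter, via a factor $\frac{x}{1-x}(K-1)$. The paper's formula is
\[
G_m(x)=x^m+(m-1)x^{m-1}\big(F_T(x)-1-xF_T(x)\big)+mx^m\big(F_T(x)-1\big)+(m-1)\,\frac{x^{m+1}}{1-x}(K-1),
\]
valid for all $m\ge 2$, which is genuinely different from $(m-2)x^m(K-1)+x^{m-2}G_2(x)$.
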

\begin{proof}
Let $G_m(x)$ be the generating function for $T$-avoiders with $m$ left-right maxima.
Clearly, $G_0(x)=1$ and $G_1(x)=xF_T(x)$. Now let us write an equation for $G_m(x)$ with $m\geq2$.
Suppose $\pi=i_1\pi^{(1)}\cdots i_m\pi^{(m)}\in S_n(T)$ with $m\ge 2$ left-right maxima $i_1,i_2,\dots,i_m$.
Then, because $\pi$ avoids 2314,
$$\pi^{(1)}<i_1<\pi^{(2)}<i_2<\cdots<\pi^{(m-2)}<i_{m-2}<\pi^{(m-1)}<i_{m-1}.$$
If $\pi^{(1)}, \dots,\pi^{(m)}$ are all empty, the contribution is $x^m$.
Now suppose the $\pi$'s are not all empty and $j$ is minimal such that $\pi^{(j)}\ne \emptyset$.

First, suppose $j \in [m-1]$. Then $\pi^{(1)}=\cdots=\pi^{(j-1)}=\emptyset$ by supposition
and $\pi^{(j+1)}=\dots=\pi^{(m-1)}=\emptyset$ (to avoid 2143).
Furthermore, $\pi_m>i_{j-1}$ (to avoid 2431) and $\pi_m <i_{j}$ (to avoid 2143).
Hence, $\pi=12\cdots (j-1)i_j \pi^{(j)} (i_j +1)(i_j+2)\cdots n\, \pi^{(m)}$.
So ``delete $12\cdots (j-1)$ and $(i_j +1)(i_j+2)\cdots (n-1)$ and standardize'' is a bijection to $T$-avoiders with second largest letter in first position and largest letter not in second position, giving
a contribution of $x^{m-1}(F_T(x)-1-xF_T(x))$ as in Theorem \ref{th174A5}, for each $j \in [m-1]$.

Next, suppose $j=m$ so that $\pi=i_1 i_2 \cdots i_m \pi^{(m)}$ with $\pi^{(m)} \ne \emptyset$. Then,
because $\pi$ avoids 2431, $\pi$ has the form
$i_1 i_2 \cdots i_m\,\beta^{(1)}\beta^{(2)}\cdots\beta^{(m)}$
with $\beta^{(1)}<i_1<\beta^{(2)}<i_2<\cdots<\beta^{(m)}<i_m$. Let $\ell$ be the minimal index such
that $\beta^{(\ell)}$ is not empty, and say $x \in \beta^{(\ell)}$.
Then $\beta^{(j)}= \emptyset$ for $j\ge
\ell +2$ because if $y \in \beta^{(j)}$ with $j\ge \ell +2$, then $i_{j-2}i_{j-1}xy$ is a 2314.
Furthermore,  $\beta^{(\ell+1)}$ is increasing, because $z>y$ in
$\beta^{(\ell+1)}$ implies $i_{\ell}xzy$
is a 2143.

If $\beta^{(\ell+1)}=\emptyset$, we get a contribution of $x^m(F_T(x)-1)$ for each $\ell \in
[m]$.

If $\beta^{(\ell+1)}\ne \emptyset$, then $\beta^{(\ell)}$ must avoid 231. So ``delete the initial
$m$ letters (= the left-right maxima) and standardize'' is a bijection to pairs ($\gamma^{(\ell)},
\gamma^{(\ell+1)}$) with $\gamma^{(\ell)}$ a nonempty $\{231,2143\}$-avoider and $\gamma^{(\ell+1)}$ an
initial segment of the positive integers.
Thus we get, for each $\ell \in [m-1]$, a contribution of $x^m \frac{x}{1-x}(K-1)$, where
$K=\sum_{n\geq0}|S_n(231,2143)|x^n=\frac{1-2x}{1-3x+x^2}$ (see \cite[Seq. A001519]{Sl}).
Summing all contributions, we have for $m\ge 2$,
\[
G_m(x)= x^m + (m-1)x^{m-1}\big(F_T(x)-1-xF_T(x)\big) + m x^m(F_T(x)-1) +(m-1)\left(\frac{x^{m+1}}{1-x}(K-1)\right).
\]
Since $F:=F_T(x)=\sum_{m\geq0}G_m(x)$, we find
\[
F=1 +xF+\frac{x}{(1-x)^{2}}(F-1-xF) +\frac{(2-x)x^{2}}{(1-x)^{2}}(F-1)+\frac{x^{2}}{1-x}+
\frac{x^{3}}{(1-x)^{3}}(K-1),
\]
with solution the stated $F_T(x)$.
\end{proof}

\subsection{Case 177}
The two representative triples $T$ are:

\{2143,2341,2413\} (Theorem \ref{th177A1})

\{2143,2341,3241\} (Theorem \ref{th177A2})

\begin{theorem}\label{th177A1}
Let $T=\{2143,2341,2413\}$. Then
 $$F_T(x)=\frac{1-4x+3x^2-x^3}{1-5x+6x^2-3x^3}.$$
\end{theorem}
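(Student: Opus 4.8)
The plan is to count $T$-avoiders by their number $m$ of left–right maxima, as in the proofs above: write $\pi=i_1\pi^{(1)}i_2\pi^{(2)}\cdots i_m\pi^{(m)}$ with $i_1<\cdots<i_m=n$, let $G_m(x)$ be the generating function for $T$-avoiders with exactly $m$ left–right maxima, so $F_T(x)=\sum_{m\ge0}G_m(x)$ with $G_0(x)=1$ and $G_1(x)=xF_T(x)$, and for $m\ge2$ pin down the shape of $\pi$. Since $2143$, $2341$, $2413$ all begin with ``$2$'', the maxima repeatedly serve as that ``$2$''. I would first prove: (a) every entry below $i_1$ lies in $\pi^{(1)}\cup\pi^{(2)}$ (else $i_1,i_2,i_3,c$ is a forbidden $2341$); (b) if $\pi^{(1)}\ne\emptyset$ then no entry of $\pi^{(2)},\dots,\pi^{(m)}$ exceeds $i_1$ (else $i_1,z,i_j,y$ is a $2143$), which forces $\pi^{(3)}=\cdots=\pi^{(m)}=\emptyset$, consecutive maxima $i_{j+1}=i_j+1$, and $\pi^{(1)}\sqcup\pi^{(2)}=[i_1-1]$; (c) if $\pi^{(1)}=\emptyset$ but some entry of $\pi^{(2)}$ is below $i_1$, then likewise $\pi^{(3)}=\cdots=\pi^{(m)}=\emptyset$ and the maxima from $i_2$ on are consecutive; (d) if $\pi^{(1)}=\emptyset$ and no entry of $\pi^{(2)}$ is below $i_1$, there is no entry below $i_1$ at all, so $i_1=1$ and $\pi=1\pi'$ with $\pi'$ an arbitrary $T$-avoider with $m-1$ left–right maxima.

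Case (d) contributes $xG_{m-1}(x)$ to $G_m(x)$, while in cases (b) and (c) the permutation is a ``core'' of length $i_1+1$ (resp.\ $i_2$) followed by a forced increasing run, which contributes a factor $x^{m-2}$. Summing over $m\ge2$ then gives
$$(1-2x)F_T(x)=1-x+\frac{A(x)+B(x)}{1-x},$$
where $A(x)$ and $B(x)$ generate the cores arising in cases (b) and (c). Case (c) is short: taking $i_1$ as the ``$2$'' in the three patterns forces the core $i_1\,i_2\,\pi^{(2)}$ to have $\pi^{(2)}=(i_2-1)(i_2-2)\cdots(i_1+1)\,\sigma$ with $\sigma$ any $T$-avoider of $\{1,\dots,i_1-1\}$, whence $B(x)=\dfrac{x^2(F_T(x)-1)}{1-x}$. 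In case (b) the subcase $\pi^{(2)}=\emptyset$ merely asks that $\pi^{(1)}$ avoid $T$ (contributing $x^2(F_T(x)-1)$ to $A(x)$), and the remaining work is to count the ordered pairs $(\pi^{(1)},\pi^{(2)})$ partitioning $[i_1-1]$ with $\pi^{(1)},\pi^{(2)}\ne\emptyset$ for which $\pi$ avoids $T$. Here the surviving conditions are, roughly: $\max\pi^{(2)}$ does not exceed the least entry of $\pi^{(1)}$ that has a smaller entry to its right ($2143$); $\min\pi^{(2)}$ exceeds every entry of $\pi^{(1)}$ that has a larger entry to its right ($2341$); and whenever $u$ precedes $w$ with $u<w$ in $\pi^{(2)}$, the integer interval $(u,w)$ lies entirely in $\pi^{(2)}$ ($2413$). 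Carrying out this count should express the rest of $A(x)$ through $F_T(x)$ and the $\{231,2143\}$-avoider generating function $\frac{1-2x}{1-3x+x^2}$ (A001519), already used repeatedly in this paper. Substituting $A(x)$ and $B(x)$ into the displayed identity then yields a single \emph{linear} equation for $F_T(x)$; solving and simplifying produces the claimed rational function, the rationality confirming that—unlike neighbouring cases—no kernel/quadratic step is needed.

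The main obstacle is precisely that last count: unpacking the three simultaneous forbidden-pattern conditions in case (b) with $\pi^{(2)}\ne\emptyset$. Because $2341$ and $2143$ each constrain only one ``side'' of how the values of $\pi^{(1)}$ and $\pi^{(2)}$ interleave while $2413$ couples both sides, the admissible pairs do not form an obvious pattern class, and one must organize them carefully (for instance by whether $\pi^{(1)}$ is monotone, and by the position of the smallest entry of $\pi^{(1)}$) so that the resulting generating function telescopes. Everything after that is routine algebra.
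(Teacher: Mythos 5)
Your setup coincides with the paper's: the same decomposition by left-right maxima, the same three cases (your (b), (c), (d) are the paper's cases $\pi^{(1)}\neq\emptyset$; $\pi^{(1)}=\emptyset$ with a letter of $\pi^{(2)}$ below $i_1$; and $\pi^{(1)}=\emptyset$, $\pi^{(2)}>i_1$, forcing $i_1=1$), and your summed identity $(1-2x)F_T(x)=1-x+\frac{A(x)+B(x)}{1-x}$ together with $B(x)=\frac{x^2(F_T(x)-1)}{1-x}$ is consistent with the paper's equation. But there is a genuine gap: the contribution $A(x)$ of case (b) with $\pi^{(2)}\neq\emptyset$ is exactly the step you leave undone ("the main obstacle"), and without it you have no equation to solve, so the theorem is not proved. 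Moreover, your guess that this count would bring in the $\{231,2143\}$ generating function $\frac{1-2x}{1-3x+x^2}$ points in the wrong direction: no auxiliary avoidance class is needed here, and organizing the pairs $(\pi^{(1)},\pi^{(2)})$ by interleaving-value conditions is harder than necessary.

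The missing idea is to treat the case-(b) core $i_1\,\pi^{(1)}\,(i_1+1)\,\pi^{(2)}$ as a whole: it is precisely a $T$-avoider whose first letter is its second-largest entry and whose second letter is not its largest entry. Since every pattern in $T=\{2143,2341,2413\}$ begins with a ``2'' and the roles ``3'' and ``4'' require two letters larger than that ``2'', prepending the second-largest value to a $T$-avoider can never create an occurrence; hence deleting/prepending that letter is a bijection, and avoiders starting with the second-largest letter are counted by $x(F_T(x)-1)$. Subtracting those whose second letter is the maximum (counted by $x^2F_T(x)$) gives $A(x)=x\big(F_T(x)-1-xF_T(x)\big)$, i.e. the paper's contribution $x^{m-1}(F_T(x)-1-xF_T(x))$ for each $m\geq2$. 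With this $A(x)$ your displayed identity becomes linear in $F_T(x)$ and yields $\frac{1-4x+3x^2-x^3}{1-5x+6x^2-3x^3}$ (a quick check: the case-(b) cores with $\pi^{(2)}\neq\emptyset$ first appear at length $4$, namely $3142$ and $3241$, matching the expansion $x(1-2x)F_T(x)-x(1-x)=2x^4+9x^5+\cdots$). So you need either this bijective identification of the cores or some equivalent closed evaluation of $A(x)$; as written, the proposal stops short of it.
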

\begin{proof}
Let $G_m(x)$ be the generating function for $T$-avoiders with $m$ left-right maxima.
Clearly, $G_0(x)=1$ and $G_1(x)=xF_T(x)$. Now let us write an equation for $G_m(x)$ with $m\geq2$.
So suppose $\pi=i_1\pi^{(1)}i_2\pi^{(2)}\cdots i_m\pi^{(m)}\in S_n(T)$ with $m\ge 2$ left-right maxima. We consider the following three cases:
\begin{itemize}
\item $\pi^{(1)}\neq\emptyset$.
Here, the only letters occurring after $i_2$ that are $>i_1$ are $i_3,\dots,i_m$ (to avoid 2143) and no letter occurring after $i_3$ is $<i_1$ (to avoid 2341). So $\pi$ has the form $i_1 \pi^{(1)} (i_1 +1) \gamma^{(2)}(i_1+2)\cdots n$ with $\gamma^{(2)}<i_1$. Thus, $(i_1+2)\cdots n$ contributes $x^{m-2}$ and deleting these letters is a bijection to $T$-avoiders of length $r$ for some $r\geq2$ with first letter $r-1$ and second letter $\ne r$, contributing $x(F_T-1)-x^2 F_T$.

\item $\pi^{(1)}=\emptyset$ and $\pi^{(2)}$ has a letter $a<i_1$. Here, no non-left-right max letter occurring after $i_3$ (if present) is $>i_1$ (2143) and, again, no letter occurring after $i_3$ is $<i_1$ (2341). Also, in $\pi_2$, all letters $>i_1$ occur before all letters $<i_1$ (2413).
So $\pi$ has the form $i_1 i_2\gamma^{(2)}\gamma^{(1)} (i_2+1)\cdots n$ with $\gamma^{(1)}<i_1<\gamma^{(2)}<i_2$ and, furthermore, $\gamma^{(2)}$ is decreasing because $b<c$ in $\gamma^{(2)}$ implies $i_1bca$ is a 2341. So $\pi=i_1 i_2(i_2 -1) \cdots (i_1 +1)\gamma^{(1)} (i_2+1)\cdots n$ with $\gamma^{(1)}$ a $T$-avoider of length $\in [n-m]$,  giving a contribution of $\frac{x^m}{1-x}(F_T(x)-1)$.

\item $\pi^{(1)}=\emptyset$ and $\pi^{(2)}>i_1$. This condition implies $i_1=1$ (obvious if $m=2$ and because $i_1 i_2 i_3 1$ would be a 2341 if $m\ge 3$), giving a contribution of $x G_{m-1}(x)$.
\end{itemize}

Summing the contributions, we have for $m\geq2$,
$$G_m(x)=x^{m-1}(F_T(x)-1-xF_T(x))+\frac{x^m}{1-x}(F_T(x)-1)+xG_{m-1}(x).$$
Since $F_T(x)=\sum_{m\geq0}G_m(x)$, we find that
$$F_T(x)=1+xF_T(x)+\frac{x}{1-x}(F_T(x)-1-xF_T(x))+\frac{x^2}{(1-x)^2}(F_T(x)-1)+x(F_T(x)-1),$$
which, by solving for $F_T(x)$, completes the proof.
\end{proof}

\begin{theorem}\label{th177A2}
Let $T=\{2143,2341,3241\}$. Then
 $$F_T(x)=\frac{1-4x+3x^2-x^3}{1-5x+6x^2-3x^3}.$$
\end{theorem}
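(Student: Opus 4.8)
The plan is to run the same left-right-maxima decomposition that worked for Theorem~\ref{th177A1}. The two triples of Case~177 are not related by any symmetry of the square (the dihedral orbit of $3241$ has size $8$, that of $2413$ only size $2$), so Theorem~\ref{th177A1} cannot simply be invoked; but the patterns $2143$ and $2341$ that the two triples share will force the same coarse structure on an avoider, and only the internal shape of the second block $\pi^{(2)}$ will change because $3241$ now replaces $2413$.

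Let $G_m(x)$ be the \gf for $T$-avoiders with exactly $m$ left-right maxima, so $G_0(x)=1$ and $G_1(x)=xF_T(x)$ (the largest letter in first position plays no role in a forbidden pattern). For $m\ge 2$ write $\pi=i_1\pi^{(1)}i_2\pi^{(2)}\cdots i_m\pi^{(m)}\in S_n(T)$ and split into the same three cases used for Theorem~\ref{th177A1}: (i) $\pi^{(1)}\ne\emptyset$; (ii) $\pi^{(1)}=\emptyset$ but $\pi^{(2)}$ contains a letter $<i_1$; (iii) $\pi^{(1)}=\emptyset$ and every letter of $\pi^{(2)}$ exceeds $i_1$ (the case $\pi^{(2)}=\emptyset$ included here). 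In cases~(i) and~(ii), avoidance of $2143$ forces every non-left-right-maximum after $i_2$ (resp.\ after $i_3$) to be $<i_1$, and then avoidance of $2341$ forces $\pi^{(3)}=\cdots=\pi^{(m)}=\emptyset$ with $i_3,\ldots,i_m$ equal to the consecutive values $i_2+1,\ldots,n$. Case~(iii) goes through verbatim as in Theorem~\ref{th177A1}: $2341$-avoidance forces $i_1=1$, and deleting the $1$ and standardizing gives a bijection onto $T$-avoiders with $m-1$ left-right maxima, i.e.\ a contribution of $xG_{m-1}(x)$.

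The real work is in cases~(i) and~(ii), where $3241$ enters. In case~(i) the new pattern appears by reading $i_1$, a letter of $\pi^{(1)}$, $i_1+1$, a letter of $\pi^{(2)}$ as a potential $3241$; together with the $2143$-argument this forces $\pi^{(1)}<\pi^{(2)}$ and forces $\pi^{(1)}$ to be increasing whenever $\pi^{(2)}\ne\emptyset$, after which a short check shows the rest is unconstrained ($\pi^{(1)}$ a nonempty $T$-avoider when $\pi^{(2)}=\emptyset$, and $\pi^{(2)}$ a $T$-avoider on the larger values when $\pi^{(1)}$ is increasing, with $i_2=i_1+1$ throughout); collecting the two sub-cases, the case~(i) contribution simplifies to $\frac{x^m}{1-x}\bigl(F_T(x)-1\bigr)$. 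In case~(ii), fixing a small letter $a\in\pi^{(2)}$ and testing $i_1$ against the ``small'' ($<i_1$) and ``medium'' ($\in(i_1,i_2)$) letters of $\pi^{(2)}$ via $2143$, $2341$ and $3241$ forces the mediums of $\pi^{(2)}$ to be essentially increasing and constrained in position relative to the smalls, reducing $\pi^{(2)}$ to a $T$-avoiding permutation of the small values flanked by increasing runs of mediums (with, in one sub-case, a $\{231,2143\}$-avoider appearing, of \gf $K=\frac{1-2x}{1-3x+x^2}$, \cite[Seq.~A001519]{Sl}); this again yields a contribution of the shape $\frac{x^m}{1-x}\bigl(F_T(x)-1\bigr)$ up to multiplication by a low-degree polynomial in $x$. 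Adding the contributions of (i)--(iii) gives a recurrence $G_m(x)=R(x)F_T(x)+S(x)+xG_{m-1}(x)$ for $m\ge 2$ with $R,S$ rational, and summing over $m\ge 2$ together with $F_T(x)=\sum_{m\ge 0}G_m(x)$ produces a single linear equation for $F_T(x)$ whose solution is $\frac{1-4x+3x^2-x^3}{1-5x+6x^2-3x^3}$.

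The main obstacle is case~(ii): pinning down precisely what simultaneous avoidance of $2143$, $2341$ and $3241$ forces on $\pi^{(2)}$ once it carries a letter below $i_1$, and then keeping the bookkeeping of which sub-blocks may be empty and of the $m=2$ versus $m\ge 3$ boundary straight, since an off-by-one there would perturb the numerator polynomial. Two cheap sanity checks guard against error: the resulting \gf must coincide with that of Theorem~\ref{th177A1}, and it must reproduce the $16$ initial terms already recorded in Table~\ref{long3s}.
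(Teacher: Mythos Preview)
Your overall strategy is exactly the paper's: decompose by left-right maxima, split $m\ge 2$ into the three cases (i)~$\pi^{(1)}\ne\emptyset$, (ii)~$\pi^{(1)}=\emptyset$ with a small letter in $\pi^{(2)}$, (iii)~$\pi^{(1)}=\emptyset$ and $\pi^{(2)}>i_1$. Case~(iii) is correct, and your case~(i) is correct too: your dichotomy (on whether $\gamma^{(2)}$ is empty) is equivalent to the paper's (on whether $\pi^{(1)}$ is increasing), and your total $\frac{x^m}{1-x}(F_T(x)-1)$ agrees with the paper's $\frac{x^{m+1}}{1-x}F_T(x)+x^m\bigl(F_T(x)-\frac{1}{1-x}\bigr)$.

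The gap is case~(ii). Your analysis there is both vague and wrong. The claim that the ``medium'' letters of $\pi^{(2)}$ (those in $(i_1,i_2)$) must be ``essentially increasing'' is false: with $m=2$, the permutation $\pi=365412$ lies in case~(ii) and has mediums $5,4$ in decreasing order. No $\{231,2143\}$-avoider $K$ appears anywhere in this case. In fact case~(ii) is \emph{simpler} than case~(i): since $2143$ and $2341$ already force $\pi=i_1 i_2\,\pi^{(2)}\,(i_2+1)\cdots n$, the map $\pi\mapsto i_1\pi^{(2)}$ (delete $i_2,\dots,i_m$) is a bijection onto $T$-avoiders of length $\ge 2$ whose first letter is not $1$ (the three patterns each have their ``$1$'' in a non-initial position, so reinserting a new maximum right after the first letter and appending an increasing tail creates no occurrence of $T$). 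The contribution is therefore $x^{m-1}\bigl(F_T(x)-1-xF_T(x)\bigr)$, with no further structure on $\pi^{(2)}$ needed. A pleasant consequence is that, compared with Theorem~\ref{th177A1}, the contributions of cases~(i) and~(ii) simply trade places, which immediately explains why the two triples share the same $F_T(x)$.
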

\begin{proof}
Let $G_m(x)$ be the generating function for $T$-avoiders with $m$ left-right maxima.
Clearly, $G_0(x)=1$ and $G_1(x)=xF_T(x)$. Now let us write an equation for $G_m(x)$ with $m\geq2$.
So suppose $\pi=i_1\pi^{(1)}i_2\pi^{(2)}\cdots i_m\pi^{(m)}\in S_n(T)$ with $m\ge 2$ left-right maxima. We consider the following three cases:
\begin{itemize}
\item $\pi^{(1)}\neq\emptyset$.
Since $T$ contains 2143 and 2341, $\pi$ has the form $i_1 \pi^{(1)} (i_1 +1) \gamma^{(2)}(i_1+2)\cdots n$ with $\gamma^{(2)}<i_1$, as in Theorem \ref{th177A1}. Furthermore, $\pi^{(1)} < \gamma^{(2)}$ for else $i_1,i_1+1$ are the 3 and 4 of a 3241. Hence, if $\pi^{(1)}$ is increasing, then $\pi^{(1)}=12\cdots i$ for some $i\ge 1$ and $\pi \rightarrow \textrm{St}(\gamma^{(2)})$ is a bijection to $T$-avoiders, giving a contribution of $\frac{x^{m+1}}{1-x}F_T(x)$. On the other hand, if $\pi^{(1)}$ is not increasing, then $\gamma^{(2)}=\emptyset$ because $b>a$ in $\pi^{(1)}$ and $c \in \gamma^{(2)}$ implies $ba i_2 c$ is a 2143. So, $\pi \rightarrow \pi^{(1)}$ is a bijection to non-identity $T$-avoiders, giving a contribution of $x^{m} (F_T(x)-\frac{1}{1-x})$.

\item $\pi^{(1)}=\emptyset$ and $\pi^{(2)}$ has a letter smaller than $i_1$. Here
$\pi$ has the form $i_1 i_2 \pi^{(2)} (i_2+1)\cdots n$ (to avoid 2143,\,2341) with $i_1 \ne 1$, and $\pi \rightarrow i_1 \pi^{(2)}$ is a bijection to $T$-avoiders of length $\ge 2$ with first letter $\ne 1$,
giving a contribution of $x^{m-1}(F_T(x)-1-xF_T(x))$.

\item $\pi^{(1)}=\emptyset$ and $\pi^{(2)}>i_1$. As in Theorem \ref{th177A1}, $i_1=1$ and the contribution is $xG_{m-1}(x)$.
\end{itemize}

Summing the contributions, we have for $m\geq2$,
$$G_m(x)=\frac{x^{m+1}}{1-x}F_T(x)+x^m \left(F_T(x)-\frac{1}{1-x}\right)+x^{m-1}\big(F_T(x)-1-xF_T(x)\big)+xG_{m-1}(x).$$
Since $F_T(x)=\sum_{m\geq0}G_m(x)$, we find that
\begin{align*}
F_T(x)&=1+xF_T(x)+\frac{x^3}{(1-x)^2}F_T(x)+\frac{x^2}{1-x}\left(F_T(x)-\frac{1}{1-x}\right)\\
&\quad+\frac{x}{1-x}\big(F_T(x)-1-xF_T(x)\big)+x(F_T(x)-1),
\end{align*}
which, by solving for $F_T(x)$, completes the proof.
\end{proof}

\subsection{Case 191}
\begin{theorem}\label{th191A1}
Let $T=\{1342,2134,2413\}$. Then
 $$F_T(x)=\frac{(1-x)(1-2x)(1-3x)}{1-7x+16x^2-14x^3+3x^4}.$$
\end{theorem}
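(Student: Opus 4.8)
The natural plan is to run the left-right-maxima decomposition used throughout the paper. Let $G_m(x)$ denote the generating function for $T$-avoiders with exactly $m$ left-right maxima, so $G_0(x)=1$, $G_1(x)=xF_T(x)$, and $F_T(x)=\sum_{m\ge0}G_m(x)$. The goal is to obtain $G_2(x)$ as an explicit expression in $F_T(x)$, to determine $G_m(x)$ for $m\ge3$ (one expects a shape such as $x^{m-2}G_2(x)$ plus correction terms forming geometric series in $m$), and then to sum over $m$ and solve the resulting equation — which, given that the answer is rational, should be linear in $F_T(x)$.

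The structural engine is the pattern $2134$. Writing $\pi=i_1\pi^{(1)}i_2\pi^{(2)}\cdots i_m\pi^{(m)}$, every entry of $\pi^{(1)}$ is smaller than $i_1$ (an entry exceeding $i_1$ there would itself be a left-right maximum); and if $m\ge3$, an entry $a<i_1$ lying in a block $\pi^{(j)}$ with $j\le m-2$ makes $i_1,a,i_{j+1},i_{j+2}$ a forbidden $2134$. Hence, for $m\ge3$, $\pi^{(1)}=\emptyset$ and all entries below $i_1$ are confined to $\pi^{(m-1)}\pi^{(m)}$. Avoidance of $1342$ then restricts the middle blocks — an entry of $\pi^{(j)}$ strictly between $i_1$ and $i_{j-1}$ would make $i_1,i_{j-1},i_j,(\cdot)$ a $1342$ — and avoidance of $2413$ together with $1342$ should force the portion of $\pi$ lying among and after the left-right maxima into a rigid staircase, leaving genuine freedom only in the last one or two blocks. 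The relevant subclass counts will be expressed through $F_T$ and the recurring auxiliary series $K(x)=\sum_{n\ge0}|S_n(231,2143)|\,x^n=\frac{1-2x}{1-3x+x^2}$ (sequence A001519 in \cite{Sl}), which should enter via the nearly-increasing tail of $\pi$.

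For $G_2(x)$ I would take $\pi=i_1\pi^{(1)}n\pi^{(2)}$ and split on whether $\pi^{(1)}$ is empty, along lines similar to the analysis of the triple in Theorem \ref{th174A3} (which also contains $2134$ and $2413$): if $\pi^{(1)}\ne\emptyset$, the entries of $\pi^{(2)}$ that exceed $i_1$ must be decreasing, and avoidance of $1342$ forces either $i_1=n-1$ or a very special shape of $\pi^{(2)}$, with deleting a short decreasing run giving a bijection to $T$-avoiders of smaller size in each subcase; if $\pi^{(1)}=\emptyset$, one analyzes where the entries of $\pi^{(2)}$ below $i_1$ can occur, again reducing to smaller $T$-avoiders. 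This should give $G_2(x)$ as a rational multiple of $F_T(x)$ plus a rational function of $x$. Feeding $G_2(x)$ and $\sum_{m\ge3}G_m(x)$ into $F_T(x)=1+xF_T(x)+G_2(x)+\sum_{m\ge3}G_m(x)$ and solving the linear equation should produce $\dfrac{(1-x)(1-2x)(1-3x)}{1-7x+16x^2-14x^3+3x^4}$.

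The hard part will be the structural bookkeeping for $m\ge3$: the three patterns interact nontrivially — $2134$ governs what can sit below $i_1$, $1342$ governs the staircase among the left-right maxima, and $2413$ constrains the terminal blocks — so extracting a decomposition into genuinely independent parts (so that each subclass generating function is a product) requires careful attention to the boundary cases, namely which block first becomes nonempty, whether the final block is empty, and whether the penultimate block contributes the $K$ factor. Once that skeleton is nailed down, the summation over $m$ and the concluding algebra are routine.
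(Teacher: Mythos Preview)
Your overall plan---decompose by left-right maxima, show $\pi^{(1)}=\cdots=\pi^{(m-2)}=\emptyset$ for $m\ge3$, sum the $G_m$, and solve a linear equation---is exactly the paper's approach, and your expectation $G_m(x)=x^{m-2}G_2(x)+(\text{geometric correction})$ turns out to be correct. But you have misidentified the auxiliary generating functions, and this is not a cosmetic slip: carrying out your plan with your proposed $K(x)=\frac{1-2x}{1-3x+x^2}$ gives the wrong $F_T$.

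The pattern $2143$ is not in $T=\{1342,2134,2413\}$, so $\{231,2143\}$-avoiders have no reason to appear. What actually happens is this. For $m=2$ with $\pi=i\pi'n\beta'\beta''$ (where $\beta'>i>\beta''$, forced by $2413$) and $\pi'=\emptyset$: the block $\beta'$ must avoid $231$ (else $i$ starts a $1342$), and since $231$-avoidance subsumes $2413$-avoidance, $\beta'$ is a $\{231,2134\}$-avoider. The relevant series is
\[
K(x)=\sum_{n\ge0}|S_n(231,2134)|\,x^n=1+\frac{x(1-3x+3x^2)}{(1-x)(1-2x)^2}
\]
(OEIS A005183), not A001519. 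For $m\ge3$, the block $\beta'\subset\pi^{(m-1)}$ lying between $i_{m-2}$ and $i_{m-1}$ must additionally avoid $213$ (else $i_m$ completes a $2134$), so it is a $\{213,231\}$-avoider with generating function $L(x)=\frac{1-x}{1-2x}$; the block $\gamma'\subset\pi^{(m)}$ above $i_{m-1}$ is again a $\{231,2134\}$-avoider. With these two series in hand one gets
\[
G_2(x)=x^2K(x)F_T(x)+\frac{x}{1-x}\bigl(F_T(x)-1-xF_T(x)\bigr),
\qquad
G_m(x)=x^{m-2}G_2(x)+\frac{x^{m-1}}{1-x}\bigl(L(x)-1\bigr)\bigl(F_T(x)-1\bigr)\ \ (m\ge3),
\]
and summing yields the stated rational $F_T(x)$.
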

\begin{proof}
Let $G_m(x)$ be the generating function for $T$-avoiders with $m$ left-right maxima.
Clearly, $G_0(x)=1$ and $G_1(x)=xF_T(x)$. Now let us write an equation for $G_m(x)$ with $m\geq2$.

For $m=2$, let $\pi=i\pi'n\pi''\in S_n(T)$ with two left-right maxima.
The entries after $n$ and $>i$ all precede entries $<i$ or else $i,n$ are the ``2,4''of a 2413.
Hence, $\pi=i\pi'n \beta'\beta''$ with $\beta'>i>\beta''$.

If $\pi'=\emptyset$ so that $\pi=in\beta'\beta''$, then $\beta'$ avoids 231 or $i$ would start a 1342. So $\beta'$ avoids 2134 and 231 (which subsumes 2413), and $\beta''$ avoids $T$.
The \gf $K(x)$ for $\{231,2134\}$-avoiders is $K(x)=1+\frac{x(1-3x+3x^2)}{(1-x)(1-2x)^2}$ \cite[Seq. A005183]{Sl}.
Thus the contribution is $x^2K(x)F_T(x)$.

If $\pi'\ne \emptyset$, then $\beta'$ is decreasing  (or $i\,\max(\pi')$ would start a 2134) and
St($\pi'n\beta'') $ is a $T$-avoider that does not start with its max. Thus, by deleting $i$, we have a contribution of $\frac{x}{1-x}\big(F_T(x)-1-xF_T(x)\big)$.

Hence,
$$G_2(x)=x^2K(x)F_T(x)+\frac{x}{1-x}\big(F_T(x)-1-xF_T(x)\big)\, .$$

Now, suppose $m\geq3$ and 
$\pi$ avoids $T$ with $m$ left-right maxima $i_1<i_2<\cdots<i_m$. Then $\pi$ has the form shown in the figure below with shaded regions empty to avoid a pattern involving the gray bullet as indicated, and entries in $\beta'$ preceding entries in $\beta''$ to avoid 2413, and similarly for $\gamma',\gamma''$.
\begin{center}
\begin{pspicture}(-1.3,.1)(4,3)
\psset{xunit =1cm, yunit=.7cm,linewidth=.5\pslinewidth}
\psline(-1,0)(0,0)(0,1)(-1,1)(-1,0)
\psline(2.5,0)(3,0)
\psline(3.5,0)(4,0)(4,1)
\psline(3,3)(3,4)(3.5,4)(3.5,3)(2.5,3)
\pspolygon[fillstyle=hlines,hatchcolor=lightgray,hatchsep=0.8pt](-1,0)(0,0)(0,1)(-1,1)(-1,0)
\psline[fillstyle=hlines,hatchcolor=lightgray,hatchsep=0.8pt](1,0)(2.5,0)(2.5,1)(4,1)(4,4)(3.5,4)(3.5,3)(2.5,3)(2.5,2)(1,2)(1,0)
\pspolygon[fillstyle=hlines,hatchcolor=lightgray,hatchsep=0.8pt](3,0)(3.5,0)(3.5,1)(3,1)(3,0)
\psline(2,0)(2,3)(2.5,3)
\psline(2,1)(2.5,1)
\psline(2.5,2)(3,2)(3,3)
\psline(3.5,3)(4,3)
\pscircle*(-1,1){0.09}\pscircle*(1,2){0.09}\pscircle*(2,3){0.09}\pscircle*(3,4){0.09}
\rput(-1.25,1.2){\textrm{{\footnotesize $i_1$}}}
\rput(0.6,2.25){\textrm{{\footnotesize $i_{m-2}$}}}
\rput(1.6,3.25){\textrm{{\footnotesize $i_{m-1}$}}}
\rput(2.7,4.2){\textrm{{\footnotesize $i_m$}}}
\rput(2.25,2.5){\textrm{{\footnotesize $\beta'$}}}
\rput(2.75,0.5){\textrm{{\footnotesize $\beta''$}}}
\rput(3.25,3.5){\textrm{{\footnotesize $\gamma'$}}}
\rput(3.75,0.5){\textrm{{\footnotesize $\gamma''$}}}
\rput(-0.5,.5){\textrm{{\footnotesize $2\overset{{\gray \bullet}}{1}34$}}}
\rput(1.5,1){\textrm{{\footnotesize $2\overset{{\gray \bullet}}{1}34$}}}
\rput(3.3,1.7){\textrm{{\footnotesize $134\overset{{\gray \bullet}}{2}$}}}
\rput(0.5,0.5){$\dots$}
\rput(0,1.7){$\iddots$}
\end{pspicture}
\end{center}
We consider 4 cases according as $\beta',\beta''$ are empty or not.

If $\beta',\beta''$ are both empty, then $\gamma'$ avoids 231 (else $i_{m-1}$ is the ``1'' of a 1342) and
$\gamma''$ avoids $T$, giving a contribution of $x^m K(x)F_T(x)$.

If $\beta'\ne \emptyset,\,\beta''=\emptyset$, then $\beta'$  avoids 213 due to $i_m$ (2134) and avoids 231 due to $i_{m-2}$ (1342). The \gf $L(x)$ for $\{213,231\}$-avoiders is $L(x) =\frac{1-x}{1-2x}$ \cite{SiS}.
Also, $\gamma'$ is decreasing (2134), and $\gamma''$ avoids $T$. By deleting the left-right maxima and $\gamma'$, the contribution is $\frac{x^m}{1-x}(L(x)-1)F_T(x)$.

If $\beta' = \emptyset,\,\beta''\ne \emptyset$, then $\gamma'$ is decreasing once again and St($\beta''\,i_m\,\gamma'')$ avoids $T$ and does not start with its max. Deleting $i_1,\dots,i_{m-1}$ and $\gamma'$, the contribution is $\frac{x^{m-1}}{1-x}\big(F_T(x)-1-xF_T(x)\big)$.

If $\beta',\beta''$ are both nonempty, then $\beta'$ avoids 213 and 231, $\gamma'$ is decreasing, and St($\beta''\,i_m\,\gamma'')$ avoids $T$ and does not start with its max. Again deleting $i_1,\dots,i_{m-1}$ and $\gamma'$, the contribution is $\frac{x^{m-1}}{1-x}(L(x)-1)\big(F_T(x)-1-xF_T(x)\big)$.

Hence, for $m\ge 3$,
$$G_m(x)=x^mK(x)F_T(x)+\frac{x^m}{1-x}(L(x)-1)F_T(x)+\frac{x^{m-1}}{1-x}L(x)\big(F_T(x)-1-xF_T(x)\big).$$
Since $F_T(x)=\sum_{m\geq0}G_m(x)$, we obtain
\begin{align*}
F_T(x)&=1+xF_T(x)+x^2K(x)F_T(x)+\frac{x}{1-x}\big(F_T(x)-1-xF_T(x)\big)\\
&\quad+\frac{x^3}{1-x}K(x)F_T(x)+\frac{x^3}{(1-x)^2}(L(x)-1)F_T(x)+\frac{x^2}{(1-x)^2}L(x)\big(F_T(x)-1-xF_T(x)\big).
\end{align*}
By solving for $F_T(x)$, we complete the proof.
\end{proof}

\subsection{Case 196}
The two representative triples $T$ are:

\{2143,2431,3241\} (Theorem \ref{th196A3})

\{2413,2431,3214\} (Theorem \ref{th196A4})
\begin{theorem}\label{th196A3}
Let $T=\{2143,2431,3241\}$. Then
 $$F_T(x)=\frac{1-5x+7x^2-4x^3}{1-6x+11x^2-9x^3+2x^4}\, .$$
\end{theorem}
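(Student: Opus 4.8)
The plan is to carry out a left-right maxima decomposition, exactly as in the proofs of Theorems~\ref{th177A1} and~\ref{th177A2}. Let $G_m(x)$ be the generating function for $T$-avoiders with exactly $m$ left-right maxima, so $G_0(x)=1$, $G_1(x)=xF_T(x)$, and $F_T(x)=\sum_{m\ge 0}G_m(x)$. For $m\ge 2$ I would write $\pi=i_1\pi^{(1)}i_2\pi^{(2)}\cdots i_m\pi^{(m)}\in S_n(T)$ with $i_1<\cdots<i_m$, and split the analysis according to whether $\pi^{(1)}$ is empty, aiming to express $G_m(x)$ for $m\ge 2$ in terms of $F_T(x)$, $G_{m-1}(x)$, and the generating functions of a few smaller avoidance classes, then sum over $m$ and solve a linear equation for $F_T(x)$.

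First suppose $\pi^{(1)}\ne\emptyset$ and pick $a\in\pi^{(1)}$, so $a<i_1$. Avoidance of $2143$ forces every value exceeding $i_1$ to be a left-right maximum: any entry $b>i_1$ lying in a block $\pi^{(j)}$ with $j\ge 2$ would make $i_1\,a\,i_j\,b$ a $2143$ (here $a<i_1<b<i_j$), while entries $>i_1$ cannot lie in $\pi^{(1)}$. Hence the values above $i_1$ are exactly $i_2,\dots,i_m$, appearing in increasing order, so $n=i_1+m-1$ and $\pi$ has the staircase skeleton $i_1\pi^{(1)}(i_1+1)\pi^{(2)}(i_1+2)\cdots n\,\pi^{(m)}$ with $\bigcup_{j}\pi^{(j)}=[i_1-1]$. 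It then remains to use avoidance of $2431$ and $3241$ to decide which distributions of a permutation of $[n-m]$ into the blocks $\pi^{(1)},\dots,\pi^{(m)}$ (with $\pi^{(1)}$ nonempty) produce a $T$-avoider; I expect this to force $\pi^{(2)},\dots,\pi^{(m)}$ to be rigidly decreasing (or empty past a point) and $\pi^{(1)}$ to avoid a $3$-letter pattern, so that the contribution of this case is $x^{m-1}$ (or $x^{m-2}$) times a fixed rational generating function built from $F_T(x)$ and the $\{231,2143\}$-avoider generating function $\frac{1-2x}{1-3x+x^2}$ (sequence A001519).

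Next suppose $\pi^{(1)}=\emptyset$. I would sub-split according to whether $\pi^{(2)}$ contains a letter smaller than $i_1$, and (when it does not) whether any later block does. When a letter below $i_1$ appears early, avoidance of $2143$ again forces most high entries to be left-right maxima, collapsing $\pi$ to a near-staircase form; deleting the forced increasing runs and one or two left-right maxima then gives a bijection onto $T$-avoiders with a prescribed initial letter, contributing $x^{m-1}$ times a correction such as $F_T(x)-1-xF_T(x)$, as in Theorem~\ref{th174A5}. When no such low letter appears, so that essentially $i_1$ plays no role in any occurrence of a forbidden pattern, deleting $i_1$ is a bijection onto $T$-avoiders with $m-1$ left-right maxima, yielding the recursive term $xG_{m-1}(x)$; the base case $m=2$ is treated directly. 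Summing these contributions over $m\ge 2$ (the $xG_{m-1}(x)$ term telescopes to a factor $\frac{x}{1-x}$ after summation), adding $G_0(x)+G_1(x)$, and solving the resulting linear equation should yield the stated $F_T(x)$; I would then check it against the first sixteen terms of the counting sequence in Table~\ref{long3s}.

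The step I expect to be the main obstacle is the pattern bookkeeping for the staircase case $\pi^{(1)}\ne\emptyset$ together with the low-letter sub-cases of $\pi^{(1)}=\emptyset$: because $T$ here contains $2431$ and $3241$ rather than the tamer $2341$ present in Case~177, several of the arguments that were short there require a more careful sub-case split---on whether the first block is increasing, on the position of the letter $i_1+1$, and on emptiness of the part of $\pi^{(2)}$ lying below $i_1$---and one must pin down exactly which small avoidance class (most likely $\{231,2143\}$-avoiders, or $\{213,231\}$-avoiders with generating function $\frac{1-x}{1-2x}$) governs each free piece. Once those contributions are correctly identified, the remaining generating-function algebra is routine.
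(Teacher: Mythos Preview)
Your left-right-maxima framework is sound, and the recursive contribution $xG_{m-1}(x)$ does arise from the sub-case $\pi^{(1)}=\emptyset$ with no letter $<i_1$ in any block (which forces $i_1=1$). The gap is in your structural predictions for the remaining cases. In the ``staircase'' case $\pi^{(1)}\neq\emptyset$ (where $2143$ indeed forces $i_j=i_1+j-1$ and every block $<i_1$), avoidance of $3241$ forces $\pi^{(1)}<\pi^{(2)}<\cdots<\pi^{(m)}$ in \emph{value} (check $i_j,a,i_k,b$ with $a\in\pi^{(j)}$, $b\in\pi^{(k)}$, $j<k$, $a>b$), the opposite of ``rigidly decreasing''; each block is then an arbitrary $T$-avoider with no $3$-letter restriction, and $2143$ only adds that a descent in $\pi^{(j)}$ kills all later blocks. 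No $\{231,2143\}$- or $\{213,231\}$-avoiders enter. The sub-cases with $\pi^{(1)}=\emptyset$ and a low letter first appearing in $\pi^{(k)}$ ($k\ge2$) need the preliminary step $\pi^{(2)}=\cdots=\pi^{(k-1)}=\emptyset$ (via $3241$: take $i_j,b,i_k,a$ with $b\in\pi^{(j)}$, $b>i_1$) and then a split on whether $\pi^{(k)}$ also contains letters $>i_1$; both branches feed back into the same staircase mechanism. With these corrections your split does close and yields exactly $G_m(x)-xG_{m-1}(x)=x^{m-1}(F_T(x)-1)\bigl(2(1-x)^{1-m}-mx-2(1-x)\bigr)$, whence the stated $F_T(x)$.

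The paper organizes the count differently. Its $xG_{m-1}(x)$ term comes not from ``$i_1=1$'' but from the condition $(*)$: $\pi^{(1)}=\emptyset$ and $\pi^{(j)}<i_{j-1}$ for every $j$; under $(*)$ the map $i_1\,i_2\pi^{(2)}\cdots i_m\pi^{(m)}\mapsto i_1\pi^{(2)}i_2\pi^{(3)}\cdots i_{m-1}\pi^{(m)}$ (delete $n=i_m$, shift each block one slot left) is a bijection to $T$-avoiders with $m-1$ left-right maxima. When $(*)$ fails there is an index $s$ with $\pi^{(s)}\cap(i_{s-1},i_s)\neq\emptyset$, which forces $\pi^{(1)}=\cdots=\pi^{(s-1)}=\emptyset$ and a two-sided staircase in which $\pi^{(s)}$ itself decomposes as $\beta_1\cdots\beta_s$ by value level and each later $\pi^{(j)}$ sits in its own gap $(i_{j-1},i_j)$. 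The paper computes the contribution via an auxiliary $L_{m,s;d}(x)$ with a parameter $d$ recording how many of the $\beta$'s may be nonempty, solves a one-step recursion in $d$, and handles the boundary $d=1$ by a second recursion in $m-s$. Both routes reach the same closed form for $G_m(x)-xG_{m-1}(x)$, but the paper's pivot on the first block whose ``gap'' $(i_{s-1},i_s)$ is occupied is the cleaner invariant here, and is what replaces the Case~177 ingredients you were reaching for.
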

\begin{proof}
Let $G_m(x)$ be the generating function for $T$-avoiders with $m$ left-right maxima.
Clearly, $G_0(x)=1$ and $G_1(x)=xF_T(x)$. For $m\geq2$, we need a simple lemma.
Let $S_{n,m}$ denote the set of all permutations $\pi=i_1\pi^{(1)}i_2\pi^{(2)}\cdots i_m\pi^{(m)}
\in S_n$ with $m$ left-right maxima $i_1,i_2,\dots,i_m$ and $R_{n,m}$ the subset that satisfy the condition
(*) $\pi^{(1)}=\emptyset$ and $\pi^{(j)}<i_{j-1}$ for all $j=2,3,\ldots,m$.  Let $S_{n,m}(T)$ and $R_{n,m}(T)$ have the obvious meaning.
\begin{lemma}\label{lemma196}
For $m\ge 2$, the map $\phi:\:i_1 \pi^{(1)}i_2\pi^{(2)} \cdots i_m\pi^{(m)} \rightarrow i_1 \pi^{(2)}i_2\pi^{(3)} \cdots i_{m-1}\pi^{(m)}$ is a bijection from $R_{n,m}$ to $S_{n-1,m-1}$. Furthermore, the restriction of $\phi$ to $R_{n,m}(T)$ is a bijection to  $S_{n-1,m-1}(T). $\qed
\end{lemma}
Now suppose $\pi \in S_{n,m}(T)$ with $m\ge 2$ and consider cases according as condition (*) holds or not.

If (*) holds, so that $\pi \in R_{n,m}(T)$, then the contribution is $xG_{m-1}(x)$, by the lemma.
If (*) does not hold, then there exists an index $s \in [m]$ such that $\pi^{(s)}$ contains a letter
between $i_{s-1}$ and $i_s$. This imposes restrictions on $\pi$ as illustrated:
\begin{center}
\begin{pspicture}(-4,-.1)(13,4.2)
\psset{xunit =.5cm, yunit=.3cm,linewidth=.5\pslinewidth}
\psline(0,0)(13,0)(13,15)(11,15)(11,14)(9,14)(9,13)(7,13)(7,12)(4,12)(4,4)(2,4)(2,2)(0,2)(0,0)
\psline(4,0)(4,2)(5,2)(5,4)(6,4)(6,6)(7,6)(9,6)(9,8)(11,8)(11,10)(13,10)(13,12)
\psline(4,0)(5,0)(5,2)(6,2)(6,4)(7,4)(7,6)(7,8)(9,8)(9,10)(11,10)(11,12)(13,12)
\psline(7,8)(7,12)
\pspolygon[fillstyle=hlines,hatchcolor=lightgray,hatchsep=0.8pt](7,0)(13,0)(13,6)(7,6)(7,0)
\pspolygon[fillstyle=hlines,hatchcolor=lightgray,hatchsep=0.8pt](7,12)(13,12)(13,15)(11,15)(11,14)(9,14)(9,13)(7,13)(7,12)
\pspolygon[fillstyle=hlines,hatchcolor=lightgray,hatchsep=0.8pt](0,0)(4,0)(4,4)(2,4)(2,2)(0,2)(0,0)
\rput(10,3){\textrm{{\small $324\overset{{\gray \bullet}}{1}$}}}
\rput(11,13.2){\textrm{{\small $214\overset{{\gray \bullet}}{3}$}}}
\rput(2.4,1.2){\textrm{{\small $2\overset{{\gray \bullet}}{1}43$}}}
\rput(-0.6,2.2){\textrm{{\footnotesize $i_1$}}}
\rput(1.2,4.25){\textrm{{\footnotesize $i_{s-1}$}}}
\rput(3.5,12.2){\textrm{{\footnotesize $i_{s}$}}}
\rput(6.2,13.2){\textrm{{\footnotesize $i_{s+1}$}}}
\rput(10.4,15.2){\textrm{{\footnotesize $i_{m}$}}}
\rput(5.5,3.2){$\iddots$}
\rput(10,9.3){$\iddots$}
\rput(4.5,1){\textrm{{\small $\beta_1$}}}
\rput(6.5,4.8){\textrm{{\small $\beta_s$}}}
\rput(8,7){\textrm{{\small $\pi^{(s+1)}$}}}
\rput(12,11){\textrm{{\small $\pi^{(m)}$}}}
\rput(13.3,0){\textrm{,}}
\pscircle*(0,2){.08}\pscircle*(2,4){.08}\pscircle*(4,12){.08}\pscircle*(7,13){.08}
\pscircle*(9,14){.08}\pscircle*(11,15){.08}\pscircle*(6.5,5.7){.08}
\end{pspicture}
\end{center}
where dark bullets indicate mandatory entries, shaded regions are empty to avoid the pattern involving a light bullet as indicated, blank regions are empty, and the $\beta$'s and $\pi$'s are in the displayed order to avoid 2431.

Thus, the contribution is $x^m\sum_{s=1}^m L_{m,s;s}(x)$, where $L_{m,s;d}(x)$ is the generating function for such avoiders $\pi$ (as illustrated) with $\beta_1=\cdots=\beta_{s-d}=\emptyset,\ 1\le d \le s$, when the left-right maxima are understood to make no contribution, i.e., are weighted with 1 rather than $x$. Note the latter condition on the $\beta$'s is vacuous---no restriction---when $d=s$. We need to introduce $d$ because we can get a recurrence
for $L_{m,s;d}$ in terms of $L_{m,s;d-1}$ that will yield $L_{m,s;s}$.
To do so, let $2\le d \le s$ and consider whether $\beta_{s-d+1}$ is empty or not.
If $\beta_{s-d+1}=\emptyset$, clearly the contribution is $L_{m,s;d-1}(x)$.
If $\beta_{s-d+1}\ne \emptyset$ then, to avoid 2143, $\beta_{i}$ is increasing
(could be empty) for $s-d+2 \le i \le s-1$ while $\beta_{s}$ is increasing and nonempty. Moreover, also to avoid 2143 (but utilizing different letters), $\pi^{(j)}=\emptyset$ for all $j=s+1,s+2,\ldots,m$. There are $d-1$ $\beta$'s required to be increasing and so the contribution is
$(F_T(x)-1)\frac{x}{(1-x)^{d-1}}$.

Adding the two contributions, we have
\begin{equation}\label{eq196}
L_{m,s;d}(x)=L_{m,s;d-1}(x)+(F_T(x)-1)\frac{x}{(1-x)^{d-1}}\, .
\end{equation}
To complete the recurrence, we need an expression for $L_{m,s;1}(x)$. Here, $\beta_1,\dots,\beta_{s-1}$ are all empty. Set $r=m-s,\ M_r=L_{m,s;1}(x)$ and relabel $\beta$'s and $\pi$'s so that the boxes not required to be empty for $M_r$ contain $\beta_{0}\ne \emptyset,\pi^{(1)}, \dots, \pi^{(r)}$. Now consider variable $r$. Clearly, $M_0 = F_T(x)-1$ and we obtain a recurrence for $M_r,\ r\ge 1,$ conditioning on the first nonempty $\pi^{(j)}$. If all $\pi$'s are empty, the contribution is $F_T(x)-1$.
Otherwise, let $j\in [r]$ be minimal with $\pi^{(j)}\ne \emptyset$.
Then $\beta_{0}$ is increasing ($b>a$ in $\beta_{0}$ would make $ba$ the 21 of a 2143) and
the contribution is $\frac{x}{1-x}M_{r-j}$ (because $\pi^{(j)}$ can play the role of $\beta_{0}$).
So $M_r=F_T(x)-1 + \frac{x}{1-x} \sum_{j=1}^{r}M_{r-j}$ for $r\ge 1$, with $M_0 = F_T(x)-1$. This recurrence has solution $M_r= \frac{F_T(x)-1}{(1-x)^r}$.
So $L_{m,s;1}(x)= \frac{F_T(x)-1}{(1-x)^{m-s}}$, the initial condition for recurrence (\ref{eq196}), with solution
\[
L_{m,s;d}(x) = \big(F_T(x)-1\big)\left( \frac{1}{(1-x)^{d-1}} + \frac{1}{(1-x)^{m-s}} - 1\right)\, .
\]
Hence,
\begin{align*}
G_m(x) &=  xG_{m-1}(x)+ x^m\sum_{s=1}^m L_{m,s;s}(x) \\
&=  xG_{m-1}(x)+x^m\sum_{s=1}^m \big(F_T(x)-1\big)\left(\frac{1}{(1-x)^{s-1}}+
\frac{1}{(1-x)^{m-s}}-1\right),
\end{align*}
which implies
$$G_m(x)=xG_{m-1}(x)+x^m\frac{\big(F_T(x)-1\big)\big(2(1-x)^{1-m}-mx-2(1-x)\big)}{x}\, .$$
By summing over all $m\geq2$ and using the initial condition $G_0(x)=1$ and $G_1(x)=xF_T(x)$, we obtain
$$F_T(x)-1-x F_T(x)=x\big(F_T(x)-1\big)+\frac{(2x^2-3x+2)(F_T(x)-1)x^2}{(1-2x)(1-x)^2}\, ,$$
with solution the desired $F_T$.
\end{proof}

\begin{theorem}\label{th196A4}
Let $T=\{2413,2431,3214\}$. Then
 $$F_T(x)=\frac{1-5x+7x^2-4x^3}{1-6x+11x^2-9x^3+2x^4}\, .$$
\end{theorem}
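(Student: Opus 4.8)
The plan is to mirror the strategy of Theorem~\ref{th196A3}: classify $T$-avoiders by their number of left-right maxima. Let $G_m(x)$ denote the generating function for $T$-avoiders with exactly $m$ left-right maxima, so $G_0(x)=1$, $G_1(x)=xF_T(x)$, and $F_T(x)=\sum_{m\ge0}G_m(x)$. The aim is to produce, for each $m\ge2$, an expression for $G_m(x)$ in terms of $F_T(x)$ and (at most) $G_{m-1}(x)$, and then to sum over $m$ to obtain a single linear equation for $F_T(x)$.

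Fix a $T$-avoider $\pi=i_1\pi^{(1)}i_2\pi^{(2)}\cdots i_m\pi^{(m)}$ with left-right maxima $i_1<\cdots<i_m$. The structural facts I would first record are: (a) since $\pi$ avoids $3214$ and $i_{j+1}$ lies above and to the right of all of $\pi^{(j)}$, each block $\pi^{(j)}$ with $j\le m-1$ is \emph{increasing} (a descent in $\pi^{(j)}$ would complete a $3214$ together with $i_j$ and $i_{j+1}$), and more generally everything lying below $i_m$ and to its left avoids $321$; and (b) since $\pi$ avoids $2413$ and $2431$, for any entry $a$ that has a strictly larger entry to its right, at most one entry to the right of that larger entry can be smaller than $a$. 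Applying (b) to the left-right maxima, the blocks $\pi^{(2)},\dots,\pi^{(m)}$ can contain only a tightly limited number of ``small'' entries, and (a) further forces those blocks to be nearly increasing; consequently most entries of each $\pi^{(j)}$ sit in the ``staircase cell'' just below $i_j$, with a single controlled exceptional entry allowed to drop lower. This is precisely the phenomenon that forced the auxiliary families $L_{m,s;d}(x)$ and $M_r$ in the proof of Theorem~\ref{th196A3}, and I expect to introduce analogous auxiliary generating functions here.

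Concretely, I would split on the least index $s$ (if any) for which $\pi^{(s)}$ contains an entry below $i_{s-1}$, i.e.\ on whether the analogue of condition~(*) of Lemma~\ref{lemma196} holds. When it does hold, a deletion map in the spirit of $\phi$ from Lemma~\ref{lemma196} should give a contribution $xG_{m-1}(x)$. When it fails, the $3214$-, $2413$- and $2431$-constraints should pin the nontrivial part of $\pi$ down to a product of: an overall factor of $F_T(x)$ coming from the tail block $\pi^{(m)}$ (or a standardized suffix), a factor counted by one of the short, already-identified sequences appearing in this section --- the \gf $\frac{1-2x}{1-3x+x^2}$ for $\{231,2143\}$-avoiders, the \gf $\frac{1-x}{1-2x}$ for $\{213,231\}$-avoiders, or a similar one --- and geometric factors $\frac1{(1-x)^k}$ recording the position of the exceptional small entry and the lengths of the increasing blocks. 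Summing the resulting formula for $G_m(x)$ over $m\ge2$ and adding $G_0(x)+G_1(x)$ yields a linear equation in $F_T(x)$, whose solution should simplify, after routine algebra, to $\dfrac{1-5x+7x^2-4x^3}{1-6x+11x^2-9x^3+2x^4}$, matching Theorem~\ref{th196A3} as required.

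The step I expect to be the main obstacle is the bookkeeping in case~(b): determining exactly which block, and which position within that block, the lone ``exceptional'' small entry may occupy relative to the increasing blocks and the left-right maxima, and organizing the sub-cases (exceptional entry inside $\pi^{(m)}$ versus inside an earlier block; tail block empty or not) so that nothing is double-counted or omitted --- the same delicacy handled in Theorem~\ref{th196A3} via the recurrences for $L_{m,s;d}(x)$ and $M_r$. If that direct approach turns out to be unwieldy, a cleaner route would be to construct a length-preserving bijection between $\{2143,2431,3241\}$-avoiders and $\{2413,2431,3214\}$-avoiders (ideally preserving the number and positions of left-right maxima), thereby deducing Theorem~\ref{th196A4} directly from Theorem~\ref{th196A3}; the matrix-diagram pictures suggest hunting for such a map among those that reverse carefully chosen factors, in the spirit of the bijection $f$ in Theorem~\ref{th50A3}.
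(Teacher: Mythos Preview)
What you have written is a research plan, not a proof: no generating function is actually computed, no recurrence is established, and the crucial bookkeeping you flag as ``the main obstacle'' is left entirely open.  Moreover, your structural claim (b) is wrong as stated.  Avoiding $2413$ and $2431$ does \emph{not} bound the number of entries smaller than $a$ that can appear to the right of a larger entry $c$; the patterns $3412$ and $3421$ are not in $T$, so arbitrarily many such small entries may occur.  What $2413$ and $2431$ together forbid is having, to the right of a pair $a<c$ (in that order), \emph{both} an entry below $a$ and an entry strictly between $a$ and $c$.  Since the rest of your decomposition is built on (b), the plan needs substantial repair before it can be carried out.

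The paper in fact abandons the left-right-maxima viewpoint for this triple and instead refines by the first one or two letters.  Writing $a_n(i,j)$ for the number of $T$-avoiders of length $n$ starting $ij$, one checks:
\[
a_n(i,1)=a_{n-1}(i-1),\quad a_n(i,i+1)=a_{n-1}(i),\quad a_n(i,j)=0\ (j\ge i+2),
\]
and, crucially, for $2\le j<i\le n-1$ every $T$-avoider beginning $ij$ must have the form $i\,j\,(j{+}1)\cdots\widehat{i}\cdots n\,\beta$ with $\beta\in S_{j-1}(T)$, so $a_n(i,j)=a_{j-1}$.  Summing these gives the three-term-plus-convolution recurrence
\[
a_n=4a_{n-1}-2a_{n-2}+\sum_{i=1}^{n-3}(n-2-i)a_i,
\]
which translates immediately into a linear equation for $F_T(x)$ with the stated rational solution.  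If you want to salvage your approach, the bijection idea in your last paragraph is more promising than the direct left-right-maxima count; but the first-letters argument above is short and self-contained.
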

\begin{proof}
Let $a_n=|S_n(T)|$ and let $a_n(i_1,i_2,\ldots,i_s)$ denote the number of permutations $i_1i_2\cdots i_s\pi\in S_n(T)$. We will obtain expressions for $a_n(i,j)$ and $a_n(i)$ and deduce a recurrence for $a_n$. Clearly, $a_n(1)=a_n(n)=a_{n-1}$. For $2\le i \le n-1$, we have the following expressions for $a_n(i,j)$:
\[
a_n(i,j)=
 \begin{cases}
  a_{n-1}(i-1) & \text{if $j=1$}\, , \\
  a_{j-1} & \text{if $2 \le j<i$}\, , \\
  a_{n-1}(i)  & \text{if $j=i+1$}\, , \\
  0  & \text{if $j\ge i+2$}\, .
  \end{cases}
\]
For the first item, ``delete 1 and standardize'' is a bijection from $T$-avoiders that begin $i1$ to
one-size-smaller $T$-avoiders that begin $i-1$. For the second item, $n$ occurs before 1 (3214) and
so $\pi=ij\pi'n\pi''1\pi'''$. Also, $\pi'>j$ (3214),\ $\pi''<j$ (2431),\ $\pi'''<j$ (2413),
and $\pi'$ is increasing (2431). These results imply that
$\pi=ij(j+1)\cdots \widehat{i} \cdots n\beta$ (where $\widehat{i}$ indicates that $i$ is missing)
with $\beta \in S_{j-1}(T)$. The easy proofs of the last two items are left to the reader.

Since $a_n= \sum_{i=1}^{n}a_n(i) = a_n(1) + \sum_{i=2}^{n-1}\sum_{j=1}^{n}a_n(i,j) +a_n(n)$, the preceding results yield
$$a_n=\sum_{i=1}^{n-3}(n-2-i)a_i+2\sum_{i=1}^{n-1}a_{n-1}(i)+2(a_{n-1}-a_{n-2})$$
for $n\ge 3$, which implies
$$a_n=4a_{n-1}-2a_{n-2}+\sum_{i=1}^{n-3}(n-2-i)a_i\, ,$$
with $a_0=a_1=1$ and $a_2=2$. Since $F_T(x)=\sum_{n\geq0}a_n x^n$,
the recurrence for $a_n$ translates to
$$F_T(x)-1-x-2x^2=4x\big(F_T(x)-1-x\big)-2x^2\big(F_T(x)-1\big)+\frac{x^3}{(1-x)^2}\big(F_T(x)-1\big)\, ,$$
with solution the desired $F_T$.
\end{proof}

\subsection{Case 201}
The two representative triples $T$ are:

\{1243,1324,3142\} (Theorem \ref{th201A1})

\{1342,1423,2314\} (Theorem \ref{th201A2})
\subsubsection{$\mathbf{T=\{1243,1324,3142\}}$}
\begin{theorem}\label{th201A1}
Let $T=\{1243,1324,3142\}$. Then
$$F_T(x)=\frac{1-3x+x^2}{1-x}C^3(x).$$
\end{theorem}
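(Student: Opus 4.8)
The plan is to count $T$-avoiders by their number of left-right maxima, following the general scheme of this paper. Write $G_m(x)$ for the \gf of $T$-avoiders with exactly $m$ left-right maxima, so that $F_T(x)=\sum_{m\ge 0}G_m(x)$, with $G_0(x)=1$ and $G_1(x)=xF_T(x)$ immediate. The goal is to produce, for $m\ge 2$, an explicit formula for $G_m(x)$ in terms of $F_T(x)$, $C(x)$, and rational functions of $x$, and then to sum.

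The starting point is a sub-pattern remark: each of $1243$, $1324$, $3142$ contains $132$, so any $132$-avoider automatically lies in $S_n(T)$, and more usefully, if $\pi=i_1\pi^{(1)}i_2\pi^{(2)}\cdots i_m\pi^{(m)}$ has left-right maxima $i_1<i_2<\cdots<i_m=n$, then the entries exceeding $i_1$ (namely $i_2,\dots,i_m$ together with the large letters of $\pi^{(2)},\dots,\pi^{(m)}$) must, read in order, avoid both $132$ (forced by $1243$, with $i_1$ in the role of ``$1$'') and $213$ (forced by $1324$); recall the \gf for $\{132,213\}$-avoiders is $\frac{1-x}{1-2x}$. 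The entries below $i_1$ are distributed among $\pi^{(1)},\dots,\pi^{(m)}$, and the pattern $3142$ (with $i_1$ as the ``$3$'' and a large letter as the ``$4$'') sharply limits where such a small letter may appear once a large letter has occurred, while $1243$ and $1324$ with a small letter in the role of ``$1$'' impose the remaining restrictions. One should extract from this a short list of rigid templates for $\pi$, each assembled from independent pieces: a $\{132,213\}$-avoider, one or more runs of consecutive integers in a prescribed (near-)monotone order, and a recursive $T$-avoider, and the three Catalan factors in the answer will materialize from these pieces. A convenient warm-up/check: a $T$-avoider beginning with $1$ is exactly $1$ followed by a $\{132,213\}$-avoider on $\{2,\dots,n\}$ (avoidance of $3142$ being automatic then), contributing $\frac{x^2}{1-2x}$ to $F_T(x)-1$; this also suggests that an initial-letter decomposition ($i_1=1$, $i_1=n$, or $2\le i_1\le n-1$) is a viable alternative organizing principle.

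Once the templates are in hand, summing the resulting expressions for $G_m(x)$ over $m\ge 2$ and adding $G_0(x)+G_1(x)=1+xF_T(x)$ will give a single linear equation for $F_T(x)$; solving it and simplifying with the Catalan identities $xC(x)^2=C(x)-1$ and $x^2C(x)^3=(1-x)C(x)-1$ (so that $C(x)^2-\tfrac{1}{1-x}=\tfrac{xC(x)^3}{1-x}$) should yield $F_T(x)=\frac{1-3x+x^2}{x}\bigl(C(x)^2-\frac{1}{1-x}\bigr)=\frac{1-3x+x^2}{1-x}C(x)^3$, the claimed formula. (Phrasing the equation with a $C(x)^2$ rather than a $C(x)^3$ means one only has to exhibit two ``$C$'' pieces structurally, the third appearing via the identity.)

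The main obstacle is the structural analysis rather than the algebra: one must verify precisely which configurations of the below-$i_1$ letters are permitted relative to the large letters, and the bookkeeping forced by $3142$ is the delicate part. Moreover the case $m=2$ is likely to need its own treatment — as elsewhere in the paper, one expects to introduce an auxiliary \gf refined by an extra statistic, such as the value of the first letter, and to clear the resulting functional equation by the kernel method, before the clean formula above emerges.
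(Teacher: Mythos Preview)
Your framework is right—decompose by the number $m$ of left-right maxima—and your observation that the subsequence of entries exceeding $i_1$ must avoid both $132$ and $213$ is correct and relevant. But you have not actually carried out the structural analysis, and you overestimate its difficulty: no auxiliary refined generating function, no kernel method, and no special treatment of $m=2$ is needed here. The paper's argument is short and uniform for all $m\ge 2$.

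The missing key step is this sharpening of your $\{132,213\}$ remark: since $1243$ and $1324$ are forbidden, the left-right maxima $i_2,\dots,i_m$ must be \emph{consecutive} integers $n-m+2,\dots,n$ (any value strictly between two of them, necessarily lying in some $\pi^{(s)}$ with $s\ge 2$, would create a $1243$ with $i_1$ or a $1324$ with $i_m$). This immediately gives a clean dichotomy on $i_1$:
\begin{itemize}
\item If $i_1=n-m+1$, there are no entries between $i_1$ and $i_m$ other than the $i_j$, so every $\pi^{(j)}$ consists of letters $<i_1$. Avoidance of $3142$ (with $i_j,\,i_k$ as the $3,4$) forces $\pi^{(1)}>\pi^{(2)}>\cdots>\pi^{(m)}$; each $\pi^{(j)}$ for $j<m$ must avoid $132$ (else $i_{j+1}$ completes a $1324$), while $\pi^{(m)}$ need only avoid $T$. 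Contribution: $x^mC(x)^{m-1}F_T(x)$.
\item If $i_1<n-m+1$, then $i_1+1$ lies in some $\pi^{(s)}$ with $s\ge 2$, and in fact (by $1324$ and $3142$) one finds $\pi^{(2)}=\cdots=\pi^{(m-1)}=\emptyset$, $\pi^{(1)}$ is a decreasing block of consecutive integers $i_1-1,i_1-2,\dots$ (else a $1243$ or a $3142$ appears), and $i_1\pi^{(m)}$ standardizes to an arbitrary $T$-avoider that does not begin with its maximum. Contribution: $\dfrac{x^{m-1}}{1-x}\bigl(F_T(x)-1-xF_T(x)\bigr)$.
\end{itemize}
Summing over $m\ge 0$ gives the single linear equation
\[
F_T(x)=1+xC(x)F_T(x)+\frac{x}{(1-x)^2}\bigl(F_T(x)-1-xF_T(x)\bigr),
\]
whose solution simplifies (via $xC^2=C-1$) to the stated $\frac{1-3x+x^2}{1-x}C(x)^3$. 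Your $\{132,213\}$ observation is essentially the germ of the ``consecutive $i_j$'' fact, but without isolating that consequence you cannot see the two rigid templates, and you are led to expect machinery that the problem does not require.
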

\begin{proof}
Let $G_m(x)$ be the generating function for $T$-avoiders with $m$ left-right maxima.
Clearly, $G_0(x)=1$ and $G_1(x)=xF_T(x)$. Now suppose $m\geq2$ and $\pi=i_1\pi^{(1)}\cdots i_m\pi^{(m)}$ avoids $T$. Then $\{i_2,i_3,\dots,i_m=n\}$ are consecutive integers (a gap would give a 1324 or a 1243). We consider two cases.
\begin{itemize}
\item $i_1=n-m+1$, its maximum possible value. Here, $\pi^{(1)}>\pi^{(2)}>\cdots>\pi^{(m)}$ (to avoid 3142).
Also, for $j\in [ m-1]$,
 $\pi^{(j)}$ avoids $132$ (or $i_{j+1}$ is the ``4'' of a 1324), and $\pi^{(m)}$ avoids $T$.
Hence, the contribution is $x^mC(x)^{m-1}F_T(x)$.

\item $i_1<n-m+1$. Here, $i_1$ and $i_2$ are not consecutive and $\pi$ has the form
\begin{center}
\begin{pspicture}(-1.5,-.8)(8,3.7)
\psset{xunit =.6cm, yunit=.4cm,linewidth=.5\pslinewidth}
\psline[linecolor=gray](2,4)(8,4)
\psline(0,4)(0,2)(2,2)(2,0)(10,0)(10,2)(8,2)(8,4)(10,4)(10,6)(2,6)(2,4)(0,4)
\psline(8,0)(8,2)(10,2)
\psline(10,4)(8,4)(8,6)
\psline(0,2)(2,2)(2,4)
\rput(1,-0.6){\textrm{{\small $\pi^{(1)}$}}}
\rput(9,-0.6){\textrm{{\small $\pi^{(m)}$}}}
\pspolygon[fillstyle=hlines,hatchcolor=lightgray,hatchsep=0.8pt](2,0)(8,0)(8,6)(2,6)(2,0)
\pspolygon[fillstyle=hlines,hatchcolor=lightgray,hatchsep=0.8pt](8,2)(10,2)(10,4)(8,4)(8,2)
\psline[fillstyle=hlines,hatchcolor=lightgray,hatchsep=0.8pt](0,0)(2,0)(2,2)(0,2)(0,0)
\psline(2,4)(8,4)
\pscircle*(0,4){.08}\pscircle*(2,6){.08}\pscircle*(4,6.5){.08}\pscircle*(8,7.5){.08}
\pscircle*(9,4.4){.08}
\rput(5.8,7.2){$\cdots$}
\rput(5,2){\textrm{{\small $3\overset{{\gray \bullet}}{1}42$}}}
\rput(5,5){\textrm{{\small $13\overset{{\gray \bullet}}{2}4$}}}
\psline[arrows=->,arrowsize=3pt 3](.4,3.6)(1.6,2.4)
\rput(-0.5,4.4){\textrm{{\footnotesize $i_1$}}}
\rput(1.5,6.4){\textrm{{\footnotesize $i_{2}$}}}
\rput(3.5,6.9){\textrm{{\footnotesize $i_{3}$}}}
\rput(7.4,7.9){\textrm{{\footnotesize $i_{m}$}}}
\rput(10.3,0){,}
\end{pspicture}
\end{center}
where the top shaded rectangle is empty (1324) and hence $\pi^{(m)}$ contains $i_1+1$; the rectangle below it is empty (3142);
$\pi^{(1)}$ is decreasing (or $i_m(i_1+1)$ would terminate a 1243) and $i_1\pi^{(1)}$ has no gaps (else there exist $a<b<i_1$ with $a \in \pi^{(1)}$ and $b \in \pi^{(m)}$ and then $i_1ai_mb$ is a 3142). Also, $i_1\pi^{(m)}$ avoids $T$ and does not start with its maximal letter. Hence, the contribution is
$\frac{x^{m-1}}{1-x}(F_T(x)-1-xF_T(x))$.
\end{itemize}
Combining the preceding cases gives $G_m(x)$ for $m\geq2$, and
by summing over all $m\geq0$, we obtain
$$F_T(x)=1+\sum_{m\geq1} x^mC(x)^{m-1}F_T(x)+\sum_{m\geq2}\frac{x^{m-1}}{1-x}\big(F_T(x)-1-xF_T(x)\big),$$
which implies
$$F_T(x)=1+xC(x)F_T(x)+\frac{x}{(1-x)^2}\big(F_T(x)-1-xF_T(x)\big),$$
with solution $F_T(x)=\frac{1-3x+x^2}{(1-x)\big(1 - 2 x + (x^2 - x) C(x)\big)}$, equivalent to the stated expression.
\end{proof}

\subsubsection{$\mathbf{T=\{1342,1423,2314\}}$}
To enumerate the members of $S_n(T)$, we consider the relative positions of the letters $n$ and $n-1$ within a permutation.  More precisely, given $1 \leq i,j \leq n$ with $i \neq j$, let $a(n;i,j)$ denote the number of permutations $\pi=\pi_1\pi_2\cdots \pi_n \in S_n(T)$ such that $\pi_i=n$ and $\pi_j=n-1$.  If $n\geq 2$ and $1 \leq i \leq n$, then let $a(n;i)=\sum_{j=1}^n a(n;i,j)$, with $a(1;1)=1$.  The array $a(n;i,j)$ is determined by the following recurrence relations.

\begin{lemma}\label{201bl1}
If $n \geq 3$, then
\begin{equation}\label{201bl1e1}
a(n;i,i-1)=a(n-1;i-1,i-2)+\sum_{j=2}^{i-2}a(n-1;i-1,j), \qquad 3 \leq i \leq n,
\end{equation}
and
\begin{equation}\label{201bl1e2}
a(n;i,j)=a(n-1;i,j)+a(n-1;i-1,j-1)+\sum_{k=j+1}^{n-1}a(n-1;j-1,k), \qquad 2 \leq i \leq j-2.
\end{equation}
Furthermore, we have $a(n;i,1)=a(n-1;i-1)$ for $2 \leq i \leq n$, $a(n;1,j)=a(n-1;j-1)$ for $2 \leq j \leq n$, $a(n;i,j)=a(n-1;i-1,j)$ for $2 \leq j \leq i-2$, and $a(n;i,i+1)=a(n-1;i)$ for $1 \leq i \leq n-1$.
\end{lemma}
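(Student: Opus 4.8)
The plan is to extract each relation by analyzing a permutation $\pi=\pi_1\cdots\pi_n\in S_n(T)$ through the positions of its two largest letters, $\pi_i=n$ and $\pi_j=n-1$, and, where necessary, the position of the third-largest letter $n-2$. The workhorse is deletion-standardization: delete $n$, or $n-1$, or a suitably forced smaller letter, and reindex. Since removing a letter never creates an occurrence of a pattern, the content of each case is to pin down, using the defining restrictions $1342,1423,2314$, precisely which configurations of $(i,j)$ and of $n-2$ actually occur, and then to verify that the corresponding re-insertion stays within $S(T)$, so that each map is a bijection. The six displayed relations exhaust the mutually exclusive possibilities $j=1$; $i=1$; $j=i-1$; $j=i+1$; $2\le j\le i-2$; and $2\le i\le j-2$.

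I would dispatch the four ``furthermore'' identities first. When $\pi_1\in\{n-1,n\}$, the leading letter is too large to be the first letter of any of $1342,1423,2314$ (each such pattern has at least two letters larger than its first), so it lies in no forbidden occurrence; deleting it and standardizing is thus a bijection, giving $a(n;i,1)=a(n-1;i-1)$ and $a(n;1,j)=a(n-1;j-1)$. When $j=i+1$, the letter $n-1$ immediately follows $n$; the same size consideration, now also using that $n$ is adjacent and to its left, shows $n-1$ lies in no forbidden occurrence, so $a(n;i,i+1)=a(n-1;i)$. The identity $a(n;i,j)=a(n-1;i-1,j)$ for $2\le j\le i-2$ (here $n-1$ precedes $n$ with an entry between them and $n-1$ is not first) is the most delicate: one shows that avoidance forces $n-2$ into position $j+1$. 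Indeed, every entry after $n$ is $<\pi_1$ (else $\pi_1,n-1,n$ together with that entry is a $1342$) and hence $<n-2$; the entries strictly between $n-1$ and $n$ form a decreasing sequence with all values exceeding $\pi_1$ (by $1423$- and $2314$-avoidance against $\pi_1$), so among them only position $j+1$ can hold $n-2$; and $n-2$ cannot precede $n-1$, for then $n-2,n-1,\pi_{i-1},n$ (with $\pi_{i-1}$ the last entry between $n-1$ and $n$) would be a $2314$. With $n-2$ pinned at $j+1$, deleting $n-1$ gives a bijection onto the $S_{n-1}(T)$-permutations with largest letter at position $i-1$ and second-largest at position $j$, and re-inserting $n-1$ just before the second-largest letter is seen to create no pattern.

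For the two recurrences I would delete $n$ (or, in some sub-cases of \eqref{201bl1e2}, the letter $n-2$) and standardize. For \eqref{201bl1e1}, with $n-1$ at $i-1$ and $n$ at $i$, deleting $n$ yields an $S_{n-1}(T)$-permutation whose largest letter is still at position $i-1$; splitting according to the position $j'$ of its new second-largest letter $n-2$ gives the two terms, with $j'=i-2$ (that is, $n-2$ immediately left of $n-1$) producing $a(n-1;i-1,i-2)$ and $2\le j'\le i-3$ producing $\sum_{j=2}^{i-2}a(n-1;i-1,j)$ after reindexing, while $j'=1$ and $j'\ge i$ are impossible because $\pi$ would then contain a forbidden pattern. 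For \eqref{201bl1e2}, with $n$ at $i$, $n-1$ at $j$ and $i\le j-2$, one first notes that $n-2$ cannot lie strictly between $n$ and $n-1$ (else $\pi_1,n,n-2,n-1$ is a $1423$), so $n-2$ lies before $n$ or after $n-1$; sorting those sub-cases---further distinguishing within the ``after $n-1$'' case according to whether $n-2$ is adjacent to $n-1$---and applying the appropriate deletion in each yields the three terms $a(n-1;i,j)$, $a(n-1;i-1,j-1)$, and $\sum_{k=j+1}^{n-1}a(n-1;j-1,k)$.

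The main obstacle is precisely this forbidden-pattern bookkeeping in the two ``gap'' cases, \eqref{201bl1e2} and $a(n;i,j)=a(n-1;i-1,j)$: one must show that avoiding $1342$, $1423$, and $2314$ simultaneously locates the third-largest letter and rules out every configuration not appearing on the right-hand side, so that these are genuine equalities and not merely one-sided bounds; the remaining identities and \eqref{201bl1e1} are comparatively routine.
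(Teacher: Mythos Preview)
Your overall strategy---locate $n-2$ and delete a suitable letter---is the right one and matches the paper's, but your treatment of \eqref{201bl1e1} contains a genuine error. You assert that after deleting $n$ (with $\pi_i=n$, $\pi_{i-1}=n-1$), the position $j'$ of $n-2$ cannot equal $1$. This is false: $\pi=32451\in S_5(T)$ has $n=5$ at position $4$, $n-1=4$ at position $3$, and $n-2=3$ at position $1$. More fundamentally, uniform deletion of $n$ \emph{cannot} give the stated identity, because the right-hand side counts $a(n-1;i-1,i-2)$ twice (once as the leading term, once as the $j=i-2$ summand), whereas a single deletion map is injective. Concretely, for $n=5$, $i=4$ one has $a(5;4,3)=2$ (the avoiders are $23451$ and $32451$), while $\sum_{j'=1}^{2}a(4;3,j')=a(4;3,1)+a(4;3,2)=2+1=3$; and indeed inserting $5$ at position $4$ into $3142\in S_4(T)$ produces $31452$, which contains $1452\cong 1342$. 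So ``delete $n$'' is not a bijection onto the set you describe.

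The paper resolves this by treating the case $\pi_1=n-2$ separately: there it deletes $n-2$ (shifting $n-1$ and $n$ one position left, producing the standalone term $a(n-1;i-1,i-2)$), and it deletes $n$ only when $n-2$ sits at some position $j\in\{2,\ldots,i-2\}$, checking that in that range the structural constraints forced on $\pi$ (entries between $n-2$ and $n-1$ exceed those before $n-2$; entries before $n-2$ exceed those after $n$) coincide with those on a $T$-avoider of length $n-1$ with largest letter at $i-1$ and second-largest at $j$, so that reinsertion of $n$ is safe. Your sketch of \eqref{201bl1e2} is closer to correct but similarly incomplete: ``$n-2$ lies before $n$'' must be sharpened to ``$n-2=\pi_1$'' (otherwise $\pi_1,(n-2),\pi_{i+1},(n-1)$ is a $2314$, since the successor of $n$ is forced to be a left-right minimum), or else the ``before $n$'' sub-case does not cleanly yield the single term $a(n-1;i-1,j-1)$.
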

\begin{proof}
Throughout, let $\pi=\pi_1\pi_2\cdots\pi_n \in S_n(T)$ be of the form enumerated in the case under consideration.  The formulas for $a(n;i,1)$ and $a(n;1,j)$ follow from the fact that an initial letter $n-1$ or $n$ within a member of $S_n(T)$ may be safely deleted.  To determine $a(n;i,j)$ where $2 \leq j \leq i-2$, first note that within $\pi \in S_n(T)$ in this case, the letter $n-2$ cannot go to the left of $n-1$ (for if it did, then there would be an occurrence of $2314$ of the form $(n-2)(n-1)xn$ for some $x<n-2$).  Furthermore, the letter $n-2$ cannot go to the right of $n$, for otherwise there would be an occurrence of $1342$ of the form $x(n-1)n(n-2)$ for some $x<n-2$ (since $j \geq2$ implies $n-1$ is not the first letter).  Thus, $n-2$ must go between $n-1$ and $n$ in this case.  Note also that $\min\{\pi_{j+1},\pi_{j+2},\ldots,\pi_{i-1}\}>\max\{\pi_1,\pi_2,\ldots,\pi_{j-1}\}$ so as to avoid an occurrence of $2314$ (of the form $x(n-1)yn$).  Thus, $j\geq 2$ implies the section $\pi_{j+1}\pi_{j+2}\cdots\pi_{i-1}$ is decreasing in order to avoid $1423$, whence $\pi_{j+1}=n-2$.  It follows that the letter $n-2$ may be deleted, which implies $a(n;i,j)=a(n-1;i-1,j)$ if $1<j<i-1$.  Next, observe that $a(n;i,i+1)=a(n-1;i)$ since the letter $n-1$ is extraneous in this case and may be deleted (as none of the patterns in $T$ contain ``4'' directly followed by ``3'').

We now show \eqref{201bl1e1}.  Note that the letter $n-2$ in this case must occur to the left of $n-1$, for otherwise there would be a $1342$.  If $\pi_1=n-2$, there are $a(n-1;i-1,i-2)$ possibilities as the letter $n-2$ may be deleted since it cannot play the role of a ``2''  within a $2314$.  So suppose $\pi_j=n-2$ for some $2 \leq j \leq i-2$.  Then we must have $\min\{\pi_{j+1},\pi_{j+2},\ldots,\pi_{i-2}\}>\max\{\pi_1,\pi_2,\ldots,\pi_{j-1}\}$ in order to avoid $2314$, with $\min\{\pi_1,\pi_2,\ldots,\pi_{j-1}\}>\max\{\pi_{i+1},\pi_{i+2},\ldots,\pi_n\}$ to avoid $1342$.  Since all of the same restrictions on $\pi$ are seen to apply if we delete $n$, it follows that there are $\sum_{j=2}^{i-2}a(n-1;i-1,j)$ possibilities if $n-2$ does not start a permutation.  Combining this case with the previous implies formula \eqref{201bl1e1}.

Finally, to show \eqref{201bl1e2}, it is convenient to write $\pi \in S_n(T)$ enumerated by $a(n;i,j)$ when $1<i<j-1$ as $\pi=w^{(1)}w^{(2)}\cdots w^{(r)}$, where $w^{(i)}$ for $i<r$ denotes the sequence of letters of $\pi$ between the $i$-th and the $(i+1)$-st left-right minimum, including the former but excluding the latter (with $w^{(r)}$ comprising all letters to the right of and including the rightmost left-right minimum).  Observe that $n$ must be the final letter of some $w^{(\ell)}$.  For if not, then $j>i+1$ implies that there would be an occurrence of $1423$ of the form $xny(n-1)$, where $x$ is a left-right minimum and $y$ is not.  Then $n-2$ must be the first letter of $\pi$ or go to the right of $n$, for otherwise, $\pi$ would contain an occurrence of $2314$ of the form $x(n-2)y(n-1)$, where $x$ is the first letter and $y$ is the successor of $n$ (and hence a left-right minimum).  If $n-2$ is the first letter, then it is seen to be extraneous (since $n-1$ occurs to the right of $n$ within $\pi$) and thus may be deleted, yielding $a(n-1;i-1,j-1)$ possibilities.  If $n-2$ occurs to the right of $n$, then it must also occur to the right of $n-1$ in order to avoid $1423$.  If $\pi_{j+1}=n-2$, then it is seen that $n-2$ may be deleted as there can be no possible occurrence of a pattern in $T$ involving both $n-2$ and $n-1$ in this case, whence there are $a(n-1;i,j)$ possibilities.  On the other hand, if $\pi_k=n-2$ for some $k>j+1$, then the letter $n-1$, like $n$, must be the last letter of some $w^{(\ell)}$ in order to avoid $1423$.

We claim that the letter $n$ may be deleted in this case.  First note that $i>1$ implies $n$ belongs to the leftmost $w^{(\ell)}$ such that $w^{(\ell)}$ is not of length one (for otherwise, there would be an occurrence of $2314$, with $n$ playing the role of the ``4'').  If $s$ denotes the index of this $w^{(\ell)}$, then $w^{(s)}$ must be of length two, for if not and $w^{(s)}$ contained a third letter, then $\pi$ would contain $2314$, as witnessed by the subsequence $xyz(n-1)$, where $x$ and $z$ are the first letters of $w^{(s)}$ and $w^{(s+1)}$ and $y$ is the second letter of $w^{(s)}$.  It follows that the letters to the left of $n$ within $\pi$ form a decreasing sequence.  By similar reasoning, the letters between $n$ and $n-1$ are decreasing since $\pi_k=n-2$ for some $k>j+1$.  Since $\min\{\pi_{1},\pi_{2},\ldots,\pi_{i-1}\}>\max\{\pi_{i+1},\pi_{i+2},\ldots,\pi_{j-1}\}$
in order to avoid $1423$, it follows that the letters to the left of $n-1$ excluding $n$ form a decreasing sequence.  From this, it is seen that the letter $n$ may be deleted, which gives $\sum_{k=j+1}^{n-1}a(n-1;j-1,k)$ additional possibilities.  Combining this with the previous cases implies \eqref{201bl1e2} and completes the proof.
\end{proof}

Define the functions $b_{n,i}(v)=\sum_{j=i+2}^na(n;i,j)v^j$ for $1 \leq i \leq n-2$ and $c_{n,i}(v)=\sum_{j=2}^{i-1}a(n;i,j)v^j$ for $3 \leq i \leq n$.  Then recurrences \eqref{201bl1e2} and \eqref{201bl1e1} imply
\begin{equation}\label{201be1}
b_{n,i}(v)=b_{n-1,i}(v)+vb_{n-1,i-1}(v)+\sum_{j=i+2}^nb_{n-1,j-1}(1)v^j, \qquad 2 \leq i \leq n-2,
\end{equation}
and
\begin{equation}\label{201be2}
c_{n,i}(v)=c_{n-1,i-1}(v)+c_{n-1,i-1}(1)v^{i-1}+a(n-1;i-1,i-2)v^{i-1}, \qquad 3 \leq i \leq n.
\end{equation}

Let $a_n(u)=\sum_{i=1}^na(n;i)u^i$ for $n\geq1$, $b_n(u,v)=\sum_{i=2}^{n-2}b_{n,i}(v)u^i$ for $n\geq4$, $c_n(u,v)=\sum_{i=3}^nc_{n,i}(v)u^i$ for $n\geq3$, and $d_n(u)=\sum_{i=2}^na(n;i,i-1)u^i$ for $n\geq 2$.  Let $a(n)=a_n(1)$ for $n \geq 1$, with $a(0)=1$.

By Lemma \ref{201bl1}, we have
$$\sum_{i=2}^{n-1}a(n;i,i+1)u^i=\sum_{i=2}^{n-1}a(n-1;i)u^i=a_{n-1}(u)-a(n-2)u, \qquad n \geq 2,$$
and
$$\sum_{i=2}^na(n;i,1)u^i=\sum_{i=2}^na(n-1;i-1)u^i=ua_{n-1}(u), \qquad n \geq 2.$$
Thus, by the definitions, we have
\begin{equation}\label{201be3}
a_n(u)=u(a(n-1)-a(n-2))+(1+u)a_{n-1}(u)+b_n(u,1)+c_n(u,1), \qquad n \geq2,
\end{equation}
with $a_1(u)=u$, upon considering separately the cases of $a(n;i,j)$ when $i=1$, $j=1$ or both $i,j>1$.

Note that by the definitions,
$$b_{n,1}(v)=\sum_{j=3}^na(n;1,j)v^j=\sum_{j=3}^na(n-1;j-1)v^j=v(a_{n-1}(v)-a(n-2)v), \qquad n \geq 3.$$
Multiplying both sides of \eqref{201be1} by $u^i$, and summing over $2 \leq i \leq n-2$, then yields
\begin{align}
b_n(u,v)&=b_{n-1}(u,v)+uv(b_{n-1}(u,v)+ub_{n-1,1}(v))+\sum_{j=3}^nb_{n-1,j-1}(1)v^j\sum_{i=2}^{j-2}u^i\notag\\
&=(1+uv)b_{n-1}(u,v)+u^2v^2(a_{n-2}(v)-a(n-3)v)\notag\\
&\quad+\frac{v}{1-u}(u^2b_{n-1}(v,1)-b_{n-1}(uv,1)), \qquad n \geq4.\label{201be4}
\end{align}
Multiplying both sides of \eqref{201be2} by $u^i$, and summing over $3 \leq i \leq n$, gives
\begin{equation}\label{201be5}
c_n(u,v)=u(c_{n-1}(u,v)+c_{n-1}(uv,1)+d_{n-1}(uv)), \qquad n \geq3.
\end{equation}
Finally, using recurrence \eqref{201bl1e1} and noting $a(n;2,1)=a(n-2)$, we get
\begin{align}
d_n(u)&=a(n-2)u^2+ud_{n-1}(u)+\sum_{i=3}^nc_{n-1,i-1}(1)u^i\notag\\
&=a(n-2)u^2+ud_{n-1}(u)+uc_{n-1}(u,1), \qquad n \geq 2. \label{201be6}
\end{align}

Define the generating functions $a(x;u)=\sum_{n\geq1}a_n(u)x^n$, $b(x;u,v)=\sum_{n\geq4}b_n(u,v)x^n$, $c(x;u,v)=\sum_{n\geq3}c_n(u,v)x^n$ and $d(x;u)=\sum_{n\geq2}d_n(u)x^n$.  Rewriting recurrence \eqref{201be3}--\eqref{201be6} in terms of generating functions yields the following system of functional equations.

\begin{lemma}\label{201bl2}
We have
\begin{equation}\label{fe1201b}
(1-x-xu)a(x;u)=xu(1-x)(1+a(x;1))+b(x;u,1)+c(x;u,1),
\end{equation}
\begin{equation}\label{fe2201b}
(1-x-xuv)b(x;u,v)=(xuv)^2(a(x;v)-xva(x;1)-xv)+\frac{xv}{1-u}(u^2b(x;v,1)-b(x;uv,1)),
\end{equation}
\begin{equation}\label{fe3201b}
(1-xu)c(x;u,v)=xu(c(x;uv,1)+d(x;uv)),
\end{equation}
and
\begin{equation}\label{fe4201b}
(1-xu)d(x;u)=x^2u^2(1+a(x;1))+xuc(x;u,1).
\end{equation}
\end{lemma}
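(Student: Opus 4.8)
The plan is to prove Lemma~\ref{201bl2} by translating each of the four scalar recurrences \eqref{201be3}--\eqref{201be6} into a functional equation: multiply the recurrence by $x^n$, sum over the range of $n$ for which it is valid, and collect terms into the generating functions $a(x;u)$, $b(x;u,v)$, $c(x;u,v)$, $d(x;u)$. Before doing so I would record the bookkeeping facts that make all partial sums collapse cleanly: $a_1(u)=u$; $b_n(u,v)=0$ for $n\le 3$ (the index runs over $2\le i\le n-2$); $c_n(u,v)=0$ for $n\le 2$ (the index runs over $3\le i\le n$); $d_n(u)=0$ for $n\le 1$ (the index runs over $2\le i\le n$); and, writing $A(x)=\sum_{n\ge 0}a(n)x^n$, the identity $A(x)=1+a(x;1)$, which holds since $a(0)=1$ and $a_n(1)=a(n)$. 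Throughout I would use the shift rule $\sum_{n\ge k}g(n-j)x^n=x^j\sum_{m\ge k-j}g(m)x^m$, so that together with the vanishing facts every partial sum becomes one of $a(x;u),b(x;u,v),c(x;u,v),d(x;u)$ (possibly at a specialized argument) with no leftover boundary terms.

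For \eqref{fe1201b}, multiply \eqref{201be3} by $x^n$ and sum over $n\ge 2$. The left side becomes $a(x;u)-a_1(u)x=a(x;u)-ux$. On the right, $(1+u)\sum_{n\ge2}a_{n-1}(u)x^n=(1+u)x\,a(x;u)$; the telescoping piece is $u\sum_{n\ge2}\bigl(a(n-1)-a(n-2)\bigr)x^n=u\bigl(x(A(x)-1)-x^2A(x)\bigr)=xu(1-x)A(x)-ux$; and $\sum_{n\ge2}b_n(u,1)x^n=b(x;u,1)$, $\sum_{n\ge2}c_n(u,1)x^n=c(x;u,1)$ since $b_2=b_3=c_2=0$. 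Cancelling $ux$ and solving for $a(x;u)$ gives \eqref{fe1201b}. The identical manipulation on \eqref{201be6} over $n\ge 2$, using $\sum_{n\ge2}a(n-2)x^n=x^2A(x)$, $d_1=0$, $c_1=c_2=0$, yields \eqref{fe4201b} at once, and on \eqref{201be5} over $n\ge 3$, using $c_2(u,v)=0$ and $d_1=0$, yields \eqref{fe3201b}.

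The equation with the most moving parts is \eqref{fe2201b}. Multiplying \eqref{201be4} by $x^n$ and summing over $n\ge 4$: the left side is $b(x;u,v)$ and the $(1+uv)$-term contributes $(1+uv)x\,b(x;u,v)$. For the quadratic piece, $\sum_{n\ge4}a_{n-2}(v)x^n=x^2\bigl(a(x;v)-a_1(v)x\bigr)=x^2(a(x;v)-vx)$ and $\sum_{n\ge4}a(n-3)x^n=x^3a(x;1)$, so $u^2v^2\sum_{n\ge4}\bigl(a_{n-2}(v)-a(n-3)v\bigr)x^n=(xuv)^2\bigl(a(x;v)-xva(x;1)-xv\bigr)$. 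For the last term, $u^2\sum_{n\ge4}b_{n-1}(v,1)x^n=x\,b(x;v,1)$ and $\sum_{n\ge4}b_{n-1}(uv,1)x^n=x\,b(x;uv,1)$ (using $b_3=0$), giving $\tfrac{xv}{1-u}\bigl(u^2b(x;v,1)-b(x;uv,1)\bigr)$. Moving $(1+uv)x\,b(x;u,v)$ to the left produces \eqref{fe2201b}.

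No step is a genuine obstacle; the argument is pure generating-function bookkeeping. The only care needed is (i) tracking which low-degree coefficients of the auxiliary sequences $b_n,c_n,d_n$ vanish, so the partial sums become the generating functions with no correction terms, and (ii) keeping straight the specialized-argument evaluations $b(x;v,1)$, $b(x;uv,1)$, $c(x;uv,1)$, $d(x;uv)$ that already arose from the inner sums collapsed in deriving \eqref{201be4}--\eqref{201be6}. I would carry out (i) and (ii) simultaneously while performing the four summations.
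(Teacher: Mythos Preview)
Your proposal is correct and is exactly the approach the paper takes (the paper simply says ``rewriting recurrences \eqref{201be3}--\eqref{201be6} in terms of generating functions yields the following system,'' and you have supplied the routine details). One tiny slip: in your treatment of \eqref{fe2201b} you write $u^2\sum_{n\ge4}b_{n-1}(v,1)x^n=x\,b(x;v,1)$, where the right-hand side should be $u^2x\,b(x;v,1)$; your final expression $\tfrac{xv}{1-u}\bigl(u^2b(x;v,1)-b(x;uv,1)\bigr)$ is nonetheless correct.
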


We can now determine the generating function $F_T(x)$.

\begin{theorem}\label{th201A2}
Let $T=\{1342,1423,2314\}$.  Then
$$F_T(x)=\frac{1-3x+x^2}{1-x}C^3(x).$$
\end{theorem}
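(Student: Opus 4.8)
The plan is to extract $F_T(x)=\sum_{n\ge0}a(n)x^n=1+a(x;1)$ from the system of functional equations in Lemma~\ref{201bl2} (recall $a(n)=a_n(1)$ for $n\ge1$ and $a(0)=1$) and then simplify the result. Since \eqref{fe3201b} and \eqref{fe4201b} involve $c$ and $d$ only through their ``$1$''-slot after specialization, I would first set $v=1$ in \eqref{fe3201b} to get $(1-2xu)c(x;u,1)=xu\,d(x;u)$, substitute into \eqref{fe4201b}, and solve the resulting purely algebraic relation for
\[
c(x;u,1)=\frac{x^3u^3(1+a(x;1))}{1-3xu+x^2u^2},
\]
and hence $d(x;u)$ as well, each expressed in terms of the single unknown series $a(x;1)$. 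No kernel is needed at this stage.

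Next I would set $v=1$ in \eqref{fe2201b}. After clearing the denominator $1-u$, the coefficient of $b(x;u,1)$ becomes the kernel $1-u+xu^2$, while $b(x;1,1)$ survives as an auxiliary unknown. Applying the kernel method with the admissible root $u=C(x)$ (for which $1-C(x)+xC(x)^2=0$) cancels the $b(x;u,1)$ term and yields
\[
b(x;1,1)=x\bigl(C(x)-1\bigr)\bigl((1-x)a(x;1)-x\bigr).
\]
Then, taking $u=1$ in \eqref{fe1201b} and inserting the expressions just obtained for $b(x;1,1)$ and $c(x;1,1)=\frac{x^3(1+a(x;1))}{1-3x+x^2}$ gives a single linear equation for $a(x;1)$, which I would solve to obtain $a(x;1)$, and hence $F_T(x)=1+a(x;1)$, as a rational function of $x$ and $C(x)$.

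Finally I would simplify the closed form. The cleanest route is to write $s=\sqrt{1-4x}$, so that $C(x)=\tfrac{1-s}{2x}$ and $x\bigl(C(x)-1\bigr)=x^2C(x)^2=\tfrac{1-2x-s}{2}$, and then to exploit the identity $(1+s)^2=2(1-2x+s)$ together with its consequence $(1+s)^5=16\bigl((1-5x+5x^2)+(1-3x+x^2)s\bigr)$. Substituting these, the numerator of $F_T$ collapses to $\tfrac12(1-3x+x^2)(1-2x+s)$ and its denominator to $\tfrac12\bigl(1-6x+10x^2-5x^3+s(1-x)(1-3x+x^2)\bigr)$, and the quotient is seen to equal $\frac{1-3x+x^2}{1-x}C(x)^3$. (Equivalently, one may verify the answer agrees with the form $\frac{1-3x+x^2}{(1-x)(1-2x+(x^2-x)C(x))}$ obtained in the proof of Theorem~\ref{th201A1}, using $C(x)^{-1}=1-xC(x)$.)

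The main obstacle is this last simplification: the raw output of the kernel computation is an unwieldy combination of polynomials in $x$ with $C(x)$, and it is not evident a priori that it telescopes down to $\frac{1-3x+x^2}{1-x}C(x)^3$; passing to the variable $s=\sqrt{1-4x}$ and invoking the squaring identity $(1+s)^2=2(1-2x+s)$ is what makes the collapse transparent. A secondary point requiring care is the kernel step applied to \eqref{fe2201b}: because $b$ carries two variables, one must first specialize $v=1$ so that the relation becomes a genuine one-variable functional equation before the substitution $u=C(x)$ is legitimate.
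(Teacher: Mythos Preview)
Your proposal is correct and follows essentially the same route as the paper: both reduce the system of Lemma~\ref{201bl2} to a single equation in $a(x;1)$ by applying the kernel method with $u=C(x)$ to \eqref{fe2201b} at $v=1$. The only difference is bookkeeping: the paper first sets $u=v=1$ in \eqref{fe1201b}, \eqref{fe3201b}, \eqref{fe4201b} to express $b(x;1,1)$, $c(x;1,1)$, $d(x;1)$ in terms of $a(x;1)$ and then uses the kernel step to solve for $a(x;1)$, whereas you use the kernel step to get $b(x;1,1)$ and then \eqref{fe1201b} at $u=1$ to solve for $a(x;1)$; your treatment of the final simplification is more explicit than the paper's ``after several algebraic operations''.
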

\begin{proof}
In our present notation, we seek $1+a(x;1)$.  By taking $u=v=1$ in \eqref{fe1201b}, \eqref{fe3201b} and \eqref{fe4201b}, and then solving the resulting system for $b(x;1,1)$, $c(x;1,1)$ and $d(x;1)$, we obtain
\begin{align*}
b(x;1,1)&=\frac{(1-x)(1-5x+6x^2-x^3)a(x;1)-x(1-4x+5x^2-x^3)}{1-3x+x^2},\\
c(x;1,1)&=\frac{x^3(1+a(x;1))}{1-3x+x^2},\\
d(x;1)&=\frac{x^2(1-2x)(1+a(x;1))}{1-3x+x^2}.
\end{align*}
Hence, equation \eqref{fe2201b} with $v=1$ can be written as
\begin{align*}
&\left(1+\frac{xu^2}{1-u}\right)b(x;u,1)=(xu)^2((1-x)a(x;1)-x)\\
&\qquad\qquad+\frac{xu^2}{1-u}\frac{(1-x)(1-5x+6x^2-x^3)a(x;1)-x(1-4x+5x^2-x^3)}{1-3x+x^2}.
\end{align*}
Applying the kernel method to this last equation, it is seen that taking $u=C(x)$ cancels out the left-hand side. This gives, after several algebraic operations, the formula
$$1+a(x;1)=\frac{2(1-3x+x^2)}{(1-x)(1-3x)+(1-x)^2\sqrt{1-4x}}=\frac{1-3x+x^2}{1-x}C^3(x),$$
as desired.
\end{proof}

\emph{Remark:} From the formula for $a(x;1)$, one can now determine $b(x;1,1)$, as well as $c(x;u,1)$ and $d(x;u,1)$, by \eqref{fe3201b} and \eqref{fe4201b}.  This in turn allows one to find $b(x;u,1)$, by \eqref{fe2201b} at $v=1$.  By \eqref{fe1201b}, one then obtains a formula for $1+a(x;u)$ which generalizes $F_T(x)$ (reducing to it when $u=1$).

\subsection{Case 203}
The two representative triples $T$ are:

\{1324,1432,3142\} (Theorem \ref{th203A1})

\{1234,1342,2314\} (Theorem \ref{th203A2})

\subsubsection{$\mathbf{T=\{1324,1432,3142\}}$}
\begin{theorem}\label{th203A1}
Let $T=\{1324,1432,3142\}$. Then
$$F_T(x)=\frac{1-x}{2-2x-(1-x-x^2)C(x)}.$$
\end{theorem}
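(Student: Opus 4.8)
The approach is a decomposition by left-right maxima, the method used throughout this section. Write $F_T(x)=\sum_{m\ge 0}G_m(x)$, where $G_m(x)$ is the generating function for $T$-avoiders with exactly $m$ left-right maxima. Trivially $G_0(x)=1$, and $G_1(x)=xF_T(x)$: a $T$-avoider with a single left-right maximum is $n$ followed by an arbitrary $T$-avoider, since a leading $n$ cannot participate in an occurrence of $1324$, $1432$, or $3142$. The task is therefore to determine $G_m(x)$ for $m\ge 2$.

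First I would pin down the shape of a $T$-avoider $\pi=i_1\pi^{(1)}i_2\pi^{(2)}\cdots i_m\pi^{(m)}$ with $m\ge 2$ left-right maxima. Two observations do most of the work. (a) \emph{Avoidance of $1324$.} Anchoring $1324$ with $i_1$ as its ``$1$'' and a later left-right maximum as its ``$4$'' forces $\pi^{(j)}$ to have all entries below $i_1$ for $2\le j\le m-1$, and forces $\pi^{(1)}$ to avoid $132$. (b) \emph{Avoidance of $1432$ and $3142$.} Anchoring $1432$ at $i_1$ and $i_m=n$ forces the entries of $\pi^{(m)}$ exceeding $i_1$ to form an increasing run; and anchoring $3142$ at $i_1$ (and at the later left-right maxima) forces the entries below $i_1$ to be distributed among the blocks so that, read from left to right, their value ranges nest from top to bottom and all but the first nonempty such piece is empty, strictly increasing, or strictly decreasing. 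The upshot is that, once the left-right maxima are fixed, exactly one block is free to be an arbitrary $132$-avoider (contributing the Catalan factor $C(x)$), while every other block contributes a factor equal to $1$, $x$, or $\frac{1}{1-x}$.

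Having catalogued the configurations, I would assemble $G_m(x)$ for $m\ge 2$ as the corresponding sum of products, being careful with the boundary sub-cases ($m=2$ versus $m\ge 3$; whether $\pi^{(1)}$ is empty, with an attendant reduction $xG_{m-1}(x)$ when $i_1=1$; and whether $\pi^{(m)}$ has entries on both sides of $i_1$). Summing the resulting series (geometric in $m$) and substituting into
$$F_T(x)=1+xF_T(x)+\sum_{m\ge 2}G_m(x)$$
yields a single linear equation for $F_T(x)$ whose coefficients are polynomials in $x$ and $C(x)$. Solving it and simplifying with the identity $xC(x)^2=C(x)-1$ gives
$$F_T(x)=\frac{1-x}{2-2x-(1-x-x^2)C(x)},$$
as claimed.

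The crux is the structural analysis of step two: the three forbidden patterns interlock, and one must identify precisely which block is the flexible ($132$-avoiding) one in each configuration and verify the rigidity of the others, so that the sum over configurations neither double-counts nor omits any avoider. Once the structural picture is in place, the summation and the closing algebra are routine; as for the other left-right-maxima cases in this paper, the argument also upgrades to track the number of left-right maxima via $G(x,q)=\sum_{m\ge0}G_m(x)q^m$.
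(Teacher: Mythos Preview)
Your framework is right—left-right maxima decomposition is exactly what the paper uses—but your structural analysis in step two is not correct, and the error is not a boundary detail.

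You assert that ``exactly one block is free to be an arbitrary $132$-avoider (contributing the Catalan factor $C(x)$), while every other block contributes a factor equal to $1$, $x$, or $\frac{1}{1-x}$.'' In fact \emph{every} $\pi^{(j)}$ for $1\le j\le m-1$ is an arbitrary $132$-avoider (so you pick up $C(x)^{m-1}$, not a single $C(x)$), and in addition $\pi^{(m)}$ itself decomposes further: the letters of $\pi^{(m)}$ above $i_1$ are increasing (as you note), but between consecutive such letters and after the last one sit blocks $\beta^{(1)},\beta^{(2)},\dots$ with $\pi^{(m-1)}>\beta^{(1)}>\beta^{(2)}>\cdots$, each $\beta^{(j)}$ except the last a $132$-avoider and the last a full $T$-avoider. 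This yields a factor $\frac{1}{1-xC(x)}\cdot F_T(x)$ on top of the $C(x)^{m-1}$. Moreover, there is a genuine case split on which interval $[i_p,i_{p+1}]$ the large letters of $\pi^{(m)}$ occupy; when $p\le m-2$ this forces $\pi^{(p+1)}=\cdots=\pi^{(m-1)}=\emptyset$ (by $3142$), changing the Catalan count. The resulting formula is
\[
G_m(x)=x^mC(x)^mF_T(x)+\sum_{p=1}^{m-2}x^{m+1}C(x)^{p+2}F_T(x),
\]
which after summing over $m$ gives the claimed $F_T(x)$. Your proposed ingredients ($1$, $x$, $\tfrac{1}{1-x}$, and a single $C(x)$) cannot produce this, and the side-case ``$i_1=1$ giving $xG_{m-1}(x)$'' that you mention does not arise here. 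The crux you correctly flag—identifying which blocks are flexible—is precisely where the proposal goes wrong.
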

\begin{proof}
Let $G_m(x)$ be the generating function for $T$-avoiders with $m$ left-right maxima.
Clearly, $G_0(x)=1$ and $G_1(x)=xF_T(x)$. Now let us write an equation for $G_m(x)$ with $m\geq2$.
Suppose $\pi=i_1\pi^{(1)}i_2\pi^{(2)}\cdots i_m\pi^{(m)}$ is a permutation that avoids $T$ with $m\ge 2$ left-right maxima. Then $\pi^{(j)}$ avoids $132$ for all $j=1,2,\ldots,m-1$ or else $i_m$ is the 4 of a 1324.
All the letters greater than $i_1$ in $\pi^{(m)}$ are increasing (to avoid 1432) and all the letters less than $i_1$ in $\pi^{(m)}$ are $<$ all letters in other $\pi$'s (to avoid 3142),
and $i_1>\pi^{(1)}>\pi^{(2)}>\cdots>\pi^{(m-1)}$ (see figure, where the shaded regions are empty to avoid the indicated pattern with the gray bullets).
\begin{center}
\begin{pspicture}(0,.1)(4,3.6)
\psset{xunit = 1.1cm, yunit=.5cm,linewidth=.5\pslinewidth}
\psline(3,6)(4,6)
\psline(3,5)(4,5)
\psline(0,0)(0,4)(1,4)(1,5)(2,5)(2,6)(3,6)(3,7)(4,7)(4,0)(0,0)
\psline(0,4)(1,4)(1,3)(2,3)(2,2)(3,2)(3,1)(4,1)
\psline(0,3)(1,3)(1,2)(2,2)(2,1)(3,1)(3,0)
\psline(3,6)(3,4)(4,4)
\psline[arrows=->,arrowsize=3pt 3](3.2,4.2)(3.8,6.8)
\pspolygon[fillstyle=hlines,hatchcolor=lightgray,hatchsep=0.8pt](1,4)(1,3)(2,3)(2,2)(3,2)(3,1)(4,1)(4,4)(3,4)(3,6)(2,6)(2,5)(1,5)(1,4)
\pspolygon[fillstyle=hlines,hatchcolor=lightgray,hatchsep=0.8pt](0,3)(1,3)(1,2)(2,2)(2,1)(3,1)(3,0)(0,0)(0,3)
\psline(1,4)(3,4)
\rput(-0.3,4.2){\textrm{{\footnotesize $i_1$}}}
\rput(1.5,6.2){\textrm{{\footnotesize $i_{m-1}$}}}
\rput(2.7,7.2){\textrm{{\footnotesize $i_{m}$}}}
\rput(1.5,2.7){$\ddots$}
\rput(.6,5.4){$\iddots$}
\rput(2.2,4.7){\textrm{{\footnotesize $13\overset{{\gray \bullet}}{2}4$}}}
\rput(1,1){\textrm{{\footnotesize $3\overset{{\gray \bullet}}{1}4\overset{{\gray \bullet}}{2}$}}}
\rput(3,3){\textrm{{\footnotesize $3\overset{{\gray \bullet}}{1}4\overset{{\gray \bullet}}{2}$}}}

\rput(0.5,3.5){\textrm{{\footnotesize $\pi^{(1)}$}}}
\rput(2.5,1.5){\textrm{{\footnotesize $\pi^{(m-1)}$}}}
\pscircle*(0,4){0.08}\pscircle*(1,5){0.08}\pscircle*(2,6){0.08}\pscircle*(3,7){0.08}
\end{pspicture}
\end{center}

Also, at most one of the $m-1$ rectangles covered by the arrow can be occupied: $ab$ in $\pi^{(m)}$ with $b$ in a higher such rectangle than $a$ makes $ab$ the 24 of a 1324, and $b$ in a lower rectangle than $a$ makes $ab$ the 32 of a 1432. So we distinguish two cases:
\begin{itemize}
\item all of these rectangles except possibly the top one are empty, i.e., there is no letter in $\pi^{(m)}$ between $i_1$ and $i_{m-1}$. In this case $\pi^{(m)}$ can be decomposed as $\beta^{(1)}(i_{m-1}+1)\beta^{(2)}(i_{m-1}+2)\cdots\beta^{(i_m-i_{m-1}-1)}(i_m-1)\beta^{(i_m-i_{m-1})}$ such that $\pi^{(m-1)}>\beta^{(1)}>\cdots>\beta^{(i_m-i_{m-1})}$, $\beta^{(j)}$ avoids $132$ for $j=1,2,\ldots,i_m-i_{m-1}-1$ and $\beta^{(i_m-i_{m-1})}$ avoids $T$.
Since $\beta^{(j)}$ avoids $132$, each $\beta^{(j)}(i_{m-1}+j)$ contributes $xC(x)$ and since there are zero or more of them, their contribution is $\frac{1}{1-xC(x)}$.
So, this case contributes $\frac{x^mC(x)^{m-1}F_T(x)}{1-xC(x)}$.
\item There is a  letter in $\pi^{(m)}$ between $i_p$ and $i_{p+1}$ for some $p\in[m-2]$. Then $\pi^{(p+1)}=\cdots=\pi^{(m-1)}=\emptyset$ (3142) and $\pi^{(m)}$ can be decomposed as $$\beta^{(1)}(i_p+1)\beta^{(2)}(i_p+2)\cdots\beta^{(i_{p+1}-i_p-1)}(i_{p+1}-1)\beta^{(i_{p+1}-i_p)}$$ such that $\pi^{(p)}>\beta^{(1)}>\cdots>\beta^{(i_{p+1}-i_p)}$ where all except the last $\beta^{(j)}$ avoid $132$  and $\beta^{(i_{p+1}-i_p)}$ avoids $T$. This time there is at least one $\beta^{(j)}(i_p+j)$ and so we have an overall contribution of $\frac{x^{m+1}C(x)^{p+1}F_T(x)}{1-xC(x)}$.
\end{itemize}
Since $C(x)=\frac{1}{1-xC(x)}$, we find that
$$G_m(x)=x^mC(x)^mF_T(x)+\sum_{p=1}^{m-2}x^{m+1}C(x)^{p+2}F_T(x),$$
for $m\ge 2$, with $G_1(x)=xF_T(x)$ and $G_0(x)=1$.

From $F_T(x)=\sum_{m\geq0}G_m(x)$, we deduce
$$F_T(x)=1+xF_T(x)+x^2C(x)^3F_T(x)-\frac{x^2C(x)F_T(x)}{1-x}+x^2C(x)^2F_T(x),$$
with solution
$$F_T(x)=\frac{1-x}{2-2x-(1-x-x^2)C(x)}\, .$$
\end{proof}

\subsubsection{$\mathbf{T=\{1234,1342,2314\}}$}
A permutation $\pi=\pi_1\pi_2\cdots\pi_n$ is said to have an \emph{ascent} at index $i$  if $\pi_i<\pi_{i+1}$, where $1 \leq i \leq n-1$.  The letter $\pi_{i+1}$ is called an \emph{ascent top}.  In order to count the members of $S_n(T)$, we categorize them by the nature of their leftmost ascent (i.e., smallest $i$ such that $\pi_i<\pi_{i+1}$).  If $n\geq 2$ and $1 \leq i \leq n-1$, let $a(n;i)$ denote the number of $T$-avoiding permutations of length $n$ whose leftmost ascent occurs at index $i$, with $a(n;n)=1$ for $n\geq 1$ (this accounts for the permutation $n(n-1)\cdots1$, which is understood to have an ascent at index $n$).  Let $a(n)=\sum_{i=1}^n a(n;i)$ for $n\geq1$, with $a(0)=1$.

We now consider various restrictions on the ascent top corresponding to the leftmost ascent which will prove helpful in determining a recurrence for $a(n;i)$.  Let $A_{n,i}$ denote the subset of permutations of $S_n(T)$ enumerated by $a(n;i)$.  If $1 \leq i \leq n-1$, let $b(n;i)$ be the number of members of $A_{n,i}$ in which the leftmost ascent top equals $n$.  If $1 \leq i \leq n-2$, let $c(n;i)$ be the number of members of $A_{n,i}$ not starting with $n$ in which the leftmost ascent top equals $n-1$.  Finally, for $1 \leq i \leq n-2$, let $d(n;i)$ be the number of members of $A_{n,i}$ not starting with $n$ in which the leftmost ascent top is less than $n-1$.  For example, we have $b(4;2)=3$, the enumerated permutations being $2143$, $3142$ and $3241$, $c(4;1)=2$, the permutations being $1324$ and $2341$ (note that $1342$ and $2314$ are excluded), and $d(5;3)=2$, the permutations being $42135$ and $43125$.  Note that by the definitions, we have
\begin{equation}\label{203be1}
a(n;i)=a(n-1;i-1)+b(n;i)+c(n;i)+d(n;i), \qquad 1 \leq i \leq n-1,
\end{equation}
upon considering whether or not a member of $A_{n,i}$ starts with $n$.  The arrays $b(n;i)$, $c(n;i)$ and $d(n;i)$ are determined recursively as follows.

\begin{lemma}\label{203bl1}
We have
\begin{equation}\label{203bl1e1}
b(n;i)=\sum_{j=i}^{n-1}a(n-1;j), \qquad 1 \leq i \leq n-1,
\end{equation}
\begin{equation}\label{203bl1e2}
c(n;i)=\sum_{j=1}^{n-i-1}a(j-1), \qquad 1 \leq i \leq n-2,
\end{equation}
and
\begin{equation}\label{203bl1e3}
d(n;i)=c(n-1;i)+c(n-1;i-1)+d(n-1;i)+d(n-1;i-1), \qquad 1 \leq i \leq n-2.
\end{equation}
\end{lemma}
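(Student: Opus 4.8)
The plan is to prove the three recurrences one at a time, in each case by exhibiting an explicit deletion (or insertion) bijection and using forbidden occurrences of $1234$, $1342$ and $2314$ to pin down the structure of the permutations involved. Throughout write $\pi=\pi_1\cdots\pi_n\in S_n(T)$, and recall that ``leftmost ascent at index $i$'' means exactly $\pi_1>\pi_2>\cdots>\pi_i<\pi_{i+1}$.

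For \eqref{203bl1e1} the leftmost ascent top is $n$, so $\pi_{i+1}=n$ while $\pi_1>\cdots>\pi_i$. Since each of $1234,1342,2314$ has an increasing pair strictly to the left of its largest entry and the first $i$ letters of $\pi$ decrease, the entry $n$ cannot be the largest entry of a forbidden pattern; hence ``delete $n$'' is $T$-avoidance preserving and visibly reversible. It carries $b(n;i)$ bijectively onto the set of $T$-avoiders of length $n-1$ whose first $i$ letters decrease, that is, those with leftmost ascent at some index $j\geq i$ (including $j=n-1$, the decreasing permutation). Summing over $j$ gives $b(n;i)=\sum_{j=i}^{n-1}a(n-1;j)$.

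For \eqref{203bl1e2} we have $\pi_{i+1}=n-1$ and $\pi_1\neq n$, so $n$ occupies some position $k$ with $i+2\leq k\leq n$. I claim $\pi$ is then forced to equal
$$\pi=(n-k+i)(n-k+i-1)\cdots(n-k+1)\,(n-1)\,(n-2)(n-3)\cdots(n-k+i+1)\,n\,Q,$$
with $Q$ an arbitrary $T$-avoiding permutation of $\{1,\dots,n-k\}$. The forcing comes from three observations: any entry $w$ strictly between positions $i+1$ and $k$ must exceed $\pi_1$ (else $\pi_1,n-1,w,n$ is a $2314$), those entries must moreover be decreasing (else $\pi_i,w,w',n$ is a $1234$), and any entry $z$ in positions $k+1,\dots,n$ must be smaller than $\pi_i$ (else $\pi_i,n-1,n,z$ is a $1342$). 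Together with $\pi_1>\cdots>\pi_i$ these pin down the prefix, the middle decreasing run, and the fact that $Q$ occupies the bottom $n-k$ values; conversely any permutation of this shape avoids $T$, since a forbidden occurrence would have to lie entirely inside the displayed head or entirely inside $Q$. Summing over the $n-i-1$ admissible values of $k$ yields $c(n;i)=\sum_{k=i+2}^{n}a(n-k)=\sum_{j=1}^{n-i-1}a(j-1)$.

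For \eqref{203bl1e3} the relevant operation is ``delete the entry $n-1$ and standardize,'' which trivially preserves $T$-avoidance. The structural input is that in $\pi\in d(n;i)$ the entry $n-1$ is either $\pi_1$ or lies immediately to the right of $n$: it cannot lie strictly to the left of $n$ (else $\pi_i,\pi_{i+1},n-1,n$ is an increasing $1234$), and if it lay to the right of $n$ with an entry $y$ in between, a three-way case analysis comparing $y$ with $\pi_i$ and $\pi_{i+1}$ always produces a forbidden pattern ($\pi_i,\pi_{i+1},y,n-1$ is a $2314$ if $y<\pi_i$; $\pi_i,\pi_{i+1},n,y$ is a $1342$ if $\pi_i<y<\pi_{i+1}$; and $\pi_i,\pi_{i+1},y,n-1$ is a $1234$ if $y>\pi_{i+1}$) --- unless $n-1=\pi_1$. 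Deleting $n-1$ lowers the leftmost-ascent index from $i$ to $i-1$ when $n-1=\pi_1$ and leaves it at $i$ otherwise; meanwhile the leftmost ascent top of $\pi$, being $<n-1$, is untouched by the deletion and standardization and either equals the second-largest value of the image (putting it in class $c$) or is strictly smaller (class $d$). So the map sends $d(n;i)$ into $c(n-1;i)\sqcup c(n-1;i-1)\sqcup d(n-1;i)\sqcup d(n-1;i-1)$. For the inverse one reinserts $n-1$ immediately after the re-bumped maximum when the image has leftmost ascent at index $i$, and at the very front when it has index $i-1$; in either case no forbidden pattern arises, because an $n-1$ placed just after $n$ would have to be the maximal entry of the pattern or to precede $n$, and an $n-1$ placed at the front would have to be among the pattern's two smallest entries. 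This gives the claimed bijection and hence \eqref{203bl1e3}.

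I expect the main obstacle to be the structural rigidity statements rather than the bookkeeping: for \eqref{203bl1e2} the three forcing observations are the real content, and for \eqref{203bl1e3} the crux is the lemma that in $d(n;i)$ the entry $n-1$ must be either first or directly after $n$, since this is exactly what makes the reinsertion position unique and the deletion map reversible.
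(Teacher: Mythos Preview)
Your proof is correct and follows essentially the same strategy as the paper's. For \eqref{203bl1e1} and \eqref{203bl1e2} the arguments are identical (the paper also deletes $n$ for the first and exhibits the same forced decomposition $\alpha j(n-1)\beta n\gamma$ for the second). For \eqref{203bl1e3} there is a cosmetic difference: the paper first sets up a block decomposition $\sigma^{(1)}jk\sigma^{(2)}\sigma^{(3)}\sigma^{(4)}$ and reads off from it that $n-1$ is either the first letter or sits in the decreasing block $\sigma^{(3)}$ headed by $n$, whereas you locate $n-1$ directly by your three-way case analysis on $y$ --- but the conclusion and the bijection are the same (note that since $n$ and $n-1$ are adjacent, the paper's ``remove $n$'' and your ``remove $n-1$ and standardize'' produce the identical image). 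Your route is slightly more economical since it avoids establishing the full decomposition. One small point: your justification that reinsertion creates no forbidden pattern is compressed; the clean way to say it is that in each of $1234,1342,2314$ the ``$3$'' precedes the ``$4$'', so $n$ and $n-1$ cannot both appear in a forbidden occurrence, and if $n-1$ alone appears it is the maximum and can be swapped for the adjacent $n$ to exhibit an occurrence already in $\sigma$.
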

\begin{proof}
Let $B_{n,i}$, $C_{n,i}$ and $D_{n,i}$ denote the subsets of $S_n(T)$ enumerated by $b(n;i)$, $c(n;i)$ and $d(n;i)$, respectively.  For \eqref{203bl1e1}, observe that members of $B_{n,i}$ can be obtained by inserting $n$ directly after the $i$-th letter of a member of $\cup_{j=i}^{n-1}A_{n-1,j}$, with such an insertion seen not to introduce an occurrence of any of the patterns in $T$ (since the ``4'' does not correspond to the first ascent within these patterns).  This insertion operation is seen to be a bijection and hence \eqref{203bl1e1} follows.  To show \eqref{203bl1e2}, note that members $\pi \in C_{n,i}$ must be of the form
$$\pi=\alpha j(n-1) \beta n \gamma,$$
where $\alpha=j+i-1,j+i-2,\ldots,j+1$ for some $j \in [n-i-1]$, $\beta=n-2,n-3,\ldots,j+i$, and $\gamma$ is a $T$-avoider (on the letters in $[j-1]$).  The section $\alpha$ if nonempty consists of a decreasing string of consecutive numbers ending in $j+1$ in order to avoid $2314$, with all letters in $[j+i,n-2]$ required to be to the left of $n$ and all letters in $[j-1]$ required to be to the right, in order to avoid $1342$ or $2314$, respectively.  That $\beta$ is decreasing is required in order to avoid $1234$.  Furthermore, one may verify that all permutations $\pi$ of the stated form above avoid the patterns in $T$.  Considering all possible $j$, we get $\sum_{j=1}^{n-i-1}a(j-1)$ possibilities for $\pi$, which gives \eqref{203bl1e2}.

Finally, to show \eqref{203bl1e3}, first note that one can express $\sigma \in D_{n,i}$ as
$$\sigma=\sigma^{(1)}jk\sigma^{(2)}\sigma^{(3)}\sigma^{(4)},$$
where $\sigma^{(1)}$ is a decreasing sequence of length $i-1$ in $[j+1,n-1]$, $1 \leq j<k<n-1$, $\sigma^{(2)}$ is contained within $[j+1,k-1]$, $\sigma^{(3)}$ is a sequence in $[k+1,n]$ that contains $n$, and $\sigma^{(4)}$ is a permutation of $[j-1]$.  Observe that $\sigma^{(3)}$ must decrease in order to avoid $1234$ and hence starts with $n$. If $n-1$ belongs to $\sigma^{(3)}$, then removing $n$ is seen to define a bijection with $C_{n-1,i}\cup D_{n-1,i}$.  If $n-1$ belongs to $\sigma^{(1)}$, then removing $n-1$, and replacing $n$ with $n-1$, defines a bijection with $C_{n-1,i-1}\cup D_{n-1,i-1}$.  Combining the two previous cases implies \eqref{203bl1e3} and completes the proof.
\end{proof}

Let $a_n(u)=\sum_{i=1}^na(n;i)u^i$ for $n\geq1$, $b_n(u)=\sum_{i=1}^{n-1}b(n;i)u^i$ for $n\geq2$, $c_n(u)=\sum_{i=1}^{n-2}c(n;i)u^i$ for $n \geq3$, and $d_n(u)=\sum_{i=1}^{n-2}d(n;i)u^i$ for $n\geq3$.  For convenience, we take $a_0(u)=1$.

Then recurrences \eqref{203be1} and \eqref{203bl1e1} imply
\begin{equation}\label{203be2}
a_n(u)=ua_{n-1}(u)+b_n(u)+c_n(u)+d_n(u), \qquad n \geq 1,
\end{equation}
and
\begin{align}
b_n(u)&=\sum_{i=1}^{n-1}u^i\sum_{j=i}^{n-1}a(n-1;j)=\sum_{j=1}^{n-1}a(n-1;j)\sum_{i=1}^j u^i\notag\\
&=\frac{u}{1-u}(a_{n-1}(1)-a_{n-1}(u)), \qquad n \geq2. \label{203be3}
\end{align}
Multiplying both sides of \eqref{203bl1e2} by $u^i$, and summing over $1 \leq i \leq n-2$, yields
\begin{align}
c_n(u)&=\sum_{j=1}^{n-2}a(j-1)\sum_{i=1}^{n-j-1}u^i\notag\\
&=\frac{u}{1-u}\sum_{j=1}^{n-2}a(j-1)-\frac{1}{1-u}\sum_{j=1}^{n-2}a(j-1)u^{n-j}, \label{203be4} \qquad n \geq 3.
\end{align}
Finally, recurrence  \eqref{203bl1e3} gives
\begin{equation}\label{203be5}
d_n(u)=(1+u)(c_{n-1}(u)+d_{n-1}(u)), \qquad n \geq 3.
\end{equation}

Let $a(x;u)=\sum_{n\geq0}a_n(u)x^n$.  It is determined by the following functional equation.

\begin{lemma}\label{203bl2}
We have
\begin{equation}\label{203bl2e1}
\left(1+\frac{xu^2}{1-u}\right)a(x;u)=1+xu\left(\frac{1}{1-u}+\frac{x^2}{(1-x)(1-xu)(1-x-xu)}\right)a(x;1).
\end{equation}
\end{lemma}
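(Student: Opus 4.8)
The plan is to translate each of the recurrences \eqref{203be2}--\eqref{203be5} into a generating function identity and then eliminate the three auxiliary series. Set $b(x;u)=\sum_{n\ge2}b_n(u)x^n$, $c(x;u)=\sum_{n\ge3}c_n(u)x^n$, and $d(x;u)=\sum_{n\ge3}d_n(u)x^n$, and recall $a_0(u)=a_0(1)=1$ together with the fact that $b_n,c_n,d_n$ vanish for indices below their stated ranges. Multiplying \eqref{203be2} by $x^n$ and summing over $n\ge1$ gives $a(x;u)-1=xu\,a(x;u)+b(x;u)+c(x;u)+d(x;u)$; multiplying \eqref{203be3} by $x^n$ and summing over $n\ge2$ gives $b(x;u)=\tfrac{xu}{1-u}\bigl(a(x;1)-a(x;u)\bigr)$; and multiplying \eqref{203be5} by $x^n$ and summing over $n\ge3$ gives $d(x;u)=(1+u)x\bigl(c(x;u)+d(x;u)\bigr)$, hence $d(x;u)=\tfrac{(1+u)x}{1-x-xu}\,c(x;u)$ and consequently $c(x;u)+d(x;u)=\tfrac{1}{1-x-xu}\,c(x;u)$.

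The one step that takes a little care is \eqref{203be4}. Writing $\sum_{j=1}^{n-2}a(j-1)=\sum_{k=0}^{n-3}a(k)$ and using $\sum_{m\ge0}\bigl(\sum_{k=0}^{m}a(k)\bigr)x^m=a(x;1)/(1-x)$, the first sum in \eqref{203be4} contributes $\tfrac{u}{1-u}\cdot\tfrac{x^3}{1-x}\,a(x;1)$ to $c(x;u)$. For the second sum, reindexing by $k=j-1$ and, for each fixed $k$, summing $x^n u^{n-k-1}$ over $n\ge k+3$ produces a geometric series in $xu$, yielding a contribution of $-\tfrac{1}{1-u}\cdot\tfrac{x^3u^2}{1-xu}\,a(x;1)$. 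Therefore
\[
c(x;u)=\frac{x^3u}{1-u}\left(\frac{1}{1-x}-\frac{u}{1-xu}\right)a(x;1)=\frac{x^3u}{(1-x)(1-xu)}\,a(x;1),
\]
the apparent pole at $u=1$ disappearing because $\tfrac{1}{1-x}-\tfrac{u}{1-xu}=\tfrac{1-u}{(1-x)(1-xu)}$.

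Finally I would substitute everything back into $a(x;u)-1=xu\,a(x;u)+b(x;u)+\tfrac{c(x;u)}{1-x-xu}$, collect the $a(x;u)$ terms on the left, and simplify the coefficient using $1-xu+\tfrac{xu}{1-u}=1+\tfrac{xu^2}{1-u}$; the $a(x;1)$ terms on the right then assemble into $xu\bigl(\tfrac{1}{1-u}+\tfrac{x^2}{(1-x)(1-xu)(1-x-xu)}\bigr)a(x;1)$, which is exactly \eqref{203bl2e1}. I expect no genuine obstacle here — the proof is bookkeeping — but the main thing to watch is lining up the summation ranges when shifting indices (so that the omitted low-order terms of $b_n,c_n,d_n$ really are zero) and carrying out the two simplifications that cancel the $1-u$ denominators.
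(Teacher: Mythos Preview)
Your proposal is correct and follows essentially the same approach as the paper: you define the same auxiliary generating functions $b(x;u)$, $c(x;u)$, $d(x;u)$, translate the recurrences \eqref{203be2}--\eqref{203be5} into the same four generating function identities, and eliminate them in the same order. Your write-up actually gives more detail than the paper on the derivation of $c(x;u)$ and on the final algebraic simplification, but there is no substantive difference in method.
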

\begin{proof}
Let $b(x;u)=\sum_{n\geq2}b_n(u)x^n$, $c(x;u)=\sum_{n\geq3}c_n(u)x^n$, and $d(x;u)=\sum_{n\geq 3}d_n(u)x^n$.  Rewriting recurrences \eqref{203be2}--\eqref{203be5} in terms of generating functions yields the following:
\begin{align*}
a(x;u)&=1+xua(x;u)+b(x;u)+c(x;u)+d(x;u),\\
b(x;u)&=\frac{xu}{1-u}(a(x;1)-a(x;u)),\\
c(x;u)&=\frac{x^3u}{(1-x)(1-xu)}a(x;1),\\
d(x;u)&=x(1+u)(c(x;u)+d(x;u)).
\end{align*}
Noting
$$c(x;u)+d(x;u)=c(x;u)+\frac{x(1+u)}{1-x(1+u)}c(x;u)=\frac{c(x;u)}{1-x(1+u)},$$
and using the expressions for $b(x;u)$ and $c(x;u)$ in the equation for $a(x;u)$, gives \eqref{203bl2e1}.
\end{proof}

We can now determine the generating function for the sequence $a(n)$.

\begin{theorem}\label{th203A2}
Let $T=\{1234,1342,2314\}$.  Then
$$F_T(x)=\frac{1-x}{2-2x-(1-x-x^2)C(x)}.$$
\end{theorem}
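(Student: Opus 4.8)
The plan is to apply the kernel method to the single functional equation \eqref{203bl2e1} supplied by Lemma \ref{203bl2}. Since $a(n)=|S_n(T)|$ for $n\ge1$ and $a(0)=1$, while $a_0(u)=1$ and $a_n(u)=\sum_{i=1}^{n}a(n;i)u^i$ for $n\ge1$, we have $F_T(x)=\sum_{n\ge0}|S_n(T)|x^n=a(x;1)$, so it is enough to solve \eqref{203bl2e1} for $a(x;1)$.

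First I would note that the coefficient of $a(x;u)$ on the left of \eqref{203bl2e1} is $1+\frac{xu^2}{1-u}=\frac{1-u+xu^2}{1-u}$, whose numerator vanishes precisely when $xu^2-u+1=0$, i.e.\ at $u=C(x)$ (the root that is a formal power series, with $C(0)=1$). For $x\neq0$ one has $1-C(x)=-xC(x)^2\neq0$, so the substitution $u=C(x)$ is legitimate and kills the left-hand side; moreover the relation $xC(x)^2=C(x)-1$ gives $1-xC(x)=1/C(x)$ and $1-x-xC(x)=1/C(x)^2$, so the denominators on the right-hand side are nonzero at $u=C(x)$. Performing the substitution and using these three identities, \eqref{203bl2e1} reduces to
$$0=1+\left(-\frac{1}{C(x)}+\frac{x^3C(x)^4}{1-x}\right)a(x;1),$$
hence $a(x;1)=\dfrac{C(x)(1-x)}{1-x-x^3C(x)^5}$.

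It then remains only to bring this to the asserted closed form. Replacing $x^3C(x)^5$ by $(C(x)-1)^3/C(x)$, which is valid because $x^3C(x)^6=(xC(x)^2)^3=(C(x)-1)^3$, yields $a(x;1)=\dfrac{C(x)^2(1-x)}{C(x)(1-x)-(C(x)-1)^3}$. A routine computation — most cleanly carried out by checking the polynomial identity $C^2\bigl(2-2x-(1-x-x^2)C\bigr)=C(1-x)-(C-1)^3$, which collapses to $0=0$ after repeated use of $xC^2=C-1$ — shows that the denominator equals $C(x)^2\bigl(2-2x-(1-x-x^2)C(x)\bigr)$, so that $F_T(x)=a(x;1)=\dfrac{1-x}{2-2x-(1-x-x^2)C(x)}$. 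The only mildly delicate point of the whole argument is this final simplification, since the raw output of the kernel step carries a spurious power of $C(x)$ that must be cleared using the Catalan quadratic relation; everything else is bookkeeping. The resulting generating function coincides with the one found in Theorem \ref{th203A1}, confirming that the two representative triples of Case 203 lie in the same Wilf class.
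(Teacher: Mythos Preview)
Your proof is correct and follows essentially the same approach as the paper: both apply the kernel method to the functional equation \eqref{203bl2e1} with the substitution $u=C(x)$ and then simplify using the Catalan relation $xC(x)^2=C(x)-1$. The only difference is in the algebraic bookkeeping of the simplification—you pass through $\dfrac{C(1-x)}{1-x-x^3C^5}$ and clear powers of $C$, whereas the paper keeps the factors $(1-xu)$ and $(1-x-xu)$ intact and simplifies the resulting rational expression directly—but these are cosmetic variations of the same computation.
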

\begin{proof}
In the present notation, we must find $a(x;1)$.  Applying the kernel method to \eqref{203bl2e1}, and setting $u=C(x)$, gives
\begin{align*}
a(x;1)&=-\frac{(1-x)(1-u)(1-xu)(1-x-xu)}{xu(1-x)(1-xu)(1-x-xu)+x^3u(1-u)}\\
&=\frac{xu(1-x-xu)(1-x)}{x(1-x)^2-x^2(1-x)u+x^3u(1-u)}\\
&=\frac{(1-x)(1-xu)}{x+(1-x)^2-2x(1-x)u},
\end{align*}
where we have used the fact $xu^2=u-1$ several times.  Multiplying the numerator and denominator of the last expression by $u$ gives
$$a(x;1)=\frac{(1-x)(u-xu^2)}{(1-x+x^2)u-2(1-x)(u-1)}=\frac{1-x}{2-2x-(1-x-x^2)u},$$
as desired.
\end{proof}

\subsection{Case 218}
The three representative triples $T$ are:

\{1342,2314,2413\} (Theorem \ref{th218A1})

\{1324,1423,3142\} (Theorem \ref{th218A2})

\{1243,1342,2314\} (Theorem \ref{th218A3})
\subsubsection{$\mathbf{T=\{1342,2314,2413\}}$}
\begin{theorem}\label{th218A1}
Let $T=\{1342,2314,2413\}$. Then
$$F_T(x)=\frac{(1-2x)(1+\sqrt{1-4x})}{x^2+(2-4x+x^2)\sqrt{1-4x}}.$$
\end{theorem}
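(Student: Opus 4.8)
The plan is to follow the left-right maxima decomposition used throughout the paper. Write $F_T(x)=\sum_{m\ge 0}G_m(x)$, where $G_m(x)$ is the generating function for $T$-avoiders with exactly $m$ left-right maxima; trivially $G_0(x)=1$ and $G_1(x)=xF_T(x)$. For $m\ge 2$ I would take a generic $\pi=i_1\pi^{(1)}i_2\pi^{(2)}\cdots i_m\pi^{(m)}\in S_n(T)$ and use the three forbidden patterns to pin down its shape. Avoidance of $2314$ and $2413$ (both of which begin with the ``2'' immediately followed by a top block) forces strong restrictions on where letters smaller than $i_1$ may reappear among the later blocks $\pi^{(2)},\dots,\pi^{(m)}$ and on the internal order of the $\pi^{(j)}$; avoidance of $1342$ then further constrains the block that actually contains such small letters. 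I expect the analysis to split into a handful of cases according to the first index $j$ for which $\pi^{(j)}$ contains a letter below $i_1$ (together with the degenerate case where no such $j$ exists), exactly as in the treatment of Cases 50, 171, and 203. A short lemma isolating the sub-case ``$\pi^{(1)}=\emptyset$'' (in the spirit of Lemma \ref{lem174A3}) will probably be convenient.

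Each case should decompose into independent parts whose generating functions are either $F_T$ again, a geometric factor $\tfrac{1}{1-x}$, or the generating function of a simpler two- or three-pattern class. Since the target answer contains $\sqrt{1-4x}$, I anticipate that one of the parts is counted by the Catalan series $C(x)$ (equivalently by avoiders of some $3$-letter pattern together with a compatible $4$-letter one), so that summing the contributions over $m$ yields not a bare equation for $F_T$ but a functional equation of the form $\Phi(x,u)\,G(x,u)=\cdots+\psi(x,u)\,G(x,1)+\cdots$ in an auxiliary ``catalog'' variable $u$ --- the natural candidates being the deficiency $n-1-i_1$ of the first left-right maximum or the number of still-available slots in the final block. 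I would then apply the kernel method: choose $u=u(x)$ (expected to be $C(x)$, simplified via $xC(x)^2=C(x)-1$) so as to annihilate the $G(x,u)$ term, solve for $G(x,1)$, assemble $F_T(x)=\sum_{m\ge 0}G_m(x)$, and simplify.

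The main obstacle is the structural case analysis for $m\ge 2$: correctly determining, for each position of the first ``small'' letter, which of the remaining blocks must be empty, which must be monotone (and in which direction), and which are free $T$-avoiders, while keeping track of which left-right maxima do or do not contribute a factor of $x$. Setting up the auxiliary variable so that the kernel method applies cleanly with a single substitution is the delicate part; once the functional equation is in hand, extracting $G(x,1)$ and checking that the assembled series simplifies to $\dfrac{(1-2x)(1+\sqrt{1-4x})}{x^2+(2-4x+x^2)\sqrt{1-4x}}$ is a routine (if tedious) computation, best performed by computer and cross-validated against the initial terms in Table \ref{long3s}.
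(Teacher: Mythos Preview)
Your outline is only a plan, not a proof, and the plan itself is aimed at heavier machinery than the problem requires. The paper's argument does begin with the left-right maxima decomposition $F_T=\sum_{m\ge 0}G_m$, but the structural analysis is much cleaner than you anticipate: no auxiliary variable, no functional equation, no kernel method.

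For $m\ge 3$, if $\pi=i_1\pi^{(1)}\cdots i_m\pi^{(m)}$ avoids $T$, the paper shows directly that $\pi^{(m)}=\alpha\beta$ with $\alpha>i_{m-1}$ and $\beta<i_1$, that $\pi^{(j)}>i_{j-1}$ for $j<m$, and that each of $\pi^{(1)},\dots,\pi^{(m-1)},\alpha$ avoids $231$ while $\beta$ avoids $T$. This gives the closed form $G_m(x)=x^mC(x)^mF_T(x)$ in one step. The case $m=2$ is handled separately by splitting on whether the ``high'' part $\beta'$ of $\pi''$ (letters above $i$) is decreasing, yielding
\[
G_2(x)=\frac{x}{1-x}\bigl(F_T(x)-1\bigr)+x^2C(x)\Bigl(C(x)-\tfrac{1}{1-x}\Bigr)F_T(x).
\]
Summing over $m$ then produces a \emph{linear} equation for $F_T$ (the $\sqrt{1-4x}$ enters only through $C(x)$), which is solved by elementary algebra.

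So the concrete gap is twofold. First, you never carry out the case analysis; the whole content of the proof is precisely the structural lemma for $m\ge 3$ that you defer. Second, your expected route through a catalog variable and the kernel method is a detour: the three patterns are restrictive enough that the blocks factor independently with Catalan weights, and one should look for that simplification before introducing extra variables.
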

\begin{proof}
Let $G_m(x)$ be the generating function for $T$-avoiders with $m$ left-right maxima.
Clearly, $G_0(x)=1$ and $G_1(x)=xF_T(x)$. Now let us write an equation for $G_m(x)$ with $m\geq2$.

For $m=2$, suppose $\pi=i\pi'n\pi''\in S_n(T)$ has two left-right maxima.
In $\pi''$ all letters $>i$ occur before all letters $<i$ for else $\pi''$ contains letters $a,b$ with $a<i<b$ and $inab$ is a 2413. Thus, $\pi= i\pi'n\beta'\beta''$ with $\beta'>i>\beta''$:
\begin{center}
\begin{pspicture}(0.3,0)(2,2)
\psset{unit =.9cm, linewidth=.5\pslinewidth}
\psline[linecolor=gray](1,0)(2,0)
\psline(0,0)(0,1)(1,1)(1,0)(0,0)
\psline(1,1)(1,2)(2,2)(2,0)(3,0)(3,1)(1,1)
\rput(.5,.5){\textrm{{\small $\pi'$}}}
\rput(1.5,1.5){\textrm{{\small $\beta'$}}}
\rput(2.5,.5){\textrm{{\small $\beta''$}}}
\pscircle*(0,1){.09}\pscircle*(1,2){.09}
\rput(-0.2,1.2){\textrm{{\footnotesize $i$}}}
\rput(0.8,2.2){\textrm{{\footnotesize $n$}}}
\pspolygon[fillstyle=hlines,hatchcolor=lightgray,hatchsep=0.8pt](1,0)(1,1)(2,1)(2,0)(1,0)
\pspolygon[fillstyle=hlines,hatchcolor=lightgray,hatchsep=0.8pt](2,1)(2,2)(3,2)(3,1)(2,1)
\end{pspicture}
\end{center}
If $\beta'$ is decreasing, then  $\pi=i\pi'n(n-1)\cdots(i+1)\beta''$ and $\pi'i\beta'' \in S_i(T)$, giving a
contribution of $\frac{x}{1-x}(F_T(x)-1)$.

If $\beta'$ is not decreasing, then $\pi'>\beta''$ (or an ascent $ab$ in $\beta'$ would be the 34 of a 1342); $\pi'$ avoids 231 (or $n$ is the 4 of a 2314); $\beta'$ avoids 231 (or $i$ is the 1 of a 1342), and $\beta''$ avoids $T$. Since $\beta'$ is not decreasing, its contribution is $C(x)-\frac{1}{1-x}$,
and the overall contribution of this case is $x^2C(x)\big(C(x)-\frac{1}{1-x}\big)F_T(x)$. Thus,
$$G_2(x)=\frac{x}{1-x}(F_T(x)-1)+x^2C(x)\left(C(x)-\frac{1}{1-x}\right)F_T(x)\:  .$$

Now, let $m\geq3$ and suppose $\pi=i_1\pi^{(1)}i_2\pi^{(2)}\cdots i_m\pi^{(m)}$ is a permutation that avoids $T$ with $m$ left-right maxima. Let $\alpha$ (resp. $\beta$) denote the list of letters in $\pi^{(m)}$ that are greater than (resp. less than) $i_1$. All letters of $\alpha$ occur before all letters of $\beta$ in $\pi^{(m)}$ (or $i_{1}i_{m-1}$ are the 23 of a 2314) and so $\pi^{(m)}=\alpha\beta$; $\pi^{(1)}>\beta$ (or $a \in \pi^{(1)},\, b\in \beta$ with $a<b$ makes $ai_{2}i_{m}b$ a 1342); $\pi^{(j)}>i_{j-1}$ for $j=2,\dots,m-1$ (or $i_{j-1}i_{j}i_m$ are the 234 of a 2314); $\alpha>i_{m-1}$ (or $i_1 i_{m-1}i_m$ are the 134 of a 1342). Thus, $\pi$ has the form pictured.
\begin{center}
\begin{pspicture}(0,0)(7,3.8)
\psset{xunit =1.2cm, yunit=.6cm, linewidth=.5\pslinewidth}
\psline[linecolor=gray](1,1)(5,1)(5,5)
\psline[linecolor=gray](0,1)(0,0)(6,0)(6,6)(5,6)
\rput(.5,1.5){\textrm{{\small $\pi^{(1)}$}}}
\rput(1.5,2.5){\textrm{{\small $\pi^{(2)}$}}}
\rput(3.5,4.5){\textrm{{\small $\pi^{(m-1)}$}}}
\rput(1.8,4.2){$\iddots$}
\psline(5,1)(6,1)(6,0)(5,0)(5,1)
\psline(0,1)(1,1)(1,3)(2,3)(2,2)(0,2)(0,1)
\psline(3,4)(4,4)(4,6)(5,6)(5,5)(3,5)(3,4)
\rput(4.5,5.5){\textrm{{\small $\alpha$}}}
\rput(5.5,0.5){\textrm{{\small $\beta$}}}
\pscircle*(0,2){.09}\pscircle*(1,3){.09}\pscircle*(3,5){.09}\pscircle*(4,6){.09}
\rput(-0.3,2.2){\textrm{{\footnotesize $i_1$}}}
\rput(.7,3.2){\textrm{{\footnotesize $i_{2}$}}}
\rput(2.5,5.2){\textrm{{\footnotesize $i_{m-1}$}}}
\rput(3.6,6.2){\textrm{{\footnotesize $i_{m}$}}}
\pspolygon[fillstyle=hlines,hatchcolor=lightgray,hatchsep=0.8pt](0,0)(0,1)(1,1)(1,2)(2,2)(2,3)(3,3)(3,4)(4,4)(4,5)(5,5)(5,6)(6,6)(6,1)(5,1)(5,0)(0,0)
\end{pspicture}
\end{center}
Also, $\pi_j$ avoids 231, $j=1,2,\dots, m-1$ (or $i_m$ is the 4 of a 2314);
$\alpha$ avoids 231 (or $i_{m-1}$ is the 1 of a 1342); $\beta$ avoids $T$.
Hence,
$$G_m(x)=x^mC^m(x)F_T(x)\, .$$

From $F_T(x)=\sum_{m\geq0}G_m(x)$, we obtain
$$F_T(x)=1+xF_T(x)+\frac{x}{1-x}(F_T(x)-1)+x^2C(x)\left(C(x)-\frac{1}{1-x}\right)F_T(x)+\sum_{m\geq3}x^mC^m(x)F_T(x)\,.$$
Solving for $F_T(x)$ yields
$$F_T(x)=\frac{(1-2x)(1-xC(x))}{(1-2x)(1-x)-x(1-2x)(1-x)C(x)-x^2(1-2x)C^2(x)-x^4C^3(x)}\, ,$$
which is equivalent to the desired expression.
\end{proof}

\subsubsection{$\mathbf{T=\{1324,1423,3142\}}$}
\begin{theorem}\label{th218A2}
Let $T=\{1324,1423,3142\}$. Then
$$F_T(x)=\frac{(1-2x)(1+\sqrt{1-4x})}{x^2+(2-4x+x^2)\sqrt{1-4x}}\, .$$
\end{theorem}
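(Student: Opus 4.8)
The plan is to re‑run the left‑right‑maxima decomposition used to prove Theorem~\ref{th218A1}. Let $G_m(x)$ be the generating function for $T$‑avoiders with exactly $m$ left‑right maxima, so $G_0(x)=1$, $G_1(x)=xF_T(x)$, and $F_T(x)=\sum_{m\ge0}G_m(x)$; the task is to compute $G_m(x)$ for $m\ge2$. Write a generic such avoider as $\pi=i_1\pi^{(1)}i_2\pi^{(2)}\cdots i_m\pi^{(m)}$ with $i_1<i_2<\cdots<i_m=n$.

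The first step is to read off the structure forced by the three forbidden patterns. Using $i_1$ as the ``$1$'' and $i_m$ as the ``$4$'' of a $1324$ forces every inner block $\pi^{(2)},\dots,\pi^{(m-1)}$ to use only entries below $i_1$, and forces each of $\pi^{(1)},\dots,\pi^{(m-1)}$ to avoid $132$ (a $132$ inside $\pi^{(j)}$ together with $i_m$ is a $1324$). Using $i_1$ and $i_m$ as the ``$1$'' and ``$4$'' of a $1423$ forces the entries of $\pi^{(m)}$ exceeding $i_1$ to occur in decreasing order. Finally, $3142$ with $i_1$ as the ``$3$'' and $i_m$ as the ``$4$'' pins down how the sub‑$i_1$ entries of $\pi^{(m)}$ sit relative to the inner blocks: as soon as some $\pi^{(j)}$ with $j\le m-1$ is nonempty, $\pi^{(m)}$ can have no entry strictly between the smallest entry occurring in $\pi^{(1)}\cdots\pi^{(m-1)}$ and $i_1$. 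The upshot should be a picture closely parallel to Theorem~\ref{th218A1}: $\pi^{(1)},\dots,\pi^{(m-1)}$ are independent $132$‑avoiders (each giving a factor $C(x)$), the part of $\pi^{(m)}$ above $i_1$ is essentially forced, the part below $i_1$ is a free $T$‑avoider, and a single $3142$‑governed gap remains. I expect this to yield a clean product formula for $G_m(x)$ when $m\ge3$ (a power of $x$ times Catalan and $(1-x)^{-1}$ factors times $F_T(x)$) and a slightly more delicate expression for $G_2(x)$ of the same shape as in Theorem~\ref{th218A1}, i.e.\ a sum of a term $\tfrac{x}{1-x}\bigl(F_T(x)-1\bigr)$ from the case $\pi'$ decreasing and a term proportional to $x^2C(x)\bigl(C(x)-\tfrac1{1-x}\bigr)F_T(x)$ from the remaining case.

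With these $G_m(x)$ in hand, substituting into $F_T(x)=\sum_{m\ge0}G_m(x)$ and summing the resulting geometric‑type series in $m$ gives a single linear equation for $F_T(x)$ with coefficients rational in $x$ and $C(x)$; solving it and simplifying via $xC(x)^2=C(x)-1$ should collapse to the stated closed form. As a built‑in consistency check, the output must coincide with the generating function of Theorem~\ref{th218A1}, even though $\{1324,1423,3142\}$ and $\{1342,2314,2413\}$ lie in distinct symmetry classes; indeed $\{1324,1423,3142\}$ is Wilf‑equivalent to $\{3124,1423,3142\}$, the inverse of $\{1342,2314,2413\}$, so an alternative route would be to establish that equivalence bijectively and quote Theorem~\ref{th218A1}.

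The main obstacle is the $m=2$ case $\pi=i\pi'n\pi''$: one must organize the sub‑cases according to whether $\pi'$ is empty, whether $\pi''$ has an entry above $i$, and how the sub‑$i$ entries of $\pi''$ interleave with and are bounded by $\pi'$, while simultaneously tracking the $132$‑avoidance of $\pi'$ (and of the above‑$i$ part of $\pi''$) and the $3142$‑interaction between $\pi'$ and $\pi''$. Placing each Catalan factor and each $\tfrac1{1-x}$ factor in its correct slot is where the real bookkeeping lies; the subsequent algebra, including rationalizing the $C(x)$‑expression for $F_T(x)$ into the displayed radical form, is routine.
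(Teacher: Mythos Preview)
Your overall approach---decompose by left-right maxima and compute $G_m(x)$---is exactly the one the paper uses, but the structural analysis you sketch is materially wrong, and following it would not produce the right $G_m$.

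First, you have the $m=2$ versus $m\ge3$ complexity reversed. For this $T$ the case $m=2$ is the \emph{clean} one: in fact $G_2(x)=x^2C(x)^2F_T(x)$, not the two-term expression you import from Theorem~\ref{th218A1}. Conversely, for $m\ge3$ there is no single product formula. The entries of $\pi^{(m)}$ that exceed $i_1$ are decreasing (as you say), but the crucial extra fact you miss is that they can occupy \emph{at most one} of the vertical gaps $(i_s,i_{s+1})$: a letter in gap $s$ and a letter in a higher gap make a $1324$, and a letter in a lower gap makes a $1432$---no, here a $1423$. One must therefore split on the index $s\in\{1,\dots,m-1\}$ of the (unique) occupied gap. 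When $s\le m-2$, a further $3142$ argument (with $i_{s+1}$ as the ``$3$'' and an above-$i_1$ letter of $\pi^{(m)}$ as the ``$2$'') forces $\pi^{(s+1)}=\cdots=\pi^{(m-1)}=\emptyset$; this is what converts the expected Catalan factors into $(1-x)^{-1}$ factors and produces the sum
\[
G_m(x)=x^mC(x)^mF_T(x)+\sum_{s=1}^{m-2}\frac{x^{m+1}C(x)^{s+2}F_T(x)}{(1-x)^{m-1-s}}.
\]
Your description of the $3142$ constraint (``$\pi^{(m)}$ can have no entry strictly between the smallest entry of $\pi^{(1)}\cdots\pi^{(m-1)}$ and $i_1$'') captures a different, and less decisive, restriction.

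Second, ``the part below $i_1$ is a free $T$-avoider'' is also off: the below-$i_1$ letters of $\pi^{(m)}$ are sliced by the decreasing above-$i_1$ letters into a string of $132$-avoiding blocks followed by one $T$-avoiding block, which is what yields the factor $\dfrac{1}{1-xC(x)}=C(x)$, not a bare $F_T(x)$.

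Finally, your proposed alternative route is circular. The set $\{3124,1423,3142\}$ \emph{is} the inverse of $\{1342,2314,2413\}$, so it lies in the same symmetry class as the $T$ of Theorem~\ref{th218A1}. Thus proving $\{1324,1423,3142\}\sim\{3124,1423,3142\}$ bijectively is precisely the Wilf equivalence at stake, not an independent shortcut.
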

\begin{proof}
Let $G_m(x)$ be the generating function for $T$-avoiders with $m$
left-right maxima. Clearly, $G_0(x)=1$ and $G_1(x)=xF_T(x)$. Now
suppose $\pi=i_1\pi^{(1)}i_2\pi^{(2)}\cdots i_m\pi^{(m)}\in S_n(T)$ has $m\geq2$ left-right maxima. Then $i_1>\pi^{(j)}$
for all $j=1,2,\ldots,m-1$ to avoid 1324, and the letters $>i_1$ in $\pi^{(m)}$ are decreasing to avoid 1423.
We consider two cases for $\pi^{(m)}$:
\begin{itemize}
\item Each letter of $\pi^{(m)}$ is either greater than $i_{m-1}$
or smaller than $i_1$. In this case,
$\pi^{(m)}=\beta^{(1)}(n-1)\cdots\beta^{(n-1-i_{m-1})}(i_{m-1}+1)\beta^{(n-i_{m-1})}$,
where $\pi^{(1)}>\cdots>\pi^{(m-1)}>\beta^{(1)}>\cdots>\beta^{(n-i_{m-1})}$
and $\pi^{(j)}$ avoids $132$ for $j=1,2,\ldots,m-1$, $\beta^{(j)}$ avoids
$132$ for $j=1,2,\ldots,n-1-i_{m-1}$ and $\beta^{(n-i_{m-1})}$ avoids $T$. There are zero or more
factors of the form $\beta_j(n-j)$, each contributing $xC(x)$.
Hence, the contribution is
\[
\hspace*{15mm}\frac{x^mC(x)^{m-1}F_T(x)}{1-xC(x)}=x^mC(x)^mF_T(x)\, .
\]
\item $\pi^{(m)}$ has a letter between $i_1$ and $i_{m-1}$ (this case only arises for $m\ge 3$).
Let $s\in [m-2\,]$ be the smallest index such that $\pi^{(m)}$ has a letter between $i_s$ and $i_{s+1}$.
Then $\pi^{(s+1)}=\cdots=\pi^{(m-1)}=\emptyset$ to avoid 3142,
and $\pi$ has the form
\begin{center}
\begin{pspicture}(-4,-.5)(13,6)
\psset{unit =.5cm, linewidth=.5\pslinewidth}
\psline[linecolor=gray](4,9)(11,9)
\psline[linecolor=gray](2.5,8)(11,8)
\psline[linecolor=gray](4,4)(7,4)
\psline[linecolor=gray](7,9)(7,4)
\psline(2.5,8)(4,8)(4,9)(4.5,9)(4.5,10)(5,10)
\psline(0,7)(1.5,7)(1.5,6)(0,6)(0,7)
\psline(2.5,5)(4,5)(4,4)(2.5,4)(2.5,5)
\psline(10,0)(10,1)(11,1)(11,0)(10,0)
\psline(5,11)(6,11)(6,10)(5,10)(5,11)
\psline(6,10)(7,10)(7,9)(6,9)(6,10)
\psline(7,4)(8,4)(8,3)(7,3)(7,4)
\psline(9,2)(10,2)(10,1)(9,1)(9,2)
\rput(.75,6.5){\textrm{{\small $\pi^{(1)}$}}}
\rput(3.25,4.5){\textrm{{\small $\pi^{(s)}$}}}
\rput(10.5,.5){\textrm{{\small $\gamma$}}}
\rput(7.5,3.5){\textrm{{\small $\beta_1$}}}
\rput(9.5,1.5){\textrm{{\small $\beta_r$}}}
\psline(6,11)(11,11)(11,0)(0,0)(0,6)
\pscircle*(0,7){.1}\pscircle*(2.5,8){.1}\pscircle*(4,9){.1}\pscircle*(4.5,10){.1}
\pscircle*(5,11){.1}\pscircle*(8,8.8){.1}\pscircle*(10,8.2){.1}
\rput(1,7.8){$\iddots$}
\rput(1.9,5.6){$\ddots$}
\rput(8.4,2.6){$\ddots$}
\rput(9,8.7){$\ddots$}
\psline[linecolor=gray](8,4)(8,8)
\psline[linecolor=gray](10,2)(10,8)
\psline[arrows=->,arrowsize=3pt 3](5.2,10.8)(5.8,10.2)
\psline[arrows=->,arrowsize=3pt 3](6.2,9.8)(6.8,9.2)
\rput(-0.6,7.2){\textrm{{\footnotesize $i_1$}}}
\rput(1.9,8.2){\textrm{{\footnotesize $i_{s}$}}}
\rput(3.2,9.2){\textrm{{\footnotesize $i_{s+1}$}}}
\rput(4.4,11.2){\textrm{{\footnotesize $i_{m}$}}}
\rput(11.3,0){,}
\end{pspicture}
\end{center}
where blank regions are empty and there is one $\beta$ for each of the $r:=i_{s+1}-i_{s}-1$ letters in $[i_{s}+1,i_{s+1}-1]$,
the $\pi$'s and $\beta$'s all avoid 132 (due to 1324), $\gamma$ avoids $T$, and the arrows indicate decreasing entries.
The $\pi$'s contribute $C(x)^s$; each $\beta$ and its associated letter between
$i_s$ and $i_{s+1}$ contributes $xC(x)$ and
there are one or more $\beta$'s, so they contribute $\frac{xC(x)}{1-xC(x)}$; each of the $m-1-s$ arrows contributes $\frac{1}{1-x}$; $\gamma$ contributes $F_T(x)$.
Thus, for given $s\in [m-2]$, the contribution is
$$\hspace*{10mm}\frac{x^mC(x)^sF_T(x)}{(1-x)^{m-1-s}}\frac{xC(x)}{1-xC(x)}=
\frac{x^{m+1}C(x)^{s+2}F_T(x)}{(1-x)^{m-1-s}}.$$
\end{itemize}
Hence, from $F_T(x)=\sum_{m\geq0}G_m(x)$, we have
$$F_T(x)=1+xF_T(x)+\sum_{m\geq2}\left(x^mC(x)^m F_T(x)+x^{m+1}C(x)^2 F_T(x)\sum_{s=1}^{m-2}\frac{C(x)^s}{(1-x)^{m-1-s}}\right),$$
with solution
$$F_T(x)=\frac{(1-2x)(1-xC(x))}{(1-2x)(1-x)-x(1-2x)(1-x)C(x)-x^2(1-2x)C(x)^2-x^4C(x)^3},$$
which simplifies to the desired expression.
\end{proof}

\subsubsection{$\mathbf{T=\{1243,1342,2314\}}$}
We will employ an approach similar to
that used for the second case in class 203 above and make use of
the same notation.  As before, we have
\begin{equation}\label{218ce1}
a(n;i)=a(n-1;i-1)+b(n;i)+c(n;i)+d(n;i), \qquad 1 \leq i \leq n-1,
\end{equation}
with $a(n;n)=1$ for $n\geq 1$.  The arrays $b(n;i)$, $c(n;i)$ and
$d(n;i)$ are determined recursively as follows and a similar proof
applies.

\begin{lemma}\label{218cl1}
We have
\begin{align}
b(n;i)&=\sum_{j=i}^{n-1}a(n-1;j),\qquad1\leq i\leq n-1,\label{218cl1e1}\\
c(n;i)&=a(n-i-2)+\sum_{j=1}^{n-i-2}2^{n-i-j-2}a(j-1), \qquad 1 \leq
i \leq n-2,\label{218cl1e2} \\
d(n;i)&=c(n-1;i)+c(n-1;i-1)+d(n-1;i)+d(n-1;i-1), \qquad 1 \leq i
\leq n-2. \label{218cl1e3}
\end{align}
\end{lemma}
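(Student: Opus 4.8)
The plan is to mirror the proof of Lemma~\ref{203bl1} almost word for word, exploiting the fact that $T=\{1243,1342,2314\}$ shares the patterns $1342$ and $2314$ with the triple treated there and differs from it only in having $1243$ in place of $1234$. Throughout, let $A_{n,i}$, $B_{n,i}$, $C_{n,i}$, $D_{n,i}$ denote the subsets of $S_n(T)$ counted by $a(n;i)$, $b(n;i)$, $c(n;i)$, $d(n;i)$, as in that proof. For \eqref{218cl1e1} the argument for \eqref{203bl1e1} applies verbatim: every member of $B_{n,i}$ is obtained from a member of $\cup_{j=i}^{n-1}A_{n-1,j}$ by inserting $n$ immediately after its $i$-th letter, and this insertion creates no forbidden pattern because $n$ is preceded only by the decreasing prefix $\pi_1>\cdots>\pi_i$ and so could only serve as the ``$4$'' of a length-$4$ pattern whose first three letters occur, in some order, within a decreasing string; but none of $1243,1342,2314$ begins with three decreasing letters. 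Deleting the leftmost ascent top inverts the map, which gives \eqref{218cl1e1}.

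For \eqref{218cl1e2}, using only $1342$- and $2314$-avoidance and arguing exactly as in the $203$ case, one first shows that every $\pi\in C_{n,i}$ has the form
$$\pi=\alpha\, j\, (n-1)\, \beta\, n\, \gamma,$$
where $\alpha=(j+i-1)(j+i-2)\cdots(j+1)$ is the decreasing run on the consecutive set $[j+1,j+i-1]$, the block $\beta$ is a permutation of $[j+i,n-2]$, $\gamma$ is a $T$-avoider on $[j-1]$, and $1\le j\le n-i-1$. The one genuinely new ingredient is the internal structure of $\beta$: in the $203$ case, $1234$-avoidance forced $\beta$ to be decreasing, whereas here $1243$-avoidance forces $\beta$ to avoid $132$ (an occurrence $a\,c\,b$ of $132$ in $\beta$, preceded by $j$, would complete a $1243$ of the form $j\,a\,c\,b$), while $2314$-avoidance forces $\beta$ to avoid $231$ (an occurrence $b\,c\,a$ of $231$ in $\beta$, followed by $n$, would complete a $2314$ of the form $b\,c\,a\,n$). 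One then checks that these are the only restrictions, so that $\beta$ ranges over all $\{132,231\}$-avoiding permutations of a set of size $k:=n-i-j-1$, and invokes the classical fact $|S_k(132,231)|=2^{k-1}$ for $k\ge1$. Summing over $j$ — the value $j=n-i-1$ (for which $\beta$ is empty) contributing $a(n-i-2)$, and each $j\in[1,n-i-2]$ contributing $2^{k-1}a(j-1)=2^{n-i-j-2}a(j-1)$ — yields \eqref{218cl1e2}.

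For \eqref{218cl1e3}, the proof of \eqref{203bl1e3} transfers with only cosmetic changes. Writing $\sigma\in D_{n,i}$ as $\sigma=\sigma^{(1)}jk\sigma^{(2)}\sigma^{(3)}\sigma^{(4)}$ exactly as there, one uses $1243$-avoidance in place of $1234$-avoidance to force the $n$-containing block $\sigma^{(3)}$ to avoid $132$; then deleting $n$ when $n-1\in\sigma^{(3)}$, or deleting $n-1$ and relabeling $n\mapsto n-1$ when $n-1\in\sigma^{(1)}$, gives a bijection onto $C_{n-1,i}\cup D_{n-1,i}$, respectively $C_{n-1,i-1}\cup D_{n-1,i-1}$, which is \eqref{218cl1e3}. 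Finally, \eqref{218ce1} holds by the same partition of $A_{n,i}$ (starts with $n$, or leftmost ascent top equals $n$, or equals $n-1$, or is smaller) used to obtain \eqref{203be1}.

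I expect the main obstacle to be the middle identity: one must carry out the case analysis that pins down the exact shape of $C_{n,i}$ and, crucially, verify that the block $\beta$ is constrained by nothing beyond avoidance of $132$ and of $231$ — so that its count is exactly $2^{k-1}$, rather than something larger such as a Catalan number — together with the routine but somewhat fiddly converse check that every permutation of the displayed form genuinely avoids all of $1243$, $1342$, and $2314$. The analogous bookkeeping for $\sigma^{(3)}$ in the $d$-recurrence is the only other place where care is needed, but it is an essentially mechanical transcription of the $203$ argument.
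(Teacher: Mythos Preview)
Your proposal is correct and follows exactly the approach the paper takes: the paper itself omits the proof entirely, saying only that ``a similar proof applies'' to that of Lemma~\ref{203bl1}, and then explains precisely the point you identified---that the block $\beta$ in the $c(n;i)$ decomposition is now a $\{132,231\}$-avoider rather than a decreasing sequence, giving $2^{n-i-j-2}$ choices when nonempty. One small imprecision in your $d(n;i)$ sketch: the same reasoning (using $j,k$ as the ``$1,2$'' of a $1243$) actually forces $\sigma^{(3)}$ to be \emph{increasing} (i.e., to avoid $21$), not merely $132$-avoiding; this pins $n$ at the end of $\sigma^{(3)}$ and makes the ``delete $n$'' / ``append $n$'' bijection unambiguous.
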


Note that the recurrences in Lemma \ref{218cl1} are the same as
those in Lemma \ref{203bl1} except for a factor of $2^{n-i-j-2}$
appearing in the formula for $c(n;i)$.  This accounts for the fact
that within the decomposition of a $T$-avoiding permutation
$\pi=\alpha j(n-1) \beta n \gamma$ enumerated by $c(n;i)$, where
$\alpha=j+i-1,j+i-2,\ldots,j+1$ for some $i$, the section $\beta$
is now a permutation of $[j+i,n-2]$ that avoids the patterns $132$
and $231$ (instead of just being a decreasing sequence as it was
previously).  Thus, there are $2^{n-i-j-2}$ possibilities for
$\beta$ whenever it is nonempty.  Note that a comparison of the
recurrences shows that there are strictly more permutations of
length $n$ that avoid $\{1243,1342,2314\}$ than there are that
avoid $\{1234,1342,2314\}$ for $n\geq 5$.

If $a(x;u)=\sum_{n\geq0}a_n(u)x^n$ as before, then one gets the
following functional equation whose proof we omit.

\begin{lemma}\label{218cl2}
We have
\begin{equation}\label{218cl2e1}
\left(1+\frac{xu^2}{1-u}\right)a(x;u)=1+xu\left(\frac{1}{1-u}+\frac{x^2(1-x)}{(1-2x)(1-xu)(1-x-xu)}\right)a(x;1).
\end{equation}
\end{lemma}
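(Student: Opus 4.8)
The plan is to reproduce the derivation of Lemma~\ref{203bl2} almost word for word, since the recurrences \eqref{218ce1}, \eqref{218cl1e1} and \eqref{218cl1e3} for $a(n;i)$, $b(n;i)$ and $d(n;i)$ coincide with their Case~203 counterparts \eqref{203be1}, \eqref{203bl1e1} and \eqref{203bl1e3}; only the $c$-recurrence \eqref{218cl1e2} differs from \eqref{203bl1e2}, by the factor $2^{n-i-j-2}$. First I would translate \eqref{218ce1}, \eqref{218cl1e1} and \eqref{218cl1e3} into \gf identities exactly as before --- using the conventions $a(n;0)=c(n;0)=d(n;0)=0$, which make the boundary terms $u^n$ cancel --- to get
$$a(x;u)=1+xua(x;u)+b(x;u)+c(x;u)+d(x;u),$$
$$b(x;u)=\frac{xu}{1-u}\left(a(x;1)-a(x;u)\right),\qquad d(x;u)=x(1+u)\left(c(x;u)+d(x;u)\right),$$
and hence $c(x;u)+d(x;u)=\dfrac{c(x;u)}{1-x-xu}$, precisely as in the proof of Lemma~\ref{203bl2}.

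The only genuinely new step is to evaluate $c(x;u)$ from \eqref{218cl1e2}. The key observation is that $c(n;i)$ depends on $n$ and $i$ only through $m:=n-i$; writing $\phi(m)=a(m-2)+\sum_{j=1}^{m-2}2^{m-j-2}a(j-1)$ for $m\ge2$, we get
$$c(x;u)=\sum_{n\ge3}\sum_{i=1}^{n-2}\phi(n-i)u^ix^n=\left(\sum_{i\ge1}(ux)^i\right)\left(\sum_{m\ge2}\phi(m)x^m\right)=\frac{ux}{1-ux}\,\Phi(x),$$
where $\Phi(x)=\sum_{m\ge2}\phi(m)x^m$. Both pieces of $\phi$ are summed by elementary geometric series: the term $a(m-2)$ contributes $x^2a(x;1)$, while interchanging the order of summation in $\sum_{m\ge2}\sum_{j=1}^{m-2}2^{m-j-2}a(j-1)x^m$ and summing $\sum_{\ell\ge0}(2x)^\ell$ yields $\dfrac{x^3}{1-2x}a(x;1)$ --- here one uses $a(x;1)=\sum_{k\ge0}a(k)x^k$, valid since $a(0)=1$. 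Thus $\Phi(x)=\dfrac{x^2(1-x)}{1-2x}a(x;1)$, so that
$$c(x;u)=\frac{ux^3(1-x)}{(1-ux)(1-2x)}\,a(x;1).$$
This is exactly the Case~203 formula $c(x;u)=\dfrac{ux^3}{(1-x)(1-xu)}a(x;1)$ with $\dfrac{1}{1-x}$ replaced by $\dfrac{1-x}{1-2x}$; the extra factor $2^{n-i-j-2}$ in \eqref{218cl1e2} is precisely what turns a $\tfrac{1}{1-x}$-type sum into a $\tfrac{1}{1-2x}$-type one, matching the observation that the inner block $\beta$ is now a $\{132,231\}$-avoider rather than a decreasing run.

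To finish, substitute this $c(x;u)$ together with $b(x;u)$ into the first displayed equation and collect the terms containing $a(x;u)$: the coefficient of $a(x;u)$ simplifies to $1-xu+\dfrac{xu}{1-u}=1+\dfrac{xu^2}{1-u}$, exactly as in Case~203, and the remaining terms give
$$\left(1+\frac{xu^2}{1-u}\right)a(x;u)=1+xu\left(\frac{1}{1-u}+\frac{x^2(1-x)}{(1-2x)(1-xu)(1-x-xu)}\right)a(x;1),$$
which is \eqref{218cl2e1}. I expect the only real obstacle to be the computation of $c(x;u)$ --- in particular recognizing that $c(n;i)$ is a function of $n-i$ alone, reorganizing the double sum accordingly, and carrying out the interchange of summation for the $2^{m-j-2}$ part; everything else is bookkeeping identical to the proof of Lemma~\ref{203bl2}. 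A minor care point is verifying the index conventions ($a(n;0)=0$, etc.) so that the $u^n$ terms cancel cleanly, but this is inherited unchanged from Case~203.
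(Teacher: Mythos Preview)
Your proof is correct and is exactly the argument the paper intends: the proof of Lemma~\ref{218cl2} is omitted in the paper with the remark that it follows the lines of Lemma~\ref{203bl2}, and you have carried out precisely that derivation, the only new computation being the evaluation of $c(x;u)$ from \eqref{218cl1e2}. Your handling of that step---recognizing that $c(n;i)$ depends only on $n-i$, splitting $\Phi(x)$ into the $a(m-2)$ and $2^{m-j-2}$ contributions, and summing to $\frac{x^2(1-x)}{1-2x}a(x;1)$---is correct, as is the final substitution.
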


We can now determine the generating function $F_T(x)$.

\begin{theorem}\label{th218A3}
Let $T=\{1243,1342,2314\}$.  Then
$$F_T(x)=\frac{(1-2x)(1+\sqrt{1-4x})}{x^2+(2-4x+x^2)\sqrt{1-4x}}.$$
\end{theorem}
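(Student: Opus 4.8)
The plan is to follow the route used in the proof of Theorem~\ref{th203A2}, exploiting the near-identical shape of the functional equations \eqref{203bl2e1} and \eqref{218cl2e1}. Since $a(n;i)$ counts all $T$-avoiders of length $n$ by the index of their leftmost ascent (the strictly decreasing permutation being recorded via $a(n;n)=1$), summing over $i$ gives $a(n)=|S_n(T)|$; and as $a(x;u)=\sum_{n\ge0}a_n(u)x^n$ with $a_0(u)=1$ and $a_n(1)=a(n)$, we have $F_T(x)=a(x;1)$. So the whole task reduces to extracting $a(x;1)$ from the functional equation \eqref{218cl2e1} of Lemma~\ref{218cl2}.

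First I would apply the kernel method to \eqref{218cl2e1}. The kernel factor $1+\frac{xu^2}{1-u}$ vanishes exactly when $xu^2-u+1=0$, and the root that is a formal power series in $x$ is $u=\frac{1-\sqrt{1-4x}}{2x}=C(x)$. Setting $u=C(x)$ kills the left-hand side of \eqref{218cl2e1}, leaving
$$0=1+xC(x)\left(\frac{1}{1-C(x)}+\frac{x^2(1-x)}{(1-2x)\bigl(1-xC(x)\bigr)\bigl(1-x-xC(x)\bigr)}\right)a(x;1),$$
which determines $a(x;1)$ uniquely once we solve for it.

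The remaining work is purely algebraic: simplify the resulting expression in $x$ and $C(x)$ to the closed form in the statement. As in Case~203, the engine of the simplification is the Catalan identity $xC(x)^2=C(x)-1$, equivalently $1-C(x)=-xC(x)^2$ and $1-xC(x)=1/C(x)=\tfrac12(1+\sqrt{1-4x})$; substituting these and clearing denominators collapses everything to a rational function of $\sqrt{1-4x}$. I expect this reduction to yield
$$F_T(x)=a(x;1)=\frac{(1-2x)(1+\sqrt{1-4x})}{x^2+(2-4x+x^2)\sqrt{1-4x}},$$
which is, reassuringly, exactly the generating function already obtained in Theorems~\ref{th218A1} and~\ref{th218A2} for the other two triples of this Wilf class. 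The only genuine obstacle is this final simplification: although conceptually routine, it involves enough juggling of nested radicals that it is most safely checked with a computer algebra system. Structurally it differs from the corresponding step in Theorem~\ref{th203A2} only in that the factor $\tfrac{1}{1-x}$ there is here replaced by $\tfrac{1-x}{1-2x}$, which is precisely the footprint of the extra $2^{n-i-j-2}$ in the formula \eqref{218cl1e2} for $c(n;i)$.
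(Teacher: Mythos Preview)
Your proposal is correct and follows essentially the same route as the paper: apply the kernel method to \eqref{218cl2e1} by setting $u=C(x)$, then simplify algebraically using $xC(x)^2=C(x)-1$. The paper carries out the final simplification explicitly in four short lines rather than deferring to a computer algebra system, but otherwise the arguments are identical.
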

\begin{proof}
Setting $u=C(x)$ in \eqref{218cl2e1}, and using the fact
$xu^2=u-1$, gives
\begin{align*}
a(x;1)&=-\frac{(1-2x)(1-u)(1-xu)(1-x-xu)}{xu(1-2x)(1-xu)(1-x-xu)+x^3u(1-x)(1-u)}\\
&=\frac{x(1-2x)(1-xu)}{x(1-2x)(1-x-xu)+x^2(1-x)(1-u+xu)}\\
&=\frac{(1-2x)(1+\sqrt{1-4x})}{(1-2x)(1-2x+\sqrt{1-4x})+(1-x)(3x-1+(1-x)\sqrt{1-4x})}\\
&=\frac{(1-2x)(1+\sqrt{1-4x})}{x^2+(2-4x+x^2)\sqrt{1-4x}},
\end{align*}
as desired.
\end{proof}

\subsection{Case 229}
The three representative triples $T$ are:

\{2341,2413,3142\} (Theorem \ref{th229A1})

\{1342,1423,2143\} (Theorem \ref{th229A2})

\{1342,1423,2134\} (Theorem \ref{th229A3})

\subsubsection{$\mathbf{T=\{2341,2413,3142\}}$}

\begin{theorem}\label{th229A1}
Let $T=\{2341,2413,3142\}$. Then
$$F_T(x)=\frac{1-2x+2x^2-\sqrt{1-8x+20x^2-24x^3+16x^4-4x^5}}{2x(1-x+x^2)}.$$
\end{theorem}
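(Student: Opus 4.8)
The plan is to set up and solve a single algebraic equation for $F_T(x)$ by splitting a $T$-avoider on the position of its largest letter. The key preliminary observation is that each of $2341,2413,3142$ has its largest letter ``$4$'' neither in first nor in last position; hence, since in any occurrence of a $4$-letter pattern the letter largest in value plays the role of ``$4$'', a permutation $\pi=n\pi'$ or $\pi=\pi'n$ avoids $T$ if and only if $\pi'$ does (the global maximum $n$, if used in an occurrence, would have to be ``$4$'', and ``$4$'' never sits first or last in a pattern of $T$). Splitting on whether $n$ is first, last, or strictly interior therefore gives
\[
F_T(x)=1-x+2xF_T(x)+I(x),
\]
where $I(x)$ is the \gf for $T$-avoiders of the form $\pi=\alpha n\beta$ with $\alpha,\beta$ both nonempty.

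Next I would analyse the shape of such a $\pi=\alpha n\beta$. Spelling out what it means for $2341,2413,3142$ not to occur with $n$ playing ``$4$'': avoidance of $2341$ controls how small a letter of $\beta$ may be relative to the non-right-to-left-maximal part of $\alpha$; avoidance of $2413$ forbids a letter of $\alpha$ from being straddled (first below, then above) by two letters of $\beta$ in order; and avoidance of $3142$ forbids a letter of $\beta$ from lying strictly between the two members of a descent of $\alpha$. Together with the requirements that $\alpha$ and $\beta$ each avoid $T$, and that no occurrence of a pattern of $T$ straddles $\alpha$ and $\beta$ without using $n$, these constraints pin $\pi$ down to a few shapes, which I would record with a shaded matrix diagram in the style of the other cases. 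I expect the bookkeeping to branch according to whether $\alpha$ is decreasing, whether $\beta$ is decreasing, and the location of the first/last ``exceptional'' letter, with each admissible region being empty, a single increasing or decreasing run, or a free $T$-avoider; summing these contributions should produce $I(x)$ as a polynomial-in-$x$ quadratic in $F_T(x)$, namely $I(x)=x(1-x+x^2)F_T(x)^2-2x^2F_T(x)-x(1-x)$. Substituting this into the displayed relation yields
\[
x(1-x+x^2)F_T(x)^2-(1-2x+2x^2)F_T(x)+(1-x)^2=0 ,
\]
and solving this quadratic — the discriminant is $(1-2x+2x^2)^2-4x(1-x+x^2)(1-x)^2=1-8x+20x^2-24x^3+16x^4-4x^5$, and the branch with $F_T(0)=1$ is the one carrying the minus sign — gives the asserted formula. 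As an alternative, one could run the usual left-right-maxima recurrence ($G_0=1$, $G_1=xF_T$, $F_T=\sum_{m\ge0}G_m$), but the interior-$n$ split seems better adapted here since it produces the $F_T^2$ term transparently and involves no $C(x)$, matching the fact that the answer is genuinely quadratic over a polynomial ring.

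The main obstacle will be the case analysis establishing the formula for $I(x)$: three $4$-letter patterns interact at once, so one must carefully enumerate which sub-blocks of $\alpha$ and of $\beta$ are empty, which are forced monotone, and which stay free $T$-avoiders, and — \emph{crucially} — verify that there are no additional forbidden configurations straddling $\alpha$ and $\beta$ without using $n$. Once the contributions are correctly identified, the remaining work (summing a handful of geometric-type series and solving one quadratic) is routine; the clean factorization $1-3x+4x^2-3x^3+x^4=(1-x+x^2)(1-2x+x^2)$ hidden in the discriminant computation is a useful internal check that the decomposition has been carried out without error.
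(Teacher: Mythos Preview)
Your setup is sound: the observation that each of $2341,2413,3142$ has its ``$4$'' strictly interior, so that an initial or terminal $n$ may be stripped, is correct and gives $F_T(x)=1-x+2xF_T(x)+I(x)$. The quadratic you write down is also the right one (it is easily read off from the closed form). But the proposal is not a proof: the entire substance lies in deriving $I(x)$, and you have not done that. You acknowledge this yourself (``the main obstacle will be the case analysis''), and what follows is a list of the ingredients one might expect to see, culminating in a formula for $I(x)$ that is asserted, not obtained. Since $I(x)$ can be recovered from the theorem statement by algebra alone, writing it down carries no evidential weight; the case analysis for $\pi=\alpha n\beta$ with both sides nonempty genuinely has to account for occurrences of each pattern that straddle $\alpha$ and $\beta$ both with and without using $n$, and you have not shown how those constraints collapse to a handful of monotone blocks plus free $T$-avoiders.

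For comparison, the paper does use the left-right-maxima decomposition you dismiss as a fallback, and it is in fact cleaner than you suggest: no $C(x)$ appears, and a short two-case argument (is $\beta'''$ empty or not?) yields the linear recurrence
\[
G_m(x)=xF_T(x)\,G_{m-1}(x)+\frac{x^2}{(1-x)^2}\bigl(F_T(x)-1\bigr)G_{m-2}(x),
\]
which upon summing over $m\ge 2$ produces exactly the quadratic $x(1-x+x^2)F_T^2-(1-2x+2x^2)F_T+(1-x)^2=0$ after clearing the $(1-x)^2$ denominator. So the route you set aside is the one that actually delivers the result with minimal pain; your interior-$n$ split is a legitimate alternative in principle, but as written it is a plan rather than an argument.
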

\begin{proof}
Let $G_m(x)$ be the generating function for $T$-avoiders with $m$ left-right maxima.
Clearly, $G_0(x)=1$ and $G_1(x)=xF_T(x)$.
Now suppose $m\geq2$ and $\pi=i_1\pi^{(1)}\cdots i_m\pi^{(m)}$ avoids $T$.
Clearly, there is no letter smaller than $i_1$ in $\pi^{(3)}\cdots\pi^{(m)}$ (such a letter would be the ``1'' of a 2341). Moreover, to avoid 2413 and 3142, $\pi^{(1)}i_2\pi^{(2)}$ has the form
$\beta'i_2\beta''\beta'''$ with $\beta''>i_1>\beta'>\beta'''$:
\begin{center}
\begin{pspicture}(-1,.1)(3,2.1)
\psset{xunit =.6cm, yunit=.6cm,linewidth=.5\pslinewidth}
\pspolygon[fillstyle=hlines,hatchcolor=lightgray,hatchsep=0.8pt](0,1)(0,0)(2,0)(2,1)(3,1)(3,3)(2,3)(2,2)(1,2)(1,1)(0,1)
\psline[linecolor=gray](0,1)(0,0)(1,0)(1,1)(1,1)(2,1)(2,2)(3,2)
\psline[linecolor=gray](1,0)(2,0)
\psline[linecolor=gray](2,3)(3,3)
\psline(0,2)(0,1)(1,1)(1,3)(2,3)(2,2)(0,2)
\psline(3,0)(3,3)
\psline(3,0)(2,0)(2,1)(3,1)
\rput(.5,1.5){\textrm{{\small $\beta'$}}}
\rput(1.5,2.5){\textrm{{\small $\beta''$}}}
\rput(2.5,0.4){\textrm{{\small $\beta'''$}}}
\pscircle*(0,2){.07}\pscircle*(1,3){.07}
\rput(-0.4,2.2){\textrm{{\footnotesize $i_1$}}}
\rput(0.6,3.2){\textrm{{\footnotesize $i_{2}$}}}
\rput(3.5,0){.}
\end{pspicture}
\end{center}
If $\beta'''=\emptyset$, then we have a contribution of $xF_T(x)G_{m-1}(x)$.
Otherwise, $\pi$ has the form
\begin{center}
\begin{pspicture}(-2,-.1)(6,4)
\psset{xunit =.7cm, yunit=.6cm,linewidth=.5\pslinewidth}
\pspolygon[fillstyle=hlines,hatchcolor=lightgray,hatchsep=0.8pt](0,1)(0,0)(2,0)(2,1)(3,1)(3,3)(2,3)(2,2)(1,2)(1,1)(0,1)
\psline[linecolor=gray](0,1)(0,0)(1,0)(1,1)(1,1)(2,1)(2,2)(3,2)
\psline[linecolor=gray](1,0)(2,0)
\psline(2,3)(3,3)
\psline(0,2)(0,1)(1,1)(1,3)(2,3)(2,2)(0,2)
\psline(3,3)(3,4)(4,4)(4,3)
\psline(5,3)(5,6)(6,6)(6,3)
\psline(3,0)(2,0)(2,1)(3,1)
\psline(0,1)(0,0)(2,0)
\psline[arrows=->,arrowsize=3pt 3](.2,1.8)(.8,1.2)
\psline[arrows=->,arrowsize=3pt 3](1.2,2.8)(1.8,2.2)
\rput(2.5,0.4){\textrm{{\small $\beta'''$}}}
\rput(3.5,3.5){\textrm{{\small $\pi^{(3)}$}}}
\rput(5.5,4.5){\textrm{{\small $\pi^{(m)}$}}}
\psline[fillstyle=hlines,hatchcolor=lightgray,hatchsep=0.8pt](3,0)(3,2)(6,2)(6,0)(3,0)
\pspolygon[fillstyle=hlines,hatchcolor=lightgray,hatchsep=0.8pt](3,2)(3,3)(6,3)(6,2)(3,2)
\pscircle*(0,2){.07}\pscircle*(1,3){.07}\pscircle*(3,4){.07}\pscircle*(5,6){.07}\pscircle*(2.5,.85){.07}
\rput(4,5.3){$\iddots$}
\rput(4.5,1){\textrm{{\small $234\overset{{\gray \bullet}}{1}$}}}
\rput(4.5,2.5){\textrm{{\small $241\overset{{\gray \bullet}}{3}$}}}
\rput(-0.4,2.2){\textrm{{\footnotesize $i_1$}}}
\rput(0.6,3.2){\textrm{{\footnotesize $i_{2}$}}}
\rput(2.6,4.2){\textrm{{\footnotesize $i_{3}$}}}
\rput(4.6,6.2){\textrm{{\footnotesize $i_{m}$}}}
\rput(6.3,0){,}
\end{pspicture}
\end{center}
where dark bullets indicate mandatory entries, shaded regions are empty (gray bullets would form part of a forbidden pattern as indicated),
$\beta'$ is decreasing ($b<c$ in $\beta'$ implies $bci_2a$  is a 2341 for $a$ in $\beta'''$), and $\beta''$ is decreasing ($b<c$ in $\beta''$ implies $i_1bca$ is a 2341).

Thus, we have a contribution of $\frac{x^2}{(1-x)^2}(F_T(x)-1)G_{m-2}(x)$.
Hence, for $m\geq2$,
$$G_m(x)=xF_T(x)G_{m-1}(x)+\frac{x^2}{(1-x)^2}(F_T(x)-1)G_{m-2}(x)\, .$$
By summing over $m\geq2$, we obtain
$$F_T(x)-1-xF_T(x)=xF_T(x)(F_T(x)-1)+\frac{x^2}{(1-x)^2}(F_T(x)-1)F_T(x)\, .$$
Solving this quadratic for $F_T(x)$ completes the proof.
\end{proof}

\subsubsection{$\mathbf{T=\{1342,1423,2143\}}$}

Here, and in the subsequent subsection, let $a(n;i_1,i_2,\ldots,i_k)$ denote the number of $T$-avoiding permutations of length $n$ starting with $i_1,i_2,\ldots,i_k$. Let $a(n)=\sum_{i=1}^na(n;i)$ for $n\geq1$ be the total number of $T$-avoiders, with $a(0)=1$, and $\mathcal{T}_{i,j}$ be the set of permutations enumerated by $a(n;i,j)$. Clearly, $a(n;n)=a(n;n-1)=a(n-1)$ for all $n\geq 2$. We have the following recurrence for the array $a(n;i,j)$.

\begin{lemma}\label{l1229b}
If $n\geq3$, then
\begin{equation}\label{l1229be1}
a(n;i,j)=a(n-j+i+1;i+1,i)+\sum_{\ell=1}^{i-1}a(n-j+i+1;i,\ell), \qquad i+2 \leq j \leq n,
\end{equation}
\begin{equation}\label{l1229be2}
a(n;i,i-1)=a(n-1;i;i-1)+\sum_{\ell=1}^{i-2}a(n-1;i-1,\ell), \qquad 2 \leq i \leq n-1,
\end{equation}
and
\begin{equation}\label{l1229be3}
a(n;i,j)=a(n-1;i-1,j)+\sum_{r=2}^{i-j}a(n-r;j+1,j)+\sum_{r=1}^{i-j}\sum_{\ell=1}^{j-1}a(n-r;j,\ell)
\end{equation}
for $3 \leq i \leq n-1$ and $1 \leq j \leq i-2$, with $a(n;i,i+1)=a(n-1;i)$ for $1 \leq i \leq n-1$.
\end{lemma}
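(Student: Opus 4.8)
The plan is to fix a $T$-avoider $\pi=\pi_1\pi_2\cdots\pi_n$ with $\pi_1=i$, $\pi_2=j$ and to extract its global shape from the three forbidden patterns $1342$, $1423$, $2143$, splitting into the four cases $j=i+1$, $j\ge i+2$, $j=i-1$, $j\le i-2$ (the boundary values $i\in\{n-1,n\}$ being absorbed by the already-noted identities $a(n;n)=a(n;n-1)=a(n-1)$). In each case I expect the pattern restrictions to force a rigid block at the front of $\pi$ — a single redundant letter, or a decreasing run of consecutive values — so that deleting that block and standardizing gives a bijection onto a disjoint union of the families appearing on the right-hand side, the union indexed by a secondary parameter (the value of $\pi_3$, the position of $n$, or the length of the forced run). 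The easiest instance is $a(n;i,i+1)=a(n-1;i)$: since none of $1342,1423,2143$ has its ``$4$'' directly following its ``$3$'', the letter $\pi_2=i+1$ lies in no forbidden pattern and may be deleted reversibly.

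For \eqref{l1229be1} ($j\ge i+2$) the key observations are that the gap letters $i+1,\dots,j-1$ all lie to the right of $\pi_2$; by $1423$ (with $i$ as ``$1$'' and $j$ as ``$4$'') they occur in decreasing order; by $1342$ no letter exceeding $j$ precedes a gap letter; and $2143$ controls the letters below $i$. This forces a rigid prefix built from $i$, $j$ and the gap letters, and removing $j-i-1$ of the letters $\{i,i+1,\dots,j-1\}$ followed by standardization yields a permutation of length $n-j+i+1$ starting either with $(i+1,i)$ — when the first surviving letter past the block is the relabelled ``$j$'' sitting over a retained copy of $i$ — or with $(i,\ell)$, $\ell<i$ — when a letter below $i$ comes next; summing these mutually exclusive possibilities gives \eqref{l1229be1}. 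For \eqref{l1229be2} ($j=i-1$), $2143$ with $i$ as ``$2$'' and $i-1$ as ``$1$'' forces the letters exceeding $i$ that occur after $\pi_2$ to be increasing; when the next small structural letter precedes any large one, deleting $\pi_2=i-1$ and standardizing is a bijection onto the set counted by $\sum_{\ell=1}^{i-2}a(n-1;i-1,\ell)$, and in the complementary situation the rigidity makes the largest letter removable, producing $a(n-1;i,i-1)$.

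Case \eqref{l1229be3} ($j\le i-2$) is the substantial one, and the three pieces on its right-hand side should correspond to three sub-cases. Here $2143$ again forces the post-$\pi_2$ letters above $i$ to increase, while $1342$ and $1423$ with $j$ as ``$1$'' pin down the relative order of the ``middle'' letters $j+1,\dots,i-1$ and of $n$; the parameter $r$ records the length of the removed prefix (roughly, the position of $n$, equivalently the length of a forced decreasing run of consecutive values hanging off $\pi_1$), whence $1\le r\le i-j$. One distinguished sub-case — removal of a single canonically chosen redundant middle-gap letter — yields the lone term $a(n-1;i-1,j)$; the generic sub-cases, in which a longer block (the middle gap together with the forced decreasing run) is deleted and the result standardized, give bijections at length $n-r$ onto $\mathcal{T}_{j+1,j}$ when the letter following the block is the relabelled ``$i$'' (the $a(n-r;j+1,j)$ terms, $r\ge 2$) and onto $\mathcal{T}_{j,\ell}$, $1\le\ell\le j-1$, when a letter below $j$ follows it (the $\sum_\ell a(n-r;j,\ell)$ terms, $r\ge 1$), summed over admissible $r$. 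I would then check the extreme index values ($i=n-1$, $r=1$ versus $r\ge 2$, $n-r\ge j+1$) against the stated ranges so that nothing is omitted or double-counted, the remaining identities $a(n;i,1)$, etc., being immediate from the definitions.

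The hard part will be the bookkeeping in \eqref{l1229be3}: proving that the decreasing run of consecutive values really is forced and has exactly the length indexed by $r$; that after removal of the block the survivor lands in precisely one of $\mathcal{T}_{j+1,j}$ or $\bigcup_\ell\mathcal{T}_{j,\ell}$ at the correct length $n-r$; and, above all, that the deletion map is genuinely bijective, its inverse re-inserting a uniquely determined block of consecutive values into a uniquely determined set of positions. The analogous verifications for \eqref{l1229be1} and \eqref{l1229be2} are of the same type but noticeably shorter, so I expect the bulk of the effort to go into getting the interlocking inequalities and the inverse map right in the $j\le i-2$ case.
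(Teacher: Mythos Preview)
Your overall strategy matches the paper's: in every case the key move is to examine the third letter $x=\pi_3$, show that the patterns force either a specific value of $x$ or a decreasing block of consecutive values immediately following $\pi_2$, and then delete the redundant letters. For \eqref{l1229be3} the paper splits on the \emph{value} of $x$: (i) $x=j+1$ (delete $j+1$, giving $a(n-1;i-1,j)$); (ii) $x<j$ (delete $i$, giving the $r=1$ term of the double sum); (iii) $j+1<x<i$, which forces the block $x,x-1,\dots,j+2$ directly after $j$, and then according as the next letter is $j+1$ or below $j$ one deletes $\{i,j\}\cup[j+3,x]$ or $\{i\}\cup[j+2,x]$, giving the terms with $r=x-j\in[2,i-j-1]$; (iv) $x=i+1$ gives the $r=i-j$ terms by the same mechanism.

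Two points in your plan would not survive detailed execution. In \eqref{l1229be2}, when $\pi_3>i$ your phrase ``the largest letter'' is at best ambiguous: if you mean $n$, the deletion map is not surjective onto $\mathcal T_{i,i-1}$ of length $n-1$. In fact your own observation that letters above $i$ are increasing forces $\pi_3=i+1$, and it is $i+1$ that is redundant and gets deleted. More significantly, in \eqref{l1229be3} the parameter $r$ has nothing to do with the position of $n$; it is determined by the value $x=\pi_3$, namely $r=x-j$ when $x\in(j,i)$, with $r=1$ when $x<j$ and $r=i-j$ when $x=i+1$. The forced decreasing run is $x,x-1,\dots,j+2$, which hangs off $\pi_2=j$ rather than $\pi_1=i$. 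Once you index by $x$ rather than by the position of $n$, the ranges $1\le r\le i-j$ and $2\le r\le i-j$ drop out immediately, the inverse maps (re-insert the deleted consecutive block at the front) are evident, and the bookkeeping you anticipated becomes short.
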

\begin{proof}
Let $x$ denote the third letter of a member of $\mathcal{T}_{i,j}$.  Clearly, we have $|\mathcal{T}_{i,i+1}|=a(n-1;i)$, as the letter $i+1$ may be deleted.  To show \eqref{l1229be2}, first note members of $\mathcal{T}_{i,i-1}$ must have $x=i+1$ or $x<i-1$. In the first case, the letter $i+1$ may be deleted, implying $a(n-1;i,i-1)$ possibilities, while in the latter, the letter $i$ may be, which gives $\sum_{\ell=1}^{i-2}a(n-1;i-1,\ell)$ possibilities.  We now show \eqref{l1229be1}.  Note first that one cannot have $x>j$ or $x<i$ within members of $\mathcal{T}_{i,j}$ if $j\geq i+3$, lest there be an occurrence of $1342$ or $1423$ (as witnessed by $ijx(j-1)$ or $ij(j-2)(j-1)$, respectively).  So we must have $x \in [i+1,j-1]$ and thus $x=j-1$ in order to avoid $1423$.  By similar reasoning, the fourth letter must be $x-1$ if $x \geq i+3$.  Repeating this argument shows that the block of letters $j,j-1,\ldots,i+2$ must occur.  The next letter $z$ must be $i+1$ or less than $i$ (so as to avoid $1342$).  If $z=i+1$, then all members of $[i+3,j]$, along with $i$, are seen to be irrelevant concerning avoidance of $T$ and hence may be deleted, while if $z<i$, then all members of $[i+2,j]$ may be deleted (note that $i,z$ imposes the same requirement on subsequent letters as does $i,i+2$ and $i+2,z$, together).  It follows that there are $a(n-j+i+1;i+1,i)+\sum_{\ell=1}^{i-1}a(n-j+i+1;i,\ell)$ members of $\mathcal{T}_{i,j}$ when $j \geq i+2$.

For \eqref{l1229be3}, we consider the following cases for $x$:  (i) $x=j+1$, (ii) $x<j$, (iii) $j+1<x<i$, and (iv) $x=i+1$.  There are clearly $a(n-1;i-1,j)$ possibilities in (i) and $\sum_{\ell=1}^{j-1}a(n-1;j,\ell)$ possibilities in (ii).  Reasoning as in the previous paragraph shows in case (iii) that the block of letters $x,x-1,\ldots,j+2$ must occur directly following $j$.  The next letter $z$ may either equal $j+1$ or be less than $j$.  Thus, all members of $[j+3,x]$, along with $i$, may be deleted in either case.  Furthermore, the letter $j$ may also be deleted if $z=j+1$ (since $j+2,j+1$ is more restrictive than $i,j$), while the letter $j+2$ may be deleted if $z<j$ (since $j+2$ is redundant in light of $j,z$).  Considering all possible $x$, and letting $r=x-j$, one gets $\sum_{r=2}^{i-j-1}a(n-r;j+1,j)$ possibilities if $z=j+1$, and $\sum_{r=2}^{i-j-1}\sum_{\ell=1}^{j-1}a(n-r;j,\ell)$ possibilities if $z<j$.  If $x=i+1$, then the block $x,x-2,x-3,\ldots,j+2$ must occur with the next letter $z$ as in case (iii) above.  This implies that there are $a(n-i+j;j+1,j)+\sum_{\ell=1}^{j-1}a(n-i+j;j,\ell)$ possibilities in (iv).  Combining all of the previous cases gives \eqref{l1229be3} and completes the proof.
\end{proof}

In order to solve the recurrence in Lemma \ref{l1229b}, we introduce the following auxiliary functions: $b_{n,i}(v)=\sum_{j=1}^{i-1}a(n;i,j)v^j$ for $2 \leq i \leq n-1$, $c_{n,i}(v)=\sum_{j=i+1}^{n}a(n;i,j)v^j$ for $1 \leq i \leq n-1$, $b_n(u,v)=\sum_{i=2}^{n-2}b_{n,i}(v)u^i$ for $n \geq 4$, $c_n(u,v)=\sum_{i=1}^{n-2}c_{n,i}(v)u^i$ for $n \geq3$, and $d_n(u)=\sum_{i=2}^{n-1}a(n;i,i-1)u^i$ for $n\geq3$.  Let $a_n(u,v)=\sum_{i=1}^n\sum_{j=1,j\neq i}^na(n;i,j)u^iv^j$ for $n\geq2$, with $a_1(u,v)=u$.  Note that by the definitions, we have
\begin{equation}\label{229be1}
a_n(u,v)=u^{n-1}(1+u)a_{n-1}(v,1)-(uv)^{n-1}(1-v)a_{n-2}(1,1)+b_n(u,v)+c_n(u,v), \qquad n \geq 2.
\end{equation}

By \eqref{l1229be2} and \eqref{l1229be3}, we have for $2 \leq i \leq n-2$,
\begin{align*}
b_{n,i}(v)&=b_{n-1,i-1}(v)+a(n-1;i,i-1)v^{i-1}+\sum_{j=1}^{i-1}b_{n-1,j}(1)v^j+\sum_{j=1}^{i-2}v^j\sum_{r=2}^{i-j}a(n-r;j+1,j)\notag\\
&\quad+\sum_{j=1}^{i-2}v^j\sum_{r=2}^{i-j}b_{n-r,j}(1).
\end{align*}
Multiplying both sides of the last recurrence by $u^i$, and summing over $2 \leq i\leq n-2$, yields
\begin{align}
b_n(u,v)&=ub_{n-1}(u,v)+\frac{1}{v}d_{n-1}(uv)+\sum_{j=1}^{n-3}b_{n-1,j}(1)\left(\frac{u^{j+1}-u^{n-1}}{1-u}\right)v^j\notag\\
&\quad+\sum_{j=1}^{n-4}v^j\sum_{r=2}^{n-j-1}a(n-r;j+1,j)\left(\frac{u^{j+r}-u^{n-1}}{1-u}\right)+\sum_{j=1}^{n-4}v^j\sum_{r=2}^{n-j-2}b_{n-r,j}(1)\left(\frac{u^{j+r}-u^{n-1}}{1-u}\right)\notag\\
&=ub_{n-1}(u,v)+\frac{1}{v}d_{n-1}(uv)+\frac{u}{1-u}b_{n-1}(uv,1)-\frac{u^{n-1}}{1-u}b_{n-1}(v,1)\notag\\
&\quad+\frac{1}{uv(1-u)}\sum_{r=2}^{n-2}(d_{n-r}(uv)u^r+a(n-r-2)u^nv^{n-r})\notag\\
&\quad-\frac{u^{n-1}}{v(1-u)}\sum_{r=2}^{n-2}(d_{n-r}(v)+a(n-r-2)v^{n-r})+\frac{1}{1-u}\sum_{r=2}^{n-3}b_{n-r}(uv,1)u^r\notag\\
&\quad-\frac{u^{n-1}}{1-u}\sum_{r=2}^{n-3}b_{n-r}(v,1)\notag\\
&=ub_{n-1}(u,v)+\frac{1}{v}d_{n-1}(uv)+\frac{1}{1-u}\sum_{r=3}^{n-1}b_r(u,v)u^{n-r}-\frac{u^{n-1}}{1-u}\sum_{r=3}^{n-1}b_r(v,1)\notag\\
&\quad+\frac{1}{uv(1-u)}\sum_{r=2}^{n-2}d_r(uv)u^{n-r}-\frac{u^{n-1}}{v(1-u)}\sum_{r=2}^{n-2}d_r(v), \qquad n \geq 4. \label{229be2}
\end{align}

By \eqref{l1229be1}, we have
$$c_{n,i}(v)=a(n-1;i)v^{i+1}+\sum_{j=2}^{n-i}a(n-j+1;i+1,i)v^{i+j}+\sum_{j=2}^{n-i}b_{n-j+1,i}(1)v^{i+j}, \qquad 1 \leq i \leq n-2,$$
and thus
\begin{align}
c_n(u,v)&=v\sum_{i=1}^{n-2}a(n-1;i)(uv)^i+\sum_{j=2}^{n-1}v^j\sum_{i=1}^{n-j}a(n-j+1;i+1,i)(uv)^i+\sum_{j=2}^{n-1}v^j\sum_{i=1}^{n-j}b_{n-j+1,i}(1)(uv)^i\notag\\
&=v(a_{n-1}(uv,1)-a(n-2)(uv)^{n-1})+\frac{1}{u}\sum_{j=2}^{n-1}v^{j-1}(d_{n-j+1}(uv)+a(n-j-1)(uv)^{n-j+1})\notag\\
&\quad+\sum_{j=2}^{n-1}v^j(b_{n-j+1}(uv,1)+(a(n-j)-a(n-j-1))(uv)^{n-j})\notag\\
&=v(a_{n-1}(uv,1)-a(n-2)(uv)^{n-1})+\frac{1}{u}\sum_{j=2}^{n-1}d_j(uv)v^{n-j}+\sum_{j=2}^{n-1}b_j(uv,1)v^{n-j+1}\notag\\
&\quad+v^n\sum_{j=1}^{n-2}a(j)u^j, \qquad n \geq 3.  \label{229be3}
\end{align}
Multiplying both sides of \eqref{l1229be2} by $u^i$, and summing over $2 \leq i \leq n-1$, gives
\begin{equation}\label{229be4}
d_n(u)=u^{n-1}a(n-2)+ub_{n-1}(u)+d_{n-1}(u), \qquad n \geq3.
\end{equation}

Define generating functions $a(x;u,v)=\sum_{n\geq1}a_n(u,v)x^n$, $b(x;u,v)=\sum_{n\geq4}b_n(u,v)x^n$, $c(x;u,v)=\sum_{n\geq3}c_n(u,v)x^n$, and $d(x;u)=\sum_{n\geq3}d_n(u)x^n$.  Rewriting recurrences \eqref{229be1}--\eqref{229be4} in terms of generating functions yields the following system of functional equations.

\begin{lemma}\label{l2229b}
We have
\begin{align}
a(x;u,v)&=xu(1-xv+xv^2)+b(x;u,v)+c(x;u,v)+x(1+u)a(xu;v,1)\notag\\
&\quad-x^2uv(1-v)a(xuv;1,1),\label{b229e1}
\end{align}
\begin{align}
(1-xu)b(x;u,v)&=\frac{x}{(1-u)(1-xu)}(ub(x;uv,1)-b(xu;v,1))+\frac{x(1-u+xu^2)}{v(1-u)(1-xu)}d(x;uv)\notag\\
&\quad-\frac{x^2u}{v(1-u)(1-xu)}d(xu;v), \label{b229e2}
\end{align}
\begin{align}
c(x;u,v)&=-x^2uv^2+xva(x;uv,1)+\left(\frac{x^2v^2}{1-xv}-x^2uv^2\right)a(xuv;1,1)+\frac{xv^2}{1-xv}b(x;uv,1)\notag\\
&\quad+\frac{xv}{u(1-xv)}d(x;uv),\label{b229e3}
\end{align}
\begin{equation}\label{b229e4}
(1-x)d(x;u)=x^2ua(xu;1,1)+xub(x;u,1).
\end{equation}
\end{lemma}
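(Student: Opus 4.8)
The plan is to derive each of the four identities \eqref{b229e1}--\eqref{b229e4} by the standard translation of a linear recurrence with convolution terms into rational operations on generating functions: multiply the corresponding numerical recurrence (equation \eqref{229be1}, \eqref{229be2}, \eqref{229be3}, \eqref{229be4}, in turn) by $x^n$, sum over the values of $n$ for which it is valid, and recognize every single or double sum that appears as one of $a(x;u,v)$, $b(x;u,v)$, $c(x;u,v)$, $d(x;u)$ evaluated at an appropriate monomial substitution of the variables, up to an explicit low-degree polynomial correction coming from the finitely many small values of $n$ not governed by the recurrence. Throughout one uses that $a(n)=a_n(1,1)$ for $n\ge 1$ with $a(0)=1$, so that any sum of the shape $\sum_j a(j)\,u^j$ is a multiple of $1+a(\,\cdot\,;1,1)$.

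For \eqref{b229e1}, sum \eqref{229be1} over $n\ge 2$. The block $b_n(u,v)+c_n(u,v)$ sums to $b(x;u,v)+c(x;u,v)$ since $b_n$ and $c_n$ vanish below their defining thresholds; a reindexing $m=n-1$ gives $\sum_{n\ge2}u^{n-1}(1+u)a_{n-1}(v,1)x^n=x(1+u)a(xu;v,1)$, and $m=n-2$ gives $\sum_{n\ge2}(uv)^{n-1}(1-v)a_{n-2}(1,1)x^n=x^2uv(1-v)a(xuv;1,1)$. Restoring the missing term $a_1(u,v)x=ux$ on the left together with the handful of small values of $a(n;i,j)$ that \eqref{229be1} does not yet account for collects into the polynomial $xu(1-xv+xv^2)$, which is \eqref{b229e1}. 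Equation \eqref{b229e4} follows by the same template: summing \eqref{229be4} over $n\ge 3$ sends $u^{n-1}a(n-2)$ to $x^2u\,a(xu;1,1)$, $ub_{n-1}(u,1)$ to $xu\,b(x;u,1)$, and $d_{n-1}(u)$ to $x\,d(x;u)$, and moving $x\,d(x;u)$ to the left-hand side gives $(1-x)d(x;u)=x^2u\,a(xu;1,1)+xu\,b(x;u,1)$.

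The substantive computations are \eqref{b229e2} and \eqref{b229e3}, whose recurrences already carry inner sums over an auxiliary index ($r$ in \eqref{229be2}, $j$ in \eqref{229be3}) that record partial convolutions. The identity that disposes of all of these is obtained by exchanging the order of summation and summing a geometric series: for a sequence $\varphi_r$ and an integer $c\ge 1$,
\[
\sum_{n}x^n\sum_{r=r_0}^{\,n-c}\varphi_r\,u^{\,n-r}=\frac{(xu)^{c}}{1-xu}\sum_{r\ge r_0}\varphi_r\,x^{r},
\qquad
\sum_{n}x^n u^{\,n-1}\!\!\sum_{r=r_0}^{\,n-c}\varphi_r=\frac{(xu)^{c}}{u(1-xu)}\sum_{r\ge r_0}\varphi_r\,(xu)^{r}.
\]
Applying the first form to the terms of \eqref{229be2} with summand $b_r(u,v)u^{n-r}$ and $d_r(uv)u^{n-r}$, the second form to those with $b_r(v,1)u^{n-1}$ and $d_r(v)u^{n-1}$, and translating the stray terms $ub_{n-1}(u,v)\mapsto xu\,b(x;u,v)$ and $\tfrac{1}{v}d_{n-1}(uv)\mapsto \tfrac{x}{v} d(x;uv)$, then clearing the common denominator $(1-u)(1-xu)$, produces \eqref{b229e2}; the coefficient $1-u+xu^2$ of $d(x;uv)$ is exactly what appears when the $\tfrac{1}{v}d_{n-1}(uv)$ term is merged with the telescoped $d_r(uv)$ sum. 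The identical procedure applied to \eqref{229be3}, recognizing $\sum_j d_j(uv)v^{n-j}$ as a $\tfrac{1}{1-xv}$-multiple of $d(x;uv)$, $\sum_j b_j(uv,1)v^{n-j+1}$ similarly in terms of $b(x;uv,1)$, and $v^n\sum_j a(j)u^j$ as a multiple of $a(xuv;1,1)/(1-xv)$, yields \eqref{b229e3}.

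The main obstacle will be purely organizational. In \eqref{229be2} there are six groups of terms carrying competing powers $u^{n-1}$ versus $u^{n-r}$, and one has to keep precise track of whether each telescoped range begins at $r=2$ or $r=3$ and ends at $n-3$, $n-2$, or $n-1$, since those shifts decide whether one picks up a factor of $x$, $xu$, or $(xu)^2$ and whether an endpoint $b_r(v,1)$ or $d_r$ term survives; the analogous care is needed for the small-$n$ exceptions to \eqref{229be1}--\eqref{229be4} and for the endpoint contributions of every inner sum. Once this bookkeeping is set up correctly, the four stated identities drop out by routine algebra, and the verification is most safely carried out with a computer algebra system. This lemma is then precisely the system to which the kernel method will be applied in order to solve for $a(x;1,1)$, whence $F_T(x)=1+a(x;1,1)$.
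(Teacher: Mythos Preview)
Your proposal is correct and follows exactly the approach the paper takes: the paper itself offers no detailed proof of this lemma, simply stating that ``rewriting recurrences \eqref{229be1}--\eqref{229be4} in terms of generating functions yields the following system of functional equations,'' and your write-up supplies precisely that translation (multiply by $x^n$, sum, reindex, and absorb the boundary terms into the explicit polynomial corrections). The only place to be slightly more careful is the $n=2$ term in the $(uv)^{n-1}(1-v)a_{n-2}(1,1)$ sum, where $a_0(1,1)=a(0)=1$ contributes the extra $-x^2uv(1-v)$ that combines with $xu$ to give $xu(1-xv+xv^2)$---you allude to this as a ``small-value'' correction, which is fine.
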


We can now determine the generating function $F_T(x)$.

\begin{theorem}\label{th229A2}
Let $T=\{1342,1423,2143\}$.  Then
$$F_T(x)=\frac{1-2x+2x^2-\sqrt{1-8x+20x^2-24x^3+16x^4-4x^5}}{2x(1-x+x^2)}.$$
\end{theorem}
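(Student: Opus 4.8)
Since $a_n(1,1)$ counts every $T$-avoider of length $n$ for $n\ge 2$ (each such permutation has a well-defined pair of distinct first two letters, which is exactly what $a(n;i,j)$ records) and $a_1(u,v)=u$, we have $a(x;1,1)=\sum_{n\ge1}|S_n(T)|x^n$ and hence $F_T(x)=1+a(x;1,1)$. So the whole task is to solve the system of Lemma \ref{l2229b} for $a(x;1,1)$.

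The elimination proceeds in the order the system suggests. Equation \eqref{b229e4} gives $d(x;u)$ in closed form in terms of $a(xu;1,1)$ and $b(x;u,1)$; substituting it into \eqref{b229e2} and \eqref{b229e3} removes $d$ altogether. Then \eqref{b229e3} expresses $c(x;u,v)$ through $a$ and $b$ (at the arguments $(x;u,v)$, $(x;uv,1)$ and the dilation $(xuv;1,1)$), and plugging this into \eqref{b229e1} removes $c$. What remains is a pair of functional equations involving only the series $a$ and $b$, evaluated at $(x;u,v)$, $(x;uv,1)$, $(xu;v,1)$, and $(xuv;1,1)$.

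Next I would specialize to $v=1$, which is all that is needed to reach $a(x;1,1)$ and which closes off a subsystem (no value $v\ne1$ gets reintroduced): the two equations now relate the pair $\bigl(a(x;u,1),b(x;u,1)\bigr)$ to the dilated pair $\bigl(a(xu;1,1),b(xu;1,1)\bigr)$, with the stray term $d(xu;1)$ in \eqref{b229e2} rewritten via \eqref{b229e4} as well. Replacing $x$ by $x/u$ sends every dilated argument to $x$, producing two equations in the four series $a(x/u;u,1)$, $b(x/u;u,1)$, $a(x;1,1)$, $b(x;1,1)$. The equation descended from \eqref{b229e1} carries no $\tfrac{1}{1-u}$ pole, so it is regular at $u=1$; evaluating it there — where $a(x/u;u,1)$ and $b(x/u;u,1)$ collapse to $a(x;1,1)$ and $b(x;1,1)$ — yields a first relation between $a(x;1,1)$ and $b(x;1,1)$. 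For the second relation, eliminate $b(x/u;u,1)$ between the two equations and apply the kernel method to the result: the kernel is a low-degree polynomial in $u$ with a root of Catalan type (expressible through $C(x)$), and that choice of $u$ forces the right-hand side to vanish, giving a second relation. Solving this pair of relations produces $a(x;1,1)$; in fact the outcome is the quadratic $x(1-x+x^2)F_T(x)^2-(1-2x+2x^2)F_T(x)+(1-x)^2=0$, the very one underlying Theorem \ref{th229A1}, whose power-series root — tidied using $xC(x)^2=C(x)-1$ — is exactly the displayed radical expression.

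The main obstacle is purely bookkeeping: keeping the dilated arguments straight through the elimination and through the $x\mapsto x/u$ substitution, and handling the $\tfrac{1}{1-u}$ poles of \eqref{b229e2}, where a naive $u\to1$ limit would introduce unwanted partial derivatives of $b$, so the clean second relation must come from the nontrivial kernel root rather than from $u\to1$. Pinning down the precise kernel value and verifying that the final quadratic collapses to exactly $Q(x)=1-8x+20x^2-24x^3+16x^4-4x^5$ under the square root is routine but heavy, and is best confirmed with computer algebra, as elsewhere in the paper. As a consistency check, the resulting generating function coincides with that of Theorem \ref{th229A1}, exhibiting the Wilf-equivalence of the two triples.
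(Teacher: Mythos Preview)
Your overall plan---eliminate $d$ via \eqref{b229e4}, eliminate $c$ via \eqref{b229e3}, specialize to $v=1$, dilate $x\mapsto x/u$, and extract relations between $a(x;1,1)$ and $b(x;1,1)$ by evaluation at $u=1$ together with a kernel root---is essentially the paper's strategy, just organized slightly differently (the paper sets $u=v=1$ directly in \eqref{b229e1}, \eqref{b229e3}, \eqref{b229e4} to solve for $b(x;1,1),c(x;1,1),d(x;1)$ in terms of $a(x;1,1)$, and then applies the kernel method only to \eqref{b229e2}). The quadratic $x(1-x+x^2)F_T(x)^2-(1-2x+2x^2)F_T(x)+(1-x)^2=0$ that you anticipate is indeed the correct one.

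However, there is one concrete misstep: the kernel root here is \emph{not} of Catalan type. After substituting $d$ from \eqref{b229e4} into \eqref{b229e2} at $v=1$ and replacing $x$ by $x/u$, the kernel is a quadratic in $u$ whose discriminant is exactly the quintic $1-8x+20x^2-24x^3+16x^4-4x^5$ appearing under the radical in the statement; the relevant root is
\[
u_0=\frac{1-2x+\sqrt{1-8x+20x^2-24x^3+16x^4-4x^5}}{2(1-x)^2},
\]
not $C(x)$. Correspondingly, the identity $xC(x)^2=C(x)-1$ plays no role in the simplification---the final formula contains no $C(x)$. Your write-up should replace the ``Catalan type'' claim with the explicit $u_0$ above; once that is corrected, the derivation goes through exactly as you describe.
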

\begin{proof}
In the notation above, we seek to determine $1+a(x;1,1)$.  Setting $u=v=1$ in \eqref{b229e1}, \eqref{b229e3} and \eqref{b229e4}, and solving the resulting system for $b(x;1,1)$, $c(x;1,1)$ and $d(x;1)$, yields
\begin{align*}
b(x;1,1)&=\frac{(1-5x+7x^2-5x^3+x^4)a(x;1,1)-x(1-x)^3}{1-x+x^2},\\
c(x;1,1)&=\frac{x(2-2x+x^2)((1-x)a(x;1,1)-x)}{1-x+x^2},\\
d(x;1)&=\frac{x(1-x)^2((1-x)a(x;1,1)-x)}{1-x+x^2}.
\end{align*}
Substituting the expression for $d(x;u)$ from \eqref{b229e4} into \eqref{b229e2} at $v=1$, we find
\begin{align*}
&\left(1-x-\frac{x}{(1-u)(1-x)}-\frac{x^2(1-u+xu)}{(1-u)(1-x)(u-x)}\right)b(x/u;u,1)\\
&\quad=\frac{x^3(1-u+xu)}{u(1-u)(1-x)(u-x)}a(x;1,1)-\frac{x}{u(1-u)(1-x)}b(x;1,1)-\frac{x^2}{u(1-u)(1-x)}d(x;1).
\end{align*}
Applying the kernel method to the preceding equation, and setting
$$u=u_0=\frac{1-2x+\sqrt{1-8x+20x^2-24x^3+16x^4-4x^5}}{2(1-x)^2},$$
we obtain
\begin{align*}
b(x;1,1)=\frac{x^2(1-u_0+xu_0)}{u_0-x}a(x;1,1)-xd(x;1).
\end{align*}
Substituting out the expressions above for $b(x;1,1)$ and $d(x;1)$, and then solving the equation that results for $a(x;1,1)$, yields
$$a(x;1,1)=\frac{x(1-x)^2(u_0-x)}{(1-4x+4x^2-2x^3)u_0-x(1-x)^3}.$$
Substituting the expression for $u_0$ into the last equation gives the desired formula for $1+a(x;1,1)$ and completes the proof.
\end{proof}

\emph{Remark:}  Once $a(x;1,1)$ is known, it is possible to find $b(x;u,1)$, and thus $d(x;u)$, $a(x;u,1)$ and $c(x;u,1)$.  This in turn allows one to solve the system \eqref{b229e1}--\eqref{b229e4} for all $u$ and $v$, and thus obtain a generating function formula for the joint distribution of the statistics recording the first two letters.

\subsubsection{$\mathbf{T=\{1342,1423,2134\}}$}

Clearly, $a(n;n)=a(n-1)$ for all $n\geq 1$. We have the following recurrence for the array $a(n;i,j)$ where $i<n$.

\begin{lemma}\label{l1229c}
If $n \geq3$, then
\begin{equation}\label{l1229ce1}
a(n;i,n)=a(n-1;i,n-1)+\sum_{j=1}^{i-1}a(n-1;i,j), \qquad 1 \leq i \leq n-2,
\end{equation}
\begin{equation}\label{l1229ce2}
a(n;i,i-1)=a(n-1;i-1,n-1)+\sum_{j=1}^{i-2}a(n-1;i-1,j), \qquad 2 \leq i \leq n-1,
\end{equation}
and
\begin{align}
a(n;i,j)&=a(n-1,i-1,j)+a(n-1;j,n-1)+\sum_{\ell=1}^{j-1}a(n-1;j,\ell)\notag\\
&\quad+\sum_{\ell=2}^{i-j-1}a(n-\ell;j+1,j), \qquad 1 \leq j \leq i-2 \quad and \quad  3\leq i \leq n-1. \label{l1229ce3}
\end{align}
Furthermore, we have $a(n;i,i+1)=a(n-1;i)$ for $1 \leq i \leq n-1$ and $a(n;i,j)=a(n-j+i+1;i+1,i)$ for $i+2\leq j \leq n-1$.
\end{lemma}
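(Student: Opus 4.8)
The plan is to fix a $T$-avoider $\pi=\pi_1\pi_2\cdots\pi_n$ with $\pi_1=i$ and $\pi_2=j$, and to carry out a case analysis on the third letter $x:=\pi_3$, following closely the proof of Lemma~\ref{l1229b} for the companion triple $\{1342,1423,2143\}$: the present triple differs from it only in $2143\rightsquigarrow2134$, so the shape of every decomposition is the same, but the admissible values of $x$ and the forced blocks must be recomputed. In each regime of $j$ relative to $i$ — namely $j=n$ giving \eqref{l1229ce1}, $j=i-1$ giving \eqref{l1229ce2}, $j=i+1$ and $i+2\le j\le n-1$ giving the two ``Furthermore'' identities, and $j\le i-2$ giving \eqref{l1229ce3} — I will first use the three forbidden patterns to pin down which values of $x$ are possible (forcing, when $x$ is ``large'', a maximal decreasing run of consecutive integers to occur immediately after $\pi_2$), and then exhibit a set of \emph{redundant} letters of $\pi$, i.e.\ ones whose deletion neither creates nor destroys an occurrence of a pattern in $T$, so that ``delete those letters and standardize'' is a size-reducing bijection onto a set of permutations counted by one of the arrays $a(\cdot\,;\cdot,\cdot)$ or $a(\cdot\,;\cdot)$. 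Summing the contributions over the admissible $x$ will then yield the stated recurrences.

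In outline: for \eqref{l1229ce1}, $\pi=i\,n\,x\cdots$, and an occurrence $i\,n\,x\,(n-1)$ of a potential $1423$ forces $x=n-1$ or $x<i$; in both sub-cases $n$ may be deleted (when $x=n-1$, a would-be $1423$ surviving the deletion is already present with $\pi_3=n-1$ in the role of $n$), giving the terms $a(n-1;i,n-1)$ and $\sum_{j=1}^{i-1}a(n-1;i,j)$. For \eqref{l1229ce2}, $\pi=i\,(i-1)\,x\cdots$, and an occurrence $i\,(i-1)\,x\,m$ of $2134$ with $m>x$ shows that $x=n$ or $x\le i-2$; deleting the initial $i$ leaves a permutation beginning $i-1$, with $x=n$ becoming a second letter $n-1$ and $x\le i-2$ staying a second letter $\le i-2$, which gives the two displayed terms. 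For the ``Furthermore'' identities, $a(n;i,i+1)=a(n-1;i)$ holds because no pattern in $T$ has its first two entries of consecutive increasing rank, so the letter $i+1$ in position $2$ is redundant and may be deleted; while $a(n;i,j)=a(n-j+i+1;i+1,i)$ for $i+2\le j\le n-1$ follows from the fact that $1342$ and $1423$ force the run $j,(j-1),\ldots,(i+2)$ to appear directly after $\pi_2$, after which a single continuation type is possible and the letter $i$ together with all but the bottom letter of this run becomes redundant, collapsing $\pi$ to a permutation of length $n-j+i+1$ beginning $(i+1),i$. Finally, for \eqref{l1229ce3} with $j\le i-2$, the pattern $2134$ rules out $i<x<n$, and the relevant cases are $x=j+1$ (delete $j+1$: term $a(n-1;i-1,j)$), $x=n$ (delete $i$: term $a(n-1;j,n-1)$), $x<j$ (delete $i$: term $\sum_{\ell=1}^{j-1}a(n-1;j,\ell)$), and $j+1<x<i$, where $1342$ and $1423$ force the decreasing run $x,(x-1),\ldots,(j+2)$ directly after $\pi_2$ and then $i$ and the upper part of the run become redundant, giving $\sum_{\ell=2}^{i-j-1}a(n-\ell;j+1,j)$ as the run length varies.

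I expect the principal obstacle to be the bookkeeping in \eqref{l1229ce3} (and, to a lesser extent, in the last ``Furthermore'' identity): one must verify rigorously that the decreasing run of consecutive integers is forced — a chain of implications of the form ``if some designated letter is out of place then a pattern in $T$ occurs'' — then identify in each sub-case precisely which letters are redundant and check that deletion preserves $T$-avoidance in \emph{both} directions, and finally line up the index shifts so that the four families of contributions add up to exactly the right-hand side of \eqref{l1229ce3}. The remaining parts are comparatively routine once the argument pattern of Lemma~\ref{l1229b} has been adapted.
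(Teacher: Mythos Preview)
Your approach is correct and coincides with the paper's: a case analysis on the third letter $x$, with forced decreasing runs of consecutive integers and subsequent deletion of redundant letters, exactly paralleling Lemma~\ref{l1229b}.

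One small imprecision to fix when you write it out. In both the ``Furthermore'' case $i+2\le j\le n-1$ and case (iv) of \eqref{l1229ce3}, the forced run extends \emph{one step further} than you state: it goes down to $i+1$ (not $i+2$) in the former and to $j+1$ (not $j+2$) in the latter. The extra step is not forced by $1342$ and $1423$ alone; you also need $2134$ (together with $1342$) to rule out the next letter being small. Consequently, to land on permutations beginning $(i+1,i)$, respectively $(j+1,j)$, after standardization you must keep the bottom \emph{two} letters of the run (not just the bottom one), and in case (iv) you delete $i$, $j$, and $[j+3,x]$ (so $x-j$ letters in all, matching $\ell=x-j\in[2,i-j-1]$). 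With that correction your counts line up exactly with the stated formulas; the paper does precisely this.
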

\begin{proof}
Throughout, let $x$ denote the third letter of a member of $\mathcal{T}_{i,j}$.  To show \eqref{l1229ce1}, first note that for members of $\mathcal{T}_{i,n}$, we must have $x=n-1$ or $x<i$.   There are $a(n-1;i,n-1)$ possibilities in the first case as the letter $n$ is extraneous concerning avoidance of $T$, whence it may be deleted, and $\sum_{j=1}^{i-1}a(n-1;i,j)$ possibilities in the latter case as again $n$ may be deleted (note that the presence of $i,j$ imposes a stronger restriction on the order of subsequent letters than does $i,n$).  To show \eqref{l1229ce2}, first note that members of $\mathcal{T}_{i,i-1}$ for $2\leq i \leq n-1$ must have $x=n$ or $x<i-1$.  There are $a(n-1;i-1,n-1)$ possibilities in the former case and $\sum_{j=1}^{i-2}a(n-1;i-1,j)$ possibilities in the latter since the letter $i$ may be deleted in either case as the restriction it imposes is redundant.

Next, we show \eqref{l1229ce3}.  For this, we consider the following cases: (i) $x=j+1$, (ii) $x=n$, (iii) $x<j$, and (iv) $j+1<x<i$. The first three cases are readily seen to be enumerated by the first three terms, respectively, on the right-hand side of \eqref{l1229ce3}. For case (iv), let $y$ denote the fourth letter of $\pi \in \mathcal{T}_{i,j}$.  First note that one cannot have $y>x$, for otherwise $\pi$ would contain $1342$ as witnessed by the subsequence $jxy(x-1)$.  It is also not possible to have $y<j$, for otherwise $\pi$ would again contain $1342$, this time with the subsequence $jxn(j+1)$, since all letters to the right of $y$ and larger than $j$ would have to occur in decreasing order (due to the presence of $j,y$).  So we must have $j<y<x$ and thus $y=x-1$ in order to avoid $1423$.  By similar reasoning, the next letter must be $x-2$ if $x>j+2$.  Repeating this argument shows that the block $x,x-1,\ldots,j+1$ must occur directly following $j$, with each of these letters, except the last two, seen to be extraneous concerning the avoidance or containment of patterns in $T$.  Note further that the presence of $j+2,j+1$ imposes a stricter requirement on subsequent letters than does $i,j$ when $i \geq j+3$, whence the $i$ and $j$ are also extraneous.  Deleting all members of $[j+3,x]$ from $\pi$, along with $i$ and $j$, implies that there are $a(n-\ell;j+1,j)$ possibilities where $\ell=x-j$.  Summing over all possible values of $\ell$ gives the last term on the right-hand side of \eqref{l1229ce3}.

There are clearly $a(n-1;i)$ members of $\mathcal{T}_{i,j}$ if $j=i+1$, as the letter $i+1$ may be deleted.  If $j \geq i+2$, then similar reasoning as before shows that the block $j,j-1,\ldots,i+1$ must occur when $j <n$, and thus all members of $[i+3,j]$, along with $i$, may be deleted.  This implies that there are $a(n-j+i+1;i+1,i)$ members of $\mathcal{T}_{i,j}$ in this case, which completes the proof.
\end{proof}

In order to solve the recurrence in Lemma \ref{l1229c}, we introduce the following functions: $b_{n,i}(v)=\sum_{j=1}^{i-1}a(n;i,j)v^j$ for $2 \leq i \leq n-1$, $c_{n,i}(v)=\sum_{j=i+1}^{n-1}a(n;i,j)v^j$ for $1 \leq i \leq n-2$, $b_n(u,v)=\sum_{i=2}^{n-1}b_{n,i}(v)u^i$ for $n \geq 3$, $c_n(u,v)=\sum_{i=1}^{n-2}c_{n,i}(v)u^i$ for $n \geq3$, and $d_n(u)=\sum_{i=1}^{n-1}a(n;i,n)u^i$ for $n\geq2$.  Let $a_n(u,v)=\sum_{i=1}^n\sum_{j=1,j\neq i}^na(n;i,j)u^iv^j$ for $n\geq2$, with $a_1(u,v)=u$.  Note that by the definitions, we have
\begin{equation}\label{229ce1}
a_n(u,v)=u^na_{n-1}(v,1)+b_n(u,v)+c_n(u,v)+v^nd_n(u), \qquad n \geq 2.
\end{equation}

In order to determine a formula for $b_n(u,v)$, first note that \eqref{l1229ce2} and \eqref{l1229ce3} imply
\begin{align*}
b_{n,i}(v)&=b_{n-1,i-1}(v)+\sum_{j=1}^{i-1}a(n-1;j,n-1)v^j+\sum_{j=1}^{i-1}b_{n-1,j}(1)v^j\notag\\
&\quad+\sum_{j=1}^{i-3}v^j\sum_{\ell=2}^{i-j-1}a(n-\ell;j,n-\ell), \qquad 2 \leq i \leq n-1,
\end{align*}
where we have used the fact $a(m;j+1,j)=a(m;j,m)$ in the last sum.  Multiplying both sides of the last recurrence by $u^i$, and summing over $2 \leq i \leq n-1$, gives
\begin{align}
b_n(u,v)&=ub_{n-1}(u,v)+\sum_{j=1}^{n-2}a(n-1;j,n-1)\left(\frac{u^{j+1}-u^n}{1-u}\right)v^j+\sum_{j=1}^{n-2}b_{n-1,j}(1)\left(\frac{u^{j+1}-u^n}{1-u}\right)v^j\notag\\
&\quad+\sum_{j=1}^{n-4}v^j\sum_{\ell=2}^{n-j-1}a(n-\ell;j,n-\ell)\left(\frac{u^{j+\ell+1}-u^n}{1-u}\right)\notag\\
&=ub_{n-1}(u,v)+\frac{u}{1-u}(d_{n-1}(uv)-u^{n-1}d_{n-1}(v))+\frac{u}{1-u}(b_{n-1}(uv,1)-u^{n-1}b_{n-1}(v,1))\notag\\
&\quad+\frac{u}{1-u}\sum_{\ell=2}^{n-2}d_\ell(uv)u^{n-\ell}-\frac{u^n}{1-u}\sum_{\ell=2}^{n-2}d_\ell(v), \qquad n\geq 3, \label{229ce2}
\end{align}
where we have replaced the index $\ell$ by $n-\ell$ in the last sum.

By Lemma \ref{l1229c}, we have
$$c_{n,i}(v)=a(n-1;i)v^{i+1}+\sum_{j=i+2}^{n-1}a(n-j+i+1;i+1,i)v^j, \qquad 1 \leq i \leq n-2,$$
and thus
\begin{align}
c_n(u,v)&=\sum_{i=1}^{n-2}a(n-1;i)u^iv^{i+1}+\sum_{i=1}^{n-3}u^i\sum_{j=2}^{n-i-1}a(n-j+1;i+1,i)v^{i+j}\notag\\
&=v(a_{n-1}(uv,1)-(uv)^{n-1}a(n-2))+\sum_{j=2}^{n-2}v^j\sum_{i=1}^{n-j-1}a(n-j+1;i,n-j+1)(uv)^i\notag\\
&=v(a_{n-1}(uv,1)-(uv)^{n-1}a(n-2))+\sum_{j=2}^{n-2}v^j(d_{n-j+1}(uv)-(uv)^{n-j}a(n-j-1))\notag\\
&=v(a_{n-1}(uv,1)-(uv)^{n-1}a(n-2))+\sum_{j=3}^{n-1}v^{n-j+1}(d_j(uv)-(uv)^{j-1}a(j-2)), \qquad n \geq 3.\label{229ce3}
\end{align}
Multiplying both sides of \eqref{l1229ce1} by $u^i$, and summing over $1 \leq i \leq n-2$ implies
\begin{equation}\label{229ce4}
d_n(u)=u^{n-1}a(n-2)+b_{n-1}(u,1)+d_{n-1}(u), \qquad n \geq2.
\end{equation}

Define generating functions $a(x;u,v)=\sum_{n\geq1}a_n(u,v)x^n$, $b(x;u,v)=\sum_{n\geq3}b_n(u,v)x^n$, $c(x;u,v)=\sum_{n\geq3}c_n(u,v)x^n$, and $d(x;u)=\sum_{n\geq2}d_n(u)x^n$.  Rewriting recurrences \eqref{229ce1}--\eqref{229ce4} in terms of generating functions yields the following system of functional equations.

\begin{lemma}\label{l2229c}
We have
\begin{equation}\label{l2c229e1}
a(x;u,v)=xu+xua(xu;v,1)+b(x;u,v)+c(x;u,v)+d(xv;u),
\end{equation}
\begin{align}
(1-xu)b(x;u,v)&=\frac{xu}{1-u}(b(x;uv,1)-b(xu;v,1))+\frac{xu(1-xu+xu^2)}{(1-u)(1-xu)}d(x;uv)\notag\\
&\quad-\frac{xu}{(1-u)(1-xu)}d(xu;v), \label{l2c229e2}
\end{align}
\begin{equation}\label{l2c229e3}
c(x;u,v)=xva(x;uv,1)-\frac{x^2uv^2}{1-xv}(a(xuv;1,1)+1)+\frac{xv^2}{1-xv}d(x;uv),
\end{equation}
\begin{equation}\label{l2c229e4}
(1-x)d(x;u)=x^2u(a(xu;1,1)+1)+xb(x;u,1).
\end{equation}
\end{lemma}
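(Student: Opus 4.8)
The plan is to prove Lemma \ref{l2229c} by translating each of the four recurrences \eqref{229ce1}--\eqref{229ce4} directly into generating function form, producing \eqref{l2c229e1}--\eqref{l2c229e4}, respectively. In each case I would multiply both sides by $x^n$ and sum over the range of validity ($n\geq2$ for \eqref{229ce1} and \eqref{229ce4}, $n\geq3$ for \eqref{229ce2} and \eqref{229ce3}), then rewrite the resulting double power series in terms of the functions $a(x;u,v)$, $b(x;u,v)$, $c(x;u,v)$, $d(x;u)$ defined just before the lemma, being careful about which argument each power of $x$ is attached to.

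The whole computation rests on three elementary moves. First, an index shift turns $\sum_n f_{n-1}(\cdot)x^n$ into $x$ times the corresponding generating function, with the missing low-order terms vanishing because the defining sums for $b_n,c_n,d_n$ are empty for small $n$ (while $a_1(u,v)=u$ survives, which is why a lone $xu$ sits on the right of \eqref{l2c229e1}). Second, a prefactor $u^{n-1}$ (respectively $(uv)^{n-1}$) attached to a coefficient rescales $x$ to $xu$ (respectively $xuv$); this is precisely what distinguishes, e.g., the $d(x;uv)$ term from the $d(xu;v)$ term in \eqref{l2c229e2}, and what produces the $a(xu;v,1)$ on the right of \eqref{l2c229e1}. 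The counting numbers $a(n)$ enter through $a(x;1,1)+1=\sum_{n\geq0}a(n)x^n$ (recall $a(0)=1$), so a sum such as $\sum_{n\geq2}u^{n-1}a(n-2)x^n$ collapses to $x^2u\,(a(xu;1,1)+1)$, which is how the terms $x^2u(a(xu;1,1)+1)$ in \eqref{l2c229e4} and $\tfrac{x^2uv^2}{1-xv}(a(xuv;1,1)+1)$ in \eqref{l2c229e3} arise. Third, the nested sums over $\ell$ (and over $r$ in the analogous earlier case) are Cauchy products: $\sum_n x^n\sum_\ell d_\ell(\cdot)u^{n-\ell}$ factors as $\big(\sum_k(xu)^k\big)\big(\sum_\ell x^\ell d_\ell(\cdot)\big)$, yielding the geometric factors $\tfrac{1}{1-xu}$, $\tfrac{1}{1-xv}$, $\tfrac{xv}{1-xv}$, etc., visible throughout the lemma.

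Applying these moves, \eqref{229ce4} gives \eqref{l2c229e4} immediately, \eqref{229ce1} and \eqref{229ce3} are short, and \eqref{229ce2} is the longest only because it carries two double sums handled by the same product rule. The one point requiring care -- and the main obstacle -- is purely organizational bookkeeping of boundary terms: one must check that the summation ranges reconcile with the index conventions of the defining series. For example, the $d$-contribution coming out of \eqref{229ce3} naturally begins at $j\geq3$, whereas $d(x;uv)=\sum_{n\geq2}d_n(uv)x^n$, so they differ by the single term $d_2(uv)x^2=uvx^2$; one verifies this stray term is exactly cancelled by a compensating boundary contribution from the $v\,(uv)^{n-1}a(n-2)$ piece of \eqref{229ce3}, after which the $a(xuv;1,1)$ coefficient and the constant term simplify to the forms shown. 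Reconciling all such terms and simplifying the geometric factors yields the four displayed functional equations, which completes the proof.
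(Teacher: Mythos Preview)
Your proposal is correct and follows exactly the approach the paper takes: the paper's entire proof is the single sentence ``Rewriting recurrences \eqref{229ce1}--\eqref{229ce4} in terms of generating functions yields the following system of functional equations,'' and you have simply fleshed out the routine computation behind that sentence. Your handling of the boundary term $d_2(uv)x^2=uvx^2$ in deriving \eqref{l2c229e3} is accurate and is precisely the kind of bookkeeping the paper leaves implicit.
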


We can now determine the generating function $F_T(x)$.

\begin{theorem}\label{th229A3}
Let $T=\{1342,1423,2134\}$.  Then
$$F_T(x)=\frac{1-2x+2x^2-\sqrt{1-8x+20x^2-24x^3+16x^4-4x^5}}{2x(1-x+x^2)}.$$
\end{theorem}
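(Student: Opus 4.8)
Our plan is to follow the template of the proof of Theorem~\ref{th229A2}, working from the system of functional equations in Lemma~\ref{l2229c}. Since $a_n(1,1)=a(n)$ for $n\ge 1$, we have $F_T(x)=1+a(x;1,1)$, so this is the quantity to pin down. \emph{Step 1 (boundary values).} Specializing \eqref{l2c229e1}, \eqref{l2c229e3} and \eqref{l2c229e4} at $u=v=1$ gives three genuine power-series identities—none of these three involves the singular factor $1/(1-u)$—forming a linear system in the unknowns $b(x;1,1)$, $c(x;1,1)$, $d(x;1)$ whose inhomogeneous terms are affine in $a(x;1,1)$. Solving it expresses each of $b(x;1,1)$, $c(x;1,1)$, $d(x;1)$ as an explicit rational function of $x$ times an affine function of $a(x;1,1)$, just as in the displayed formulas near the end of the proof of Theorem~\ref{th229A2}.

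\emph{Step 2 (kernel equation).} Put $v=1$ in \eqref{l2c229e2}, use \eqref{l2c229e4} together with its dilation $x\mapsto x/u$ to eliminate $d(x;u)$ and $d(xu;1)$ in favour of $b$ and $a$, and then replace $x$ by $x/u$ throughout; this carries $b(xu;1,1)\mapsto b(x;1,1)$ and $d(xu;1)\mapsto d(x;1)$, leaving every occurrence of $a(\cdot;1,1)$ at argument $x$. The result is an equation of the shape
$$\mathcal{K}(x,u)\,b(x/u;u,1)=\Phi\big(x,a(x;1,1),b(x;1,1),d(x;1)\big),$$
with $\mathcal{K}(x,u)$ an explicit rational kernel whose numerator is quadratic in $u$. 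Its two roots have product $x$ and sum $(1-2x)/(1-x)^2$, so the power-series root with value $1$ at $x=0$ is
$$u_0=\frac{1-2x+\sqrt{1-8x+20x^2-24x^3+16x^4-4x^5}}{2(1-x)^2},$$
exactly the $u_0$ used in the proof of Theorem~\ref{th229A2}. Setting $u=u_0$ annihilates the left-hand side and yields a single relation among $a(x;1,1)$, $b(x;1,1)$ and $d(x;1)$.

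\emph{Step 3 (solve).} Substituting the Step~1 expressions for $b(x;1,1)$ and $d(x;1)$ into the Step~2 relation and solving for $a(x;1,1)$ expresses $a(x;1,1)$ as a rational function of $x$ and $u_0$; inserting the closed form of $u_0$ and simplifying (the identity $(1-2x)^2-4x(1-x)^4=1-8x+20x^2-24x^3+16x^4-4x^5$ is what makes the radical collapse, and a computer algebra system handles the bookkeeping) gives
$$1+a(x;1,1)=\frac{1-2x+2x^2-\sqrt{1-8x+20x^2-24x^3+16x^4-4x^5}}{2x(1-x+x^2)},$$
so that $F_T(x)$ coincides with the generating function of Theorem~\ref{th229A2}, which is the point of Case~229.

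The delicate step is Step~2: one has to manage the three dilated arguments $x$, $xu$ and $x/u$ correctly while eliminating the $d$-terms, and must verify that, after clearing denominators, $u_0$ really is a simple root of the kernel polynomial and that $u_0(0)=1$, so that $b(x/u_0;u_0,1)$ is a legitimate formal power series and the cancellation is valid. Everything else—setting up and solving the two linear systems, and the final rationalization—is mechanical.
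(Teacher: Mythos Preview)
Your proposal is correct and follows essentially the same approach as the paper's own proof: eliminate $d$ via \eqref{l2c229e4}, substitute into \eqref{l2c229e2} at $v=1$, change $x\mapsto x/u$, apply the kernel method with the same $u_0$, and close the system using \eqref{l2c229e1} and \eqref{l2c229e3} at $u=v=1$. The only cosmetic difference is the order: the paper runs the kernel method first to obtain $b(x;1,1)=\dfrac{x^2(1-u_0)(a(x;1,1)+1)}{u_0-x}$ and only then invokes the three equations at $u=v=1$, whereas you solve those three equations first and plug into the kernel relation afterwards; the two orderings are equivalent. Your identification of the kernel quadratic $(1-x)^2u^2-(1-2x)u+x(1-x)^2=0$ (product $x$, sum $(1-2x)/(1-x)^2$) is exactly what underlies the paper's $u_0$, and your discriminant check matches. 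One small wording issue: the ``dilation $x\mapsto x/u$'' of \eqref{l2c229e4} is not quite the right description---what is actually needed is \eqref{l2c229e4} itself (for $d(x;u)$) together with its specialization at $u=1$ with $x\mapsto xu$ (for $d(xu;1)$), before the global replacement $x\mapsto x/u$---but this does not affect the validity of the argument.
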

\begin{proof}
In the notation above, we seek to determine $1+a(x;1,1)$.
By \eqref{l2c229e4}, we have $d(x;u)=\frac{x^2u}{1-x}(a(xu;1,1)+1)+\frac{x}{1-x}b(x;u,1)$. Thus, equation \eqref{l2c229e2} with $v=1$ gives
\begin{align*}
&\left(1-x-\frac{x}{1-u}-\frac{x^2(1-x+xu)}{(1-u)(1-x)(u-x)}\right)b(x/u;u,1)\\
&\qquad=-\left(\frac{x}{1-u}+\frac{x^2}{(1-u)(1-x)^2}\right)b(x;1,1)\\
&\qquad\quad+\left(\frac{x^3(1-x+xu)}{(1-u)(1-x)(u-x)}-\frac{x^3}{(1-u)(1-x)^2}\right)(a(x;1,1)+1).
\end{align*}
Applying the kernel method to this last equation, and setting
$$u=u_0=\frac{1-2x+\sqrt{1-8x+20x^2-24x^3+16x^4-4x^5}}{2(1-x)^2},$$
we obtain
\begin{align*}
b(x;1,1)=\frac{x^2(1-u_0)(a(x;1,1)+1)}{u_0-x}.
\end{align*}
Note that $c(x;1,1)=\frac{x((1-2x)a(x;1,1)-x+d(x;1))}{1-x}$ by \eqref{l2c229e3}, and
$$a(x;1,1)=x+xa(x;1,1)+b(x;1,1)+c(x;1,1)+d(x;1)$$
by \eqref{l2c229e1}.
Substituting out $c(x;1,1)$, and then $d(x;1)$ and $b(x;1,1)$, in the preceding equation and solving the equation that results for $a(x;1,1)$ yields
$$a(x;1,1)=\frac{x^3+x(1-x)^2u_0}{x(2x^2-2x+1)-(1-x)^3u_0}.$$
Substituting the expression for $u_0$ into the last equation gives the desired formula for $1+a(x;1,1)$ and completes the proof.
\end{proof}

\subsection{Case 234}
The two representative triples $T$ are:

\{2143,2314,2413\} (Theorem \ref{th234A1})

\{1243,1342,3142\} (Theorem \ref{th234A2})

\begin{theorem}\label{th234A1}
Let $T=\{2143,2314,2413\}$. Then
$$F_T(x)=\frac{(1-x)^2-\sqrt{(1-x)^4-4x(1-2x)(1-x)}}{2x(1-x)}\, .$$
\end{theorem}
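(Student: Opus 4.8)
The plan is to enumerate $T$-avoiders by a direct structural decomposition according to the first letter $k=\pi_1$ of $\pi\in S_n(T)$ (which is also the first left-right maximum). The governing observation is a structural lemma. Since $\pi$ avoids $2314$ and $2413$, no entry $a$ of $\pi$ can be followed by a subsequence of shape ``(entry $>a$)(entry $<a$)(entry $>a$)'' (such a subsequence is a $2314$ or a $2413$ according to the relative order of the two large entries); applied to $a=k$, this forces the entries exceeding $k$ to occupy a single contiguous block of positions, so $\pi=k\,S_1\,B\,S_2$ where $B$ is the block of entries $>k$ and $S_1,S_2$ are the entries $<k$ occurring before and after $B$. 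Avoidance of $2143$ then forces, whenever $S_1\neq\emptyset$, that $B$ be the increasing run $(k+1)(k+2)\cdots n$ (an inversion inside $B$ together with a letter of $S_1$ and the letter $k$ would be a $2143$). This is exactly the left-right-maxima statement in disguise: if $\pi$ has $m\ge2$ left-right maxima and the block after the first maximum is nonempty, then all intermediate blocks are empty and only the final block carries small entries.

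With this in hand I would split into four cases. If $k=1$ then $\pi=1\oplus\sigma$ with $\sigma\in S(T)$ arbitrary, because a leading $1$ can never be the ``$1$'' of a pattern in $T$ (in each of $2143,2314,2413$ the letter $1$ is preceded by the letter $2$); contribution $xF_T(x)$. If $k=n$ then $\pi=n\pi'$ with $\pi'$ a nonempty $T$-avoider, since a leading $n$ is never the ``$4$'' of a pattern in $T$; contribution $x(F_T(x)-1)$. If $1<k<n$ and $S_1=\emptyset$, so $\pi=k\,B\,S_2$ with $B,S_2$ nonempty, one checks that $\pi$ avoids $T$ iff $B$ and $S_2$ (standardized) each avoid $T$, independently — an entry of $B$ sees the letters of $S_2$ only as a block of small letters lying entirely to its right, so no forbidden pattern can straddle $B$ and $S_2$ — giving the contribution $x(F_T(x)-1)^2$.

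The remaining and substantially harder case is $1<k<n$ with $S_1\neq\emptyset$, where $\pi=k\,S_1\,(k+1)(k+2)\cdots n\,S_2$. Now the pair $(S_1,S_2)$ is constrained not only by $S_1$ and $S_2$ each avoiding $T$ (with $S_1$ additionally avoiding $231$ — this is precisely what couples $S_1$ to the mandatory increasing block $B$), but by ``interface'' conditions linking the value sets and arrangements of $S_1$ and $S_2$: an ascent of $S_2$ may not straddle a value of $S_1$, and the top of any inversion of $S_1$ must exceed every value of $S_2$. I would encode the admissible pairs by an auxiliary generating function $\Psi(x)$ counting valid pairs with $S_1\neq\emptyset$ by $|S_1|+|S_2|$, observe that $\Psi$ depends neither on $k$ nor on $|B|$, so that this case contributes $\dfrac{x^2}{1-x}\,\Psi(x)$, and then identify $\Psi$; the target is that this case contributes $x\bigl(F_T(x)-\tfrac{1}{1-x}\bigr)$, i.e.\ $x$ times the generating function for non-identity $T$-avoiders. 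The interface bookkeeping here — plausibly via a secondary first-letter (or left-right-maxima) decomposition of $S_1$ together with a kernel-method argument — is where I expect the real difficulty to lie.

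Finally I would assemble the four contributions,
$$F_T(x)=1+xF_T(x)+x\bigl(F_T(x)-1\bigr)+x\bigl(F_T(x)-1\bigr)^2+x\Bigl(F_T(x)-\tfrac{1}{1-x}\Bigr),$$
which simplifies to $x(1-x)F_T(x)^2-(1-x)^2F_T(x)+(1-2x)=0$; solving this quadratic and taking the power-series branch (the root with $F_T(0)=1$) gives
$$F_T(x)=\frac{(1-x)^2-\sqrt{(1-x)^4-4x(1-2x)(1-x)}}{2x(1-x)},$$
as claimed. As a consistency check, this equation yields the initial values $1,1,2,6,21,\dots$, in agreement with $|S_n(T)|$ for small $n$.
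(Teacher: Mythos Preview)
Your decomposition by first letter is sound and is essentially the paper's left-right-maxima argument reorganized; in particular your case 4 ($1<k<n$, $S_1\ne\emptyset$) is exactly the paper's subcase ``$\pi^{(1)}\ne\emptyset$'', summed over $m\ge2$. Cases 1, 2, and 3 are correctly handled.

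The genuine gap is case 4: you assert the target contribution $x\bigl(F_T(x)-\tfrac{1}{1-x}\bigr)$ and assemble the quadratic from it, but explicitly leave its proof undone, anticipating a kernel-method computation. In fact no such machinery is needed. Given $\pi = k\,S_1\,(k{+}1)\cdots n\,S_2$ with $S_1\ne\emptyset$, set $\sigma := S_1\,k\,S_2 \in S_k$ (delete the leading $k$ and all of $B=(k{+}1)\cdots n$ except $n$, then standardize). Then $\pi$ avoids $T$ if and only if $\sigma$ does. One direction is immediate, since $\sigma$ is the standardization of a subsequence of $\pi$. For the other, observe that every pattern in $T$ begins with its ``$2$'', so a $T$-occurrence in $\pi$ cannot begin at $k$ nor at a letter of $B$ (a short check shows only $2134$ or $2341$ can arise), and if it begins in $S_2$ it lies entirely inside $S_2$, a subsequence of $\sigma$. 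If it begins at some $s_1\in S_1$, then with two or more letters of $B$ the only patterns obtainable are again $2134$ and $2341$ (the $B$-letters are consecutive among the four, both maximal, and increasing); with at most one letter $b\in B$, replacing $b$ by $k$ (same value rank as the maximum of the four, same relative position between $S_1$ and $S_2$) yields the same pattern inside $\sigma$. Hence $\sigma\in S_k(T)$ implies $\pi\in S_n(T)$. Since $S_1\ne\emptyset$, $\sigma$ is a $T$-avoider of length $k\ge2$ not starting with its maximum, and conversely every such $\sigma$ arises once for each choice of $n>k$; thus the case-4 contribution is
\[
\frac{x}{1-x}\bigl(F_T(x)-1-xF_T(x)\bigr)=x\Bigl(F_T(x)-\frac{1}{1-x}\Bigr),
\]
exactly as you wanted. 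This is precisely the paper's argument in slightly different dress; your ``interface conditions'' on $(S_1,S_2)$ are nothing other than the conditions that $\sigma=S_1\,k\,S_2$ avoid $2413$ and $2143$, so they require no separate treatment.
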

\begin{proof}
Let $G_m(x)$ be the generating function for $T$-avoiders with $m$ left-right maxima.
Clearly, $G_0(x)=1$ and $G_1(x)=xF_T(x)$.
Now suppose $\pi=i_1\pi^{(1)}\cdots i_m\pi^{(m)}$
is a permutation that avoids $T$ with $m\geq2$ left-right maxima.  Then $\pi^{(m)}$ has the form $\beta_m  \beta_{m-1} \cdots \beta_1$ with $\beta_1<i_1<\beta_2<i_2< \cdots < \beta_m<i_m$ because $c<d$ in $\pi^{(m)}$ with $c<i_j<d$ implies $i_j i_m cd $ is a $2413$.

If $\pi^{(1)}=\cdots =\pi^{(m-1)}=\emptyset$, the contribution is $(xF_T(x))^m$.
Otherwise, let $k$ be minimal such that $\pi^{(k)}\ne \emptyset$. Then $\pi$ has the form
\begin{center}
\begin{pspicture}(-1.5,0)(9.5,4.5)
\psset{xunit =.5cm, yunit=.3cm,linewidth=.5\pslinewidth}
\psline[linestyle=dashed](0,2)(1,2)(1,4)
\psline(3,0)(3,6)(5,6)(5,8)
\psline(0,0)(0,2)
\psline(1,4)(2,4)(2,6)(3,6)(3,8)(5,8)(5,10)(6,10)(6,12)
\psline(7,14)(15,14)(15,0)(0,0)
\psline(7,8)(9,8)(9,6)(11,6)(11,4)(9,4)(9,6)(7,6)(7,8)
\psline(13,2)(15,2)(15,0)(13,0)(13,2)
\pspolygon[fillstyle=hlines,hatchcolor=lightgray,hatchsep=0.8pt](5,8)(5,10)(6,10)(6,12)(7,12)(7,14)(15,14)(15,8)(5,8)
\pspolygon[fillstyle=hlines,hatchcolor=lightgray,hatchsep=0.8pt](5,8)(7,8)(7,0)(3,0)(3,6)(5,6)(5,8)
\pspolygon[fillstyle=hlines,hatchcolor=lightgray,hatchsep=0.8pt](7,0)(7,6)(9,6)(9,4)(11,4)(11,2)(13,2)(13,0)(7,0)
\pspolygon[fillstyle=hlines,hatchcolor=lightgray,hatchsep=0.8pt](9,6)(11,6)(11,4)(13,4)(13,2)(15,2)(15,8)(9,8)(9,6)
\pspolygon[fillstyle=hlines,hatchcolor=lightgray,hatchsep=0.8pt](0,0)(0,2)(1,2)(1,4)(2,4)(2,6)(3,6)(3,0)(0,0)

\psline[linecolor=white](6,12)(7,12)(7,14)
\psline[linestyle=dashed](6,12)(7,12)(7,14)
\psline[linecolor=white](0,2)(1,2)(1,4)
\psline[linestyle=dashed](0,2)(1,2)(1,4)
\rput(4.4,7){\textrm{{\small $\pi^{(k)}$}}}
\rput(8,7){\textrm{{\small $\beta_k$}}}
\rput(10,5){\textrm{{\small $\beta_{k-1}$}}}
\rput(14,1){\textrm{{\small $\beta_1$}}}
\rput(12,3.3){$\ddots$}
\rput(15.4,0){,}
\pscircle*(0,2){.08}\pscircle*(1,4){.08}\pscircle*(2,6){.08}\pscircle*(3,8){.08}\pscircle*(5,10){.08}
\pscircle*(6,12){.08}\pscircle*(7,14){.08}
\pscircle*(3.6,7){.08}
\rput(5,4){\textrm{{\small $23\overset{{\gray \bullet}}{1}4$}}}
\rput(10,11){\textrm{{\small $214\overset{{\gray \bullet}}{3}$}}}
\rput(-0.6,2.4){\textrm{{\footnotesize $i_1$}}}
\rput(.1,4.4){\textrm{{\footnotesize $i_{k-2}$}}}
\rput(1.1,6.4){\textrm{{\footnotesize $i_{k-1}$}}}
\rput(2.5,8.4){\textrm{{\footnotesize $i_k$}}}
\rput(4.1,10.4){\textrm{{\footnotesize $i_{k+1}$}}}
\rput(5.1,12.4){\textrm{{\footnotesize $i_{k+2}$}}}
\rput(6.4,14.4){\textrm{{\footnotesize $i_{m}$}}}
\end{pspicture}
\end{center}
where dark bullets indicate mandatory entries and some shaded regions are empty because the gray bullet would form part of the indicated pattern; $\pi^{(k)}i_m \beta_k$ avoids $T$ and does not start with its largest entry, and $\beta_{k-1},\dots, \beta_1$ all avoid $T$. Thus, the contribution for fixed $k\in [m]$ is given by $x^{m-1}(F_T(x)-1-xF_T(x))F_T(x)^{k-1}$.

Hence, for $m\ge 2$,
$$G_m(x)=(xF_T(x))^m+x^{m-1}(F_T(x)-1-xF_T(x))\sum_{k=0}^{m-1}F_T(x)^k.$$
Summing over $m\ge 0$, we obtain
\begin{align*}
F_T(x)&=1+\frac{xF_T(x)}{1-xF_T(x)}+\frac{\big(F_T(x)-1-xF_T(x)\big)\left(\frac{x}{1-x}-\frac{xF_T(x)}{1-xF_T(x)}\right)}{1-F_T(x)},
\end{align*}
which has the desired solution.
\end{proof}

\begin{theorem}\label{th234A2}
Let $T=\{1243,1342,3142\}$. Then
$$F_T(x)=\frac{(1-x)^2-\sqrt{(1-x)^4-4x(1-2x)(1-x)}}{2x(1-x)}.$$
\end{theorem}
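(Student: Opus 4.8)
The plan is to use the same device as in Theorem~\ref{th234A1} --- a decomposition by left-right maxima --- but the structural analysis must be carried out afresh, since here $1243\in T$, and $1243$ is precisely the pattern that was \emph{allowed} for the first triple and that made its blocks $\beta_j$ unconstrained. Write $G_m(x)$ for the generating function of $T$-avoiders with exactly $m$ left-right maxima, so that $G_0(x)=1$, $G_1(x)=xF_T(x)$ (a first letter equal to $n$ cannot participate in any occurrence of $1243$, $1342$ or $3142$, since none of these has its largest symbol in position~$1$), and $F_T(x)=\sum_{m\ge0}G_m(x)$. For $m\ge2$ let $\pi=i_1\pi^{(1)}i_2\pi^{(2)}\cdots i_m\pi^{(m)}\in S_n(T)$ with $i_1<\cdots<i_m=n$.

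The first step is to read off the structure. Avoidance of $1243$ and $1342$, applied with $i_1$ in the role of the ``$1$'', shows that the subword $W$ of entries exceeding $i_1$ (equivalently, all entries after position~$1$ that are $>i_1$) avoids $\{132,231\}$; such words are counted by $L(x)=\frac{1-x}{1-2x}$ and, crucially, have their maximum at one end. Since $W$ begins with $i_2$, this forces $i_2=n$ when $m=2$ and, when $m\ge3$, forces $n=i_m$ to be the \emph{last} entry of $W$, so that $\pi^{(m)}$ consists entirely of entries $<i_1$; moreover $W$ then has exactly $m-1$ left-right maxima, and peeling maxima off its right end exhibits it as $i_2\,Y_2\,i_3\,Y_3\cdots$, each $Y_j$ a $\{132,231\}$-avoider confined to an appropriate value-interval. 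Next, avoidance of $3142$ (with $i_1$, or a later left-right maximum, as the ``$3$'') ``layers'' the entries below $i_1$: once a left-right maximum occurs after a small entry $c$, every subsequent small entry must lie below $c$, so the below-$i_1$ portions of $\pi^{(1)},\pi^{(2)},\dots$ form decreasing blocks. Combining these facts, I expect $\pi$ to decompose into a run of left-right maxima, a distinguished block that is a $T$-avoider not beginning with its largest letter, and a sequence of independent $T$-avoiders, and I would extract a linear recurrence expressing $G_m(x)$ in terms of $G_{m-1}(x)$, $F_T(x)$ and $L(x)$.

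Summing that recurrence over $m\ge2$ and adjoining $G_0$ and $G_1$ should yield a quadratic equation for $F_T(x)$, which I expect to be $x(1-x)F_T(x)^2-(1-x)^2F_T(x)+(1-2x)=0$; the root with $F_T(0)=1$ is the stated radical. The main obstacle is the first step: one must check that the three patterns $1243$, $1342$, $3142$ \emph{jointly} force exactly this shape. No single one of them suffices --- just as in Theorem~\ref{th234A1}, each forbidden configuration is killed by a different member of $T$ --- so the argument requires a careful case analysis that tracks, for each pair consisting of an early small entry and a later entry (or of an ascent above $i_1$ and a later intermediate value), which of the three patterns it would produce. An alternative route would be to mimic instead the functional-equation method of Cases~201, 203, 218 and~229, keeping track of the first one or two letters via arrays $a(n;i,j)$, or to exhibit a direct bijection with the $\{2143,2314,2413\}$-avoiders of Theorem~\ref{th234A1}; but the left-right-maxima decomposition appears to be the most economical.
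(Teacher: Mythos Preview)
Your overall strategy --- left-right-maxima decomposition leading to the quadratic $x(1-x)F_T^2-(1-x)^2F_T+(1-2x)=0$ --- is exactly the paper's, and your two structural observations are correct: the subword $W$ of entries exceeding $i_1$ is indeed a $\{132,231\}$-avoider, and $3142$-avoidance does layer the sub-$i_1$ blocks as $\pi^{(1)}>\beta_2>\pi^{(3)}>\cdots>\pi^{(m)}$ (writing $\beta_2$ for the sub-$i_1$ part of $\pi^{(2)}$). You also correctly anticipate the appearance of a ``$T$-avoider not beginning with its largest letter''.

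However, the plan to bring in $L(x)$ is a detour, and your description of $W$ understates how rigid it is. For $m\ge3$, each $i_j$ with $3\le j\le m$ is not a left-right minimum of $W$ (since $i_2<i_j$ precedes it), so by your own $\{132,231\}$ criterion it must be a right-left minimum of $W$; hence every later entry of $W$ exceeds $i_j$, which forces $Y_j=\emptyset$ for \emph{all} $j\ge3$, not just $j=m$. Thus $i_2,\dots,i_m$ are consecutive integers and only $\alpha_2$ (the above-$i_1$ part of $\pi^{(2)}$) can be nonempty. The paper's move that replaces any need for $L$ is this: rather than separately tracking the $\{132,231\}$-avoider $\alpha_2$, the $T$-avoider $\beta_2$, and their interleaving inside $\pi^{(2)}$, one observes that $\mathrm{St}(i_1\pi^{(2)})$ is itself a $T$-avoider, and it fails to start with its maximum precisely when $\alpha_2\ne\emptyset$. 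Splitting $m=2$ on whether $\alpha=\emptyset$ (contribution $x^2F_T^2$, with $\pi^{(1)}$ and $\beta$ independent $T$-avoiders) or $\alpha\ne\emptyset$ (then $\pi^{(1)}$ must be decreasing to avoid $1243$, giving $\frac{x}{1-x}\bigl(F_T-1-xF_T\bigr)$) yields $G_2$; the rigidity of $W$ then gives $G_m=G_2\,(xF_T)^{m-2}$ for $m\ge3$, and summing produces your quadratic with no $L(x)$ in sight.
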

\begin{proof}
Let $G_m(x)$ be the generating function for $T$-avoiders with $m$ left-right maxima.
Clearly, $G_0(x)=1$ and $G_1(x)=xF_T(x)$.
For $m=2$, suppose $\pi=i\pi' n\pi''$ is a permutation in $S_n(T)$ with two left-right maxima.
Let $\beta$ denote the subsequence of letters less than $i$ in $\pi''$. Then $\beta<\pi'\ (a\in \pi'$ and $b \in \beta$ with $a<b$ implies $ianb$ is a 3142) and so $\pi$ is as in the figure.
\begin{center}
\begin{pspicture}(-1.5,0)(3,1.8)
\psset{xunit =.8cm, yunit=.5cm,linewidth=.5\pslinewidth}
\pspolygon[fillstyle=hlines,hatchcolor=lightgray,hatchsep=0.8pt](1,1)(1,2)(2,2)(2,1)(1,1)
\pspolygon[fillstyle=hlines,hatchcolor=lightgray,hatchsep=0.8pt](0,0)(0,1)(1,1)(1,0)(0,0)
\psline(0,2)(0,1)(2,1)(2,0)(1,0)(1,3)(2,3)(2,2)(0,2)
\rput(1.5,2.5){\textrm{{\small $\alpha$}}}
\rput(1.5,0.5){\textrm{{\small $\beta$}}}
\rput(0.5,1.5){\textrm{{\small $\pi'$}}}
\pscircle*(0,2){.08}\pscircle*(1,3){.08}
\rput(-0.3,2.2){\textrm{{\footnotesize $i$}}}
\rput(0.7,3.3){\textrm{{\footnotesize $n$}}}
\end{pspicture}
\end{center}
If $\alpha=\emptyset$, then $\pi'$ and $\beta$ avoid $T$ and the contribution is $x^2 F_T(x)^2$.
If $\alpha\ne \emptyset$ so that $i+1 \in \alpha$, then $\pi'$ is decreasing (or $n(i+1)$ would be the 43 of a 1243), and St($i\pi''$) is a $T$-avoider that does not start with its maximal element. Hence, the contribution is
$\frac{x}{1-x}\big(F_T(x)-1-xF_T(x)\big)$.
Thus,
\[
G_2(x)=x^2F_T(x)^2+\frac{x}{1-x}\big(F_T(x)-1-xF_T(x)\big).
\]
For $m\ge 3$, $\pi$ has the form
\begin{center}
\begin{pspicture}(-1.5,0)(6,4.5)
\psset{xunit =.8cm, yunit=.5cm,linewidth=.5\pslinewidth}
\pspolygon[fillstyle=hlines,hatchcolor=lightgray,hatchsep=0.8pt](2,5)(2,6)(3,6)(3,7)(4,7)(4,8)(5,8)(5,5)(2,5)
\pspolygon[fillstyle=hlines,hatchcolor=lightgray,hatchsep=0.8pt](5,5)(2,5)(2,4)(5,4)(5,5)
\pspolygon[fillstyle=hlines,hatchcolor=lightgray,hatchsep=0.8pt](0,0)(0,3)(2,3)(2,2)(3,2)(3,1)(4,1)(4,0)(0,0)
\pspolygon[fillstyle=hlines,hatchcolor=lightgray,hatchsep=0.8pt](2,3)(2,4)(5,4)(5,1)(4,1)(4,2)(3,2)(3,3)(2,3)
\psline[linecolor=white](3,7)(4,7)(4,8)
\psline[linestyle=dashed](3,7)(4,7)(4,8)
\psline(0,0)(0,4)(1,4)(1,3)(0,3)
\psline(1,4)(1,5)(2,5)(2,2)(3,2)(3,3)(1,3)
\psline(0,0)(5,0)(5,4)
\psline(4,0)(4,1)(5,1)
\rput(3.5,1.7){$\ddots$}
\rput(.5,3.5){\textrm{{\small $\pi^{(1)}$}}}
\rput(1.5,4){\textrm{{\small $\pi^{(2)}$}}}
\rput(2.5,2.5){\textrm{{\small $\pi^{(3)}$}}}
\rput(4.5,.5){\textrm{{\small $\pi^{(m)}$}}}
\pscircle*(0,4){.08}\pscircle*(1,5){.08}\pscircle*(2,6){.08}\pscircle*(3,7){.08}\pscircle*(4,8){.08}
\rput(-0.3,4.2){\textrm{{\footnotesize $i_1$}}}
\rput(.7,5.2){\textrm{{\footnotesize $i_2$}}}
\rput(1.7,6.2){\textrm{{\footnotesize $i_3$}}}
\rput(3.7,8.2){\textrm{{\footnotesize $i_m$}}}
\rput(3.5,4.6){\textrm{{\small $134\overset{{\gray \bullet}}{2}$}}}
\rput(3.5,6){\textrm{{\small $124\overset{{\gray \bullet}}{3}$}}}
\rput(5.3,0){,}
\end{pspicture}
\end{center}
where some shaded regions are empty to avoid the indicated pattern and the $\pi$'s are in their relative positions to avoid 3142. Hence, $G_m(x)=G_2(x)(xF_T(x))^{m-2}$.

Summing over $m\ge 0$, we obtain
\begin{align*}
F_T(x)&=1+xF_T(x)+\frac{x^2F_T(x)+\frac{x}{1-x}(F_T(x)-1-xF_T(x))}{1-xF_T(x)},
\end{align*}
which has the desired solution.
\end{proof}
\subsection{Case 235}
The three representative triples $T$ are:

\{1423,1432,2143\} (Theorem \ref{th235A1})

\{1423,1432,3142\} (Theorem \ref{th235A2})

\{1234,1243,2314\} (Theorem \ref{th235A3})

\subsubsection{$\mathbf{T=\{1423,1432,2143\}}$}
Let $a(n;i_1,i_2,\ldots,i_k)$, $a(n)$ and $\mathcal{T}_{i,j}$ be as in the second case of class 171 above.  Note here that $a(n;n)=a(n;n-1)=a(n-1)$ for $n\geq2$.  It is convenient to consider separately the case of a permutation starting $i,j,j+2$, where $j \leq i-3$.  Define $f(n;i,j)=a(n;i,j,j+2)$ for $4 \leq i \leq n$ and $1 \leq j \leq i-3$.  The arrays $a(n;i,j)$ and $f(n;i,j)$ are determined recursively as follows.

\begin{lemma}\label{235al1}
We have
\begin{equation}\label{235al1e1}
a(n;i,i+2)=a(n-1;i,i+2)+a(n-1;i+1,i)+\sum_{j=1}^{i-1}a(n-1;i,j), \qquad 1 \leq i \leq n-2,
\end{equation}
\begin{equation}\label{235al1e2}
a(n;i,i-1)=a(n-1;i,i-1)+\sum_{j=1}^{i-2}a(n-1;i-1,j), \qquad 2 \leq i \leq n-1,
\end{equation}
\begin{equation}\label{235al1e3}
a(n;i,i-2)=a(n-1;i,i-2)+a(n-1;i-1,i-2)+\sum_{j=1}^{i-3}a(n-1;i-2,j),\qquad 3\leq i \leq n-1,
\end{equation}
\begin{equation}\label{235al1e4}
a(n;i,j)=a(n-1;i-1,j)+f(n;i,j)+\sum_{\ell=1}^{j-1}a(n-1;j,\ell), \qquad 1 \leq j \leq i-3,
\end{equation}
and
\begin{equation}\label{235al1e5}
f(n;i,j)=f(n-1;i-1,j)+a(n-2;j+1,j)+\sum_{\ell=1}^{j-1}a(n-2;j,\ell), \qquad 1 \leq j \leq i-4,
\end{equation}
with $f(n;i,i-3)=a(n-1;i-1,i-3)$ for $4 \leq i \leq n$, $a(n;i,i+1)=a(n-1;i)$ for $1 \leq i \leq n-1$, and $a(n;i,j)=0$ for $1 \leq i \leq j-3 \leq n-3$.
\end{lemma}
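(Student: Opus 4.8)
\emph{Proof proposal.} The plan is to prove all five recurrences, together with the three boundary identities, by conditioning on the next letter, exactly in the spirit of Lemmas \ref{171Bl1}, \ref{203bl1}, and \ref{l1229c}. Fix a $T$-avoider $\pi$ whose first two letters are $i,j$ (or first three letters $i,j,j+2$ for the array $f$), let $x$ be its third letter (respectively $y$ its fourth), split into cases according to the band in which $x$ (respectively $y$) falls, use the forbidden patterns $1423$, $1432$, $2143$ to rule out most bands and to pin down the shape of $\pi$ in the rest, and in each surviving band identify one or two leading letters that are \emph{redundant} — i.e. cannot take part in any occurrence of a pattern of $T$ once that shape is fixed — so that deleting them and standardizing is a bijection onto a smaller array. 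The index shifts $i\mapsto i-1$, $n\mapsto n-1$ (or $n-2$) are then simply the bookkeeping of the standardization, and summing over the admissible bands yields the stated recurrences.

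First I would dispose of the easy facts. When $j\ge i+3$, the values $i+1$ and $i+2$ both lie strictly between $i$ and $j$ and are absent from the first two positions, so they occur after position $2$; whichever comes first, $i\,j$ followed by them is a $1423$ or a $1432$, giving $a(n;i,j)=0$. When $j=i+1$, the letter $i+1$ sits immediately after the consecutive value $i$; a short check on the role $i+1$ might play (none of the three patterns has a prefix that can be realized by such a pair, and whenever $i+1$ could head an occurrence the smaller letter $i$ heads the same occurrence) shows $i+1$ is redundant, so deletion and standardization is a bijection onto $\{T$-avoiders of length $n-1$ beginning with $i\}$. For \eqref{235al1e1}–\eqref{235al1e3} the pattern $2143$ together with $1423/1432$ confines the third letter to a handful of values: for \eqref{235al1e1} one finds $x\in\{i+1,i+3\}\cup[1,i-1]$, with $x=i+1$ forcing deletion of $i$, $x=i+3$ forcing deletion of $i+3$, and $x\le i-1$ forcing deletion of $i+2$, producing the three terms $a(n-1;i+1,i)$, $a(n-1;i,i+2)$, and $\sum_{j=1}^{i-1}a(n-1;i,j)$; the cases \eqref{235al1e2} (bands $x=i+1$, $x\le i-2$) and \eqref{235al1e3} (bands $x=i+1$, $x=i-1$, $x\le i-3$, with the ``missing'' value $i-1$ floating) run in parallel, the only extra wrinkle in \eqref{235al1e3} being that one must track where $i-1$ is forced to sit.

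The substantive case is \eqref{235al1e4}–\eqref{235al1e5}. For $a(n;i,j)$ with $j\le i-3$, any third letter $x>j+2$ pushes both $j+1$ and $j+2$ to the right of $x$, making $j\,x$ with them a $1423$ or $1432$, so $x\in\{j+1\}\cup\{j+2\}\cup[1,j-1]$: for $x=j+1$ the letter $j+1$ is redundant (contribution $a(n-1;i-1,j)$), for $x<j$ the pair $j,x$ already forbids whatever $i$ could (contribution $\sum_{\ell=1}^{j-1}a(n-1;j,\ell)$), and for $x=j+2$ no single deletion is safe — which is precisely the reason for the auxiliary array $f(n;i,j)=a(n;i,j,j+2)$, whose contribution is recorded as $f(n;i,j)$. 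For $f$ I would condition on the fourth letter $y$ of a $T$-avoider beginning $i,j,j+2$: a suitably large $y$ (essentially $y=j+3$) can be deleted, landing in $f(n-1;i-1,j)$; $y=j+1$ forces a configuration in which $i$ and $j$ may both be deleted, giving $a(n-2;j+1,j)$; and $y<j$ forces a decreasing block among the values just above $j$ with everything below $j$ pushed to the right, after which $i$ and $j+2$ may be deleted, giving $\sum_{\ell=1}^{j-1}a(n-2;j,\ell)$; all remaining $y$ are killed by $1423/1432$. The base identity $f(n;i,i-3)=a(n-1;i-1,i-3)$ is the $j=i-3$ edge, where there is no value below $j$ between $j$ and $i$, so only the first of these branches survives.

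I expect the main obstacle to be the redundancy-and-reversibility verifications that underlie essentially every one of these reductions: one must check not only that the forbidden patterns force the claimed rigid skeleton, but also that the deleted letters cannot serve in any $1423$, $1432$, or $2143$ inside that skeleton, and — so that the deletion maps are genuine bijections rather than merely surjections or injections — that the inverse insertion never recreates a forbidden pattern. These checks are elementary but numerous (the $x=j+2$/fourth-letter analysis for $f$, and the floating-$i-1$ case in \eqref{235al1e3}, are the most involved). Once they are in hand, the remainder is routine: confirming that the listed bands for $x$ (and $y$) are exhaustive and pairwise disjoint, and that the standardization shifts produce exactly the indices appearing in \eqref{235al1e1}–\eqref{235al1e5}.
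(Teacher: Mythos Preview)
Your proposal is correct and follows essentially the same approach as the paper: condition on the third letter $x$ (or fourth letter $y$ for $f$), use $1423/1432/2143$ to confine it to a few bands, and in each band delete one or two redundant leading letters to obtain a bijection with a smaller array. Two small points where your wording drifts from what actually happens: for the boundary identity $f(n;i,i-3)=a(n-1;i-1,i-3)$, the paper's argument is not that ``only the first branch survives'' (in fact $y=j+3=i$ is the branch that \emph{dies}, while $y=i-2$, $y=i+1$, and $y<i-3$ all remain possible); rather, one simply deletes the third letter $j+2=i-1$, which is redundant next to $i$, giving a direct bijection with $T$-avoiders of length $n-1$ starting $i-1,i-3$. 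Also, in the $y<j$ case of \eqref{235al1e5} no ``decreasing block'' structure is needed---the point is just that $j,y$ (for $1423/1432$) and $j+2,j+1,\ldots$ (for $2143$) already impose every constraint that $i$ and $j+2$ would, so both may be deleted.
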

\begin{proof}
The formulas for $f(n;i,i-3)$ and $a(n;i,i+1)$, and for $a(n;i,j)$ when $i \leq j-3$, follow from the definitions.  In the cases that remain, let $x$ denote the third letter of a $T$-avoiding permutation.  For \eqref{235al1e1}, first note that members of $\mathcal{T}_{i,i+2}$ where $i<n-2$ must have $x=i+3$, $x=i+1$ or $x<i$, lest there be an occurrence of $1423$ or $1432$.  The letter $i+2$ can be deleted in the first case, while the letter $i$ can in the second, giving $a(n-1;i,i+2)$ and $a(n-1;i+1,i)$ possibilities, respectively.  If $x<i$, then $i,x$ imposes a stricter requirement on subsequent letters than does $i+2,x$, whence $i+2$ may be deleted in this case.  This gives $\sum_{j=1}^{i-1}a(n-1;i,j)$ possibilities, which implies \eqref{235al1e1} when $i<n-2$.  Equation \eqref{235al1e1} is also seen to hold when $i=n-2$ since there is no $x=i+3$ case with $a(n-1;i,i+2)=0$ accordingly.  For \eqref{235al1e2}, note that members of $\mathcal{T}_{i,i-1}$ where $i<n$ must have $x=i+1$ or $x<i-2$ so as to avoid $2143$.  This yields $a(n-1;i,i-1)$ and $\sum_{j=1}^{i-2}a(n-1;i-1,j)$ possibilities, respectively, which implies \eqref{235al1e2}.  For \eqref{235al1e3}, note that members of $\mathcal{T}_{i,i-2}$ where $i<n$ must have $x=i+1$, $x=i-1$ or $x<i-2$, yielding $a(n-1;i,i-2)$, $a(n-1;i-1,i-2)$ and $\sum_{j=1}^{i-3}a(n-1;i-2,j)$ possibilities, respectively.

To show \eqref{235al1e4}, first observe that members of $\mathcal{T}_{i,j}$ where $j \leq i-3$ must have $x=j+1$, $x=j+2$ or $x<j$, lest there be an occurrence of $1423$ or $1432$.  If $x=j+1$, then there are $a(n-1;i-1,j)$ possibilities since the letter $j+1$ is extraneous and may be deleted.  If $x=j+2$, then there are $f(n;i,j)$ possibilities, by definition.  If $x<j$, then the letter $i$ may be deleted, which gives the last term on the right-hand side of \eqref{235al1e4}.  Finally, to show \eqref{235al1e5}, let $y$ denote the fourth letter of a permutation enumerated by $f(n;i,j)$ where $j<i-3$.  Then we must have $y=j+3$, $y=j+1$ or $y<j$.  If $y=j+3$, then $y$ may be deleted, yielding $f(n-1;i-1,j)$ possibilities, by definition.  If $y=j+1$, then $j+2,j+1$ is seen to impose a stricter requirement on subsequent letters than does $i,j$ with regard to $2143$, with $j+1$ also making $j$ redundant concerning $1423$ or $1432$.  Thus, both $i$ and $j$ may be deleted in this case, giving $a(n-2;j+1,j)$ possibilities.  Finally, if $y<j$, then both the $i$ and $j+2$ may be deleted and thus there are $\sum_{\ell=1}^{j-1}a(n-2;j,\ell)$ possibilities, which implies \eqref{235al1e5} and completes the proof.
\end{proof}

To aid in solving the recurrences of the prior lemma, we define the following auxiliary functions:  $b(n;i)=\sum_{j=1}^{i-1}a(n;i,j)$, $c(n;i)=a(n;i,i-2)$, $d(n;i)=a(n;i,i-1)$ and $e(n;i)=a(n;i,i+2)$.  Assume functions are defined on the natural range for $i$, given $n$, and are zero otherwise.  For example, $c(n;i)$ is defined for $3 \leq i \leq n$, with $c(n;1)=c(n;2)=0$.  Let $f(n;i)=\sum_{j=1}^{i-3}f(n;i,j)$ for $4 \leq i \leq n$.

The recurrences in the previous lemma may be recast as follows.

\begin{lemma}\label{235al2}
We have
\begin{equation}\label{235al2e1}
a(n;i)=a(n-1;i)+b(n;i)+e(n;i), \qquad 1 \leq i \leq n-1,
\end{equation}
\begin{align}
b(n;i)&=c(n;i)+d(n;i)+b(n-1;i-1)-d(n-1;i-1)+f(n;i)\notag\\
&\quad+\sum_{j=1}^{i-3}b(n-1;j), \qquad 2 \leq i \leq n-1,\label{235al2e2}
\end{align}
\begin{equation}\label{235al2e3}
c(n;i)=b(n-1;i-2)+c(n-1;i)+d(n-1;i-1), \qquad 3 \leq i \leq n-1,
\end{equation}
\begin{equation}\label{235al2e4}
d(n;i)=b(n-1;i-1)+d(n-1;i), \qquad 2 \leq i \leq n-1,
\end{equation}
\begin{equation}\label{235al2e5}
e(n;i)=b(n-1;i)+d(n-1;i+1)+e(n-1;i), \qquad 1 \leq i \leq n-2,
\end{equation}
and
\begin{equation}\label{235al2e6}
f(n;i)=c(n-1;i-1)+f(n-1;i-1)+\sum_{j=1}^{i-4}b(n-2;j)+\sum_{j=1}^{i-4}d(n-2;j+1), \qquad 4 \leq i \leq n.
\end{equation}
\end{lemma}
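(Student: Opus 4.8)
The plan is to derive all six identities of Lemma~\ref{235al2} by purely mechanical summation: each one is obtained by summing the appropriate recurrence of Lemma~\ref{235al1} over the relevant letter index and then reading off the resulting partial sums as the auxiliary functions $b(n;i),c(n;i),d(n;i),e(n;i),f(n;i)$, under the standing convention that each such function is $0$ outside its stated range of $i$. Two bookkeeping facts will be used throughout. First, since $a(n;i,j)=0$ for $j\geq i+3$, a $T$-avoider with first letter $i$ has second letter in $\{1,\dots,i-1\}\cup\{i+1,i+2\}$, so that $a(n;i)=\sum_{j=1}^{i-1}a(n;i,j)+a(n;i,i+1)+a(n;i,i+2)$. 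Second, directly from the definitions $b(m;k)=\sum_{j=1}^{k-1}a(m;k,j)$, whose top term $j=k-1$ is $d(m;k)$ and whose next term $j=k-2$ is $c(m;k)$; hence $\sum_{j=1}^{k-2}a(m;k,j)=b(m;k)-d(m;k)$ and $\sum_{j=1}^{k-3}a(m;k,j)=b(m;k)-c(m;k)-d(m;k)$, with analogous (often shorter) identities when $c$ or $d$ falls out of range.

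Four of the identities are immediate term-by-term translations. For \eqref{235al2e1}, apply the first bookkeeping fact and use $a(n;i,i+1)=a(n-1;i)$ together with $e(n;i)=a(n;i,i+2)$. For \eqref{235al2e4}, rewrite \eqref{235al1e2} with $a(n-1;i,i-1)=d(n-1;i)$ and $\sum_{j=1}^{i-2}a(n-1;i-1,j)=b(n-1;i-1)$. For \eqref{235al2e3}, rewrite \eqref{235al1e3} with $a(n-1;i,i-2)=c(n-1;i)$, $a(n-1;i-1,i-2)=d(n-1;i-1)$ and $\sum_{j=1}^{i-3}a(n-1;i-2,j)=b(n-1;i-2)$ (here the summation reaches $j=(i-2)-1$, the full range defining $b(n-1;i-2)$). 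For \eqref{235al2e5}, rewrite \eqref{235al1e1} with $a(n-1;i,i+2)=e(n-1;i)$, $a(n-1;i+1,i)=d(n-1;i+1)$ and $\sum_{j=1}^{i-1}a(n-1;i,j)=b(n-1;i)$.

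The two remaining identities need one extra split. For \eqref{235al2e2}, write $b(n;i)=d(n;i)+c(n;i)+\sum_{j=1}^{i-3}a(n;i,j)$ and substitute \eqref{235al1e4} into the last sum, using $\sum_{\ell=1}^{j-1}a(n-1;j,\ell)=b(n-1;j)$, $\sum_{j=1}^{i-3}f(n;i,j)=f(n;i)$, and $\sum_{j=1}^{i-3}a(n-1;i-1,j)=b(n-1;i-1)-d(n-1;i-1)$ (the summation reaching only $j=(i-1)-2$, so one removes just the $d$ term from $b(n-1;i-1)$). For \eqref{235al2e6}, write $f(n;i)=f(n;i,i-3)+\sum_{j=1}^{i-4}f(n;i,j)$, use $f(n;i,i-3)=a(n-1;i-1,i-3)=c(n-1;i-1)$, substitute \eqref{235al1e5} into the remaining sum with $a(n-2;j+1,j)=d(n-2;j+1)$ and $\sum_{\ell=1}^{j-1}a(n-2;j,\ell)=b(n-2;j)$, and note $\sum_{j=1}^{i-4}f(n-1;i-1,j)=f(n-1;i-1)$. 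Collecting terms gives the stated formula.

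There is no conceptual difficulty here; the only thing to watch is the bookkeeping. One must keep the shifted index ranges straight, honor the boundary conventions (e.g.\ $c(m;k)=d(m;k)=0$ for $k\le 2$, $e(m;k)=0$ for $k\ge m-1$, and $a(n;i,j)=0$ once $j\ge i+3$), and check the smallest and largest admissible values of $i$ in each identity so that the reduced ranges agree with those in Lemma~\ref{235al1}. No functional equation or kernel-method argument is involved at this stage; those enter only when this system of recurrences is subsequently converted into generating functions.
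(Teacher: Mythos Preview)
Your proposal is correct and follows essentially the same approach as the paper's proof: both derive each identity by directly translating the corresponding recurrence from Lemma~\ref{235al1} via the definitions of $b,c,d,e,f$, splitting off the top one or two terms of the relevant sum where needed. The only minor quibble is the parenthetical ``$d(m;k)=0$ for $k\le 2$,'' since $d(m;2)=a(m;2,1)$ is generally nonzero; but this slip in the illustrative boundary remark does not affect any of the six derivations.
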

\begin{proof}
For \eqref{235al2e1}, note that by the definitions, we have
\begin{align*}
a(n;i)&=\sum_{i=1, i\neq j}^na(n;i,j)=\sum_{j=1}^{i-1}a(n;i,j)+a(n;i,i+1)+a(n;i,i+2)\\
&=b(n;i)+a(n-1;i)+e(n;i).
\end{align*}
For \eqref{235al2e2}, note that by summing \eqref{235al1e4} over $j$ and the definitions, we have
\begin{align*}
b(n;i)&=a(n;i,i-2)+a(n;i,i-1)+\sum_{j=1}^{i-3}a(n;i,j)\\
&=a(n;i,i-2)+a(n;i,i-1)+\sum_{j=1}^{i-3}a(n-1;i-1,j)+\sum_{j=1}^{i-3}f(n;i,j)+\sum_{j=1}^{i-3}b(n-1;j)\\
&=c(n;i)+d(n;i)+(b(n-1;i-1)-d(n-1;i-1))+f(n;i)+\sum_{j=1}^{i-3}b(n-1;j).
\end{align*}
Next, observe that formulas \eqref{235al2e3}, \eqref{235al2e4} and \eqref{235al2e5} follow directly from the definitions and recurrences \eqref{235al1e3}, \eqref{235al1e2} and \eqref{235al1e1}, respectively.  Finally, formula \eqref{235al2e6} follows from summing \eqref{235al1e5} over $1 \leq j \leq i-4$ and noting $f(n;i,i-3)=c(n-1;i-1)$.
\end{proof}

Define $a_n(u)=\sum_{i=1}^na(n;i)u^i$ for $n\geq1$, $b_n(u)=\sum_{i=2}^{n-1}b(n;i)u^i$ for $n\geq3$, $c_n(u)=\sum_{i=3}^{n-1}c(n;i)u^i$ for $n\geq4$, $d_n(u)=\sum_{i=2}^{n-1}d(n;i)u^i$ for $n\geq3$, $e_n(u)=\sum_{i=1}^{n-2}e(n;i)u^i$ for $n \geq3$, and $f_n(u)=\sum_{i=4}^n f(n;i)u^i$ for $n\geq4$.  Assume all functions take the value zero if $n$ is such that the sum in question is empty.  Note that $a_1(u)=u$, with $b_3(u)=d_3(u)=u^2$.

Multiplying both sides of \eqref{235al2e1} by $u^i$, and summing over $1 \leq i \leq n-1$, yields
\begin{equation}\label{235ae1}
a_n(u)=a(n-1)u^n+a_{n-1}(u)+b_n(u)+e_n(u), \qquad n \geq2.
\end{equation}
Note that, by the definitions,
\begin{align*}
f(n;n)&=\sum_{j=1}^{n-3}f(n;n,j)=\sum_{j=1}^{n-3}a(n-1;j,j+2)=\sum_{j=1}^{n-3}e(n-1;j)=e_{n-1}(1), \qquad n \geq 4,
\end{align*}
and
$$b(n;n-1)=a(n;n-1)-a(n;n-1,n)=a(n-1)-a(n-2), \qquad n \geq 2.$$
By recurrence \eqref{235al2e2}, we then have
\begin{align}
b_n(u)&=c_n(u)+d_n(u)+u(b_{n-1}(u)-d_{n-1}(u))+f_n(u)-f(n;n)u^n+\sum_{j=1}^{n-3}b(n-1;j)\sum_{i=j+3}^{n-1}u^i\notag\\
&=c_n(u)+d_n(u)+u(b_{n-1}(u)-d_{n-1}(u))+f_n(u)-e_{n-1}(1)u^n\notag\\
&\quad+\frac{u^3}{1-u}(b_{n-1}(u)-(a(n-2)-a(n-3))u^{n-2})-\frac{u^n}{1-u}(b_{n-1}(1)-(a(n-2)-a(n-3)))\notag\\
&=c_n(u)+d_n(u)+u(b_{n-1}(u)-d_{n-1}(u))+f_n(u)-e_{n-1}(1)u^n\notag\\
&\quad+\frac{u}{1-u}(u^2b_{n-1}(u)-u^{n-1}b_{n-1}(1))+(a(n-2)-a(n-3))u^n, \qquad n \geq 3. \label{235ae2}
\end{align}

From recurrence \eqref{235al2e3}, we get
\begin{align}
c_n(u)&=u^2(b_{n-1}(u)-b(n-1;n-2)u^{n-2})+c_{n-1}(u)+c(n-1;n-1)u^{n-1}+ud_{n-1}(u)\notag\\ &=u^2b_{n-1}(u)-u^na(n-2)+u^{n-1}(1+u)a(n-3)+c_{n-1}(u)+ud_{n-1}(u), \qquad n \geq 4. \label{235ae3}
\end{align}
By \eqref{235al2e4} and \eqref{235al2e5}, we have
\begin{equation}\label{235ae4}
d_n(u)=a(n-3)u^{n-1}+ub_{n-1}(u)+d_{n-1}(u), \qquad n \geq3,
\end{equation}
and
\begin{equation}\label{235ae5}
e_n(u)=a(n-3)u^{n-2}+b_{n-1}(u)+\frac{1}{u}d_{n-1}(u)+e_{n-1}(u), \qquad n \geq 3.
\end{equation}
Finally, multiplying both sides of \eqref{235al2e6} by $u^i$, and summing over $4 \leq i \leq n$, yields
\begin{align}
f_n(u)&=u(c(n-1;u)+a(n-3)u^{n-1})+uf_{n-1}(u)\notag\\
&\quad+\sum_{j=1}^{n-3}b(n-2;j)\sum_{i=j+4}^nu^i+\sum_{j=1}^{n-4}d(n-2;j+1)\sum_{i=j+4}^nu^i\notag\\
&=a(n-3)u^n+uc_{n-1}(u)+uf_{n-1}(u)\notag\\
&\quad+\frac{u^3}{1-u}(ub_{n-2}(u)+d_{n-2}(u)-u^{n-2}(b_{n-2}(1)+d_{n-2}(1))), \qquad n \geq 4. \label{235ae6}
\end{align}

Define the generating functions $a(x;u)=\sum_{n\geq1}a_n(u)x^n$, $b(x;u)=\sum_{n\geq3}b_n(u)x^n$, $c(x;u)=\sum_{n\geq4}c_n(u)x^n$, $d(x;u)=\sum_{n\geq3}d_n(u)x^n$, $e(x;u)=\sum_{n\geq3}e_n(u)x^n$ and $f(x;u)=\sum_{n\geq 4}f_n(u)x^n$.  Recall that $a(n)=a_n(1)$ for $n\geq 1$, with $a(0)=1$.  Rewriting recurrences \eqref{235ae1}--\eqref{235ae6} in terms of generating functions yields the following system of functional equations.

\begin{lemma}\label{235al3}
We have
\begin{equation}\label{235a1f}
(1-x)a(x;u)=xu(1+a(xu;1))+b(x;u)+e(x;u),
\end{equation}
\begin{align}
(1-xu)b(x;u)&=-x^3u^3+c(x;u)+(1-xu)d(x;u)-xue(xu;1)+f(x;u)\notag\\
&\quad+x^2u^2(1-xu)a(xu;1)+\frac{xu}{1-u}(u^2b(x;u)-b(xu;1)),\label{235a2f}
\end{align}
\begin{equation}\label{235a3f}
(1-x)c(x;u)=x^3u^3-x^2u^2(1-x-xu)a(xu;1)+xu^2b(x;u)+xud(x;u),
\end{equation}
\begin{equation}\label{235a4f}
(1-x)d(x;u)=x^3u^2(1+a(xu;1))+xub(x;u),
\end{equation}
\begin{equation}\label{235a5f}
(1-x)e(x;u)=x^3u(1+a(xu;1))+xb(x;u)+\frac{x}{u}d(x;u),
\end{equation}
and
\begin{align}
(1-xu)f(x;u)&=x^3u^3a(xu;1)+xuc(x;u)\notag\\
&\quad+\frac{x^2u^3}{1-u}(ub(x;u)+d(x;u)-b(xu;1)-d(xu;1)).\label{235a6f}
\end{align}
\end{lemma}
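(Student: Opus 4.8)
The plan is to obtain the six functional equations \eqref{235a1f}--\eqref{235a6f} by multiplying each of the scalar recurrences \eqref{235ae1}--\eqref{235ae6} through by $x^n$ and summing over the indicated range of $n$, then identifying every resulting sum with one of the generating functions $a(x;u),b(x;u),c(x;u),d(x;u),e(x;u),f(x;u)$ (occasionally with $xu$ in the first slot). No convolutions over $n$ appear, since the inner $i$-sums have already been telescoped in \eqref{235ae1}--\eqref{235ae6}, so this is a purely term-by-term translation.

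First I would dispose of the ``diagonal'' monomials $a(n-k)\,u^{n-j}x^n$ that occur throughout. Using $a(n)=a_n(1)$ and $a(0)=1$, the substitution $m=n-k$ turns $\sum_n a(n-k)u^{n-j}x^n$ into an explicit monomial in $x,u$ times $\sum_m a(m)(ux)^m$, which equals that monomial times $1+a(xu;1)$ when the range of $m$ includes $0$ and times $a(xu;1)$ otherwise; for instance summing \eqref{235ae1} over $n\ge 2$ and using $a_1(u)=u$ on the left yields exactly $(1-x)a(x;u)=xu\bigl(1+a(xu;1)\bigr)+b(x;u)+e(x;u)$, which is \eqref{235a1f}. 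Next I would handle the evaluation-at-$u=1$ terms: each occurrence of $u^{n-1}b_{n-1}(1)$, of $e_{n-1}(1)u^n$, or of $u^{n-2}\bigl(b_{n-2}(1)+d_{n-2}(1)\bigr)$ becomes, after multiplying by $x^n$ and shifting, a power of $x,u$ times $b(xu;1)$, $e(xu;1)$, or $b(xu;1)+d(xu;1)$ respectively; the plain shifts $b_{n-1}(u),c_{n-1}(u),d_{n-1}(u),e_{n-1}(u),f_{n-1}(u)$ (and $b_{n-2},d_{n-2}$) each contribute $x$ times the corresponding generating function, once one checks that the lowest missing coefficients $b_2,c_3,d_2,e_2,f_3$ are all empty sums and hence $0$, so that no correction term is needed. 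Carrying the prefactors $\tfrac{u}{1-u}$ and $\tfrac{u^3}{1-u}$ along verbatim, the same procedure applied to \eqref{235ae2}--\eqref{235ae6} in turn produces \eqref{235a2f}--\eqref{235a6f}; in particular the term $\tfrac{xu}{1-u}\bigl(u^2b(x;u)-b(xu;1)\bigr)$ in \eqref{235a2f} comes from $\tfrac{u^3}{1-u}b_{n-1}(u)-\tfrac{u^n}{1-u}b_{n-1}(1)$, and $\tfrac{x^2u^3}{1-u}\bigl(ub(x;u)+d(x;u)-b(xu;1)-d(xu;1)\bigr)$ in \eqref{235a6f} from the last parenthesis of \eqref{235ae6}.

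The step most likely to cause trouble is the accurate bookkeeping of boundary data: the lowest surviving coefficient of each generating function (recall $a_1(u)=u$ and $b_3(u)=d_3(u)=u^2$), the precise lower limit of each shifted sum, and the distinction between a collapsed diagonal sum that begins at $m=0$ (giving $1+a(xu;1)$) and one that begins at $m\ge 1$ (giving $a(xu;1)$) — a single off-by-one here corrupts the polynomial coefficients. There is no conceptual obstacle beyond this; assembling the six translated identities gives the claimed system, after which it is solved (by the kernel method, as in the companion cases) to extract $F_T(x)$, but that lies beyond the present statement.
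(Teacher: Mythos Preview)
Your approach is correct and is exactly what the paper does: it states only that the functional equations are obtained by ``rewriting recurrences \eqref{235ae1}--\eqref{235ae6} in terms of generating functions,'' which is precisely the term-by-term translation (multiply by $x^n$, sum, shift indices) that you describe, including the care with boundary coefficients and the $1+a(xu;1)$ versus $a(xu;1)$ distinction. Your writeup simply makes explicit the bookkeeping the paper leaves to the reader.
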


We now determine the generating function $F_T(x)$.

\begin{theorem}\label{th235A1}
Let $T=\{1423,1432,2143\}$.  Then $y=F_T(x)$ satisfies the equation
$$y=1-x+xy+x(1-2x)y^2+x^2y^3.$$
\end{theorem}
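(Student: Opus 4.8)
The plan is to solve the linear system of functional equations of Lemma~\ref{235al3} for $a(x;1)$, since $F_T(x)=1+a(x;1)$. First I would use \eqref{235a3f}, \eqref{235a4f} and \eqref{235a5f}, which express $c(x;u)$, $d(x;u)$ and $e(x;u)$ in terms of $a(x;u)$, $b(x;u)$ and the boundary series $a(xu;1)$, together with \eqref{235a6f} to remove $f(x;u)$, thereby collapsing the six unknowns to two: $a(x;u)$ and $b(x;u)$. Eliminating $a(x;u)$ between the two surviving equations (solving the one descended from \eqref{235a1f} for $a(x;u)$ and substituting into the one descended from \eqref{235a2f}) leaves a single functional equation of the form
\[
K(x,u)\,b(x;u)=R\bigl(x,u,a(xu;1),b(xu;1)\bigr)
\]
with $K$ polynomial in $x$ and $u$. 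As in the treatment of the second triple of Case~171, I would then replace $x$ by $x/u$, which turns the boundary arguments $xu$ into $x$ so that the right-hand side involves only $a(x;1)$ and $b(x;1)$.

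A second relation, purely among the boundary series, comes from setting $u=1$ in \eqref{235a1f}, \eqref{235a4f} and \eqref{235a5f}: eliminating $d(x;1)$ and $e(x;1)$ yields
\[
(1-x)^2\bigl((1-2x)a(x;1)-x-b(x;1)\bigr)=x^3\bigl(1+a(x;1)\bigr)+x\,b(x;1),
\]
which determines $b(x;1)$ as an explicit rational function of $a(x;1)$ and $x$. Now comes the kernel method adapted for the system: choose $u=u_0(x)$ to be the branch of the kernel equation (after the $x\mapsto x/u$ rescaling) that is a formal power series with nonzero constant term, so that $b(x/u_0;u_0)$ is a legitimate series. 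Substituting $u=u_0(x)$ annihilates the left-hand side and produces an identity among $x$, $u_0$, $a(x;1)$ and $b(x;1)$; combined with the boundary relation just derived, this is a $2\times2$ linear system that I would solve for $a(x;1)$ as a rational function of $x$ and $u_0$.

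It remains to eliminate the algebraic quantity $u_0$, which satisfies the (low-degree) kernel equation, and then recognize the result. Writing $y=F_T(x)=1+a(x;1)$ and taking a resultant to remove $u_0$ (and $b(x;1)$) should give a polynomial equation for $y$ alone, which one checks is precisely $y=1-x+xy+x(1-2x)y^2+x^2y^3$; as a sanity check this cubic reproduces $|S_n(T)|=1,1,2,6,21,\dots$. I expect the main obstacle to be the bookkeeping rather than anything conceptual: correctly performing the five eliminations and computing $K$, verifying that the substitutions $x\mapsto x/u$ and $u\mapsto u_0(x)$ are valid over the ring of formal power series, and arranging the final resultant so that it collapses exactly to the stated cubic (and not to a proper multiple of it). These steps are routine but long and are best checked with computer algebra.
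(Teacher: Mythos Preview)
Your proposal is correct and follows essentially the same route as the paper. The paper also reduces Lemma~\ref{235al3} by eliminating $c,d,e,f$ in terms of $b$ and the boundary series, performs the substitution $x\mapsto x/u$, and applies the kernel method with the cubic kernel $K(x;u)=u^2(1-u)-xu(2-u^2)+x^2(1+2u-2u^2)-x^3$; the only organizational differences are that the paper first solves for $b(x;1),d(x;1),e(x;1)$ in terms of $a(x;1)$ at $u=1$ and then substitutes (rather than carrying $b(x;1)$ as a second unknown to the end), and that the final elimination of $u_0$ is done by exhibiting an explicit factorization $1-x-(1-x)y+x(1-2x)y^2+x^2y^3=\dfrac{(1-x)^2K(x;u_0)V(x;u_0)}{(\cdots)^3}$ rather than via a resultant. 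One small correction: after the eliminations, the equation descended from \eqref{235a2f} already contains no $a(x;u)$, so your ``eliminate $a(x;u)$ between the two surviving equations'' step is vacuous---the $b$-equation alone carries the kernel, and \eqref{235a1f} is used only through its $u=1$ specialization (your boundary relation, which is correct).
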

\begin{proof}
By solving \eqref{235a1f}, \eqref{235a4f} and \eqref{235a5f} with $u=1$ for $b(x;1)$, $d(x;1)$ and $e(x;1)$, we obtain
\begin{align*}
b(x;1)&=\frac{1-4x+5x^2-3x^3}{1-x+x^2}a(x;1)-\frac{x(1-2x+2x^2)}{1-x+x^2},\\
d(x;1)&=\frac{x(1-x)^3}{1-x+x^2}a(x;1)-\frac{x^2(1-x)^2}{1-x+x^2},\\
e(x;1)&=\frac{x(1-x)^2}{1-x+x^2}a(x;1)-\frac{x^2(1-x)}{1-x+x^2}.
\end{align*}
Define $K(x;u)=u^2(1-u)-xu(2-u^2)+x^2(1+2u-2u^2)-x^3$. Substituting the expressions for $b(x;1)$, $d(x;1)$ and $e(x;1)$ into \eqref{235a2f}--\eqref{235a6f}, and then solving for $b(x/u;u)$, $c(x/u;u)$, $d(x/u;u)$, $e(x/u;u)$ and $f(x/u;u)$, yields
\begin{align*}
K(x;u)b(x/u;u)&=x(-u^2+2xu(u+1)-x^2(u^2+3u+1)+x^3(2u+1))a(x;1)\\
&\quad+x^2u(2x^2-xu-x+u),\\
K(x;u)e(x/u;u)&=x^2(1-x)(x-u)a(x;1)+x^3(1-x).
\end{align*}
Multiplying both sides of \eqref{235a1f} by $K(x;u)$, and then substituting the expressions of $K(x;u)b(x/u;u)$ and $K(x;u)e(x/u;u)$, gives
\begin{align*}
(1-x/u)K(x;u)a(x/u;u)&=x(x-u)(u^2+x(1-u-u^2)+x^2(2u-1))a(x;1)\\
&\quad+x(1-u)(u^2-xu(2+u)+x^2(2+3u)-2x^3).
\end{align*}
To solve this last equation, we let $u=u_0=u_0(x)$ such that $K(x;u_0(x))=0$. Then
\begin{align*}
F_T(x)=1+a(x;1)&=\frac{(1-x)(x^2-xu_0+u_0^2)}{(u_0-x)(x(1-x)-x(1-2x)u_0+(1-x)u_0^2)}.
\end{align*}
Using the fact that $u_0^3=u_0^2(1-u_0)-xu_0(2-u_0^2)+x^2(1+2u_0-2u_0^2)$, we obtain
\begin{align*}
&1-x-(1-x)F_T(x)+x(1-2x)F_T^2(x)+x^2F_T^3(x)\\
&\quad=\frac{(1-x)^2K(x;u_0)V(x;u_0)}{(x-u_0)^3(x(1-x)-x(1-2x)u_0+(1-x)u_0^2)^3}=0,
\end{align*}
where
\begin{align*}
V(x;u)&=-x^5(2x^4+7x^2(1-x)-5x+2)+x^2(x+1)(4x^4-7x^3+8x^2-5x+1)(u-x)\\
&\quad-(7x^6+2x^5(1-x^2)-x^4-37x^3(1-x)+24x^2(1-x^2)-8x+1)(u-x)^2.
\end{align*}
Hence, the generating function $F_T(x)$ satisfies
$$F_T(x)=1-x+xF_T(x)+x(1-2x)F_T^2(x)+x^2F_T^3(x),$$
as desired.
\end{proof}

\subsubsection{$\mathbf{T=\{1423,1432,3142\}}$}
\begin{theorem}\label{th235A2}
Let $T=\{1423,1432,3142\}$. Then $y=F_T(x)$ satisfies the equation
$$y=1-x+xy+x(1-2x)y^2+x^2y^3.$$
\end{theorem}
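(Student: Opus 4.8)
The plan is to decompose a $T$-avoider by its left-right maxima, in the style already used in this paper for the $3142$-containing triples (compare the proofs of Theorems~\ref{th229A1} and~\ref{th234A1}). Let $G_m(x)$ be the \gf for permutations in $S_n(T)$ with exactly $m$ left-right maxima, so that $G_0(x)=1$, $G_1(x)=xF_T(x)$, and $F_T(x)=\sum_{m\ge0}G_m(x)$; the goal is to find a (recursive or closed-form) expression for $G_m(x)$ when $m\ge2$ whose summation over $m$ yields the stated relation. Write such a $\pi$ as $\pi=i_1\pi^{(1)}i_2\pi^{(2)}\cdots i_m\pi^{(m)}$ with $i_1<i_2<\cdots<i_m=n$ and $\max(\pi^{(j)})<i_j$ for each $j$, and recall that, since $i_1$ is the first letter, every entry below $i_1$ occurs after $i_2$.

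The first step is to constrain the ``interior'' blocks $\pi^{(1)},\dots,\pi^{(m-1)}$ using the patterns $1423$ and $1432$. Taking $i_1,i_2$ as the ``$1$'' and ``$4$'' of a forbidden $1423$ or $1432$ shows there is at most one letter strictly between $i_1$ and $i_2$; running the same argument with $i_1,i_j$ in place of $i_1,i_2$ packs the left-right maxima tightly and forces each of $\pi^{(2)},\dots,\pi^{(m-2)}$ to be a single (possibly empty) increasing block of consecutive values. Moreover, if $\pi^{(1)}\ne\emptyset$ and $a\in\pi^{(1)}$, then $i_1\,a\,i_2\,b$ is a forbidden $3142$ for any later $b$ with $a<b<i_1$, which forces $\pi^{(1)}$ together with $i_1$ to occupy an interval of consecutive values and to be severely restricted internally. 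The second step is the shape of the tail $\pi^{(m)}$: using $3142$ with $i_1$ (and with $i_{m-1}$) as the ``$3$'', together with $1423,1432$, one separates $\pi^{(m)}$ into a bounded number of independent factors according to whether an entry is above $i_{m-1}$, between $i_1$ and $i_{m-1}$, or below $i_1$, with at most three of these genuinely free $T$-avoiders (this is why the degree is three; a short check of which self-references actually occur should confirm that no Catalan factor is needed, consistently with $F_T(x)$ being a true cubic over $\mathbb{Q}(x)$, unlike in Theorem~\ref{th234A1}). Assembling the contributions gives, for $m\ge2$, an equation for $G_m(x)$ with coefficients rational in $x$ and $F_T(x)$; summing the resulting geometric-type series over $m\ge0$ and simplifying should collapse to
$$F_T(x)=1-x+xF_T(x)+x(1-2x)F_T^2(x)+x^2F_T^3(x),$$
which is the claim.

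The hard part will be the second step: correctly enumerating the sub-cases for $\pi^{(m)}$ and verifying both that the pieces are genuinely independent and that their concatenation avoids all of $1423,1432,3142$; a carefully drawn matrix diagram should carry most of this bookkeeping, but the small cases $m=2$ and $m=3$ typically behave differently from generic $m$ and will have to be treated separately. Should the tail analysis prove unwieldy, a guaranteed (though longer) fallback is to imitate the proof of Theorem~\ref{th235A1}: set up recurrences for the arrays $a(n;i,j)$ (and $a(n;i,j,k)$) counting $T$-avoiders by their first few letters, introduce the usual auxiliary generating functions, pass to a system of functional equations, and extract $F_T(x)$ by the kernel method.
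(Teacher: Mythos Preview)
Your setup matches the paper's: decompose by left-right maxima, write $G_m(x)$, and use the $1423/1432$ constraint to bound the gap $i_2-i_1$. Your observation that ``there is at most one letter strictly between $i_1$ and $i_2$'' is exactly the paper's starting point, phrased there as $i_2\in\{i_1+1,i_1+2\}$.

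Where you diverge is in the decomposition strategy. You propose to analyze the \emph{tail} $\pi^{(m)}$ and the intermediate blocks $\pi^{(2)},\dots,\pi^{(m-1)}$ all at once, separating $\pi^{(m)}$ into up to three independent $T$-avoiding factors. The paper instead \emph{recurses from the front}: it splits only on whether $i_2=i_1+1$ or $i_2=i_1+2$, peels off $i_1\pi^{(1)}$ (and, in the second case, also the letter $i_1+1$ together with two short blocks $\alpha,\beta$ determined by $3142$-avoidance), and observes that what remains is again a $T$-avoider with fewer left-right maxima. Concretely, the paper obtains
\[
G_m(x)=xF_T(x)\,G_{m-1}(x)+\sum_{j=2}^{m}x^{j}F_T(x)^2\,G_{m-j+1}(x),\qquad m\ge2,
\]
where $j$ records the index with $i_1+1\in\pi^{(j)}$ in the case $i_2=i_1+2$. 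Taking a difference $G_m-xG_{m-1}$ collapses the convolution to a three-term relation, and summing over $m\ge2$ immediately gives the cubic.

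This recursion sidesteps exactly the ``hard part'' you flag: there is no separate small-$m$ analysis, no case split on the shape of $\pi^{(m)}$, and the three occurrences of $F_T$ in the final equation come not from three factors inside $\pi^{(m)}$ but from $\pi^{(1)}$, from the pair $(\alpha,\beta)$ in the $i_2=i_1+2$ case, and from the recursion itself. Your tail-based plan is not obviously wrong, but it is vaguer (the claim that $\pi^{(2)},\dots,\pi^{(m-2)}$ are ``single increasing blocks of consecutive values'' needs care once letters below $i_1$ are allowed there) and would require the bookkeeping you anticipate; the front-peeling recursion is the idea you are missing.
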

\begin{proof}
Let $G_m(x)$ be the generating function for $T$-avoiders with $m$ left-right maxima.
Clearly, $G_0(x)=1$ and $G_1(x)=xF_T(x)$. Now suppose  $\pi=i_1\pi^{(1)}i_2\pi^{(2)}\cdots i_m\pi^{(m)}\in S_n(T)$ with $m\ge 2$ left-right maxima. Since $\pi$ avoids $1423$ and $1432$, we have that either $i_2=i_1+1$ or $i_2=i_1+2$.
\begin{itemize}
\item The case $i_2=i_1+1$. Since $\pi$ avoids $3142$, we see that there is no element between the minimal element of $\pi^{(1)}$ and $i_1$ in $\pi^{(2)}\pi^{(3)}\cdots\pi^{(m)}$. Thus, the contribution in this case is $xF_T(x)G_{m-1}(x)$, where $xF_T(x)$ accounts for the section $i_1\pi^{(1)}$ and $G_{m-1}(x)$ for $i_2\pi^{(2)}\cdots i_m\pi^{(m)}$.

\item The case $i_2=i_1+2$. Let $j$ be the index with  $i_1+1 \in \pi^{(j)}$.
Then $\pi$ has the form
\begin{center}
\begin{pspicture}(-1.5,-0.5)(9.5,6.3)
\psset{xunit =.5cm, yunit=.4cm,linewidth=.5\pslinewidth}
\psline[linecolor=gray](6,6)(6,8)
\psline[linecolor=gray](4,4,)(4,0)
\psline[linecolor=gray](6,6)(8,6)(8,4)
\psline[linecolor=gray](4,0)(4,4)
\psline[linecolor=gray](2,0)(2,6)
\psline(0,0)(12,0)(12,14)(10,14)(10,13)
\psline(4,6)(4,4)(8,4)(8,0)
\psline(4,6)(6,6)(6,2)(8,2)
\psline(0,6)(0,8)(2,8)(2,6)(0,6)
\psline(8,11)(8,12)(9,12)
\psline(2,8)(2,9)(3,9)
\pspolygon[fillstyle=hlines,hatchcolor=lightgray,hatchsep=0.8pt](3,9)(3,10)(4,10)(4,11)(12,11)(12,9)(3,9)
\pspolygon[fillstyle=hlines,hatchcolor=lightgray,hatchsep=0.8pt](0,0)(0,6)(4,6)(4,4)(6,4)(6,2)(8,2)(8,0)(0,0)
\pspolygon[fillstyle=hlines,hatchcolor=lightgray,hatchsep=0.8pt](2,8)(2,6)(6,6)(6,4)(8,4)(8,2)(12,2)(12,8)(2,8)
\psline[linecolor=white](2,6)(4,6)
\psline[linecolor=white](3,9)(3,10)(4,10)
\psline[linestyle=dashed](3,9)(3,10)(4,10)
\psline[linecolor=white](9,12)(9,13)(10,13)
\psline[linestyle=dashed](9,12)(9,13)(10,13)
\rput(1,7){\textrm{{\small $\pi^{(1)}$}}}
\rput(5,5){\textrm{{\small $\alpha$}}}
\rput(7,3){\textrm{{\small $\beta$}}}
\pscircle*(0,8){.08}\pscircle*(2,9){.08}\pscircle*(4,11){.08}\pscircle*(8,12){.08}\pscircle*(10,14){.08}
\pscircle*(6,8.5){.08}
\rput(8,10){\textrm{{\small $1423,\ 1432$}}}
\rput(3,3){\textrm{{\small $3\overset{{\gray \bullet}}{1}42$}}}
\rput(-0.5,8.4){\textrm{{\footnotesize $i_1$}}}
\rput(.5,9.4){\textrm{{\footnotesize $i_2=i_{1}\!+\!2$}}}
\rput(3.4,11.3){\textrm{{\footnotesize $i_{j}$}}}
\rput(7.2,12.3){\textrm{{\footnotesize $i_{j+1}$}}}
\rput(7.1,8.6){\textrm{{\footnotesize $i_{1}\!+\!1$}}}
\rput(9.4,14.3){\textrm{{\footnotesize $i_{m}$}}}
\end{pspicture}
\end{center}
with $\pi^{(1)}>\alpha>\beta$ to avoid 3142, where $i_1 \pi^{(1)}\alpha\beta$ spans an interval of integers, also to avoid 3142,
and the other shaded regions are empty to avoid the indicated patterns.

Thus, for given $j$, we have a bijection between such permutations and triples \linebreak
$(\pi^{(1)},\alpha,i_j\beta i_{j+1}\pi^{(j+1)}\cdots i_m\pi^{(m)}),$
where $\pi^{(1)}$ and $\alpha$ avoid $T$, and $i_j\beta i_{j+1}\pi^{(j+1)}\cdots i_m\pi^{(m)}$ avoids $T$ with exactly $m-(j-1)$ left-right maxima.
Hence, the contribution in this case is given by $x^{j}F_T^2(x)G_{m-j+1}(x)$, where $j=2,3,\ldots,m$.
\end{itemize}
By adding all the contributions, we get
$$G_m(x)=xF_T(x)G_{m-1}+\sum_{j=2}^{m}x^jF_T^2(x)G_{m-j+1}(x),\quad m\geq2,$$
which implies
$$G_m(x)-xG_{m-1}(x)=xF_T(x)G_{m-1}-x^2F_T(x)G_{m-2}(x)+x^2F_T^2(x)G_{m-1}(x)$$
with $G_0(x)=1$ and $G_1(x)=xF_T(x)$. By summing this recurrence over all $m\geq2$, we have
$$F_T(x)-1-xF_T(x)-x(F_T(x)-1)=xF_T(x)(F_T(x)-1)-x^2F_T^2(x)+x^2 F_T^2(x)(F_T(x)-1),$$
which leads to
$$F_T(x)=1-x+xF_T(x)+x(1-2x)F_T^2(x)+x^2F_T^3(x),$$
as required.
\end{proof}

\subsubsection{$\mathbf{T=\{1234,1243,2314\}}$}
To enumerate the members of $S_n(T)$, we categorize them by their first letter and the position of the leftmost ascent.  More precisely, given $1 \leq j \leq i \leq n$, let $a(n;i,j)$ be the number of $T$-avoiding permutations of length $n$ starting with the letter $i$ whose leftmost ascent is at index $j$.  For example, we have $a(4;3,2)=3$, the enumerated permutations being $3124$, $3142$ and $3241$. If $1 \leq i \leq n$, then let $a(n;i)=\sum_{j=1}^ia(n;i,j)$ and let $a(n)=\sum_{i=1}^na(n;i)$ for $n\geq1$, with $a(0)=1$.  The array $a(n;i,j)$ satisfies the following recurrence relations.

\begin{lemma}\label{235cl1}
If $n \geq3$, then
\begin{equation}\label{235cl1e1}
a(n;i,j)=\sum_{\ell=1}^{n-i}\sum_{k=j}^{i}a(n-\ell;i,k), \qquad 1 \leq j \leq i \leq n-2.
\end{equation}
If $2 \leq j \leq n-1$, then $a(n;n-1,j)=\sum_{i=j-1}^{n-2}a(n-1;i,j-1)$, with $a(n;n-1,1)=a(n-2)$ for $n\geq2$.  If $2 \leq j \leq n$, then $a(n;n,j)=\sum_{i=j-1}^{n-1}a(n-1;i,j-1)$, with $a(n;n,1)=\delta_{n,1}$ for $n\geq1$.
\end{lemma}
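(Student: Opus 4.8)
The plan is to prove the three families of relations by a direct combinatorial analysis of the shape of a $T$-avoider $\pi=\pi_1\pi_2\cdots\pi_n$ with first letter $i$ and leftmost ascent at index $j$, organized according to the size of $i$ relative to $n$.

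First I would dispose of the boundary families, $i\in\{n-1,n\}$. The key observation is that none of the three patterns $1234,1243,2314$ has its largest entry in first position (the maxima sit in pattern-positions $4,3,4$ respectively), so a leading letter equal to $n$ or $n-1$ can never be part of a forbidden pattern. Hence deleting $\pi_1$ (and standardizing, which either does nothing or maps $n\mapsto n-1$) is a bijection from the avoiders counted by $a(n;i,j)$ onto $T$-avoiders of length $n-1$, with the reverse direction — prepending $n$ or $n-1$ — automatically legal; one only has to track the transformed parameters. When $j\ge 2$ the letters $i>\pi_2>\cdots>\pi_j$ form a decreasing block, so the new first letter is $\pi_2$, which ranges over $\{j-1,\ldots,n-2\}$ or $\{j-1,\ldots,n-1\}$ according as $i=n-1$ or $i=n$, and the leftmost ascent moves to index $j-1$; this is exactly the stated sum. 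When $j=1$, a leading $i=n$ would force $\pi_2>n$, impossible, so $a(n;n,1)=\delta_{n,1}$ (the one-letter permutation being the exception); a leading $n-1$ forces $\pi_1\pi_2=(n-1)n$, and since neither of these two letters can lie in a forbidden pattern, the remaining $n-2$ letters form an arbitrary $T$-avoider, giving $a(n;n-1,1)=a(n-2)$.

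The substance is \eqref{235cl1e1}, for $i\le n-2$. Here I would analyze $\pi$ by repeatedly peeling off its current largest letter. Since $\pi_1,\ldots,\pi_j$ are all at most $i$, the value $n$ occupies a position $>j$, and $\pi_{j+1}>\pi_j$ is the ascent top. Deleting $n$ leaves a $T$-avoider of length $n-1$ with first letter still $i$; if $n$ stood in position $j+1$ its deletion may move the leftmost ascent rightward to some $k$ with $j\le k\le i$, whereas if $n$ stood in a position $\ge j+2$ the leftmost ascent remains at $j$. Iterating this ``delete the current maximum'' step, one continues while the maximum sits to the right of position $j+1$ and the length exceeds $i$; the process terminates once the current maximum reaches position $j+1$ (one then deletes it one more time) or the length drops to $i$. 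Writing $\ell$ for the total number of deletions and $k\in\{j,\ldots,i\}$ for the leftmost-ascent index of the resulting length-$(n-\ell)$ permutation, this realizes the class $a(n;i,j)$ as a disjoint union over $(\ell,k)$ with $1\le\ell\le n-i$, $j\le k\le i$. The forbidden patterns enter precisely to make the pieces bijective: $2314$-avoidance (with $\pi_1$ as the ``$2$'' and a large letter as the ``$4$'') severely constrains the relative positions of the letters below $i$ and the letters exceeding the ascent top, while $1234$- and $1243$-avoidance (with $\pi_j\pi_{j+1}$ as the ``$12$'') force at most one letter to the right of $\pi_{j+1}$ to exceed $\pi_{j+1}$. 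Together these rigidify the shape of $\pi$ enough that, conversely, from a $T$-avoider $\sigma$ of length $n-\ell$ with first letter $i$ and leftmost ascent at some $k\ge j$, there is a unique way to restore the $\ell$ largest values — the smallest of them at position $j+1$, which re-creates the ascent at index $j$, and each subsequent one at the single admissible position — producing a $T$-avoider of length $n$ with leftmost ascent at $j$ and the given $\ell$.

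The main obstacle is the last step: establishing that the reinsertion map is well-defined and is a genuine two-sided inverse of the peeling map, so that each $a(n-\ell;i,k)$ occurs with coefficient exactly $1$. This requires combining the case analysis around the position of $n$, the uniqueness of the large letter lying to the right of the ascent top, and the placement restrictions on the letters below $i$. I would also separately verify the small cases (in particular the extreme $\ell=n-i$, where the peeled permutation has length $i$ and begins with its own maximum) and check that the two stopping conditions of the peeling process do not overlap, so that the ranges $1\le\ell\le n-i$ and $j\le k\le i$ are covered exactly once each.
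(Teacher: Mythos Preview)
Your treatment of the boundary families $i\in\{n-1,n\}$ is correct and coincides with the paper's argument: the first letter cannot participate in any pattern of $T$, so deleting it is a bijection, and the parameter bookkeeping you describe is exactly right.

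The approach to \eqref{235cl1e1}, however, has a genuine gap. Your iterative ``delete the current maximum'' process collapses too quickly. You yourself note (correctly) that $\pi_{j+1}\in\{n-1,n\}$, since otherwise $\pi_j\pi_{j+1}$ together with $n-1,n$ would give a $1234$ or $1243$. But then, whenever $n$ sits at some position $r>j+1$, the value $n-1$ is already at position $j+1$. So after the very first deletion your stopping rule fires: the new maximum $n-1$ is at position $j+1$, you delete it once more, and the process halts with $\ell_{\text{student}}=2$. Thus your peeling only ever produces $\ell\in\{1,2\}$, never the full range $1\le\ell\le n-i$ needed in the sum. The images you obtain for different values of the paper's $\ell$ all land in $A_{n-2,i,j}$ (with the extra, unaccounted-for feature that the $(j+1)$-st letter is the current maximum), so the map is not the claimed bijection.

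Relatedly, your inverse is mis-specified: you propose to place the \emph{smallest} reinserted value $n-\ell+1$ at position $j+1$, but for $\ell\ge 3$ this produces $\pi_{j+1}<n-1$, which is forbidden by the same $1234/1243$ argument. The missing structural fact is that the $\ell$ deleted letters occupy positions $j+1,\ldots,j+\ell$ in the very specific order
\[
\pi_{j+1}\pi_{j+2}\cdots\pi_{j+\ell}=(n-1)(n-2)\cdots(n-\ell+1)\,n,
\]
a decreasing run of consecutive values capped by $n$ at the end. This is forced step by step: once $\pi_{j+1}=n-1$, avoidance of $2314$ (with $i$ as the ``$2$'' and $n$ as the ``$4$'') puts every $\pi_s$ with $j+1<s<r$ above $i$, and then $1234/1243$ (with $\pi_j\pi_{j+2}$ as the ``$12$'') forces $\pi_{j+2}=n-2$, and inductively $\pi_{j+t}=n-t$ until $n$ appears. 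With this block identified, deleting it in one stroke gives a $T$-avoider of length $n-\ell$ with first letter $i$ and leftmost ascent at some $k\in[j,i]$, and reinsertion of the block is manifestly unique. That is the decomposition that yields \eqref{235cl1e1}; your iterative scheme does not recover it.
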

\begin{proof}
Let $A_{n,i,j}$ denote the subset of $S_n(T)$ enumerated by $a(n;i,j)$.  First note that removing the initial letter $n$ from members of $A_{n,n,j}$ for $2 \leq j \leq n$ defines a bijection with $\cup_{i=j-1}^{n-1}A_{n-1,i,j-1}$ (where $A_{n,n,n}$ is understood to be the singleton set consisting of the decreasing permutation $n(n-1)\cdots1$).  This implies the formula for $a(n;n,j)$ for $j>1$, with the condition $a(n;n,1)=\delta_{n,1}$ following from the definitions.  Similarly, removing $n-1$ from members of $A_{n,n-1,j}$ when $j>1$ implies the formula for $a(n;n-1,j)$ in this case. That $a(n;n-1,1)=a(n-2)$ follows from the fact that one may safely delete both $n-1$ and $n$ from members of $S_n(T)$ starting with these letters.

To show \eqref{235cl1e1}, we first consider the possible values of $\pi_{j+1}$ within $\pi=\pi_1\pi_2\cdots\pi_n \in A_{n,i,j}$ where $i<n-1$.  Note that if $\pi_{j+1}<n-1$, then $\pi$ would contain either $1234$ or $1243$, as witnessed by the subsequences $\pi_j\pi_{j+1}(n-1)n$ or $\pi_j\pi_{j+1}n(n-1)$, which is impossible.  Thus, we must have $\pi_{j+1}=n-1$ or $n$.  If $\pi_{j+1}=n-1$, consider further the sequence of letters $\pi_{j+1}\pi_{j+2}\cdots \pi_r$, where $r$ is such that $\pi_r=n$.  If $r>j+2$, then each letter $\pi_s$ for $j+2\leq s \leq r-1$ must satisfy $\pi_s>i$, for otherwise $\pi$ would contain $2314$ (with the subsequence $i(n-1)xn$ for some $x<i$).  Furthermore, if $r>j+2$ and $\pi_{j+2}<n-2$, then $i\pi_{j+2}$ would be the first two letters in an occurrence of $1234$ or $1243$, which is impossible.  Thus, we must have $\pi_{j+2}=n-2$.  Similarly, by an inductive argument, we get $\pi_{j+1}\pi_{j+2}\cdots\pi_{r-1}\pi_r=(n-1)(n-2)\cdots(n-r+j+1)n$.  Note that each of these $\ell$ letters, where $\ell=r-j$, is seen to be extraneous concerning avoidance of $T$ and thus may be deleted.  The remaining letters comprise a member of $A_{n-\ell,i,k}$ for some $k \in [j,i]$ and hence there are $\sum_{k=j}^ia(n-\ell;i,k)$ possibilities for these letters.  Since each letter of the section $\pi_{j+1}\cdots\pi_r$ belongs to $[i+1,n]$, its length $\ell$ can range from $1$ to $n-i$, with the contents of the section determined by its length.  Allowing $\ell$ to vary implies formula \eqref{235cl1e1} and completes the proof.
\end{proof}

Let $a_{n,i}(v)=\sum_{j=1}^ia(n;i,j)v^j$ for $1 \leq i \leq n$ and $a_n(u,v)=\sum_{i=1}^na_{n,i}(v)u^i$ for $n\geq1$.  Multiplying both sides of \eqref{235cl1e1} by $v^j$, and summing over $1 \leq j \leq i$, gives
\begin{align*}
a_{n,i}(v)&=\sum_{j=1}^{i}v^j\sum_{\ell=1}^{n-i}\sum_{k=j}^ia(n-\ell;i,k)=\sum_{\ell=1}^{n-i}\sum_{k=1}^ia(n-\ell;i,k)\left(\frac{v-v^{k+1}}{1-v}\right)\\
&=\frac{v}{1-v}\sum_{\ell=1}^{n-i}(a_{n-\ell,i}(1)-a_{n-\ell,i}(v)), \qquad 1 \leq i \leq n-2,
\end{align*}
with
\begin{align*}
a_{n,n-1}(v)-a(n-2)v&=\sum_{j=2}^{n-1}v^j\sum_{i=j-1}^{n-2}a(n-1;i,j-1)=\sum_{i=1}^{n-2}\sum_{j=2}^{i+1}a(n-1;i,j-1)v^j\\
&=v\sum_{i=1}^{n-2}a_{n-1,i}(v)=v(a_{n-1}(1,v)-a_{n-1,n-1}(v))\\
&=v(a_{n-1}(1,v)-va_{n-2}(1,v)),\qquad n \geq2,
\end{align*}
and
\begin{align*}
a_{n,n}(v)=\sum_{i=1}^{n-1}\sum_{j=2}^{i+1}a(n-1;i,j-1)v^j=v\sum_{i=1}^{n-1}a_{n-1,i}(v)=va_{n-1}(1,v), \qquad n \geq1.
\end{align*}

The preceding equations then imply
\begin{align}
a_n(u,v)&=\frac{v}{1-v}\sum_{i=1}^{n-2}u^i\sum_{\ell=1}^{n-i}(a_{n-\ell,i}(1)-a_{n-\ell,i}(v))+u^{n-1}a_{n,n-1}(v)+u^na_{n,n}(v)\notag\\
&=\frac{v}{1-v}\sum_{\ell=1}^{n-1}\sum_{i=1}^{n-\ell}(a_{n-\ell,i}(1)-a_{n-\ell,i}(v))u^i-\frac{u^{n-1}v}{1-v}(a_{n-1,n-1}(1)-a_{n-1,n-1}(v))\notag\\
&\quad+u^{n-1}a_{n,n-1}(v)+u^na_{n,n}(v)\notag\\
&=\frac{v}{1-v}\sum_{\ell=1}^{n-1}(a_\ell(u,1)-a_\ell(u,v))-\frac{u^{n-1}v}{1-v}(a(n-2)-va_{n-2}(1,v))\notag\\
&\quad+u^{n-1}v(a_{n-1}(1,v)-va_{n-2}(1,v))+u^{n-1}va(n-2)+u^nva_{n-1}(1,v)\notag\\
&=\frac{v}{1-v}\sum_{\ell=1}^{n-1}(a_\ell(u,1)-a_\ell(u,v))+u^{n-1}v(1+u)a_{n-1}(1,v)\notag\\
&\quad-\frac{u^{n-1}v^2}{1-v}(a(n-2)-va_{n-2}(1,v)), \qquad n \geq 2, \label{235ce1}
\end{align}
with $a_0(u,v)=1$ and $a_1(u,v)=uv$.

Let $a(x;u,v)=\sum_{n\geq1}a_n(u,v)x^n$.  Multiplying both sides of \eqref{235ce1} by $x^n$, and summing over $n\geq2$, yields the following functional equation.

\begin{lemma}\label{235cl2}
We have
\begin{align}
a(x;u,v)&=xuv(1-xv)+\frac{xv}{(1-x)(1-v)}(a(x;u,1)-a(x;u,v))+xv(1+u)a(xu;1,v)\notag\\
&\quad-\frac{x^2uv^2}{1-v}(a(xu;1,1)-va(xu;1,v)). \label{235cl2e1}
\end{align}
\end{lemma}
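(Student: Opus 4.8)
The plan is to obtain \eqref{235cl2e1} by translating the recurrence \eqref{235ce1} into generating function language: multiply both sides by $x^n$, sum over $n\geq2$, and identify each resulting sum. Writing $a(x;u,v)=\sum_{n\geq1}a_n(u,v)x^n$ and recalling $a_1(u,v)=uv$, the left-hand side contributes $\sum_{n\geq2}a_n(u,v)x^n=a(x;u,v)-uvx$, so the whole task reduces to evaluating the generating function of the right-hand side of \eqref{235ce1}.

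I would handle the three summands of \eqref{235ce1} in turn. For the term $\frac{v}{1-v}\sum_{\ell=1}^{n-1}\bigl(a_\ell(u,1)-a_\ell(u,v)\bigr)$, interchanging the order of summation gives $\sum_{n\geq2}x^n\sum_{\ell=1}^{n-1}c_\ell=\frac{x}{1-x}\sum_{\ell\geq1}c_\ell x^\ell$ with $c_\ell=a_\ell(u,1)-a_\ell(u,v)$, producing $\frac{xv}{(1-x)(1-v)}\bigl(a(x;u,1)-a(x;u,v)\bigr)$. For the term $u^{n-1}v(1+u)a_{n-1}(1,v)$, the substitution $m=n-1$ yields $xv(1+u)\sum_{m\geq1}(xu)^m a_m(1,v)=xv(1+u)a(xu;1,v)$. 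For the last term $-\frac{u^{n-1}v^2}{1-v}\bigl(a(n-2)-va_{n-2}(1,v)\bigr)$, the substitution $m=n-2$ gives $-\frac{x^2uv^2}{1-v}\sum_{m\geq0}(xu)^m\bigl(a(m)-va_m(1,v)\bigr)$, where I use $a(m)=a_m(1,1)$.

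The only point needing care — and the main (minor) obstacle — is the bookkeeping of the $m=0$ terms in this last sum, because $a(x;\cdot,\cdot)$ is defined from $n=1$ onward whereas $a_0(u,v)=1$ and $a(0)=1$. Isolating the constant terms gives $\sum_{m\geq0}(xu)^m a(m)=1+a(xu;1,1)$ and $\sum_{m\geq0}(xu)^m a_m(1,v)=1+a(xu;1,v)$, so the last term equals $-x^2uv^2-\frac{x^2uv^2}{1-v}\bigl(a(xu;1,1)-va(xu;1,v)\bigr)$. Collecting everything and moving the polynomial pieces to one side, the stray terms combine as $uvx-x^2uv^2=xuv(1-xv)$, which is precisely the polynomial part of \eqref{235cl2e1}; a final rearrangement gives the asserted identity. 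Beyond these index shifts and the initial-term accounting, the computation is routine.
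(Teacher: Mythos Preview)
Your proof is correct and follows exactly the approach the paper itself uses: multiply \eqref{235ce1} by $x^n$, sum over $n\ge 2$, and identify each piece. Your handling of the $m=0$ terms in the third summand (producing the extra $-x^2uv^2$ that combines with $uvx$ to give $xuv(1-xv)$) is precisely the small bookkeeping step the paper suppresses.
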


We now determine the generating function $F_T(x)$.

\begin{theorem}\label{th235A3}
Let $T=\{1234,1243,2314\}$.  Then $y=F_T(x)$ satisfies the equation
$$y=1-x+xy+x(1-2x)y^2+x^2y^3.$$
\end{theorem}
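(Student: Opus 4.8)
The plan is to specialize the functional equation \eqref{235cl2e1} of Lemma \ref{235cl2} at $u=1$ and then apply the kernel method, much as in the proof of Theorem \ref{th235A1} but now with only a single scalar equation to solve. Since $a(x;u,v)=\sum_{n\ge1}a_n(u,v)x^n$ and $a_n(1,1)=a(n)=|S_n(T)|$ for $n\ge1$, we have $F_T(x)=1+a(x;1,1)$, so it suffices to determine $a(x;1,1)$.

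Setting $u=1$ in \eqref{235cl2e1} and using $a(x\cdot 1;1,v)=a(x;1,v)$ gives
$$a(x;1,v)=xv(1-xv)+\frac{xv}{(1-x)(1-v)}\big(a(x;1,1)-a(x;1,v)\big)+2xv\,a(x;1,v)-\frac{x^2v^2}{1-v}\big(a(x;1,1)-va(x;1,v)\big).$$
Collecting the $a(x;1,v)$ terms on the left and clearing the factor $(1-x)(1-v)$ turns this into
$$K(x,v)\,a(x;1,v)=xv(1-xv)(1-x)(1-v)+xv\big(1-xv(1-x)\big)a(x;1,1),$$
where the kernel
$$K(x,v)=(1-x)+(2x^2-1)v+2x(1-x)v^2-x^2(1-x)v^3$$
is a cubic in $v$. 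Because $K(0,v)=1-v$ and $\partial_v K(0,1)=-1\ne 0$, there is a unique formal power series root $v_0=v_0(x)$ with $v_0(0)=1$, and $a(x;1,v_0)$ is a well-defined power series. Substituting $v=v_0$ annihilates the left-hand side, and solving for $a(x;1,1)$ yields
$$a(x;1,1)=-\frac{(1-xv_0)(1-x)(1-v_0)}{1-xv_0(1-x)},\qquad\text{hence}\qquad F_T(x)=1+a(x;1,1)=\frac{x+(1-x)v_0(1-xv_0)}{1-xv_0(1-x)}.$$

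It then remains to eliminate the catalytic root $v_0$. Writing $y=F_T(x)$ in the displayed rational form and repeatedly using the kernel relation $x^2(1-x)v_0^3=2x(1-x)v_0^2-(1-2x^2)v_0+(1-x)$ to lower all powers of $v_0$, one checks that $1-x+xy+x(1-2x)y^2+x^2y^3-y$ reduces to $0$; equivalently, the resultant in $v_0$ of $K(x,v_0)$ with $\big(1-xv_0(1-x)\big)y-x-(1-x)v_0(1-xv_0)$ is, up to a nonzero factor, $x^2y^3+x(1-2x)y^2-(1-x)y+(1-x)$. This final polynomial elimination — routine but best carried out with a computer algebra system, exactly as in the proof of Theorem \ref{th235A1} — is the only substantive obstacle; everything preceding it is a direct specialization of Lemma \ref{235cl2}. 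Since the identical cubic was already obtained in Theorems \ref{th235A1} and \ref{th235A2}, this also confirms that the three triples of Case 235 are Wilf-equivalent.
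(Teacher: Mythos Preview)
Your proof is correct and follows essentially the same route as the paper: specialize \eqref{235cl2e1} at $u=1$, identify the cubic kernel $K(x,v)=(1-x)+(2x^2-1)v+2x(1-x)v^2-x^2(1-x)v^3$, kill the left side with the power-series root $v_0$, and obtain the same rational expression $F_T(x)=\dfrac{x+(1-x)v_0(1-xv_0)}{1-x(1-x)v_0}$ before eliminating $v_0$. The only difference is cosmetic: where you invoke a resultant/reduction as a routine CAS step, the paper exhibits the explicit cofactor $g(v)$ so that plugging in $y$ and clearing the denominator gives $f(v_0)g(v_0)$ with $f=K$, making the vanishing visible by inspection.
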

\begin{proof}
In the current notation, we need to determine $1+a(x;1,1)$.  Letting $u=1$ in \eqref{235cl2e1}, and rearranging, gives
\begin{equation}\label{t235ce1}
\left((1-v)(1-2xv)-x^2v^3+\frac{xv}{1-x}\right)a(x;1,v)=xv(1-v)(1-xv)+\left(\frac{xv}{1-x}-x^2v^2\right)a(x;1,1).
\end{equation}
Setting $v=v_0$ in \eqref{t235ce1} such that
$$1-x-(1-2x^2)v_0+2x(1-x)v_0^2=x^2(1-x)v_0^3,$$
and solving for $a(x;1,1)$, implies
$$1+a(x;1,1)=\frac{x+(1-x)v_0-x(1-x)v_0^2}{1-x(1-x)v_0}.$$
Let $f(v)=1-x-(1-2x^2)v+2x(1-x)v^2-x^2(1-x)v^3$ and $h(v)=f(v)g(v)$, where $$g(v)=(1-x)(1+x^3-x(2-3x+2x^2)v-2x^3(1-x)v^2+x^3(1-x)v^3).$$  Then $y=1+a(x;1,1)$ is a solution of the equation $$1-x-(1-x)y+x(1-2x)y^2+x^2y^3=0$$
if and only if $h(v)=0$ at $v=v_0$, which is the case since $f(v_0)=0$, by definition.  This implies $F_T(x)$ is a solution of the equation stated above, as desired.
\end{proof}

\subsection{Case 238}

The five representative triples $T$ are:

\{1423,2413,3142\} (Theorem \ref{th238A1})

\{2134,2143,2413\} (Theorem \ref{th238A2})

\{1234,1342,1423\} (Theorem \ref{th238A3})

\{1324,1342,1423\} (Theorem \ref{th238A4})

\{1243,1342,1423\} (Theorem \ref{th238A5})


\subsubsection{$\mathbf{T=\{1423,2413,3142\}}$}

\begin{theorem}\label{th238A1}
Let $T=\{1423,2413,3142\}$. Then
$$F_T(x)=\frac{3-2x-\sqrt{1-4x}-\sqrt{2-16x+4x^2+(2+4x)\sqrt{1-4x}}}{2(1-\sqrt{1-4x})}\, .$$
\end{theorem}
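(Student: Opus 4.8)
The plan is to use the cell decomposition advertised in the introduction, exploiting a special feature of this triple: $2413$ and $3142$ are exactly the two simple permutations of length $4$, so a permutation avoids $\{2413,3142\}$ if and only if it is \emph{separable}. Thus $S_n(T)$ is the set of separable permutations of $[n]$ that in addition avoid $1423$. Separable permutations are generated from the one-point permutation by direct and skew sums, and this is the structural reason a Catalan-type generating function (equivalently the factor $\sqrt{1-4x}=1-2xC(x)$) will appear in the answer. One could run a direct-sum/skew-sum recursion, but it is cleaner to follow the route via left--right maxima used throughout this paper.

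So I would write $\pi=i_1\pi^{(1)}i_2\pi^{(2)}\cdots i_m\pi^{(m)}\in S_n(T)$ with left--right maxima $i_1<i_2<\cdots<i_m=n$, let $G_m(x)$ be the generating function for such avoiders, and note $G_0(x)=1$, $G_1(x)=xF_T(x)$, $F_T(x)=\sum_{m\ge0}G_m(x)$. The maxima trace out a staircase dividing the permutation matrix; the cell decomposition refines the region below the staircase both horizontally (by the column bands between consecutive maxima) and vertically (by the integers lying strictly between two consecutive maxima), producing a grid of rectangular cells. The three patterns then control this grid: avoidance of $1423$ and $2413$ (with a left--right maximum playing the ``$4$'') forces the contents of each cell to avoid $132$, hence to contribute a factor $C(x)$, with the runs of equal-band cells assembling geometrically as in the proofs of Theorems \ref{th218A1} and \ref{th218A2}; avoidance of $3142$ forces a \emph{layering}, so that only the cells of one staircase of cells may be simultaneously nonempty, all cells strictly to the lower left of the first dip being empty; and the single distinguished cell holding the entries of $\pi^{(m)}$ below $i_1$ carries an arbitrary $T$-avoider, contributing $F_T(x)$ in the manner of Theorems \ref{th234A1} and \ref{th234A2}. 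Assembling the factors cell by cell gives a closed form for each $G_m(x)$, $m\ge2$, as a product of powers of $x$, $C(x)$, $1/(1-x)$, and one factor $F_T(x)$, and these sum geometrically in $m$.

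Carrying out that sum produces a single equation for $F_T(x)$ with coefficients in $\mathbb{Q}(x)[C(x)]$; I expect it to be \emph{quadratic} in $F_T(x)$ (one more level of cell interaction than in Cases $218$ and $234$, which only gave equations rational in $F_T$), of roughly the shape $xC(x)F_T(x)^2-\bigl(1-x+xC(x)\bigr)F_T(x)+1=0$, and this quadratic is precisely the source of the nested radical. Solving it and then repeatedly using the identities $xC(x)^2=C(x)-1$ and $1-2xC(x)=\sqrt{1-4x}$ to clear $C(x)$ should yield the stated $F_T(x)=\dfrac{3-2x-\sqrt{1-4x}-\sqrt{2-16x+4x^2+(2+4x)\sqrt{1-4x}}}{2(1-\sqrt{1-4x})}$. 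The main obstacle is not the algebra (routine but messy, best verified by computer) but the combinatorial bookkeeping of the cell decomposition: pinning down exactly which cells may be simultaneously nonempty, the forced left-to-right order of their entries, and which cells are genuine $132$-avoiders, which are empty or decreasing, and which one is the lone full $T$-avoider — a single misassigned cell changes the equation. As a sanity check one then confirms that the series expansion of the resulting $F_T(x)$ matches the sixteen precomputed terms of the relevant entry of Table \ref{long3s}.
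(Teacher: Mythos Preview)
Your guessed quadratic $xC(x)F_T(x)^2-(1-x+xC(x))F_T(x)+1=0$ is exactly the equation the paper ends up with (equivalently $F_T=1+\dfrac{xF_T}{1-xC(x)F_T}$), so your target is right. But the route you outline is not the paper's, and as written it is only a plan with a real gap.

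The paper does \emph{not} use a cell decomposition here. It refines by a second parameter: for a $T$-avoider with $m$ left--right maxima $i_1<\cdots<i_m=n$, let $k$ be the number of trailing maxima that are consecutive integers, i.e.\ $i_{m-k+1}=n-k+1,\dots,i_m=n$, and set $G_{m,k}(x)$ to be the corresponding generating function. One checks $G_{m,m}(x)=(xF_T(x))^m$, and for $1\le k<m$ one tracks the letters of the interval $I=\{i_{m-k}+1,\dots,n-k\}$ inside the $\pi^{(j)}$'s (they must appear decreasing by $1423$, and only left--right maxima may separate them by $2413$). Deleting them gives a recurrence
\[
G_{m,k}(x)=G_{m,k+1}(x)+\frac{x}{1-x}\sum_{j=0}^{k-1}\Bigl(\tfrac{x}{1-x}\Bigr)^{j}G_{m-j,k+1-j}(x).
\]
Packaging $G(t,u)=1+\sum_{m\ge1}\sum_{k=1}^mG_{m,k}(x)u^{k-1}t^m$ gives a linear functional equation in $u$; the kernel method at $u=C(x)$ (this, not any cell-by-cell $132$-avoidance, is where $C(x)$ enters) kills $G(1,u)$ and, since $G(1,0)=F_T(x)$, produces exactly the quadratic above.

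The gap in your sketch is the central structural claim: ``avoidance of $1423$ and $2413$ \dots forces the contents of each cell to avoid $132$''. For this $T$ that is false. If a cell contains a $132$ pattern $abc$, then adjoining an earlier left--right maximum gives $4132$, adjoining a later one gives $1324$, and using the bounding maximum just below gives $1243$ --- none of which lie in $T=\{1423,2413,3142\}$. So the factor $C(x)$ does not arise from cell contents; your own later hedge (``which cells are genuine $132$-avoiders, which are empty or decreasing'') signals this, but without that piece the whole product formula for $G_m(x)$ collapses. If you want to salvage a direct structural argument, the separable observation is the right starting point (analyze $\oplus$- and $\ominus$-decompositions of $1423$-avoiding separables), but that is a different proof from either your cell sketch or the paper's $(m,k)$/kernel argument.
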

\begin{proof}
We say that a permutation $\pi$ has {\em $(m,k)$ left-right maxima}, $1\leq k\leq m$, if it has $m$ left-right maxima $i_1,i_2,\dots,i_m$ of which the last $k$ are consecutive, that is,
$$i_1<i_2<\cdots<i_{m-k}<i_{m-k+1}=n-k+1<i_{m-k+2}=n-k+2<\cdots<i_{m-1}=n-k+1 < i_m=n,$$
where $n$ is maximal letter of $\pi$. Let $G_{m,k}(x)$ be the generating function for $T$-avoiders with $(m,k)$ left-right maxima. Define $G_{0,0}(x)=1$. To find an equation for $G_{m,k}(x), \ 1\le k \le m$,
let $\pi=i_1\pi^{(1)}\cdots i_m\pi^{(m)}$ be a permutation that avoids $T$ with $(m,k)$ left-right maxima. If $k=m$, then it is easy to see that $\pi^{(1)}>\pi^{(2)}>\cdots>\pi^{(m)}$, where each $\pi^{(j)}$ avoids $T$. Thus, $G_{m,m}(x)=(xF_T(x))^m$.

So suppose $1\leq k\leq m-1$.
Since $\pi$ avoids $1423$, all the letters in $I=\{i_{m-k}+1,\ldots,n-k\}$ appear in decreasing order in $\pi$ .
Since $\pi$ avoids $2413$, only left-right maxima can appear between letters that belong to $I$.
If $I=\emptyset$, then the contribution is given by $G_{m,k+1}(x)$.
Otherwise, there exists a largest $s\in[n-k+1,n]$
such that $\pi^{(s)}$ contains at least one letter from $I$.
By the preceding observations,
$$\pi^{(n-k+1)}\cdots\pi^{(s)}=(n-k)(n-k-1)\cdots(i_{m-k}+1){\pi'}^{(s)},$$
where $i_{m-k}>{\pi'}^{(s)}$.
We can now safely delete the
left-right maxima $n-k+2,n-k+3,\dots,s$ and all elements of $I$. The deleted left-right maxima contribute $x^{s-(n-k)-1}$, the deleted $i_{m-k}+1\in I$ (necessarily present) contributes $x$, and the other elements of $I$, which amount to distributing an arbitrary number of balls (possibly none) among the $s-(n-k)$
boxes $\pi^{(n-k+1)},\dots, \pi^{(s)}$, contribute $1/(1-x)^{s-(n-k)}$. After the deletion, we have a $T$-avoider with
$m-(s-n+k-1)$ left-right maxima of which the last $n-s+2$ are guaranteed consecutive, and so it contributes $G_{m+1-s+n-k,n+2-s}(x)$.
Hence, the contribution for given $s$ equals
$$\frac{x^{s-(n-k)}}{(1-x)^{s-(n-k)}}G_{m+1-s+n-k,n+2-s}(x)\, .$$
By summing over all $s=n-k+1,\ldots,n$, we see that the contribution for the case $I\neq\emptyset$ is given by
$$\sum_{j=1}^k\frac{x^j}{(1-x)^j}G_{m+1-j,k+2-j}(x)\, .$$
Combining all the contributions, we obtain for $1\le k < m$,
$$G_{m,k}(x)=G_{m,k+1}(x)+\frac{x}{1-x}\sum_{j=0}^{k-1}\frac{x^j}{(1-x)^j}G_{m-j,k+1-j}(x)\, ,$$
with $G_{m,m}(x)=(xF_T(x))^m$.

In order to determine an equation for $F_T(x)$, we define $G(t,u)=1+\sum_{m\geq1}\sum_{k=1}^m G_{m,k}(x)u^{k-1}t^m$.
By multiplying the above recurrence by $t^mu^{k-1}$ and summing over $k=1,2,\ldots,m-1$ and $m\geq1$, we find
$$G(t,u)=1+\frac{xF_T(x)}{1-tuxF_T(x)}+\frac{G(t,u)-G(t,0)}{u}+\frac{x(G(t,u)-G(t,0))}{u(1-x-xut)}\, .$$
Note that $G(1,0)=1+\sum_{m\geq0}G_{m,1}(x)=F_T(x)$. Hence,
$$G(1,u)=1+\frac{xF_T(x)}{1-uxF_T(x)}+\frac{G(1,u)-F_T(x)}{u}+\frac{x(G(1,u)-F_T(x))}{u(1-x-xu)}\, .$$
To solve this functional equation, we apply the kernel method and take $u=C(x)$, which is seen to cancel out $G(1,u)$. Thus,
$$0=1+\frac{xF_T(x)}{1-xC(x)F_T(x)}-\frac{F_T(x)}{C(x)}-\frac{xF_T(x))}{C(x)(1-x-xC(x))}\, ,$$
which, using the identity $C(x)=1+xC^2(x)$, is equivalent to
$$F_T(x)=1+\frac{xF_T(x)}{1-xC(x)F_T(x)}\, .$$
Solving this last equation completes the proof.
\end{proof}

\subsubsection{$\mathbf{T=\{2134,2143,2413\}}$}
\begin{theorem}\label{th238A2}
Let $T=\{2134,2143,2413\}$. Then
$$F_T(x)=\frac{3-2x-\sqrt{1-4x}-\sqrt{2-16x+4x^2+(2+4x)\sqrt{1-4x}}}{2(1-\sqrt{1-4x})}\, .$$
\end{theorem}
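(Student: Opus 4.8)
The plan is to follow the strategy of Theorem~\ref{th238A1} rather than build a bijection: we derive directly a functional equation for $F_T(x)$ and check that it is the same one, $F_T(x)=1+\frac{xF_T(x)}{1-xC(x)F_T(x)}$, obtained there, so that solving the attendant quadratic in $F_T(x)$ reproduces the stated formula. As usual we decompose a $T$-avoider $\pi=i_1\pi^{(1)}i_2\pi^{(2)}\cdots i_m\pi^{(m)}$ by its number $m$ of left-right maxima and write $G_m(x)$ for the generating function of those with $m$ of them. The observation that makes this case pleasant is that for $m\ge 3$ the blocks $\pi^{(1)},\dots,\pi^{(m-2)}$ are all empty: a letter $a\in\pi^{(j)}$ with $j\le m-2$ would create the subword $i_j\,a\,i_{j+1}\,i_{j+2}$, an occurrence of $2134$. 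Hence for $m\ge 2$ only the last two blocks $\pi^{(m-1)}$ and $\pi^{(m)}$ can be nonempty and $\pi$ begins with the increasing run $i_1i_2\cdots i_{m-1}$.

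The next step is to analyse these two blocks. If $\pi^{(m-1)}\ne\emptyset$, say $a\in\pi^{(m-1)}$, then every letter $b$ of $\pi^{(m)}$ must satisfy $b<i_{m-1}$, since otherwise $i_{m-1}\,a\,i_m\,b$ is a $2134$; on the other hand, avoidance of $2413$ (with $i_1$ and $i_m$ playing the ``2'' and the ``4'') forces the letters of $\pi^{(m)}$ exceeding $i_1$ to precede those below $i_1$, and analogous arguments split $\pi^{(m)}$, and then $\pi^{(m-1)}$, into a bounded number of independent sub-blocks: one of these is constrained only to avoid a single $3$-letter pattern and so contributes the Catalan generating function $C(x)$, the others contributing powers of $x$ and copies of $F_T(x)$. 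The complementary case $\pi^{(m-1)}=\emptyset$, i.e. $\pi=i_1i_2\cdots i_m\pi^{(m)}$, is handled the same way, the cross-conditions between the initial run and $\pi^{(m)}$ again producing a layered decomposition. Collecting contributions expresses $G_m(x)$ in terms of $F_T(x)$ and $C(x)$; as in Theorem~\ref{th238A1} it is convenient to introduce an auxiliary parameter (for instance the number of trailing consecutive left-right maxima, or the size of the relevant sub-block), so that the resulting relation becomes a linear functional equation to which the kernel method applies, the cancelling substitution being $u=C(x)$.

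Summing over $m\ge 0$ and carrying out the kernel-method step then reduces the system, after the identity $C(x)=1+xC(x)^2$ is used to simplify, to $F_T(x)=1+\frac{xF_T(x)}{1-xC(x)F_T(x)}$, precisely the equation in the proof of Theorem~\ref{th238A1}; solving this quadratic gives the claimed generating function. The main obstacle here is purely combinatorial bookkeeping: one must enumerate carefully every cross-block forbidden configuration (there are several patterns spanning the left-right maxima, $\pi^{(m-1)}$ and $\pi^{(m)}$), pin down exactly which sub-block is Catalan-counted versus a free $T$-avoider, and choose the auxiliary parameter so that the ensuing functional equation is amenable to the kernel method. (Since $\{2134,2143,2413\}$ lies in the same Wilf class as $\{1423,2413,3142\}$, one could alternatively seek an explicit length- and left-right-maxima-preserving bijection between the two avoidance classes, but the decomposition above is self-contained and in the same spirit as the rest of the paper.)
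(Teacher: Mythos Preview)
Your outline is headed in the right direction and in fact coincides with the paper's approach: decompose by left-right maxima, observe that avoidance of $2134$ forces $\pi^{(1)}=\cdots=\pi^{(m-2)}=\emptyset$, and then analyse the two remaining blocks $\pi^{(m-1)}$ and $\pi^{(m)}$ to get a quadratic in $F_T(x)$ with coefficients involving $C(x)$. However, what you have written is a plan, not a proof: the central combinatorial step is only asserted, never carried out. You say that ``one of these [sub-blocks] is constrained only to avoid a single $3$-letter pattern'' and that the others contribute copies of $F_T(x)$, but you neither identify which sub-block, nor which $3$-letter pattern, nor why. You yourself flag this as ``the main obstacle'' and leave it unresolved. (Incidentally, the subword $i_{m-1}\,a\,i_m\,b$ with $a<i_{m-1}<b<i_m$ is a $2143$, not a $2134$ as you state; the conclusion survives since $2143\in T$, but it signals that the details were not checked.)

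Here is what is actually needed, and it is more structured than your sketch suggests. When $\pi'=\pi^{(m-1)}\ne\emptyset$, let $s$ be maximal with $i_{s-1}<\pi'$. Avoidance of $2413$ then forces the layered decomposition $\pi'=\pi'_{m-1}\cdots\pi'_s$ and $\pi''=\pi''_s\cdots\pi''_1$ with $\pi'_j\subset(i_{j-1},i_j)$ and $\pi''_j\subset(i_{j-1},i_j)$. Each $\pi'_j$ for $j>s$ avoids $213$ (else $i_m$ completes a $2134$), so contributes $C(x)$; the piece $\pi'_s\,i_m\,\pi''_s$ is a $T$-avoider not starting with its maximum, contributing $F_T(x)-1-xF_T(x)$; and each $\pi''_j$ for $j<s$ is an unrestricted $T$-avoider. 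This gives the explicit contribution $x^{m-1}C(x)^{m-1-s}\bigl(F_T(x)-1-xF_T(x)\bigr)F_T(x)^{s-1}$, while the case $\pi'=\emptyset$ contributes $\bigl(xF_T(x)\bigr)^m$. Summing over $s$ and $m$ is elementary (a geometric sum, not a kernel-method computation) and yields $F_T(x)=1-x^2C(x)^2F_T(x)+xC(x)F_T(x)^2$, which after using $xC(x)^2=C(x)-1$ is equivalent to your target equation $F_T(x)=1+\dfrac{xF_T(x)}{1-xC(x)F_T(x)}$. No auxiliary parameter or kernel method is needed here; introducing one would only obscure the argument.
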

\begin{proof}
Let $G_m(x)$ be the generating function for $T$-avoiders with $m$ left-right maxima.
Clearly, $G_0(x)=1$ and $G_1(x)=xF_T(x)$. Now let us write an equation for $G_m(x)$.
If $\pi$ is a permutation that avoids $T$ with $m$ left-right maxima, then, to avoid 2134, $\pi$ has the form
 $$\pi=i_1i_2\cdots i_{m-1}\pi'  i_m\pi''$$
with $i_1<i_2<\cdots<i_m=n$ ($n$ is the maximal letter of $\pi$), $i_{m-1}>\pi'$, and $i_{m}>\pi''$.

If $\pi'$ is empty, then since $\pi$ avoids $2413$ we see that $\pi''$ can be decomposed as
$\pi_{m}'' \pi_{m-1}'' \cdots \pi_{1}''$, where $\pi_{j}''>i_{j-1}>\pi_{j-1}'',\ j=2,\dots,m,$ and $\pi_{j}''$ avoids $T$.

If $\pi'$ is not empty, then with $i_0=0$, there is a maximal integer $s$ such that $i_{s-1}<\pi'$.
Since $\pi$ avoids $2413$, we see that $\pi'=\pi_{m-1}' \cdots \pi_{s+1}'\pi_{s}'$ and
$\pi''=\pi_{s}'' \cdots \pi_{1}''$, where
\[
\pi_{m-1}'>i_{m-2}>\pi_{m-2}' > \cdots >i_{s+1}>\pi_{s+1}'>i_s >
\pi_{s}'\pi_{s}''>i_{s-1} > \pi_{s-1}''>\cdots >i_1>\pi_{1}''\, .
\]

This means that $\pi$ has the following diagrammatic shape.
\begin{center}
\begin{pspicture}(-2,0)(12,5.5)
\psset{xunit=.9cm,yunit=.5cm}
\pspolygon[fillstyle=solid,fillcolor=white](3.5,9)(4.5,9)(4.5,8)(3.5,8)(3.5,9)
\pspolygon[fillstyle=solid,fillcolor=white](6.5,6)(7.5,6)(7.5,5)(6.5,5)(6.5,6)
\pspolygon[fillstyle=solid,fillcolor=white](7.5,4)(7.5,5)(8.5,5)(8.5,4)(7.5,4)
\pspolygon[fillstyle=solid,fillcolor=white](8.5,4)(8.5,5)(9.5,5)(9.5,4)(8.5,4)
\pspolygon[fillstyle=solid,fillcolor=white](9.5,3)(9.5,4)(10.5,4)(10.5,3)(9.5,3)
\pspolygon[fillstyle=solid,fillcolor=white](12.5,0)(12.5,1)(13.5,1)(13.5,0)(12.5,0)
\psdots(0,1)(.5,3)(1,4)(1.5,5)(2,6)(3,8)(3.5,9)(8.5,10)
\psline[linecolor=gray](8.5,10)(8.5,5)
\psline[linecolor=gray](3,8)(3.5,8)
\psline[linecolor=gray](2,6)(6.5,6)
\psline[linecolor=gray](1.5,5)(6.5,5)
\psline[linecolor=gray](1,4)(7.5,4)
\psline[linecolor=gray](.5,3)(9.5,3)
\psline[linecolor=gray](0,1)(12.5,1)
\rput(4,8.5){\textrm{{\footnotesize $\pi_{m-1}'$}}}
\rput(7,5.5){\textrm{{\footnotesize $\pi_{s+1}'$}}}
\rput(8,4.5){\textrm{{\footnotesize $\pi_{s}'$}}}
\rput(9,4.5){\textrm{{\footnotesize $\pi_{s}''$}}}
\rput(10,3.5){\textrm{{\footnotesize $\pi_{s-1}''$}}}
\rput(13,0.5){\textrm{{\footnotesize $\pi_{1}''$}}}
\rput(-0.3,1.2){\textrm{{\scriptsize $i_1$}}}
\rput(0.0,3.2){\textrm{{\scriptsize $i_{s-2}$}}}
\rput(0.5,4.2){\textrm{{\scriptsize $i_{s-1}$}}}
\rput(1.2,5.2){\textrm{{\scriptsize $i_s$}}}
\rput(1.5,6.2){\textrm{{\scriptsize $i_{s+1}$}}}
\rput(2.3,8.2){\textrm{{\scriptsize $i_{m-2}$}}}
\rput(2.8,9.2){\textrm{{\scriptsize $i_{m-1}$}}}
\rput(8.7,10.5){\textrm{{\scriptsize $i_{m}=n$}}}
\rput(0.0,2){\textrm{{\footnotesize $\iddots$}}}
\rput(2.5,7){\textrm{{\footnotesize $\iddots$}}}
\rput(5.5,7){\textrm{{\footnotesize $\ddots$}}}
\rput(11.5,2){\textrm{{\footnotesize $\ddots$}}}
\rput(6,-0.5){\small{Decomposition of $T$-avoider, case $\pi'\ne \emptyset$}}
\end{pspicture}
\end{center}
Furthermore, $\pi_{j}'$ avoids $213$ for $j=m-1,m-2,\ldots,s+1$ for else $n$ is the 4 of a 3124; $\pi_{s}' n \pi_{s}''$ avoids $T$
and, since $\pi_{s}'$ is not empty, it does not start with its largest letter;  $\pi_{j}''$ avoids $T$ for $j=s-1,\ldots,1$.

Hence, the contribution in the case $\pi'$ is empty is $x^mF_t^m(x)$; otherwise, the contribution for given $s, \ 1\leq s\leq m$, is
$$x^{m-1}C^{m-1-s}(x)(F_T(x)-1-xF_T(x))F_T^{s-1}(x)\, .$$
Combining all the contributions, we obtain
\begin{align*}
F_T(x)&=1+\sum_{j\geq1}(x^jF_T^j(x))+(F_T(x)-1-xF_T(x))\sum_{m\geq2}\sum_{s=1}^{m-1}x^{m-1}C^{m-1-s}(x)F_T^{s-1}(x)\\
&=1+\sum_{j\geq1}(x^jF_T^j(x))+(F_T(x)-1-xF_T(x))\sum_{m\geq2}x^{m-1}\frac{C^{m-1}(x)-F_T^{m-1}(x)}{C(x)-F_T(x)},
\end{align*}
and, using $C(x)=1+xC^2(x)$, we find that
$$F_T(x)=1-x^2C^2(x)F_T(x)+xC(x)F_T^2(x),$$
which yields the stated \gf.
\end{proof}

For the remaining three cases, we consider (right-left) cell decompositions. So suppose
$$\pi=\pi^{(m)}i_m\pi^{(m-1)}i_{m-1}\cdots\pi^{(1)}i_1\in S_n$$
has $m\geq2$ right-left maxima $n=i_m>i_{m-1}>\cdots >i_1\geq1$. The right-left maxima determine a
{\em cell decomposition} of the matrix diagram of $\pi$ as illustrated in the figure below for $m=4$.
There are $\binom{m+1}{2}$ cells $C_{ij},\ i,j \ge 1,\,i+j \le m+1,$ indexed by $(x,y)$ coordinates, for example, $C_{21}$ and $C_{32}$ are shown.
\begin{center}
\begin{pspicture}(-2,-.8)(4,3)
\psset{unit=.7cm}
\psline(0,4)(0,0)(4,0)
\psline(0,4)(1,4)(1,0)
\psline(0,3)(2,3)(2,0)
\psline(0,2)(3,2)(3,0)
\psline(0,1)(4,1)(4,0)
\rput(1.5,0.5){\textrm{{\footnotesize $C_{21}$}}}
\rput(2.5,1.5){\textrm{{\footnotesize $C_{32}$}}}
\rput(1.3,4.2){\textrm{\footnotesize $i_4$}}
\rput(2.3,3.2){\textrm{\footnotesize $i_3$}}
\rput(3.3,2.2){\textrm{\footnotesize $i_2$}}
\rput(4.3,1.2){\textrm{\footnotesize $i_1$}}
\rput(2,-.8){\textrm{Cell decomposition}}
\end{pspicture}
\end{center}
Cells with $i=1$ or $j=1$ are \emph{boundary} cells, the others are \emph{interior}. A cell is \emph{occupied} if it contains at least one letter of $\pi$, otherwise it is \emph{empty}. Let $\al_{ij}$ denote the subpermutation of entries in $C_{ij}$.

We now consider $R=\{1342,1423\}$, a subset of the pattern set in the remaining three cases. The reader may check the following
characterization of $R$-avoiders in terms of the cell decomposition.
A permutation $\pi$ is an $R$-avoider if and only if
\vspace*{-1mm}
\begin{enumerate}
\item For each occupied cell $C$, all cells that lie both strictly east and strictly north of $C$ are empty.
\item For each pair of occupied cells $C,D$ with $D$ directly north of $C$ (same column), all entries in $C$ lie to the right of all entries in $D$.
\item For each pair of occupied cells $C,D$ with $D$ directly east of $C$ (same row), all entries in $C$ are larger than all entries in $D$.
\item $\al_{ij}$ avoids $R$ for all $i,j$.
\end{enumerate}
\vspace*{-1mm}
Condition (1)  imposes restrictions on occupied cells as follows. A \emph{major} cell for $\pi$ is an interior cell $C$ that is occupied and such that all cells directly north or directly east of $C$ are empty. The set of major cells (possibly empty) determines a (rotated) Dyck path of semilength $m-1$ with valley vertices at the major cells as illustrated in the figure below. (If there are no major cells, the Dyck path covers the boundary cells and has no valleys.)
\begin{center}
\begin{pspicture}(-8.5,-1.2)(12,4)
\psset{unit=.6cm}
\pspolygon[fillstyle=solid,fillcolor=yellow](-10,7)(-10,3)(-8,3)(-8,2)(-6,2)(-6,0)(-3,0)(-3,1)(-5,1)(-5,3)(-7,3)(-7,4)(-9,4)(-9,7)(-10,7)
\psline(-10,7)(-10,0)(-3,0)
\psline(-10,7)(-9,7)(-9,0)
\psline(-10,6)(-8,6)(-8,0)
\psline(-10,5)(-7,5)(-7,0)
\psline(-10,4)(-6,4)(-6,0)
\psline(-10,3)(-5,3)(-5,0)
\psline(-10,2)(-4,2)(-4,0)
\psline(-10,1)(-3,1)(-3,0)
\qdisk(-7.5,3.5){3pt}
\qdisk(-5.5,2.5){3pt}
\psdots(0,0)(1,1)(2,2)(3,1)(4,0)(5,1)(6,2)(7,1)(8,2)(9,3)(10,2)(11,1)(12,0)
\psline(0,0)(2,2)(4,0)(6,2)(7,1)(9,3)(12,0)
\psline[linestyle=dashed](0,0)(12,0)
\qdisk(4,0){3pt}
\qdisk(7,1){3pt}
\psarcn[fillcolor=white,arrows=->,arrowsize=3pt 3](-1.5,5){.7}{230}{310}
\rput(-1.5,4){\textrm{\footnotesize rotate}}
\rput(-6.5,-1){\textrm{\footnotesize (rotated) Dyck path}}
\rput(-6,-1.6){\textrm{\footnotesize = major cell}}
\qdisk(-7.8,-1.6){3pt}
\rput(6,-1){\textrm{\footnotesize Dyck path}}
\rput(6.5,-1.6){\textrm{\footnotesize = valley vertex}}
\qdisk(4.4,-1.6){3pt}
\end{pspicture}
\end{center}
If $\pi$ avoids $R$, then condition (1) implies that all cells not on the Dyck path are empty, and condition (4) implies St($\al_{ij}$) is an $R$-avoider for all $i,j$. Conversely, if $n=i_m>i_{m-1}> \dots > i_1 \ge 1$ are given and we have a Dyck path in the associated cell diagram,
and an $R$-avoider $\pi_C$ is specified for each cell $C$ on the Dyck path, with the additional proviso $\pi_C \ne \emptyset$ for valley cells, then conditions (2) and (3) imply that an $R$-avoider with this Dyck path is uniquely determined.

It follows that an $R$-avoider $\pi$ avoids the pattern $\tau k$ where $\tau \in S_{k-1}$ if and only if all the subpermutations $\al_{ij}$ avoid $R$ and $\tau$. We use this observation in the next two results.
As an immediate consequence, we have
\begin{proposition}\label{prop238A3}
Let $\tau$ and $\tau'$ be two patterns in $S_{k-1}$.
If $F_{\{1342,1423,\tau\}}(x)=F_{\{1342,1423,\tau'\}}(x)$, then $F_{\{1342,1423,\tau k\}}(x)=F_{\{1342,1423,\tau'k\}}(x)$. \qed
\end{proposition}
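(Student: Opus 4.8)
The plan is to derive the statement directly from the cell-decomposition of $R$-avoiders established just above, where $R=\{1342,1423\}$. Throughout, recall that equality of two generating functions $F_T(x)$ means $|S_n(T)|$ agrees for every $n$, so it suffices to compare counting sequences. First I would record the bijective consequence of conditions (1)--(4): an $R$-avoider $\pi$ of length $n$ is equivalent to the data of (a) its number $m\ge 1$ of right-left maxima, (b) a (rotated) Dyck path $P$ of semilength $m-1$ in the associated $\binom{m+1}{2}$-cell diagram, and (c) for each cell $C$ on $P$ a standardized $R$-avoider $\pi_C$, subject to the proviso $\pi_C\neq\emptyset$ at every valley cell of $P$. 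The entries of $\pi$ other than the $m$ right-left maxima are precisely the entries of the $\pi_C$, so $n=m+\sum_{C\in P}|\pi_C|$; the converse direction of this equivalence is exactly the uniqueness statement supplied by conditions (2)--(3).

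Next I would feed in the observation preceding the proposition: for $\tau\in S_{k-1}$, such an $R$-avoider $\pi$ additionally avoids $\tau k$ if and only if each $\pi_C$ avoids $\{1342,1423,\tau\}$. Combining this with the previous paragraph, for every $n$ one obtains
$$|S_n(\{1342,1423,\tau k\})|=\sum_{m\ge 1}\;\sum_{P}\;\sum_{\substack{(n_C)_{C\in P}:\ n_C\ge 0,\\ n_C\ge 1\text{ at valleys},\ \sum_C n_C=n-m}}\;\prod_{C\in P}\bigl|S_{n_C}(\{1342,1423,\tau\})\bigr|,$$
where the inner sum over $P$ runs over Dyck paths of semilength $m-1$. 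Passing to generating functions, this exhibits $F_{\{1342,1423,\tau k\}}(x)$ as $\Phi\bigl(F_{\{1342,1423,\tau\}}(x)\bigr)$ for a power-series operator $\Phi$ assembled purely from $x$, the Dyck-path bookkeeping, and the valley/non-valley distinction; crucially $\Phi$ does not involve $\tau$ in any way.

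The proposition then follows at once. If $F_{\{1342,1423,\tau\}}(x)=F_{\{1342,1423,\tau'\}}(x)$, then $\{1342,1423,\tau\}$- and $\{1342,1423,\tau'\}$-avoiders are equinumerous in each length, so replacing $\tau$ by $\tau'$ leaves every summand $\prod_C|S_{n_C}(\cdot)|$ above unchanged; hence $F_{\{1342,1423,\tau k\}}=\Phi\bigl(F_{\{1342,1423,\tau\}}\bigr)=\Phi\bigl(F_{\{1342,1423,\tau'\}}\bigr)=F_{\{1342,1423,\tau'k\}}$. There is no serious obstacle here; the only points demanding attention are (i) that the valley-cell constraint $\pi_C\neq\emptyset$ is imposed identically on both sides, so the two Dyck-path sums match term by term, and (ii) that standardizing the cell contents genuinely reduces avoidance of $\{1342,1423,\tau\}$ by $\pi$ to avoidance of $\{1342,1423,\tau\}$ by each $\pi_C$ — both of which are precisely what conditions (1)--(4) and the quoted observation guarantee, so the argument is really just careful bookkeeping.
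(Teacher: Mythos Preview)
Your proof is correct and follows exactly the paper's approach: the paper treats this proposition as an immediate consequence (marked only with \qed) of the cell-decomposition characterization and the observation that an $R$-avoider avoids $\tau k$ if and only if every cell subpermutation avoids $\{1342,1423,\tau\}$. You have simply spelled out in detail why this observation makes $F_{\{1342,1423,\tau k\}}(x)$ a functional of $F_{\{1342,1423,\tau\}}(x)$ alone, which is precisely the content the paper leaves implicit.
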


We can now find a recurrence for avoiders of the pattern set $R \cup \{12 \cdots k\}$.

\begin{proposition}\label{prop238A4}
Let $T_k=\{1342,1423,12\cdots k\}$. Then
$$F_{T_k}(x)=\frac{1+(x-2)F_{T_{k-1}}(x)+\sqrt{\big(1+xF_{T_{k-1}}(x)\big)^2-4xF_{T_{k-1}}^2(x)}}{2\big(1-F_{T_{k-1}}(x)\big)}.$$
\end{proposition}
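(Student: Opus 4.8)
The plan is to combine the cell-decomposition characterization of $\{1342,1423\}$-avoiders given above with the observation, also recorded above, that an $R$-avoider (where $R=\{1342,1423\}$) avoids a pattern $\tau k$ with $\tau\in S_{k-1}$ if and only if each of its cell subpermutations $\al_{ij}$ avoids $R$ and $\tau$. Since $12\cdots k=(12\cdots(k-1))\,k$, this says the $T_k$-avoiders are exactly the $R$-avoiders all of whose cell subpermutations are $T_{k-1}$-avoiders. Hence a $T_k$-avoider with $m\ge1$ right-left maxima is built by choosing a Dyck path of semilength $m-1$ in the associated cell diagram and inserting a $T_{k-1}$-avoider into each cell met by the path, the only constraint being that the cells at the valleys of the path (the major cells) be nonempty. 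A routine induction on the first-return decomposition shows that a Dyck path of semilength $\ell$ meets exactly $2\ell+1$ cells, so a path of semilength $m-1$ with $v$ valleys has $2m-1-v$ ``free'' cells.

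Assigning a factor $x$ to each right-left maximum, $h:=F_{T_{k-1}}(x)$ to each free cell, and $h-1$ to each valley cell, I obtain
$$F_{T_k}(x)=1+\sum_{m\ge1}x^m\sum_{D}h^{2m-1-v(D)}(h-1)^{v(D)},$$
the inner sum being over Dyck paths $D$ of semilength $m-1$ with $v(D)$ valleys. With $W(z,q):=\sum_{\ell\ge0}z^\ell\sum_{\mathrm{sl}(D)=\ell}q^{v(D)}$ this becomes $F_{T_k}(x)=1+xh\,W\!\big(xh^2,\tfrac{h-1}{h}\big)$, and the first-return decomposition $D=U\,D_1\,D\,D_2$ (a new valley occurring exactly when $D_2$ is nonempty) gives $W=1+zW+zqW(W-1)$.

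From here it is pure algebra: writing $g=F_{T_k}(x)$, so $W=(g-1)/(xh)$, and substituting $z=xh^2$, $q=(h-1)/h$ into $W=1+zW+zqW(W-1)$, then clearing denominators, the equation collapses after cancellation to the quadratic
$$(1-h)g^2-\big(1+(x-2)h\big)g-h=0.$$
Its discriminant is $\big(1+xh\big)^2-4xh^2$, and the root that is a power series with constant term $1$ is obtained with the $+$ sign, namely
$$F_{T_k}(x)=\frac{1+(x-2)F_{T_{k-1}}(x)+\sqrt{\big(1+xF_{T_{k-1}}(x)\big)^2-4xF_{T_{k-1}}^2(x)}}{2\big(1-F_{T_{k-1}}(x)\big)},$$
as claimed; when $k=2$ one has $h\equiv1$, the quadratic degenerates to a linear equation, and the same root is recovered in the limit (giving $1/(1-x)$).

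The step needing the most care is the first paragraph: checking that conditions (1)--(4) of the cell-decomposition translate precisely into the stated Dyck-path-with-labelled-cells data, and that the size statistic is tracked correctly (each right-left maximum contributing one letter, each cell contributing its subpermutation). Everything downstream — the functional equation for $W$ and the reduction to the quadratic — is mechanical, and I would sanity-check the final formula on the small cases $k=2$ and $k=3$ (the latter giving $C(x)$, consistent with the fact that $123$-avoiders automatically avoid $1342$ and $1423$).
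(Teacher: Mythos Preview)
Your proof is correct and follows essentially the same route as the paper: the cell decomposition reduces $T_k$-avoidance to filling the cells along a Dyck path of semilength $m-1$ with $T_{k-1}$-avoiders, valleys nonempty, and one then sums over Dyck paths weighted by valley count. The only cosmetic difference is that the paper phrases the Dyck-path sum via the Narayana generating function $N(x,y)$ (indexing by peaks $\ell$, with $\ell-1$ valleys), whereas you derive the valley enumerator $W(z,q)$ directly from the first-return decomposition and substitute into the resulting quadratic; the two computations are equivalent.
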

\begin{proof}
For brevity, set $F_{k} =F_{T_{k}}(x)$. So, for $m$ right-left maxima and an associated Dyck path of semilength $m-1$, the contribution to $F_k$ is $x^m$ for the right-left maxima, $F_{k-1} -1$ for each valley vertex, and $F_{k-1}$ for every other vertex. Let $\ell$ denote the number of peaks in the Dyck path, so that $\ell-1$ is the number of valleys. Recall that the Narayana number $N_{m,\ell}=\frac{1}{m}\binom{m}{\ell}\binom{m}{\ell-1}$ counts Dyck paths of semilength $m$ with $\ell$ peaks. Hence, summing over $m$,
\begin{eqnarray*}
F_{k} & = & 1+xF_{k-1}+\sum_{m\geq2}x^m \sum_{\ell=1}^{m-1}N_{m-1,\ell}\:(F_{k-1}-1)^{\ell-1} F_{k-1}^{2m-\ell} \\
& = & 1+xF_{k-1} +\frac{x F_{k-1}^{2}}{F_{k-1}-1} \sum_{m\ge 1}\sum_{\ell=1}^{m}N_{m,\ell}\:\big(xF_{k-1}^{2}\big)^{m}\left(1-\frac{1}{F_{k-1}}\right)^{\ell} \\
& = &  1+xF_{k-1} +\frac{x F_{k-1}^{2}}{F_{k-1}-1} N\big(xF_{k-1}^{2},1-1/F_{k-1}\big)\, ,
\end{eqnarray*}
where $
N(x,y):=\sum_{m\geq1}\sum_{\ell=1}^m N_{m,\ell}x^m y^\ell$
is the \gf the Narayana numbers.
It is known that
\[
N(x,y)=\frac{1-x(1+y)-\sqrt{(1-x(1+y))^2-4yx^2}}{2x}
\]
and the theorem follows.
\end{proof}

\subsubsection{$\mathbf{T=\{1234,1342,1423\}}$}
\begin{theorem}\label{th238A3}
Let $T=\{1234,1342,1423\}$. Then
$$F_T(x)=\frac{3-2x-\sqrt{1-4x}-\sqrt{2-16x+4x^2+(2+4x)\sqrt{1-4x}}}{2(1-\sqrt{1-4x})}\, .$$
\end{theorem}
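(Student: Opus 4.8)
The plan is to identify $T=\{1234,1342,1423\}$ with the member $T_4$ of the family $T_k=\{1342,1423,12\cdots k\}$ appearing in Proposition~\ref{prop238A4}, and then invoke that recurrence in the case $k=4$.

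The only input this requires is $F_{T_3}(x)$, where $T_3=\{1342,1423,123\}$. A permutation avoiding $123$ has no increasing subsequence of length three, while both $1342$ and $1423$ do contain such a subsequence (for instance $1,3,4$ in $1342$, and the first, third, and fourth letters $1,2,3$ in $1423$). Hence $S_n(T_3)=S_n(123)$ for every $n$, and so $F_{T_3}(x)=C(x)$, the Catalan generating function.

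Substituting $F_{T_{k-1}}(x)=F_{T_3}(x)=C(x)$ into Proposition~\ref{prop238A4} with $k=4$ gives
\[
F_T(x)=F_{T_4}(x)=\frac{1+(x-2)C(x)+\sqrt{\bigl(1+xC(x)\bigr)^2-4xC(x)^2}}{2\bigl(1-C(x)\bigr)}.
\]
It then remains to reduce this to the asserted closed form. Writing $s=\sqrt{1-4x}$ and $C(x)=\tfrac{1-s}{2x}$ (equivalently, using the identity $xC(x)^2=C(x)-1$), one checks first that the rational part $\tfrac{1+(x-2)C(x)}{2(1-C(x))}$ equals $\tfrac{1+x+s-xs}{4x}$, which is precisely the rational part $\tfrac{3-2x-s}{2(1-s)}$ of the claimed formula; and second that the radicals match via the identity $\bigl((1+xC(x))^2-4xC(x)^2\bigr)(1-s)^2=\bigl(2-16x+4x^2+(2+4x)s\bigr)(1-C(x))^2$, with the sign of the factor $1-C(x)$ (negative for small $x>0$) accounting for the minus sign in front of the outer radical in the statement. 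Combining the two pieces yields the stated $F_T(x)$, which is also the generating function found in Theorems~\ref{th238A1} and \ref{th238A2}.

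The one genuine obstacle is this last algebraic step: confirming that feeding $C(x)$ into the Proposition's nested radical reproduces exactly $\sqrt{2-16x+4x^2+(2+4x)\sqrt{1-4x}}$. This is routine but slightly intricate, and is most safely verified with a computer algebra system; everything else follows immediately from the cell-decomposition machinery already set up.
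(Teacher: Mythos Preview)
Your proof is correct and follows essentially the same route as the paper: both recognize $T=T_4$ in the family of Proposition~\ref{prop238A4}, observe that $F_{T_3}(x)=F_{\{123\}}(x)=C(x)$ since $123$-avoidance already forces $1342$- and $1423$-avoidance, and then substitute $C(x)$ into the recurrence and simplify. The only cosmetic difference is that the paper substitutes into the Narayana-number form appearing inside the proof of the proposition, whereas you substitute into the closed-form radical expression in its statement; the algebraic clean-up you outline (including the sign issue from $1-C(x)<0$) is exactly the ``algebraic manipulation'' the paper alludes to.
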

\begin{proof}
Since $F_{\{1342,1423,123\}}(x)=F_{\{123\}}(x)=C(x)$,
we get by Prop. \ref{prop238A4} that
$$F_T(x)=1+xC(x)+\frac{xC^2(x)}{C(x)-1}N\big(xC^2(x),1-1/C(x)\big),$$
which, after some algebraic manipulation, agrees with the desired expression.
\end{proof}

\subsubsection{$\mathbf{T=\{1324,1342,1423\}}$}
\begin{theorem}\label{th238A4}
Let $T=\{1324,1342,1423\}$. Then
$$F_T(x)=\frac{3-2x-\sqrt{1-4x}-\sqrt{2-16x+4x^2+(2+4x)\sqrt{1-4x}}}{2(1-\sqrt{1-4x})}\, .$$
\end{theorem}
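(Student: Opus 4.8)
The plan is to derive this result from Theorem \ref{th238A3} via the cell-decomposition machinery already in place, specifically Proposition \ref{prop238A3}. The key observation is that the pattern $1324$ ends in its largest letter, so it has the form $\tau k$ with $k=4$ and $\tau=132\in S_3$; likewise $1234=\tau' k$ with $\tau'=123\in S_3$. Since Theorem \ref{th238A3} already computes $F_T(x)$ for $T=\{1234,1342,1423\}=\{1342,1423,\tau' k\}$, it suffices to verify the hypothesis of Proposition \ref{prop238A3}, namely that $F_{\{1342,1423,\tau\}}(x)=F_{\{1342,1423,\tau'\}}(x)$.

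First I would check that both $123$ and $132$ occur as patterns inside each of $1342$ and $1423$: for instance the subsequence at positions $1,2,3$ of $1342$ is $134\sim 123$ and the subsequence at positions $1,3,4$ is $142\sim 132$, and the analogous subsequences of $1423$ (positions $1,3,4$ give $123$, positions $1,2,3$ give $142\sim 132$) do the same. Consequently every $123$-avoider automatically avoids $1342$ and $1423$, and every $132$-avoider does as well, so
$$F_{\{1342,1423,123\}}(x)=F_{\{123\}}(x)=C(x)=F_{\{132\}}(x)=F_{\{1342,1423,132\}}(x).$$
Thus the hypothesis of Proposition \ref{prop238A3} holds with $\tau=132$, $\tau'=123$, $k=4$.

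Applying Proposition \ref{prop238A3} then gives $F_{\{1342,1423,1324\}}(x)=F_{\{1342,1423,1234\}}(x)$, and by Theorem \ref{th238A3} the right-hand side equals the stated closed form, completing the proof. There is essentially no substantive obstacle here; the proof is a short corollary. The only points needing care are the elementary verification that $123$ and $132$ are subpatterns of both $1342$ and $1423$ (so that these three-pattern classes collapse to the Catalan class), and confirming that $1324$ and $1234$ are genuinely of the form ``an $S_3$-pattern followed by the letter $4$,'' so that Proposition \ref{prop238A3} applies verbatim.
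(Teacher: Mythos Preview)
Your proof is correct and follows essentially the same approach as the paper: verify that $F_{\{1342,1423,132\}}(x)=F_{\{1342,1423,123\}}(x)=C(x)$, invoke Proposition~\ref{prop238A3} with $\tau=132$ and $\tau'=123$ to obtain $F_{\{1342,1423,1324\}}(x)=F_{\{1342,1423,1234\}}(x)$, and then apply Theorem~\ref{th238A3}. Your explicit check that $123$ and $132$ occur as subpatterns of both $1342$ and $1423$ is a helpful addition that the paper leaves implicit.
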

\begin{proof}
Since $F_{\{1342,1423,132\}}(x)=F_{\{132\}}(x)=C(x)$ and $F_{\{1342,1423,123\}}(x)=F_{\{123\}}(x)=C(x)$, we get by Prop. \ref{prop238A3} with $\tau=132$ and $\tau'=123$ that
$F_{\{1342,1423,1324\}}(x)=F_{\{1342,1423,1234\}}(x)$.
Apply Theorem \ref{th238A3}.
\end{proof}

\subsubsection{$\mathbf{T=\{1243,1342,1423\}}$}
\begin{theorem}\label{th238A5}
Let $T=\{1243,1342,1423\}$. Then
$$F_T(x)=\frac{3-2x-\sqrt{1-4x}-\sqrt{2-16x+4x^2+(2+4x)\sqrt{1-4x}}}{2(1-\sqrt{1-4x})}\, .$$
\end{theorem}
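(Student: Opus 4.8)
The plan is to continue with the cell decomposition of $R=\{1342,1423\}$-avoiders introduced above and to determine exactly when such an avoider also avoids $1243$. The first point to settle is that, unlike $1234$ and $1324$, the pattern $1243$ does not have its largest letter at the end, so it is not of the form $\tau k$ and the localization principle (all $\al_{ij}$ avoid $R$ and $\tau$) cannot be applied verbatim. Indeed, an occurrence of $1243$ in an $R$-avoider need not lie inside a single cell: for instance $12543$ avoids $1342$ and $1423$ but contains $1243$ via the subsequence $1\,2\,5\,4$, in which both $5$ and $4$ are right-left maxima. So the first step is to pin down precisely how $1243$ can occur.

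The key structural claim I would establish is that an $R$-avoider with $m$ right-left maxima avoids $1243$ if and only if (i) every cell subpermutation $\al_{ij}$ avoids the whole triple $T=\{1243,1342,1423\}$ (a self-referential condition that will produce a recursion in $F_T$), and (ii) those cells lying strictly below the top value-band of their position-block, for every position-block split into at least two value-bands, are decreasing permutations (equivalently, avoid $12$). The ``only if'' half is the heart of the matter: using conditions (1)--(3) of the cell characterization, one shows that the ascent of any occurrence of $1243$ already sits inside a single cell $\al_{ij}$, after which the ``$4$'' of the occurrence is either in that same cell (covered by (i)) or is one of the right-left maxima, and in the latter case the ``$3$'' is forced to be the next right-left maximum down, which is possible only when $\al_{ij}$ is not in the top band of its block. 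The ``if'' half is a routine check that no other configuration produces $1243$.

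With the characterization in hand, I would convert it into a generating function identity in the spirit of Proposition \ref{prop238A4}: sum over $m$ and over the Dyck path of semilength $m-1$ carried by the cell diagram, giving each vertex the weight $F_T$ or $F_T-1$ (the latter at valley cells, which must be nonempty) when its cell is unconstrained, and the weight $1/(1-x)$ or $x/(1-x)$ when its cell is one of the forced-decreasing cells. Doing the Dyck-path bookkeeping --- determining, for a given path, which cells are forced to be decreasing and whether any valley is among them --- yields a single algebraic equation for $F_T$, which I would solve and simplify; the expected outcome is that it coincides with the equation satisfied by $F_{\{1234,1342,1423\}}(x)$ in Theorem \ref{th238A3}, so that $F_T(x)$ equals the stated expression.

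The step I expect to be the main obstacle is precisely part (ii) of the structural claim together with its bookkeeping: correctly sorting the cells of each Dyck path into free $T$-avoider cells, valley $T$-avoider cells, and forced-decreasing cells (distinguishing, among the last, those that must be nonempty). Once the vertex weights are assigned correctly, solving the resulting equation should be as mechanical as in Proposition \ref{prop238A4}, and matching the answer to the Case 238 generating function is then just algebra.
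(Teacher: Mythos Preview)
Your approach is essentially the paper's. The paper states the structural claim more crisply: among the Dyck-path cells, those on the anti-diagonal $i+j=m+1$ (the top cell of each column, adjacent to the staircase of right-left maxima) need only avoid $T$, while every other cell must avoid $12$; this is exactly your condition (ii) once ``top value-band of a position-block'' is read as the anti-diagonal cell of that column. Your outline of the ``only if'' argument is correct (minor imprecision: the ``$3$'' need not be the \emph{next} right-left maximum down, merely some smaller one, but the conclusion is unaffected).

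One refinement to anticipate: the resulting Dyck-path sum is \emph{not} the Narayana sum of Proposition~\ref{prop238A4}. There, every cell carries weight $F_{T_{k-1}}$ (or $F_{T_{k-1}}-1$ at valleys). Here the anti-diagonal cells on the path are precisely the two extremities $C_{1m},C_{m1}$ and the \emph{low} valleys, so only these carry weight $F_T$ (resp.\ $F_T-1$); the remaining cells carry $1/(1-x)$, and high valleys carry $x/(1-x)$. Thus you need the trivariate generating function $M(x,y,z)$ for Dyck paths by semilength, number of low valleys, and number of high valleys, obtained via a first-return decomposition; after that the algebra does give the stated $F_T(x)$.
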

\begin{proof}

A permutation $\pi \in S_T(n)$ with $m\ge 2$ right-left maxima avoids $R$ and so the cell decomposition of $\pi$ has an associated Dyck path that covers all occupied cells. To also avoid 1243, all the Dyck path cells except the cells incident with a right-left maximum, that is, cells $C_{ij}$ with $i+j=m+1$, must avoid 12 for else some two right-left maxima would form the 43 of a 1243. Other cells need only avoid 1243. The cells $C_{ij}$ with $i+j=m+1$ consist of the extremities $C_{1m}$ and $C_{m1}$ together with all the low valleys in the Dyck path (a low valley is one incident with ground level, the line joining the path's endpoints). Suppose the Dyck path has $\ell$ low valleys and $h$ high valleys. The contribution of the right-left maxima is $x^m$. Since $F_{\{12\}}(x)=1/(1-x)$, the contributions of the $2m-1$ Dyck path cells are as follows. The two extremities contribute $F_T^2(x)$, the $\ell$ low valleys contribute $(F_T(x)-1)^{\ell}$, the $h$ high valleys contribute $\big(\frac{1}{1-x}-1\big)^h=\big(\frac{x}{1-x}\big)^h$, and the remaining cells contribute $\big(\frac{1}{1-x}\big)^{2m-3-\ell-h}$.

Let $M_{m,\ell,h}$ denote the number of Dyck paths of semilength $m$ containing $\ell$ low valleys and $h$ high valleys,
with \gf $M(x,y,z)=\sum_{m,\ell,h\geq0}M_{m,\ell,h}x^my^\ell z^h$. Then, by the first return decomposition of the Dyck paths, we obtain
$$M(x,1,z)=1+xM(x,1,z)+xzM(x,1,z)(M(x,1,z)-1)$$
and
$$M(x,y,z)=1+xM(x,1,z)+xyM(x,1,z)(M(x,y,z)-1)\, .$$
Thus,
$$M(x,y,z)=\frac{y-2z-1+x(1-y)(1-z)+(1-y)\sqrt{1-2x(1+z)+x^2(1-z)^2}}{(1-x)y+(xy-2)z-y\sqrt{1-2x(1+z)+x^2(1-z)^2}}\, .$$
Hence, summing over $m$ and over all Dyck paths gives
\[
F_T(x)=1 +xF_T(x)+\sum_{m\ge 2}\: \sum_{\ell,h \ge 0}M_{m-1,\ell,h}x^{m+h}F_T^2(x)\big(F_T(x)-1\big)^{\ell} \frac{1}{(1-x)^{2m-3-\ell}}.
\]
After several algebraic steps and solving for $F_T(x)$, one obtains the desired formula.
\end{proof}

The preceding theorem can be extended to the case
$T_k=\{1342,1423,\tau k(k-1)\}$ with $k\geq4$ as follows.
\begin{theorem}\label{th238A5A12k}
Let $k\geq4$ and $\tau\in S_{k-2}$. Let $T_k=\{1342,1423,
\tau k(k-1)\}$ and $T'_k=\{1342,1423,\tau\}$. Then
$$F_{T_k}(x)=\frac{(2-x)(1-t)-x^2F_{T'_k}^2(x)(1+(x-2)F_{T'_k}(x))+
\sqrt{2x(a-bt)}}{2(1-xF_{T'_k}^2(x)+x^2F_{T'_k}^3(x)-t)},$$
where
\begin{align*}
t&=\sqrt{(1-xF_{T'_k}^2(x))^2-x^2F_{T'_k}^3(x)(2-2xF_{T'_k}^2(x)+x^2F_{T'_k}^3(x))},\\
a&=(x-4)(1+x^4F_{T'_k}^6(x))+2xF_{T'_k}^2(x)(1+(1-x)F_{T'_k}^2(x)+x^2F_{T'_k}^3(x))+x^3F_{T'_k}^4(x),\\
b&=(4-x)(1+x^2F_{T'_k}^3(x))+x(2-x)F_{T'_k}^2(x).
\end{align*}
\end{theorem}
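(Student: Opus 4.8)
The plan is to push through the cell-decomposition machinery for $R=\{1342,1423\}$-avoiders developed in the preceding subsection, exactly as was done for the special case $\tau=12$ in Theorem~\ref{th238A5}. Fix $k\ge 4$ and $\tau\in S_{k-2}$, and abbreviate $G=F_{T_k}(x)$, $H=F_{T'_k}(x)$. Let $\pi$ be a $T_k$-avoider with $m\ge 2$ right-left maxima. Since $T_k\supseteq R$, the permutation $\pi$ avoids $R$, so by the characterization of $R$-avoiders via cell decompositions all occupied cells lie on a rotated Dyck path of semilength $m-1$, and $\mathrm{St}(\alpha_{ij})$ avoids $R$ for every cell $C_{ij}$. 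The task is to determine, for each cell on the path, the extra avoidance condition forced by the presence of $\tau k(k-1)$ in the pattern set, and then to translate the resulting count into a generating-function identity.

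First I would establish the key structural lemma, the analogue of the observation used in Theorem~\ref{th238A5}: because $\tau k(k-1)$ has its two largest entries $k$ and $k-1$ occupying its last two positions and in decreasing order, the ``$k(k-1)$'' of any occurrence in an $R$-avoider is either internal to a single cell or is realized by two right-left maxima, and in the latter case the ``$\tau$''-part is pinned, by conditions (1)--(4) on $R$-avoiders, to lie southwest of the anti-diagonal cell adjacent to that pair. Consequently, $\pi$ avoids $\tau k(k-1)$ if and only if $\mathrm{St}(\alpha_{ij})$ avoids $\tau k(k-1)$ for every anti-diagonal cell $C_{ij}$ (those with $i+j=m+1$, i.e.\ the two extremities $C_{1m},C_{m1}$ together with the low valleys of the path) and $\mathrm{St}(\alpha_{ij})$ avoids $\tau$ for every other cell on the path. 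In other words: anti-diagonal cells carry arbitrary $T_k$-avoiders, the remaining path cells carry arbitrary $T'_k$-avoiders, and valley cells must be nonempty. Making the geometric case analysis of how an occurrence of $\tau k(k-1)$ can be distributed among the cells airtight for a general $\tau$ is the step I expect to be the main obstacle; for $\tau=12$ it is the brief argument inside the proof of Theorem~\ref{th238A5}.

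Granting the lemma, the bookkeeping is routine and parallels Theorem~\ref{th238A5}. For a path of semilength $m-1$ with $\ell$ low valleys and $h$ high valleys, the right-left maxima contribute $x^m$, the two extremities contribute $G^2$, the low valleys contribute $(G-1)^\ell$, the high valleys contribute $(H-1)^h$, and the remaining $2m-3-\ell-h$ path cells contribute $H^{\,2m-3-\ell-h}$. Summing over $m$ and over Dyck paths refined by low and high valleys, using the generating function $M(x,y,z)=\sum_{m,\ell,h\ge0}M_{m,\ell,h}x^my^\ell z^h$ computed in the previous subsection, and adding the contributions $1$ and $xG$ from $m=0$ and $m=1$, one obtains
\[
F_{T_k}(x)=1+xF_{T_k}(x)+\frac{x\,F_{T_k}^2(x)}{F_{T'_k}(x)}\left(M\!\left(xF_{T'_k}^2(x),\ \frac{F_{T_k}(x)-1}{F_{T'_k}(x)},\ \frac{F_{T'_k}(x)-1}{F_{T'_k}(x)}\right)-1\right).
\]
When $\tau=12$, so $F_{T'_k}(x)=1/(1-x)$, this reduces to the identity used in Theorem~\ref{th238A5}, which is a useful check.

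Finally I would substitute the closed form of $M(x,y,z)$ and solve. The first-return equations for $M$ show that $M(X,1,Z)$ is quadratic in itself and $M(X,Y,Z)$ rational in $M(X,1,Z)$; after the substitution $X=xF_{T'_k}^2(x)$, $Z=(F_{T'_k}(x)-1)/F_{T'_k}(x)$ this forces the relevant square root to be a radical in $x$ and $F_{T'_k}(x)$ alone, which I call $t$. Eliminating $M$ then leaves a quadratic equation for $F_{T_k}(x)$ whose coefficients are polynomial in $x$, $F_{T'_k}(x)$ and $t$; solving it and selecting the branch with $F_{T_k}(0)=1$ produces the second, nested radical $\sqrt{2x(a-bt)}$, with $a$ and $b$ the stated combinations, and simplification of the resulting expression yields the displayed formula. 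This last stage is heavy but entirely mechanical once the equation above and the formula for $M$ are in hand.
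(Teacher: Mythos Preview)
Your proposal is correct and follows precisely the approach the paper intends: the paper's own proof consists of the single sentence that one should repeat the argument of Theorem~\ref{th238A5} with $F_{\{12\}}(x)=1/(1-x)$ replaced by $F_{T'_k}(x)$, and you have carried that out in detail, including the explicit functional equation in terms of $M(x,y,z)$ and its specialization check. The structural lemma you flag as the main obstacle is indeed the crux, and the paper likewise leaves its verification to the reader.
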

\begin{proof}
The proof follows the same lines as in the preceding theorem except that $F_T(x)$ is replaced by $F_{T_{k}}(x)$ and $F_{\{12\}}(x)$
is replaced by $F_{T'_{k}}(x)$. The details are left to the reader.
\end{proof}


\section*{Appendix}
\footnotesize\begin{longtable}[c]{|l|l|l|}
\caption{Sequences $\{|S_n(T)|\}_{n=5}^{16}$, where $T$ is one of the triples in a symmetry class, arranged in lex (increasing) order of counting sequence.\label{long3s}}\\ \hline
\multicolumn{3}{| c |}{Table 2}\\ \hline
No. &$T$ & $\{|S_n(T)|\}_{n=5}^{16}$ \\ \hline
\endfirsthead  \hline
\multicolumn{3}{|c|}{Continuation of Table \ref{long3s}}\\ \hline
No. &$T$ & $\{|S_n(T)|\}_{n=5}^{16}$ \\ \hline
\endhead \hline
\endfoot \hline
\multicolumn{3}{| c |}{End of Table}\\ \hline\hline
\endlastfoot
1&\{4321,3412,1234\}&   69,162,240,199,73,0,0,0,0,0,0,0\\\hline
2&\{4321,3142,1234\}&   69,164,252,221,85,0,0,0,0,0,0,0\\\hline
3&\{2143,4312,1234\}&   69,181,375,651,1009,1449,1971,2575,3261,4029,4879,5811\\\hline
4&\{4231,2143,1234\}&   69,190,446,927,1745,3036,4960,7701,11467,16490,23026,31355\\\hline
5&\{2143,3412,1234\}&   69,194,470,1009,1969,3562,6062,9813,15237,22842,33230,47105\\\hline
6&\{2134,3412,1432\}&   \\
&\{3412,1432,1234\}&    69,198,498,1121,2305,4402,7910,13509,22101,34854,53250,79137\\\hline
7&\{3421,4312,1234\}&   70,177,333,538,792,1095,1447,1848,2298,2797,3345,3942\\\hline
8&\{2431,4213,1234\}&   70,192,441,929,1870,3670,7097,13600,25907,49142,92911,175190\\\hline
9&\{2134,4312,1243\}&   70,195,458,942,1752,3016,4886,7539,11178,16033,22362,30452\\\hline
10&\{4213,1432,1234\}&  70,199,502,1232,2962,6970,16138,36982,84083,189918,426722,954884\\\hline 11&\{4231,1432,1234\}& 70,200,481,1004,1886,3270,5325,8246,12254,17596,24545,33400\\\hline
12&\{2341,4312,1324\}&  70,203,517,1187,2504,4921,9107,16009,26922,43567,68177,103591\\\hline
13&\{3214,1432,1234\}&     70,204,560,1617,4796,14249,41939,122658,358991,1053628,3095381,9089525\\\hline 14&\{4231,2134,1243\}&   70,205,536,1264,2722,5424,10122,17871,30102,48703,76108,115394\\\hline 15&\{2134,3412,1243\}&   70,207,541,1272,2747,5552,10672,19783,35804,63965,113903,203810\\\hline 16&\{2314,1432,4123\}&  70,210,589,1592,4218,11069,28932,75528,197165,514920,1345484,3517427\\\hline 17&\{2341,2143,4123\}&     70,212,597,1610,4248,11107,28966,75552,197251,515476,1348060,3527067\\\hline 18&\{2341,1432,4123\}&     70,212,611,1712,4712,12815,34576,92764,247819,659840,1752170,4642567\\\hline 19&\{2431,4312,1234\}&     71,200,465,929,1667,2766,4325,6455,9279,12932,17561,23325\\\hline
20&\{4312,1432,1234\}&  71,204,479,951,1687,2764,4269,6299,8961,12372,16659,21959\\\hline
21&\{4312,3142,1234\}&  71,208,526,1174,2370,4416,7714,12783,20277,31004,45946,66280\\\hline
22&\{2134,4312,1432\}&  71,209,533,1205,2473,4696,8372,14169,22959,35855,54251,79865\\\hline
23&\{2431,4132,1234\}&     71,209,545,1348,3270,7908,19201,46918,115407,285642,711031,1779289\\\hline 24&\{4231,3412,1234\}&   71,212,554,1289,2725,5326,9758,16941,28107,44864,69266,103889\\\hline 25&\{3412,4132,1234\}&    71,213,561,1317,2809,5536,10220,17865,29823,47867,74271,111897\\\hline 26&\{2134,4312,1342\}&   71,213,564,1340,2909,5860,11090,19911,34179,56447,90144,139782\\\hline 27&\{2314,4312,1432\}&   71,213,569,1389,3175,6927,14632,30238,61596,124335,249598,499492\\\hline 28&\{4231,3142,1234\}&     71,217,599,1514,3550,7801,16193,31956,60282,109214,190816,322679\\\hline 29&\{2143,4312,1324\}&     71,218,610,1585,3895,9186,21022,47061,103663,225618,486626,1042305\\\hline 30&\{4231,3412,1324\}&   71,218,614,1619,4065,9840,23168,53393,120995,270518,598218,1310943\\\hline 31&\{2314,4312,1342\}&   71,219,626,1698,4452,11428,28966,72907,182915,458590,1150877,2894324\\\hline 32&\{2134,1432,4123\}&     71,219,635,1776,4853,13068,34862,92438,244118,642947,1690256,4437947\\\hline 33&\{2134,3412,4132\}&     71,220,630,1697,4365,10842,26216,62071,144519,331928,753834,1695933\\\hline 34&\{2143,4132,1234\}&  71,220,646,1835,5095,13924,37627,100859,268756,713023,1885543,4974068\\\hline 35&\{2143,3412,1324\}&    71,222,648,1797,4807,12548,32236,82009,207529,524060,1323540,3348087\\\hline 36&\{3412,3124,1432\}&     71,222,652,1838,5053,13682,36697,97814,259585,686709,1812257,4773804\\\hline 37&\{3142,1432,1234\}&     71,229,726,2299,7296,23180,73648,233935,742924,2359143,7491146,23786672\\\hline 38&\{4321,1423,1234\}&  72,198,367,359,147,0,0,0,0,0,0,0\\\hline
39&\{4321,4123,1234\}&  72,205,396,400,185,0,0,0,0,0,0,0\\\hline
40&\{2341,4312,1234\}&  72,216,555,1252,2549,4787,8428,14079,22518,34722,51897,75510\\\hline
41&\{4312,1342,1234\}&  72,220,590,1409,3055,6118,11474,20373,34542,56304,88714,135713\\\hline 42&\{2341,4132,1234\}&   72,221,605,1517,3574,8065,17671,37953,80424,168885,352481,732581\\\hline 43&\{2314,4213,1432\}&     72,228,670,1864,5000,13099,33789,86239,218432,550107,1379348,3446817\\\hline 44&\{4213,1342,1234\}&     72,228,678,1929,5307,14203,37133,95179,239942,596587,1466529,3571386\\\hline 45&\{4213,2134,1432\}&     72,229,683,1954,5452,14974,40671,109509,292743,777810,2055833,5409187\\\hline 46&\{2341,4132,1324\}&    72,229,686,1972,5514,15131,40986,110013,293376,778678,2059646,5434009\\\hline 47&\{2413,4132,1234\}&    72,230,689,1970,5460,14833,39790,105890,280367,739878,1948186,5121973\\\hline 48&\{4312,3124,1342\}&    72,230,692,2004,5683,15948,44523,123924,344113,953353,2635064,7266192\\\hline 49&\{2341,1324,4123\}&    72,230,701,2113,6475,20468,66969,226027,782276,2760094,9880455,35758457\\\hline 50&\{2143,3412,4123\}& \\
&\{3412,1432,1324\}&    \\
&\{3412,1432,1243\}&    72,232,707,2066,5858,16257,44428,120076,321919,857942,2276454,6020541\\\hline 51&\{4213,3124,1432\}&    72,232,712,2116,6155,17629,49893,139851,388899,1074280,2950885,8066698\\\hline 52&\{1432,4123,1234\}&   72,232,717,2157,6370,18557,53490,152868,433781,1223511,3433182,9590277\\\hline 53&\{2134,4132,1243\}&   72,233,719,2146,6260,17968,50967,143278,399960,1110203,3067479,8442903\\\hline 54&\{3124,1432,1234\}&   72,233,739,2343,7458,23801,76016,242777,775265,2475513,7904587,25240597\\\hline 55&\{4213,3124,1342\}&  \\
&\{2143,1324,4123\}&    \\
&\{2143,4123,1234\}&    72,236,745,2286,6866,20285,59156,170712,488401,1387226,3916062,10996581\\\hline 56&\{2143,1342,4123\}&  \\
&\{3412,1432,4123\}&    72,237,761,2415,7626,24034,75689,238298,750179,2361533,7433917,23401274\\\hline 57&\{2143,1432,1234\}&  72,246,845,2901,9955,34165,117254,402409,1381046,4739681,16266344,55825262\\\hline 58&\{4321,1243,1234\}&   73,202,382,396,144,0,0,0,0,0,0,0\\\hline
59&\{4321,1324,1234\}&  73,215,484,669,334,0,0,0,0,0,0,0\\\hline
60&\{4312,4132,1234\}&  73,222,563,1226,2376,4213,6972,10923,16371,23656,33153,45272\\\hline
61&\{4312,1243,1234\}&  73,223,587,1356,2820,5395,9653,16355,26487,41299,62347,91538\\\hline
62&\{4231,4312,1234\}&     73,228,616,1460,3110,6082,11102,19155,31539,49924,76416,113626\\\hline 63&\{4312,1324,1234\}&   73,229,629,1521,3304,6578,12201,21353,35607,57007,88153,132293\\\hline 64&\{4312,3412,1234\}&   73,229,634,1562,3481,7132,13622,24531,42033,69031,109306,167680\\\hline 65&\{4213,4132,1234\}&  73,231,650,1668,3987,9030,19628,41333,84915,171087,339408,665004\\\hline 66&\{4231,4132,1234\}&     73,232,654,1639,3705,7678,14798,26841,46257,76324,121318,186699\\\hline 67&\{4312,1324,1243\}&  73,233,677,1819,4606,11171,26274,60471,137059,307245,683171,1509595\\\hline 68&\{4312,1342,1243\}&  73,234,691,1910,5019,12690,31147,74694,175843,407810,934179,2117958\\\hline 69&\{3412,1324,1234\}&  73,236,700,1919,4927,12006,28090,63705,141109,307088,659576,1402947\\\hline 70&\{4312,3124,1243\}&  73,236,705,1970,5224,13307,32866,79251,187523,437030,1005935,2291536\\\hline 71&\{4231,1243,1234\}&     73,237,702,1881,4577,10216,21158,41097,75561,132523,223134,362589\\\hline 72&\{3412,1324,1243\}&    73,237,711,1988,5253,13301,32673,78669,187230,443398,1050209,2497187\\\hline 73&\{4231,1324,1234\}&     73,238,714,1962,4957,11604,25390,52361,102533,191868,344970,598682\\\hline 74&\{3412,1243,1234\}&   73,238,718,2013,5301,13266,31886,74269,168841,376750,828726,1802901\\\hline 75&\{4231,1324,1243\}&  73,238,721,2044,5492,14178,35610,87938,215295,525787,1286294,3160692\\\hline 76&\{3412,1324,4123\}&     73,238,721,2046,5501,14158,35172,84895,200133,462714,1052727,2363200\\\hline 77&\{3412,3124,1243\}&     73,238,722,2054,5541,14323,35788,87043,207201,484772,1118334,2550164\\\hline 78&\{4312,1342,1324\}&    \\
&\{3412,3142,1234\}&    73,238,724,2075,5667,14892,37942,94273,229453,548954,1294440,3014775\\\hline 79&\{2134,4132,1234\}& 73,238,724,2078,5706,15161,39319,100168,251846,627046,1549898,3810125\\\hline 80&\{4312,1324,4123\}&    73,238,726,2101,5857,15926,42626,112997,297861,782666,2052958,5379953\\\hline 81&\{2431,4312,1324\}&    73,239,734,2131,5900,15697,40389,101047,246915,591507,1393504,3236723\\\hline 82&\{4312,3142,1243\}&    73,239,734,2133,5924,15859,41202,104433,259312,632815,1521646,3612653\\\hline 83&\{4312,3412,1243\}&    73,239,734,2134,5934,15918,41470,105470,262910,644350,1556478,3713022\\\hline 84&\{4231,1324,4123\}&    73,239,736,2158,6102,16813,45493,121567,322108,848654,2227722,5834253\\\hline 85&\{2314,4132,1432\}&    73,239,738,2178,6220,17351,47595,128985,346492,924788,2456502,6502017\\\hline 86&\{3412,4132,1324\}&    73,239,740,2194,6298,17653,48621,132199,356040,952154,2533014,6712221\\\hline 87&\{4312,3124,1432\}&    73,239,740,2199,6348,17947,49954,137372,374164,1011303,2716439,7259970\\\hline 88&\{4312,3412,1324\}&   73,240,744,2192,6192,16896,44800,115968,294144,733184,1800192,4362240\\\hline 89&\{3142,4132,1234\}&    73,240,746,2217,6371,17864,49202,133759,360175,963044,2561604,6787167\\\hline 90&\{3412,4132,1243\}&    73,240,748,2240,6525,18653,52640,147210,408957,1130398,3112172,8540753\\\hline 91&\{4213,1342,1243\}&   73,240,754,2309,6987,21036,63202,189723,569311,1708100,5124492,15373695\\\hline 92&\{2314,3124,1432\}&  73,240,759,2365,7369,23069,72495,228186,718341,2260566,7111650,22370236\\\hline 93&\{4312,3142,1324\}&  73,241,754,2252,6471,18003,48736,128878,333949,850061,2130078,5263536\\\hline 94&\{2134,4132,1432\}&    \\
&\{4132,1432,1234\}&    73,241,756,2276,6640,18915,52911,145951,398242,1077434,2895486,7740081\\\hline 95&\{2314,4132,1342\}&   73,241,757,2288,6724,19365,54959,154303,429733,1189430,3276306,8990037\\\hline 96&\{2134,4132,1342\}&   73,241,759,2305,6806,19652,55725,155688,429719,1174344,3183298,8571979\\\hline 97&\{2341,4312,4123\}&   73,241,766,2399,7514,23648,74706,236352,747770,2364773,7475960,23631523\\\hline 98&\{2134,3124,1432\}&  73,241,768,2415,7587,23905,75507,238759,755088,2387570,7548085,23860518\\\hline 99&\{4231,3142,1324\}&  73,242,762,2290,6610,18434,49922,131842,340738,864258,2156546,5304322\\\hline 100&\{4312,1342,4123\}&   73,242,772,2409,7439,22872,70204,215345,660375,2024866,6208416,19035179\\\hline 101&\{3124,4132,1342\}& 73,243,777,2408,7288,21661,63471,183877,527761,1503086,4252938,11966373\\\hline 102&\{2413,3142,1234\}& 73,243,785,2504,7968,25389,81033,258873,827263,2643616,8447300,26990489\\\hline 103&\{2314,1342,4123\}& 73,243,785,2511,8073,26312,87257,294603,1011602,3526519,12456315,44495535\\\hline 104&\{2134,4132,1423\}&   73,244,782,2415,7232,21122,60455,170228,473014,1300271,3543000,9584730\\\hline 105&\{4213,2134,1342\}&  73,244,787,2468,7570,22809,67727,198664,576775,1659914,4741254,13454541\\\hline 106&\{2143,3412,1423\}& 73,244,790,2505,7839,24320,74998,230243,704359,2148620,6538740,19859175\\\hline 107&\{4213,3412,1342\}& 73,244,794,2553,8179,26192,83906,268883,861815,2762484,8855204,28385839\\\hline 108&\{3124,1432,4123\}& \\
&\{1432,1324,4123\}&    73,245,795,2508,7732,23393,69687,204939,596215,1718714,4915914,13966077\\\hline 109&\{2143,3412,1243\}& 73,245,797,2530,7878,24153,73109,218929,649609,1912298,5590446,16243437\\\hline 110&\{2134,3142,1432\}& 73,245,804,2617,8511,27709,90283,294231,958826,3124175,10178664,33160777\\\hline 111&\{2143,3142,1234\}&    73,247,821,2704,8868,29030,94960,310531,1015359,3319829,10854379,35488838\\\hline 112&\{2134,1432,1243\}&   \\
&\{3142,1432,4123\}&    73,250,853,2911,9938,33931,115849,395534,1350437,4610679,15741842,53746011\\\hline 113&\{2134,1432,1234\}&  73,250,861,2967,10220,35203,121263,417710,1438865,4956391,17073052,58810751\\\hline 114&\{4312,1423,1234\}& 74,237,668,1667,3750,7743,14898,27033,46698,77369,123672,191639\\\hline 115&\{4231,1423,1234\}& 74,245,744,2068,5296,12608,28150,59412,119341,229477,424478,758491\\\hline 116&\{4312,4123,1243\}&  74,246,763,2227,6191,16567,43026,109110,271384,664236,1603813,3827381\\\hline 117&\{3124,4132,1234\}&   74,247,769,2247,6238,16649,43132,109257,272073,668704,1626916,3926643\\\hline 118&\{3412,1423,1234\}&   74,248,780,2309,6483,17407,45028,112921,275964,660030,1550320,3586449\\\hline 119&\{4312,1432,1324\}&   74,248,784,2355,6785,18897,51177,135358,350788,893038,2237998,5530485\\\hline 120&\{4132,1423,1234\}&   74,248,787,2389,7013,20079,56417,156250,427914,1161571,3130892,8391305\\\hline 121&\{3412,4123,1243\}&  74,249,792,2394,6941,19479,53323,143275,379721,996456,2596798,6735913\\\hline 122&\{4213,1432,1324\}&   74,249,797,2451,7318,21380,61449,174378,489827,1364499,3774779,10381722\\\hline 123&\{4132,1342,1234\}& 74,249,798,2459,7351,21457,61434,173120,481461,1324409,3610321,9768290\\\hline 124&\{2341,4132,4123\}&  74,249,804,2540,7977,25106,79327,251328,797094,2527977,8014590,25401277\\\hline 125&\{2341,4123,1243\}& 74,249,804,2551,8139,26500,88531,303112,1059129,3759584,13505901,48965424\\\hline 126&\{2431,4213,1324\}&   \\
&\{3142,4123,1234\}&74,251,817,2570,7872,23621,69746,203317,586561,1677746,4764474,13447701\\\hline 127&\{2413,4312,1342\}& \\
&\{3142,4123,1243\}&    74,253,840,2728,8719,27541,86221,268047,828661,2550116,7818174,23893803\\\hline 128&\{2341,3142,4123\}& 74,253,843,2772,9080,29759,97686,321033,1055596,3471365,11415280,37535830\\\hline 129&\{2413,3124,1432\}&   \\
&\{2143,3124,1432\}&    74,253,845,2791,9188,30246,99639,328422,1082797,3570197,11771589,38812310\\\hline 130&\{3412,3124,1342\}&   74,254,856,2867,9614,32368,109432,371221,1262278,4298922,14655296,49991239\\\hline 131&\{2134,1342,4123\}&  74,254,858,2889,9775,33371,115135,401538,1414821,5032193,18049762,65227297\\\hline 132&\{3142,1324,4123\}&  74,255,857,2815,9063,28677,89389,275034,836689,2520128,7524372,22291317\\\hline 133&\{2143,3124,1423\}& 74,255,863,2891,9638,32068,106627,354480,1178459,3917863,13025489,43305654\\\hline 134&\{2134,4123,1243\}&  74,255,866,2927,9923,33898,116940,407638,1435430,5102259,18290193,66060912\\\hline 135&\{1432,4123,1243\}&  74,256,876,2987,10182,34726,118492,404441,1380670,4713644,16093028,54944551\\\hline 136&\{4213,1342,4123\}& 74,256,880,3025,10406,35805,123197,423881,1458425,5017929,17264954,59402739\\\hline 137&\{3124,1432,1342\}& 74,257,881,2995,10132,34182,115143,387538,1303745,4384933,14746009,49585430\\\hline 138&\{2134,3142,1243\}& 74,257,883,3015,10258,34826,118075,399978,1354163,4582981,15506825,52460374\\\hline 139&\{2143,3124,1342\}& 74,257,883,3019,10306,35174,120063,409878,1399367,4777721,16312213,55693546\\\hline 140&\{3124,1432,1243\}& 74,257,886,3050,10505,36206,124833,430474,1484526,5119597,17655746,60888801\\\hline 141&\{2143,1423,1234\}& 74,259,905,3163,11058,38664,135193,472724,1652965,5779907,20210571,70670238\\\hline 142&\{1432,1342,4123\}& 74,260,913,3206,11258,39533,138822,487480,1711809,6011098,21108254,74122629\\\hline 143&\{4312,4123,1234\}& 75,248,735,1952,4697,10378,21320,41163,75363,131808,221561,359742\\\hline 144&\{4231,4123,1234\}&   75,253,774,2130,5314,12169,25895,51756,98034,177282,307933,516327\\\hline 145&\{4312,1423,1243\}&   75,255,813,2443,6985,19175,50917,131555,332257,823263,2007005,4825051\\\hline 146&\{4132,1243,1234\}&   75,256,826,2535,7474,21370,59718,164082,445266,1197326,3198035,8499466\\\hline 147&\{4132,1324,1234\}&  75,258,842,2614,7787,22466,63273,175044,477897,1291997,3467411,9254394\\\hline 148&\{2134,4132,1324\}&  75,258,845,2649,8019,23630,68216,193861,544312,1514024,4180488,11476203\\\hline 149&\{3412,4123,1234\}& 75,259,849,2638,7817,22275,61539,166007,439844,1150070,2978785,7665397\\\hline 150&\{4312,4132,1324\}&  75,259,852,2669,7997,23043,64190,173677,458255,1183139,2997544,7470237\\\hline 151&\{4312,1324,1423\}&  75,259,853,2684,8120,23782,67845,189493,520359,1409742,3778514,10042552\\\hline 152&\{4231,2341,4123\}& 75,259,862,2808,9090,29489,96076,314011,1027749,3364559,11012071,36033146\\\hline 153&\{4231,1324,1423\}&   75,260,864,2756,8485,25365,74021,211814,596506,1658102,4560087,12431775\\\hline 154&\{4312,1342,1423\}& 75,260,869,2817,8920,27745,85113,258256,776717,2319093,6882432,20321017\\\hline 155&\{3124,4132,1243\}& 75,261,876,2839,8923,27329,81923,241257,700150,2007431,5698047,16039035\\\hline 156&\{4132,1324,4123\}& 75,261,876,2840,8934,27399,82259,242599,704816,2021818,5737262,16130049\\\hline 157&\{4213,1342,1324\}& \\
&\{3124,4132,1432\}&    75,261,877,2852,9020,27877,84533,252331,743389,2166062,6252642,17905365\\\hline 158&\{3412,1324,1423\}& 75,261,879,2879,9232,29148,90995,281730,866917,2655218,8103324,24660429\\\hline 159&\{3412,1423,1243\}& 75,262,889,2938,9500,30183,94559,292940,899443,2742038,8312058,25083465\\\hline 160&\{4312,1432,1342\}& 75,262,890,2949,9575,30590,96486,301269,933171,2872102,8794946,26822901\\\hline 161&\{4312,4132,1342\}& 75,262,891,2964,9700,31374,100639,320949,1019396,3228687,10206180,32219494\\\hline 162&\{3412,1342,4123\}&  75,262,893,2992,9925,32747,107743,353949,1161732,3810960,12497234,40973185\\\hline 163&\{3412,3124,1423\}&  75,262,894,3011,10120,34213,116864,404013,1413582,5000943,17866417,64375380\\\hline 164&\{2341,4123,1423\}& 75,262,895,3022,10188,34524,118030,407754,1423886,5023900,17895739,64296859\\\hline 165&\{4312,3124,1423\}& 75,262,896,3033,10261,34906,119771,415012,1452361,5130997,18286959,65698315\\\hline 166&\{3412,3142,1324\}& \\
&\{3412,3142,1243\}&    75,263,901,3024,9980,32489,104585,333549,1055497,3318014,10371474,32261565\\\hline 167&\{3142,3124,1432\}&  75,263,904,3066,10324,34652,116179,389443,1305592,4377595,14679474,49227937\\\hline 168&\{3124,1432,1423\}& 75,264,914,3127,10621,35932,121324,409301,1380417,4655382,15700590,52954137\\\hline 169&\{3142,1423,1234\}& 75,264,918,3176,10978,37964,131362,454692,1574092,5449596,18867020,65319484\\\hline 170&\{3142,1324,1234\}& \\
&\{3142,1243,1234\}&    75,265,925,3201,11017,37793,129393,442497,1512225,5165953,17643457,60250113\\\hline 171&\{3124,1342,4123\}& \\
&\{1342,1324,4123\}&    75,265,926,3216,11152,38741,135126,473872,1672151,5939232,21234409,76406414\\\hline 172&\{2143,4132,1324\}& 75,265,927,3229,11253,39355,138362,489440,1742576,6244395,22516585,81673947\\\hline 173&\{4213,1342,1423\}& \\
&\{4132,1342,4123\}&    75,265,929,3249,11362,39746,139060,486549,1702349,5956172,20839367,72912441\\\hline 174&\{4312,3412,1432\}& \\
&\{4312,3412,1342\}&    \\
&\{4312,3142,1432\}&    \\
&\{4312,3142,1342\}&    \\
&\{3412,4132,1432\}&    \\
&\{3412,4132,1342\}&    75,266,935,3263,11326,39155,134955,464094,1593231,5462447,18709694,64035275\\\hline 175&\{2413,1342,4123\}& 75,266,939,3311,11676,41183,145273,512466,1807791,6377231,22496580,79359907\\\hline 176&\{3412,4132,1423\}& 75,266,939,3315,11737,41732,149104,535469,1932998,7013680,25574106,93689214\\\hline 177&\{3412,3142,1432\}& \\
&\{3412,1432,1423\}&    75,267,948,3363,11928,42306,150051,532203,1887627,6695070,23746197,84223446\\\hline 178&\{3124,4132,1423\}& 75,267,948,3367,11988,42842,153783,554624,2009904,7318260,26768537,98339812\\\hline 179&\{2134,1432,1423\}& 75,267,950,3384,12065,43034,153524,547744,1954328,6973114,24880601,88776363\\\hline 180&\{2431,4132,1324\}& 75,267,951,3407,12309,44867,164891,610347,2273020,8508804,31991549,120734511\\\hline
181&\{2143,1324,1234\}& 75,268,958,3425,12245,43778,156514,559565,2000543,7152292,25570698,91419729\\\hline 182&\{3412,3124,4132\}&  75,268,961,3467,12591,46012,169088,624478,2316582,8627816,32247951,120920851\\\hline 183&\{4132,4123,1234\}&   76,263,843,2501,6941,18245,45928,111721,264482,612707,1394929,3131269\\\hline 184&\{4213,4132,1324\}&   76,270,927,3074,9886,30985,95064,286558,851203,2497550,7252494,20874861\\\hline 185&\{2341,4123,1234\}& 76,270,929,3118,10354,34472,116097,397167,1380884,4872188,17405889,62819962\\\hline 186&\{4132,4123,1243\}& 76,270,930,3114,10196,32820,104283,328048,1023854,3175395,9797833,30104416\\\hline 187&\{3124,4132,1324\}&  76,271,938,3146,10252,32583,101368,309697,931708,2766374,8121630,23612985\\\hline 188&\{2341,3412,4123\}&   76,273,964,3356,11587,39866,137055,471326,1621698,5581897,19216642,66160957\\\hline 189&\{2143,2134,1432\}& 76,273,971,3439,12172,43098,152649,540730,1915445,6785029,24034177,85134498\\\hline 190&\{2413,3124,1342\}& 76,274,977,3449,12086,42141,146469,508098,1760610,6096937,21106816,73058238\\\hline 191&\{2134,3142,1423\}& \\
&\{3142,3124,1243\}&    \\
&\{3142,1342,1234\}&    \\
&\{3124,1432,1324\}&    76,274,978,3463,12201,42869,150415,527426,1848905,6480722,22715293,79617891\\\hline 192&\{4132,1423,1243\}& 76,274,979,3479,12351,43951,157081,564409,2039465,7410650,27070098,99369477\\\hline 193&\{2413,4132,1324\}& 76,275,989,3539,12631,45066,161021,576887,2074166,7488003,27150233,98878251\\\hline 194&\{3124,4123,1243\}&  76,275,989,3541,12660,45316,162694,586506,2124192,7730537,28267633,103834509\\\hline 195&\{1324,4123,1243\}&    76,275,989,3544,12696,45578,164194,593966,2158090,7875503,28862235,106203597\\\hline 196&\{4213,3142,1342\}&   \\
&\{2134,1432,1324\}&    \\
&\{3412,1342,1423\}&    \\
&\{3142,1342,4123\}&    76,275,991,3563,12800,45976,165141,593184,2130737,7653715,27492557,98754742\\\hline 197&\{4312,3142,1423\}&  76,275,991,3566,12848,46426,168390,613252,2242584,8233836,30347562,112259358\\\hline 198&\{1342,4123,1234\}&    76,275,991,3566,12850,46458,168686,615340,2255101,8301270,30684958,113860149\\\hline 199&\{1342,4123,1243\}&    76,275,993,3593,13068,47838,176277,653538,2436158,9124352,34315674,129523198\\\hline 200&\{2143,3124,1243\}&    76,276,1001,3626,13126,47501,171876,621876,2250001,8140626,29453126,106562501\\\hline 201&\{3142,1324,1243\}&  \\
&\{3124,1342,1423\}&76,276,1002,3641,13261,48451,177651,653753,2414426,8947576,33266626,124062001\\\hline 202&\{1432,1423,1234\}&    76,277,1012,3702,13553,49642,181885,666542,2442922,8954133,32821408,120310377\\\hline 203&\{3142,1432,1324\}&  \\
&\{3124,1423,1234\}&     76,277,1015,3743,13893,51874,194693,733983,2777748,10547615,40169157,153377405\\\hline 204&\{3124,1342,1243\}&  76,277,1016,3756,13994,52491,197987,750185,2853359,10888249,41666366,159841363\\\hline 205&\{1432,1324,1234\}&  76,278,1019,3734,13678,50100,183514,672230,2462490,9020556,33043996,121046420\\\hline 206&\{1432,1243,1234\}&   76,278,1021,3756,13827,50916,187512,690593,2543444,9367525,34500756,127067006\\\hline 207&\{2134,1423,1243\}&   76,278,1026,3818,14308,53932,204273,776859,2964716,11348261,43552751,167535302\\\hline 208&\{3124,1342,1234\}&  76,278,1029,3859,14642,56067,216174,837832,3260369,12728853,49828647,195502526\\\hline 209&\{3142,1432,1243\}&  76,279,1043,3979,15464,61035,243956,985332,4015149,16486978,68152082,283379950\\\hline 210&\{4132,1324,1243\}&  77,283,1032,3740,13522,48930,177564,646908,2367121,8699706,32108614,118975273\\\hline 211&\{1324,4123,1234\}&   77,284,1041,3789,13730,49679,179906,653083,2378702,8696754,31921462,117624497\\\hline 212&\{3142,4132,1324\}&   77,285,1053,3875,14212,52021,190301,696532,2553047,9377034,34525630,127466481\\\hline 213&\{4132,1324,1423\}&   77,285,1054,3889,14330,52800,194748,719602,2664989,9894443,36831886,137465657\\\hline 214&\{3412,4123,1423\}&   77,285,1055,3905,14476,53812,200709,751206,2820944,10625962,40138957,152012381\\\hline 215&\{2143,2134,1243\}&\\
&\{2143,1243,1234\}&\\
&\{3142,3124,1342\}&\\
&\{1432,4123,1423\}&     77,286,1066,3977,14841,55386,206702,771421,2878981,10744502,40099026,149651601\\\hline 216&\{2143,3412,3142\}& 77,286,1066,3978,14858,55556,208012,780045,2930085,11025510,41560770,156938430\\\hline
217&\{4132,1342,1243\}& 77,286,1067,3992,14976,56338,212517,803758,3047409,11580777,44103581,168294630\\\hline 218&\{3142,3124,1423\}&  \\
&\{3142,1324,1423\}&    \\
&\{3124,1423,1243\}&     77,286,1067,3993,14992,56488,213600,810449,3084733,11774727,45061101,172844990\\\hline 219&\{3412,3142,1423\}&  77,286,1068,4006,15093,57104,216875,826448,3158726,12104591,46494761,178964400\\\hline 220&\{2413,4132,1342\}&  77,286,1069,4018,15182,57636,219701,840422,3224664,12405795,47838633,184854955\\\hline 221&\{2413,3142,1324\}& \\
&\{2143,3142,1324\}&    \\
&\{2143,1324,1423\}&    \\
&\{3142,4132,1243\}&    \\
&\{3142,4123,1423\}&    \\
&\{4132,1432,1243\}&    \\
&\{4132,1342,1324\}&     77,287,1079,4082,15522,59280,227240,873886,3370030,13027730,50469890,195892565\\\hline 222&\{4312,3412,1423\}&  77,287,1080,4094,15611,59811,230048,887674,3434510,13319262,51756304,201467116\\\hline 223&\{3142,1423,1243\}&  77,287,1082,4128,15945,62330,246328,982977,3956136,16041373,65473465,268790735\\\hline 224&\{4132,1342,1423\}&  77,288,1091,4172,16069,62240,242152,945536,3703095,14539109,57204767,225484743\\\hline 225&\{2413,3142,1243\}&  77,288,1093,4202,16341,64187,254313,1015163,4078777,16481961,66940960,273115842\\\hline 226&\{2143,3142,1423\}&     77,288,1093,4203,16359,64377,255857,1025889,4145966,16873475,69105368,284618324\\\hline 227&\{2143,1432,1324\}&     77,289,1103,4261,16603,65100,256466,1014107,4021836,15988827,63691619,254145940\\\hline 228&\{3412,3142,4123\}&     77,289,1107,4322,17162,69137,281917,1161404,4826652,20211146,85192214,361185371\\\hline 229&\{2413,3142,4123\}&    \\
&\{2143,1342,1423\}&    \\
&\{2134,1342,1423\}&     77,290,1118,4398,17595,71385,293042,1215035,5081259,21408350,90786332,387212538\\\hline 230&\{4123,1243,1234\}&     78,294,1108,4165,15638,58762,221324,836330,3171916,12074924,46131496,176825773\\\hline 231&\{1324,4123,1423\}&  78,297,1143,4419,17119,66386,257621,1000407,3887666,15119991,58856167,229312425\\\hline 232&\{4123,1423,1234\}&     78,297,1144,4433,17238,67184,262276,1025202,4011660,15712335,61590780,241610745\\\hline 233&\{2143,1324,1243\}&\\
&\{2134,1324,1243\}&\\
&\{2134,1243,1234\}&\\
&\{3142,4132,1432\}&\\
&\{3142,4132,1342\}&\\
&\{3142,4132,1423\}&\\
&\{3142,1342,1324\}&\\
&\{3124,1342,1324\}&\\
&\{3124,1324,1423\}&\\
&\{4132,1432,1324\}&\\
&\{4132,4123,1423\}&\\
&\{1342,4123,1423\}&     78,298,1157,4539,17936,71251,284188,1137076,4561093,18333337,73816489,297635750\\\hline 234&\{3412,3142,4132\}&    \\
&\{3142,1342,1243\}&78,299,1172,4677,18947,77746,322545,1350906,5704822,24265651,103872254,447146683\\\hline 235&\{3412,4132,4123\}&    \\
&\{3142,1432,1423\}&    \\
&\{3124,1243,1234\}&     78,301,1197,4875,20235,85294,364131,1571212,6841633,30025137,132668839,589726354\\\hline 236&\{3124,4123,1423\}&   \\
&\{4132,1432,1342\}&    \\
&\{1432,1324,1423\}&    \\
&\{1324,1423,1243\}&    \\
&\{1423,1243,1234\}&79,309,1237,5026,20626,85242,354080,1476368,6173634,25873744,108628550,456710589\\\hline 237&\{1432,1324,1243\}&     79,310,1251,5150,21517,90921,387595,1663936,7183750,31158310,135661904,592558096\\\hline 238&\{2413,3142,1423\}&   \\
&\{4312,3412,3142\}&    \\
&\{1342,1324,1423\}&    \\
&\{1342,1423,1243\}&    \\
&\{1342,1423,1234\}&79,310,1251,5151,21536,91137,389510,1678565,7284975,31811311,139661231,616097345\\\hline 239&\{2413,3412,3142\}&    \\
&\{4312,3412,4132\}&    \\
&\{3412,3142,1342\}&    \\
&\{3142,1432,1342\}&    \\
&\{3142,1342,1423\}&    \\
&\{3124,1324,1243\}&    \\
&\{1432,1423,1243\}&    \\
&\{1324,1423,1234\}&    \\
&\{4123,1423,1243\}&79,311,1265,5275,22431,96900,424068,1876143,8377299,37704042,170870106,779058843\\\hline 240&\{4312,3412,4123\}&79,313,1290,5475,23764,105001,470738,2136022,9791501,45275765,210931962,989153896\\\hline 241&\{1324,1243,1234\}&80,322,1346,5783,25372,113174,511649,2338988,10793251,50205607,235156609,1108120540\\\hline
242&\{1432,1342,1423\}&80,322,1347,5798,25512,114236,518848,2384538,11068567,51817118,244370806,1159883685\\\hline
\end{longtable}
\end{document}